\def\subsection{\@startsection{subsection}{2}%
  \z@{.5\linespacing\@plus.7\linespacing}{1pt}%
      {\normalfont\bfseries}}
\def\l@section{\@tocline{1}{0pt}{1pc}{}{\bfseries}}
\def\l@subsection{\@tocline{2}{0pt}{3.1em}{5pc}{}}
\newtheorem{thm}{Theorem}[section]
\newtheorem{lem}[thm]{Lemma}
\newtheorem{cor}[thm]{Corollary}
\newtheorem{prop}[thm]{Proposition}
\newtheorem{prob}[thm]{Problem}
\theoremstyle{definition}
\newtheorem{rem}[thm]{Remark}
\newtheorem{defn}[thm]{Definition}
\theoremstyle{remark}
\numberwithin{equation}{section}
\newcommand{\C}{{\mathbb{C}}}
\newcommand{\R}{{\mathbb{R}}}
\newcommand{\Z}{{\mathbb{Z}}}
\newcommand{\N}{{\mathbb{N}}}
\newcommand{\T}{{\mathbb{T}}}
\renewcommand{\ker}{\mathrm{Ker}}
\DeclareMathOperator{\Ad}{Ad}
\DeclareMathOperator{\Tr}{Tr}
\DeclareMathOperator{\tr}{tr}
\DeclareMathOperator{\id}{id}
\DeclareMathOperator{\Projf}{Projf}
\DeclareMathOperator{\ONB}{ONB}
\DeclareMathOperator{\Irr}{Irr}
\DeclareMathOperator{\Mor}{Mor}
\DeclareMathOperator{\End}{End}
\DeclareMathOperator{\Aut}{Aut}
\DeclareMathOperator{\Cnt}{Cnt}
\DeclareMathOperator{\Cnd}{Cnd}
\DeclareMathOperator{\Int}{Int}
\def\oInt{\ovl{\Int}}
\DeclareMathOperator{\mo}{mod}
\DeclareMathOperator{\Ind}{Ind}
\DeclareMathOperator{\supp}{supp}            
\DeclareMathOperator{\diag}{diag}
\def\mF{\mathcal{F}}
\def\meC{\mathscr{C}}
\def\meG{\mathscr{G}}
\def\meH{\mathscr{H}}
\def\meJ{\mathscr{J}}
\def\meK{\mathscr{K}}
\def\meR{\mathscr{R}}
\def\al{\alpha}
\def\be{\beta}
\def\ga{\gamma}
\def\de{\delta}
\def\la{\lambda}
\def\vep{\varepsilon}
\def\ph{{\phi}}
\def\ps{{\psi}}
\def\vph{\varphi}
\def\om{\omega}
\def\si{\sigma}
\def\ta{\tau}
\def\th{\theta}
\def\Ph{\Phi}
\def\Ps{\Psi}
\def\Th{\Theta}
\def\Ga{\Gamma}
\def\De{\Delta}
\def\el{\ell}
\def\ovl{\overline}                 
\def\wdh{\widehat}
\def\wdt{\widetilde}
\def\opp{{\mathrm{opp}}}
\def\hE{{\hat{E}}}
\def\hal{{\widehat{\al}}}
\def\hga{{\hat{\ga}}}
\def\hvph{{\hat{\vph}}}
\def\tvph{{\tilde{\vph}}}
\def\hrho{{\hat{\rho}}}
\def\bal{{\ovl{\al}}}
\def\bbe{{\ovl{\be}}}
\def\tDe{{\tilde{\Delta}}}
\def\tal{{\wdt{\al}}}
\def\tbe{{\wdt{\be}}}
\def\trho{\wdt{\rho}}
\def\tM{{\wdt{M}}}
\def\opi{{\ovl{\pi}}}                
\def\orho{{\ovl{\rho}}}
\def\subs{\subset}                   
\def\Subs{\Subset}
\def\setm{\setminus}
\def\nin{\notin}
\def\per{\perp}
\def\oti{\otimes}                    
\def\rti{\rtimes}                    
\def\col{\colon}
\def\ra{\hspace{-0.5mm}\rightarrow\!}
\def\btr{{\boldsymbol{1}}}                   
\def\bG{\mathbb{G}}
\def\bhG{{\wdh{\mathbb{G}}}}
\def\bhGG{\wdh{\mathbb{G}}\times \wdh{\mathbb{G}}^\opp}
\def\lhG{L^\infty(\bhG)}                    
\def\lhGG{L^\infty(\bhG\times\bhG^\opp)}    
\def\lhGZ{L^\infty(\bhG\times\Z)}
\def\lthG{L^2(\bhG)}                        
\def\lG{L^\infty(\bG)}                      
\def\IG{\Irr(\bG)}
\author[T. Masuda]{Toshihiko Masuda$^1$}
\address{$^1$ 
Graduate School of Mathematics, Kyushu University, 
Hakozaki, Fukuoka\\ \indent \mbox{812-8581},
JAPAN}
\email{masuda@math.kyushu-u.ac.jp}
\author[R. Tomatsu]{Reiji Tomatsu$^{\,2,3}$}
\address{$^{2}$Department of 
Mathematical Sciences,
\hspace{-0.6mm}University of Tokyo,
\hspace{-0.6mm}Komaba,Tokyo\\ \indent \mbox{153-8914},
JAPAN,}
\address{$^3$Department of Mathematics, 
K.U. Leuven, Celestijnenlaan 200B,
B-3001 \ \indent Leuven, 
BELGIUM}
\email{tomatsu@ms.u-tokyo.ac.jp}
\subjclass[2000]{Primary 46L65; Secondary 46L55}
\thanks{$^{1,2,3}$ Supported by JSPS}
\begin{document}

\title{Classification of minimal actions of a compact Kac algebra with
amenable dual on injective factors of type III}

\begin{abstract}
We classify a certain class of minimal actions
of a compact Kac algebra with
amenable dual on injective factors of type III.
Our main technical tools are the structural analysis of
type III factors and the theory of canonical extension
of endomorphisms introduced by Izumi.
\end{abstract}

\maketitle

\section{Introduction}\label{sec:intro}
The purpose of this paper is to extend the classification result
of \cite{M-T-CMP} to type III case,
that is,
to show uniqueness of certain minimal actions of a compact Kac algebra
with amenable dual on injective type III factors.

On compact group actions on type III factors,
there are some preceding works relevant with our work.
The complete classification for compact abelian groups
was obtained by
Y.~Kawahigashi and M. Takesaki \cite{Kw-Tak}.
The recent result due to M. Izumi \cite{Iz-can2} is remarkable.
Among other things, he showed the conjugacy result for
certain minimal actions of compact groups.
More precisely,
if minimal actions of a compact group on type III$_0$ factors
are faithful on the flow of weights
and have the common Connes-Takesaki modules
\cite{CT},
then they are conjugate.

In this paper, we classify minimal actions whose dual actions are
approximately inner and centrally free, which can be regarded as the
generalization of classification results for trivial invariant case in
\cite{Kw-Tak}. 
One should notice that these objects are different from Izumi's ones
because minimal actions studied in \cite{Iz-can2}
are duals of free and centrally trivial actions.
Our strategy is on the whole same as \cite{M-T-CMP},
that is,
we mainly handle actions of an amenable discrete Kac algebra $\bhG$
instead of a compact Kac algebra $\bG$,
and obtain our main theorem through a duality argument \cite{ES}.
It seems, however, difficult to
generalize the argument in \cite{M-T-CMP}
to type III McDuff factors because of the lack of traces.

We present a different approach for injective type III factors
starting from the classification for type II$_1$ case \cite{M-T-CMP}.
More precisely,
we extend given actions of $\bhG$ on type III factors
to larger von Neumann algebras, which are
the crossed products by abelian group actions.
Then we classify the composed actions
of the extended actions of $\bhG$ and the dual actions.
Splitting the dual actions and taking the partial crossed products,
we show that all approximately inner
and centrally free actions come from a free action on the injective type
II$_1$ factor.
In these processes, what play crucial roles are
the structure analysis of type III factors \cite{CT,Tak-dual},
Izumi's theory on canonical extension of endomorphisms
\cite{Iz-can2} and the characterization of approximate innerness and
central triviality of endomorphisms \cite{M-T-endo-pre}.

This paper is organized as follows.

In \S \ref{sec:main}, our main results and their applications are stated.

In \S \ref{sec:pre}, we prove the necessary results for the study in
the later sections.
In particular, the relative Rohlin theorem proved in \S \ref{sec:Rel-Rohlin}
plays an important role for our model action splitting argument.

In \S \ref{sec:lamclass},
type III$_\lambda$ case ($0<\lambda<1$) is studied.
Considering the discrete decomposition,
we can reduce our problem to
classifying actions of direct product of $\bhG$
and the integer group $\Z$ on the injective
type II$_\infty$ factor.
Here, the $\Z$-action has non-trivial Connes-Takesaki module,
and the main theorem of \cite{M-T-CMP} is not immediately applicable.
However, we can show
the model action splitting as in \cite{Con-auto}
that enables us to cancel the Connes-Takesaki module
and to use the main theorem of \cite{M-T-CMP}.

In \S \ref{sec:III0},
type III$_0$ case is studied.
We make use of the continuous decomposition, and represent a flow of
weights as a flow built under a ceiling function.
Then all things are reduced to the type II case
as in \cite{Su-Tak-RIMS,Su-Tak-act}.
We classify actions of the direct product of $\bhG$
and an AF ergodic groupoid
on the injective type II$_\infty$ factor
by using
\cite{M-T-CMP} and Krieger's cohomology lemma \cite{JT}.

In \S \ref{sec:III1},
type III$_1$ case is studied.
Following the line of Connes and Haagerup's theory of
classification of injective factors of type III$_1$
\cite{Co-III1,Ha-III1},
we consider the discrete decomposition of the type III$_\la$
factor by the type III$_1$ factor.
Then we classify actions of the direct product of $\bhG$
and the torus coming from the dual action
by showing the model action splitting
in \S \ref{sec:lam-stab} and \ref{sec:1-model}.
The key point here is approximate innerness of modular automorphisms.

In \S \ref{sec:appendix},
we prove some basic results on the canonical extension
in order that readers can smoothly shift
from theory of endomorphisms to
that of actions of discrete Kac algebras.
Most of them directly follow from \cite{M-T-endo-pre}
by making use of the notion of
a Hilbert space in a von Neumann algebra \cite{Ro}.

\vspace{10pt} 
\noindent \textbf{Acknowledgments.}

A part of this work was done while the first named author
stayed at Fields Institute
and the second named author stayed
at Katholieke Universiteit Leuven.
They express gratitude for their warm hospitality.

\section{Notations and Main theorem}\label{sec:main}
Throughout this paper, we treat only separable von Neumann
algebras, except for ultraproduct von Neumann algebras.
We freely use the notations in \cite{M-T-CMP}.
For example, $\bhG=(\lhG,\De,\vph)$ denotes a discrete Kac algebra.
Although some of our results are applicable to a general
discrete Kac algebra,
we always assume the amenability of $\bhG$
before \S \ref{sec:appendix}.
See \cite{M-T-CMP} and the references therein
for the definition of a discrete (or compact) Kac algebra and its amenability.

For a von Neumann algebra $M$,
we denote by $U(M)$ the set of unitary elements in $M$.
By $W(M)$, we denote
the set of faithful normal semifinite weights
on $M$.

By \cite{Co-inj,Kri-erg,Co-III1,Ha-III1},
injective  type III factors are determined by their flow of weights.
We denote by $\meR_0$, $\meR_{0,1}$, $\meR_\lambda$ and
$\meR_\infty$ the injective factor of type II$_1$, type II$_\infty$,
type III$_\lambda$ $(0<\lambda<1)$ and type III$_1$, respectively.

Let $M$ be a factor.
For a finite dimensional Hilbert space $K$,
let $\Mor_0(M,M\otimes B(K))$ be a set of all homomorphisms with finite
index.
When $M$ is properly infinite,
we can identify $\Mor_0(M,M\otimes B(K))$ with
$\End_0(M)$, the set of endomorphisms of $M$
with finite index. (See \S \ref{sec:appendix}.)
By $\Tr_K$ and $\tr_K$, we denote the non-normalized trace and
the normalized trace on $B(K)$, respectively.

\subsection{Actions and cocycle actions}
We recall some definitions and notations used in \cite{M-T-CMP}
for readers' convenience.
Let $M$ be a von Neumann algebra,
$\al\in\Mor(M,M\oti\lhG)$ and $u\in U(M\oti\lhG\oti\lhG)$.
The pair $(\al,u)$ (or simply $\al$)
is called a \emph{cocycle action} of $\bhG$ on $M$
if the following holds:
\begin{enumerate}
\item
$(\al\oti\id)\circ\al=\Ad u\circ (\id\oti\De)\circ\al$;
\item
$(u\oti1)(\id\oti\De\oti\id)(u)=\al(u)(\id\oti\id\oti\De)(u)$;
\item
$u_{\cdot,\btr}=u_{\btr,\cdot}=1$.
\end{enumerate}
Here, $\al(u):=(\al\oti\id\oti\id)(u)$,
which is one of our conventions frequently used in our paper,
that is, we will omit $\id$ when the place where $\al$ acts is apparent.
If $u=1$, $\al$ is called an \emph{action}.
We introduce a left inverse $\Ph_\pi^\al\col M\oti B(H_\pi)\ra M$
of $\al_\pi$ for each $\pi\in\IG$ as follows:
\[
\Ph_\pi^\al(x)=(1\oti T_{\opi,\pi}^*)u_{\opi,\pi}^*
\al_\opi(x)u_{\opi,\pi}(1\oti T_{\opi,\pi})
\quad\mbox{for}\ x\in M\oti B(H_\pi).
\]
Then $\Ph_\pi^\al$ is a faithful normal unital completely positive
map with $\Ph_\pi^\al\circ\al_\pi=\id_M$ \cite[Lemma 2.4]{M-T-CMP}.
In general, a left inverse of $\al_\pi$ is not uniquely determined,
but we only use the left inverse above.
If $M$ is a factor, then $\Ph_\pi^\al$ is standard, that is,
the conditional expectation
$\al_\pi\circ\Ph_\pi^\al\col M\oti B(H_\pi)\ra \al_\pi(M)$
is minimal (see Proposition \ref{prop: standard}).
The other easy but useful remark is the fact that $u$ is evaluated in
$(M^\al)'\cap M$, where $M^\al:=\{x\in M\mid \al(x)=x\oti1\}$ is a
the fixed point algebra.
This means that $(\al|_{(M^\al)'\cap M},u)$ is a cocycle action
on $(M^\al)'\cap M$.

\subsection{Approximate innerness and central freeness}
\label{sec:apcen}

We collect basic notions of homomorphisms and actions
from \cite{M-T-CMP}.

\begin{defn}\label{defn: app-cent}
Let $M$ be a von Neumann algebra
and $\al\in \Mor_0(M, M\oti B(K))$
with the standard left inverse $\Ph^\al$.
We say that
\begin{enumerate}
\item
$\al$ is \emph{properly outer}
if there exists no non-zero element $a\in M\oti B(K)$
such that
$a(x\oti1)=\al(x)a$ for any $x\in M$;

\item
$\alpha$ is \emph{approximately inner}
if there exists a sequence $\{U^\nu\}_\nu\subset U(M\otimes B(K))$
such that 
\[
\lim_{\nu\to\infty}
\|(\vph\oti\tr_K)\circ \Ad (U^\nu)^*-\vph\circ\Ph^\al
\|=0
\quad\mbox{for all}\ \vph\in M_*;
\]

\item
$\alpha$ is \emph{centrally trivial}
if $\al^\om(x)=x\oti1$ for all $x\in M_\om$;

\item
$\alpha$ is \emph{centrally non-trivial}
if $\al$ is not centrally trivial;

\item
$\al$ is \emph{properly centrally non-trivial}
if there exists no non-zero element $a\in M\oti B(K)$
such that
$\al^\om(x)a=(x\oti1)a$ for all $x\in M_\om$.
\end{enumerate}
\end{defn}

\begin{defn}
Let $\al\in\Mor(M,M\oti\lhG)$ be a cocycle action of $\bhG$.
We say that
\begin{enumerate}
\item
$\al$ is \emph{free}
if $\al_\pi$ is properly outer for all $\pi\in\IG\setm\{\btr\}$;

\item
$\al$ is \emph{approximately inner}
if $\al_\pi$ is approximately inner for all $\pi\in\IG$;

\item
$\al$ is \emph{centrally free}
if $\al_\pi$ is properly centrally non-trivial
for all $\pi\in\IG\setm\{\btr\}$.
\end{enumerate}
\end{defn}

Note the following fact.
If $\al$ is a free action of $\bhG$ on a factor
\cite[Definition 2.7]{M-T-CMP},
then $\al_\pi$ is irreducible for each $\pi\in\IG$
\cite[Lemma 2.8]{M-T-CMP}.
If $\al_\pi$ is irreducible,
then central non-triviality is
equivalent to
properly central non-triviality
\cite[Lemma 8.3]{M-T-CMP}.
Hence a free action $\al$ on a factor is centrally free
if and only if $\al_\pi$ is centrally non-trivial
for each $\pi\in\IG\setm\{\btr\}$.

\subsection{Main theorem}

We recall the notion of
the cocycle conjugacy for two (cocycle) actions.
\begin{defn}
Let $M$ and $N$ be von Neumann algebras.
Let $\al\in \Mor(M,M\oti\lhG)$ and $\be\in\Mor(N,N\oti\lhG)$
be cocycle actions of $\bhG$ with 2-cocycles $u$ and $v$,
respectively.
\begin{enumerate}
\item $\al$ and $\be$ are said to be \emph{conjugate}
if there exists an isomorphism $\th\col M\ra N$
such that
\begin{itemize}
\item
$\al=(\th^{-1}\oti\id)\circ\be \circ\th$;
\item
$u=(\th^{-1}\oti\id\oti\id)(v)$.
\end{itemize}
We write $\al\approx\be$ if $\al$ and $\be$ are conjugate.

\item
$\al$ and $\be$ are said to be \emph{cocycle conjugate}
if there exist an isomorphism $\th\col M\ra N$
and $w\in U(M\oti \lhG)$ such that
\begin{itemize}
\item
$\Ad w\circ\al=(\th^{-1}\oti\id)\circ\be\circ \th$;
\item
$w\al(w)u(\id\oti\De)(w^*)=(\th^{-1}\oti\id\oti\id)(v)$.
\end{itemize}
We write $\al\sim\be$ if $\al$ and $\be$ are cocycle conjugate.
\end{enumerate}
\end{defn}
When $\al$ is an action,
$v\in U(M\oti\lhG)$ is called an $\al$-\emph{cocycle}
if $(v\oti1)\al(v)=(\id\oti\De)(v)$.
The following is the main theorem of this paper
which asserts the uniqueness of approximate inner and
centrally free action.

\begin{thm}\label{thm:main}
Let $\bG$ be a compact Kac algebra with amenable dual,
$M$ an injective factor,
$\alpha$ an approximately inner
and centrally free action of $\bhG$ on $M$,
and $\alpha^{(0)}$ a free action of
$\bhG$ on $\meR_0$.
Then $\alpha$ is cocycle
conjugate to $\id_M\otimes \alpha^{(0)}$.
\end{thm}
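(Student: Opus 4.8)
The plan is to split the argument according to the type of the injective factor $M$ and to reduce every case to the type II$_1$ classification of \cite{M-T-CMP}. Since $\al$ is centrally free, $M$ cannot be of type I, and when $M$ is of type II$_1$ the assertion is exactly \cite{M-T-CMP}. It therefore remains to treat the type III cases (with type II$_\infty$ appearing as an intermediate algebra), and in each of them the strategy is uniform: pass to a canonical decomposition exhibiting $M$ as a crossed product of a type II factor by an abelian object — a group or an ergodic groupoid — use Izumi's canonical extension \cite{Iz-can2} to lift $\al$ and its dual to this larger algebra, and then classify the combined action of $\bhG$ together with the auxiliary abelian action. The characterizations of approximate innerness and central triviality for endomorphisms from \cite{M-T-endo-pre}, collected in Section \ref{sec:appendix}, are what let me transport the hypotheses on $\al$ through the canonical extension.

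First I would handle the type III$_\la$ case with $0<\la<1$. Using the discrete decomposition I write $M=N\rti_\th\Z$ with $N$ injective of type II$_\infty$ and $\th$ scaling the trace by $\la$. The canonical extension of $\al$ yields an action of $\bhG$ on $N$ commuting with $\th$, hence an action of $\bhG\times\Z$ on $N$ whose $\Z$-part carries a non-trivial Connes--Takesaki module, so \cite{M-T-CMP} does not apply verbatim. The heart of the matter is a model action splitting in the spirit of \cite{Con-auto}: I would absorb a model $\Z$-action so as to cancel the Connes--Takesaki module, reducing to a $\bhG\times\Z$-action on $\meR_{0,1}$ to which \cite{M-T-CMP} applies. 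The relative Rohlin theorem of Section \ref{sec:Rel-Rohlin} is the tool that makes this splitting possible.

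For the type III$_0$ case I would use the continuous decomposition and represent the flow of weights as a flow built under a ceiling function, so that, following \cite{Su-Tak-RIMS,Su-Tak-act}, the problem is recast over the associated AF ergodic groupoid. The task is then to classify actions of $\bhG$ together with this groupoid on the injective type II$_\infty$ factor; here \cite{M-T-CMP} supplies the $\bhG$-part while Krieger's cohomology lemma \cite{JT} disposes of the groupoid cocycles, again after a model splitting argument analogous to the III$_\la$ case.

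The type III$_1$ case I expect to be the main obstacle, and I would follow the Connes--Haagerup philosophy \cite{Co-III1,Ha-III1}. Realizing the III$_1$ factor inside the discrete decomposition of a III$_\la$ factor, I reduce to classifying actions of $\bhG\times\T$, where the torus comes from the dual action, and the crux is once more a model action splitting, carried out in Sections \ref{sec:lam-stab} and \ref{sec:1-model}. The key technical input distinguishing this case is the approximate innerness of the modular automorphisms, which is what ultimately permits absorbing the torus action and returning to \cite{M-T-CMP}; establishing and exploiting this approximate innerness in the presence of the $\bhG$-action is the most delicate point. Assembling the three type III cases together with the type II$_\infty$ reduction and the known type II$_1$ result then yields the asserted cocycle conjugacy of $\al$ to $\id_M\oti\al^{(0)}$ for every injective factor $M$.
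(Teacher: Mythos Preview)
Your outline is correct and matches the paper's own strategy essentially verbatim: case split by type, canonical extension to the discrete (resp.\ continuous) core, model action splitting to absorb the $\Z$- or $\T$-part in the III$_\la$ and III$_1$ cases, and Krieger's cohomology lemma for the groupoid in the III$_0$ case. One small correction: in the III$_0$ case no model splitting is needed---as the Remark at the end of Section~\ref{sec:III0} notes, because the actions are free no obstructions arise in combining the $\bhG$-part with the groupoid part, and Theorem~\ref{thm:class-groupoid} goes through directly from Krieger's lemma and the II$_\infty$ classification.
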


This implies the following
in view of the duality theorem \cite[Theorem IV.3]{ES}.

\begin{thm}\label{thm:main2}
Let $\bG$ be a compact Kac algebra with amenable dual,
$M$ an injective factor,
$\alpha$ a minimal action of $\bG$ on $M$,
and
$\alpha^{(0)}$ a minimal action of $\bG$ on $\meR_0$.
If the dual
action of $\alpha$ is approximately inner and centrally free,
then $\alpha$ is cocycle conjugate to
$\id_M\otimes \alpha^{(0)}$.
If $\alpha$ is a dual action,
then $\alpha$ and
$\id_M\otimes \alpha^{(0)}$ are conjugate.
\end{thm}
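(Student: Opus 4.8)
The plan is to deduce this corollary from Theorem \ref{thm:main} by transporting everything to the dual side and then invoking the Takesaki-type duality \cite[Theorem IV.3]{ES}. First I would form the crossed product $N:=M\rtimes_\al\bG$ together with the dual action $\hal$ of $\bhG$, and record the standard consequences of minimality: $\hal$ is free, the fixed point algebra $N^{\hal}$ equals $M$, and $\al$ is recovered from $\hal$ through duality. Since $\al$ is a minimal action of a \emph{compact} (hence finite) Kac algebra, its Haar state furnishes a normal conditional expectation $N\ra M$ that leaves the flow of weights unchanged; together with injectivity of $M$ and the classification of injective factors this shows that $N$ is an injective factor with $N\cong M$. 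By hypothesis the dual action $\hal$ is approximately inner and centrally free, so $\hal$ meets the standing assumptions of the main theorem.

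Next I would apply Theorem \ref{thm:main} to $\hal$, taking as the free model on $\meR_0$ the action $\widehat{\al^{(0)}}$, which is the dual of the given minimal action $\al^{(0)}$ and hence a free action of $\bhG$ on $N_0:=\meR_0\rtimes_{\al^{(0)}}\bG$; because crossing the injective type II$_1$ factor by the compact $\bG$ returns the injective type II$_1$ factor, $N_0\cong\meR_0$. Theorem \ref{thm:main} then yields $\hal\sim\id_N\oti\widehat{\al^{(0)}}$, and the identification $N\cong M$ lets me rewrite the right-hand side as $\id_M\oti\widehat{\al^{(0)}}$. Since the $\bG$-action in $\id_M\oti\al^{(0)}$ acts only on the second tensor leg, dualization commutes with $\id_M\oti(\,\cdot\,)$, so $\id_M\oti\widehat{\al^{(0)}}$ is precisely the dual action $\widehat{\id_M\oti\al^{(0)}}$; thus $\hal\sim\widehat{\id_M\oti\al^{(0)}}$. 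Under the correspondence $\be\mapsto\hat\be$ of \cite[Theorem IV.3]{ES}, which carries minimal actions of $\bG$ to free actions of $\bhG$ and respects cocycle conjugacy, this descends to $\al\sim\id_M\oti\al^{(0)}$, proving the first assertion.

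For the second assertion I would upgrade cocycle conjugacy to conjugacy under the extra hypothesis that $\al$ is a dual action. A dual action is stable, i.e.\ every $\al$-cocycle is a coboundary; consequently the unitary $w\in U(M\oti\lhG)$ implementing the cocycle conjugacy $\Ad w\circ\al=(\th^{-1}\oti\id)\circ(\id_M\oti\al^{(0)})\circ\th$ can be written as a coboundary, and absorbing it into the conjugating isomorphism (replacing $\th$ by its composition with the inner perturbation determined by $w$) removes the factor $\Ad w$. This produces an honest conjugacy $\al\approx\id_M\oti\al^{(0)}$.

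The main obstacle I anticipate lies not in the formal duality bookkeeping but in the two structural inputs feeding it: the identification $N\cong M$, equivalently the statement that a minimal action of the compact Kac algebra does not disturb the flow of weights, and, for the dual-action case, the vanishing of the relevant $1$-cohomology (stability of dual actions) that promotes $\sim$ to $\approx$.
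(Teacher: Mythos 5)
Your overall strategy --- pass to the dual action $\hal$ on $N=M\rti_\al\bG$, apply Theorem \ref{thm:main}, and descend through \cite[Theorem IV.3]{ES} --- is exactly the route the paper intends (the paper offers no more detail than the citation of \cite{ES}), and your treatment of the second assertion via stability of dual actions is correct in outline. However, two of your structural claims are false and one of them breaks the application of Theorem \ref{thm:main} as you have set it up. The crossed product of a factor by an action of an infinite-dimensional compact Kac algebra is properly infinite: $N_0:=\meR_0\rti_{\al^{(0)}}\bG$ sits inside $\meR_0\oti B(\lthG)$ and contains the infinitely many central projections $1\oti e_\pi$ of $\lhG$, whose dual-weight values $d(\pi)^2$ sum to infinity, so $N_0\cong\meR_{0,1}$, not $\meR_0$, whenever $\IG$ is infinite. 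Consequently $\widehat{\al^{(0)}}$ is \emph{not} a free action of $\bhG$ on $\meR_0$ and cannot be used as the model action $\al^{(0)}$ demanded by the hypotheses of Theorem \ref{thm:main}. For the same reason your claim $N\cong M$ fails when $M=\meR_0$ (then $N$ is type II$_\infty$), and the justification you give --- that a flow-of-weights-preserving expectation plus the classification of injective factors forces $N\cong M$ --- cannot work, since the flow of weights does not separate type II$_1$ from type II$_\infty$.

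The repair is to run the duality argument symmetrically: apply Theorem \ref{thm:main} with an arbitrary free action $\be^{(0)}$ of $\bhG$ on $\meR_0$ as the model, both to $\hal$ on $N$ and to the dual of $\id_M\oti\al^{(0)}$ on $(M\oti\meR_0)\rti\bG$ (for the latter you must first verify that the dual of a minimal action of $\bG$ on $\meR_0$ is approximately inner and centrally free --- it is free and preserves the canonical trace, whence Corollary \ref{cor: app-cent-infinite} applies, but this needs to be said; freeness alone is not a hypothesis of Theorem \ref{thm:main}). This yields that the two dual actions are cocycle conjugate to each other, and only then does \cite[Theorem IV.3]{ES} let you descend: cocycle conjugacy of the duals gives conjugacy of the biduals, Takesaki duality identifies the biduals with the stabilizations $\al\oti\Ad$ on $M\oti B(\lthG)$, and one must still strip off the $B(\lthG)$ leg (via Lemma \ref{lem: id-otimes-al} after reducing by a suitable projection in the II$_1$ case). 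As written, your proposal skips the verification for $\widehat{\al^{(0)}}$ and feeds a II$_\infty$ action into a theorem that requires a II$_1$ model, so the key step does not go through.
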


As a corollary, we obtain the following classification of minimal
actions of compact Lie groups on $\meR_\infty$.
\begin{cor}
Let $G$ be a semisimple connected compact Lie group. Then any two minimal
actions of $G$ on $\meR_\infty$ are conjugate to each other.
\end{cor}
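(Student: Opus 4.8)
The plan is to deduce the corollary from Theorem~\ref{thm:main2} by verifying that, for a semisimple connected compact Lie group $G$, every minimal action on $\meR_\infty$ automatically satisfies the hypotheses there: that its dual action is approximately inner and centrally free. Since $\meR_\infty$ is the injective type III$_1$ factor and $G$ is a genuine compact group (hence a compact Kac algebra with $\bhG$ the associated discrete group von Neumann algebra, which is amenable), the machinery applies once these two properties are in place. So the real content is to show that for semisimple $G$ the dual action $\hal$ of any minimal $\al$ is approximately inner and centrally free, and then Theorem~\ref{thm:main2} forces all minimal actions to be conjugate (not merely cocycle conjugate: since any minimal action of a group is in particular a dual action in the relevant duality sense, the second clause of Theorem~\ref{thm:main2} gives honest conjugacy).

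The key steps, in order, are as follows. First I would recall that for a minimal action of a compact group the dual action $\hal$ of $\bhG=\wdh{G}$ is indexed by $\IhG=\IG$, the irreducible representations of $G$, and that central freeness and approximate innerness of $\hal$ can be tested irreducible-representation-by-irreducible-representation via the endomorphism characterizations cited from \cite{M-T-endo-pre}. Second, I would invoke semisimplicity: a semisimple connected compact Lie group has finite center and, crucially, no nontrivial continuous homomorphisms to the circle, so its representation ring and fusion structure are rigid enough that the modular-automorphism obstructions vanish. Concretely, on the type III$_1$ factor $\meR_\infty$ the modular flow is relevant only through its interaction with $\hal$, and for semisimple $G$ one checks that each $\hal_\pi$ with $\pi\neq\btr$ is centrally nontrivial because the minimality of $\al$ forces the relative commutant structure $\meR_\infty^\al{}'\cap\meR_\infty$ to realize the full fusion category, leaving no room for $\hal_\pi$ to act as a scalar on the asymptotic centralizer. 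Third, approximate innerness of $\hal_\pi$ should follow from the fact that $\meR_\infty$ is injective together with the structure of $G$: on the injective type III$_1$ factor the group of approximately inner automorphisms is large, and for a semisimple Lie group the absence of nontrivial characters removes the potential Connes–Takesaki module obstruction, so every $\hal_\pi$ lands in the approximately inner part.

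The main obstacle I expect is the verification of central freeness, i.e.\ showing $\hal_\pi$ is properly centrally nontrivial for all $\pi\in\IG\setm\{\btr\}$. This is where semisimplicity must be used essentially rather than cosmetically: for a torus (the opposite extreme) minimal actions can have centrally trivial dual pieces, and indeed the $\meR_\infty$-classification would fail in the abelian case, so the argument has to pinpoint exactly which feature of semisimple $G$ excludes this. The cleanest route is probably to argue that if some $\hal_\pi$ were centrally trivial then, by the characterization of central triviality for endomorphisms, $\pi$ would have to generate (under fusion) a subcategory on which the action is essentially inner, which would descend to a nontrivial character or a nontrivial central grading of $\Rep(G)$; semisimplicity of $G$ precludes both, since $\Rep(G)$ has trivial universal grading group exactly when $G$ has finite center and trivial abelianization. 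I would therefore expect the proof to consist of this categorical/representation-theoretic exclusion step, after which approximate innerness and the appeal to Theorem~\ref{thm:main2} are comparatively routine.
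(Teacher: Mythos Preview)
Your overall strategy matches the paper's: apply Theorem~\ref{thm:main2} after verifying that the dual action is approximately inner and centrally free, and then use the second clause to upgrade cocycle conjugacy to conjugacy. The paper's proof is a one-line citation of Theorem~\ref{thm:main2}, \cite[Theorems 3.15, 4.12]{M-T-endo-pre}, and \cite[Corollary 5.14]{Iz-can2}, so the substance is entirely in those external verifications.

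There are, however, two places where your proposal misallocates the work. First, approximate innerness is automatic on $\meR_\infty$ and has nothing to do with semisimplicity: by Corollary~\ref{cor: app-cent-infinite}(3) (equivalently \cite[Theorem 3.15]{M-T-endo-pre}), one has $\oInt(\meR_\infty,\meR_\infty\oti B(K))=\Mor_0(\meR_\infty,\meR_\infty\oti B(K))$, so every $\hal_\pi$ is approximately inner regardless of $G$. Your argument about characters and Connes--Takesaki modules is unnecessary here. Second, your central-freeness argument is only a heuristic. The clean route, which is what the cited references supply, is: $\hal$ is free (dual of a minimal action), so each $\hal_\pi$ is irreducible; if some $\hal_\pi$ with $\pi\neq\btr$ were centrally trivial, then by \cite[Theorem 4.12]{M-T-endo-pre} (see Corollary~\ref{cor: app-cent-infinite}(3)) it would be inner-equivalent to a single modular automorphism $\si_s^\vph$, forcing $d(\hal_\pi)=1$ and hence $\dim(H_\pi)=1$; but a semisimple connected compact Lie group has no nontrivial one-dimensional representations. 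Your talk of universal grading groups is a detour around this direct dimension count. Finally, the assertion that a minimal action of a compact group is a dual action is correct but not a triviality ``in the relevant duality sense''; it requires the Galois correspondence for minimal compact group actions (this is part of what \cite[Corollary 5.14]{Iz-can2} packages), and you should cite it rather than assert it.
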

\begin{proof}
This follows from Theorem \ref{thm:main2},
\cite[Theorem 3.15,\ 4.12]{M-T-endo-pre}
and \cite[Corollary  5.14]{Iz-can2}. 
\end{proof}

Our main purpose is to prove Theorem \ref{thm:main}.
In \cite[Theorem 7.1]{M-T-CMP}, we have proved that
in type II$_1$ case.
The remaining cases are type II$_\infty$, III$_\la$ ($0<\la<1$),
III$_0$ and III$_1$.
Type II$_\infty$ case is easily shown as follows.

\vspace{10pt}
\noindent
$\bullet$
\textit{Proof of Theorem \ref{thm:main} for $\meR_{0,1}$.}

Let $\al$ be an approximately inner and centrally free action
of $\bhG$ on $\meR_{0,1}$.
Let $\ta$ be a normal trace on $\meR_{0,1}$.
Since $\al$ is approximately inner,
we have $\ta\circ\Ph_\pi^\al=\ta\oti\tr_\pi$ for $\pi\in\IG$
by Corollary \ref{cor: app-cent-infinite}.
Hence $\ta$ is invariant under $\al$.

Let $\{e_{i,j}\}_{i,j=1}^\infty\subs \meR_{0,1}$
be a system of matrix units
with a finite projection $e_{11}$.
Since $(\ta\oti \tr_\pi)(e_{11}\oti1)=(\ta\oti \tr_\pi)(\al_\pi(e_{11}))$
for each $\pi\in\IG$,
we can take $v\in \meR_{0,1}\oti \lhG$ such that
$vv^*=e_{11}\oti1$ and $v^*v=\al(e_{11})$.
Set a unitary
$V=\sum_{i=1}^\infty (e_{i1}\oti1)v\al(e_{1i})$.
Then the perturbed cocycle action $\Ad V\circ\al$
fixes the type I factor $B:=\{e_{i,j}\}_{i,j}''$.
Therefore $\Ad V\circ\al|_{B'\cap \meR_{0,1}}$
is an approximately inner
and centrally free cocycle action
on the injective type II$_1$ factor $B'\cap \meR_{0,1}$.
By \cite[Theorem 6.2]{M-T-CMP},
we can perturb $\Ad V\circ\al|_{B'\cap \meR_{0,1}}$ to be an action.
Then this action is cocycle conjugate to the model action
$\al^{(0)}$.
Therefore we have $\al\sim\id_{B(\el_2)}\oti\al^{(0)}$.
Using $\al^{(0)}\sim \id_{\meR_0}\oti\al^{(0)}$,
we obtain $\al\sim\id_{\meR_{0,1}}\oti \al^{(0)}$.
\hfill$\Box$
\\

By Theorem \ref{thm:main}, any two approximately inner and centrally
free actions $\al$ and $\be$ on an injective factor
$M$ are cocycle conjugate.
This can be more precisely stated as \cite[Theorem 7.1]{M-T-CMP}.

\begin{thm}
Let $M$ be an injective factor and $\bhG$ an amenable discrete Kac algebra.
Let $\al$ and $\be$ be approximately inner and centrally free actions
of $\bhG$ on $M$.
Then there exists $\th\in\oInt(M)$ and an $\al$-cocycle $v\in M\oti \lhG$
such that
\[
\Ad v\circ\al=(\th^{-1}\oti\id)\circ\be\circ\th.
\]
\end{thm}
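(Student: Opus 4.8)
The plan is to deduce this refinement directly from the main theorem (Theorem \ref{thm:main}), which already asserts that any approximately inner and centrally free action on an injective factor $M$ is cocycle conjugate to the model $\id_M\oti\al^{(0)}$. The point of the present statement is to extract from that common cocycle conjugacy the specific fact that the implementing isomorphism $\th$ can be taken in $\oInt(M)$, and that only an honest $\al$-cocycle $v$ (not a perturbation of the $2$-cocycle) is needed. First I would apply Theorem \ref{thm:main} to both $\al$ and $\be$, obtaining isomorphisms and cocycles exhibiting $\al\sim\id_M\oti\al^{(0)}$ and $\be\sim\id_M\oti\al^{(0)}$. Composing these two cocycle conjugacies yields a cocycle conjugacy between $\al$ and $\be$ of the form $\Ad v\circ\al=(\th^{-1}\oti\id)\circ\be\circ\th$ for some isomorphism $\th\col M\ra M$ and some $\al$-cocycle $v$; since $\al$ and $\be$ are genuine actions (not merely cocycle actions), the $2$-cocycle perturbation terms collapse and the compatibility condition reduces to the cocycle identity $(v\oti1)\al(v)=(\id\oti\De)(v)$.

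The substantive point is to show $\th$ may be chosen in $\oInt(M)$, the closure of the inner automorphisms. Here I would invoke the hypothesis that both $\al$ and $\be$ are \emph{approximately inner}: intuitively, approximate innerness of the actions forces the structural isomorphism relating them to the same model to respect the approximately inner part of $\Aut(M)$, so that $\th$ differs from an automorphism in $\oInt(M)$ only by something that can be absorbed into the cocycle $v$. Concretely, one knows that $\th$ can be modified by composing with an approximately inner automorphism and correspondingly adjusting $v$; the freedom to replace $(\th,v)$ by $(\si\circ\th,\,v')$ for $\si\in\oInt(M)$ is exactly what lets us normalize $\th$ into $\oInt(M)$. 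This is precisely the content of the analogue \cite[Theorem 7.1]{M-T-CMP} in the type II$_1$ setting, whose proof strategy transfers once Theorem \ref{thm:main} is available in all type classes.

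The main obstacle I anticipate is verifying that the composite isomorphism obtained from chaining the two model cocycle conjugacies indeed lands in $\oInt(M)$ rather than merely in $\Aut(M)$. The model action $\id_M\oti\al^{(0)}$ is built on $M\cong M\oti\meR_0$, and the two applications of Theorem \ref{thm:main} produce isomorphisms $M\ra M\oti\meR_0$ that need not agree on the approximately inner structure; controlling the discrepancy requires knowing that any automorphism of $M$ commuting appropriately with the model data, after the cocycle adjustment, is approximately inner. I expect this to follow from the characterization of approximate innerness of endomorphisms in \cite{M-T-endo-pre} together with the fact that $\al^{(0)}$ is a fixed model action, so that the relevant automorphism is a Connes--Takesaki-module-preserving automorphism of an injective factor and hence approximately inner by the uniqueness theory for such automorphisms. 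Once this normalization is in hand, the remaining verification that $v$ is an $\al$-cocycle and that the conjugacy identity holds is routine bookkeeping with the cocycle identities already recalled in the excerpt.
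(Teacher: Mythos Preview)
Your overall strategy is sound: apply Theorem~\ref{thm:main} to get a cocycle conjugacy via some $\th_0\in\Aut(M)$, then argue that $\th_0$ can be replaced by something in $\oInt(M)$, using the characterization $\oInt(M)=\ker(\mo)$ from \cite{KST}. You also correctly isolate the tensor product structure of the model as the source of flexibility. But the normalization step contains a genuine gap.

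You write that ``the freedom to replace $(\th,v)$ by $(\si\circ\th,v')$ for $\si\in\oInt(M)$ is exactly what lets us normalize $\th$ into $\oInt(M)$.'' This cannot work: if $\si\in\oInt(M)$ then $\mo(\si\circ\th)=\mo(\th)$, so composing by approximately inner automorphisms never changes the module. The automorphism $\th_0$ produced by Theorem~\ref{thm:main} has no reason to have trivial module a priori, and you have not explained how to kill it. Your later remark that ``the relevant automorphism is a Connes--Takesaki-module-preserving automorphism'' presupposes exactly what needs to be proved.

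The paper's mechanism is different and uses two ingredients you do not invoke. First, it reduces to the case where $\be$ is the concrete model $\ga=(\Ps^{-1}\oti\id)\circ(\id_M\oti\al^{(0)})\circ\Ps$ for a fixed isomorphism $\Ps\col M\to M\oti\meR_0$; the point is that $\ga$ has a large commutant in $\Aut(M)$, namely every automorphism of the form $\th_3=\Ps^{-1}\circ(\th_2\oti\id_{\meR_0})\circ\Ps$ commutes with $\ga$ because it acts on the other tensor factor. Second, by surjectivity of the module map $\mo\col\Aut(M)\to\Aut_\th(Z(\tM))$ proved in \cite{ST2}, one can choose $\th_2$ so that $\mo(\th_3)=\mo(\th_0)$. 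Then $\th:=\th_3^{-1}\th_0$ has trivial module, hence lies in $\oInt(M)$ by \cite{KST}, and since $\th_3$ commutes with $\ga$ the cocycle conjugacy $\Ad v\circ\al=(\th_0^{-1}\oti\id)\circ\ga\circ\th_0$ rewrites as $(\th^{-1}\oti\id)\circ\ga\circ\th$ with the \emph{same} cocycle $v$. The essential idea you are missing is that one must compose with an automorphism that commutes with the model action (not one that is approximately inner), and the surjectivity result of \cite{ST2} is what guarantees such an automorphism exists with the prescribed module.
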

\begin{proof}
Since $M$ is injective, $M$ is isomorphic to $\meR_0\oti M$.
Fix an isomorphism $\Psi:M\rightarrow M\oti\meR_0$.
Let $\al^{(0)}$ be a free action of $\bhG$ on $\meR_0$.
Set $\ga:=(\Ps^{-1}\oti\id)\circ(\id_M\oti\al^{(0)})\circ\Ps$,
which is an approximately inner and centrally free action on $M$.
By Theorem \ref{thm:main}, we can take
$\theta_0\in \mathrm{Aut}(M)$ and an $\al$-cocycle $v$
such that
$\mathrm{Ad} v\circ\alpha=(\theta_0^{-1}\oti\id)\circ\ga \circ\theta_0$.
To prove the theorem, it suffices to show the statement for $\be=\ga$.

Set $\th_1:=\Ps\circ\th_0\circ \Ps^{-1}\in \Aut(M\oti\meR_0)$.
Note that the core of $M\oti\meR_0$ canonically coincides with
$\tM\oti\meR_0$.
Since 
the module map $\mo\col \Aut(M)\ra \Aut_\th(Z(\tM))$ is surjective
by \cite{ST2},
there exists $\th_2\in\Aut(M)$
such that $\mo(\th_1)=\mo(\th_2\oti \id_{\meR_0})$.
Set $\th_3:=\Ps^{-1}\circ(\th_2\oti\id_{\meR_0})\circ\Ps\in\Aut(M)$.
Then $\th_3^{-1}\th_0=\Ps^{-1}\circ(\th_2^{-1}\oti\id_{\meR_0})\th_1\circ\Ps$
implies $\mo(\th_3^{-1}\th_0)=\id$.
Putting $\th:=\th_3^{-1}\th_0$,
we have $\Ad v\circ\al=(\th^{-1}\oti\id)\circ\ga\circ\th$
because $\th_3$ commutes with $\ga$.
Moreover $\th\in\oInt(M)$ by \cite[Theorem 1(1)]{KST}.
\end{proof}

\section{Preliminaries}
\label{sec:pre}
The results in this section are frequently
used in the later sections.
One of the most important results is the relative Rohlin theorem
(Theorem \ref{thm: jRoh-G2-fix}).

\subsection{Basic results on cocycle conjugacy}

\begin{lem}\label{lem: id-otimes-al}
Let $(\al,u)$ be a cocycle action of $\bhG$ on a properly infinite
von Neumann algebra $M$.
Let $H$ be a Hilbert space.
Then $(\al,u)$ and $(\id_{B(H)}\oti\al,1\oti u)$
are cocycle conjugate.
\end{lem}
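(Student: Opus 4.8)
The plan is to absorb the $B(H)$ tensor factor by an inner-type isomorphism coming from proper infiniteness, and then to correct the transported action by an explicit unitary. Since $M$ is properly infinite (and has separable predual), I would fix a family of isometries $\{v_i\}_i\subset M$ indexed by an orthonormal basis of $H$, with $v_i^*v_j=\de_{ij}$ and $\sum_i v_iv_i^*=1$, and let $\{E_{ij}\}$ be the matching matrix units of $B(H)$. Define $\si\col B(H)\oti M\to M$ by $\si(E_{ij}\oti x)=v_i x v_j^*$. A routine check shows $\si$ is a normal $*$-isomorphism: it is multiplicative because $v_j^*v_k=\de_{jk}$, surjective because $\sum_i v_iv_i^*=1$, and injective because $v_k^*\si(a)v_l$ recovers the $(k,l)$-matrix entry of $a$. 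I would take $\th:=\si^{-1}\col M\to B(H)\oti M$ as the conjugating isomorphism.

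Next I would introduce the unitary $W:=\sum_i(v_i\oti1)\,\al(v_i)^*\in M\oti\lhG$. That $W$ is unitary follows from $v_i^*v_j=\de_{ij}$, $\sum_iv_iv_i^*=1$ and $\al(1)=1$, since $WW^*=\sum_i(v_i\oti1)\al(1)(v_i^*\oti1)=1$ and $W^*W=\sum_i\al(v_iv_i^*)=1$. The first cocycle-conjugacy condition, $\Ad W\circ\al=(\si\oti\id)\circ(\id_{B(H)}\oti\al)\circ\si^{-1}$, I would check directly on $m\in M$: writing $\si^{-1}(m)=\sum_{i,j}E_{ij}\oti v_i^*mv_j$, applying $\id_{B(H)}\oti\al$ and then $\si\oti\id$ gives $\sum_{i,j}(v_i\oti1)\al(v_i)^*\al(m)\al(v_j)(v_j^*\oti1)=W\al(m)W^*$, as wanted.

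The remaining and most delicate step is the $2$-cocycle transformation law $W\al(W)\,u\,(\id\oti\De)(W^*)=(\si\oti\id\oti\id)(1\oti u)$, whose right-hand side equals $\sum_i(v_i\oti1\oti1)\,u\,(v_i^*\oti1\oti1)$; here I expect the bookkeeping of tensor legs to be the main obstacle. The key input is cocycle-action axiom (1): since $(\al\oti\id)\circ\al=\Ad u\circ(\id\oti\De)\circ\al$, applying the convention $\al(W)=(\al\oti\id)(W)$ yields $\al(W)=\sum_k\al(v_k)\,u\,(\id\oti\De)(\al(v_k)^*)\,u^*$. Using $\al(v_k)^*\al(v_l)=\al(v_k^*v_l)=\de_{kl}$ collapses the product $W\al(W)$ to $\sum_k(v_k\oti1\oti1)\,u\,(\id\oti\De)(\al(v_k)^*)\,u^*$, so that $W\al(W)\,u=\sum_k(v_k\oti1\oti1)\,u\,(\id\oti\De)(\al(v_k)^*)$; multiplying by $(\id\oti\De)(W^*)=\sum_j(\id\oti\De)(\al(v_j))(v_j^*\oti1\oti1)$ and invoking $(\id\oti\De)(\al(v_k)^*\al(v_j))=\de_{kj}$ produces exactly $\sum_k(v_k\oti1\oti1)\,u\,(v_k^*\oti1\oti1)$. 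With both conditions established, $(\al,u)$ and $(\id_{B(H)}\oti\al,1\oti u)$ are cocycle conjugate via $(\th,W)$.
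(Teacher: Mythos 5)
Your proposal is correct and follows essentially the same route as the paper: the same isomorphism $\Ps(e_{ij}\oti x)=\xi_i x\xi_j^*$ built from a family of isometries with orthogonal ranges summing to $1$, the same intertwining unitary $\sum_i(\xi_i\oti1)\al(\xi_i^*)$, and the same use of the cocycle identity $(\al\oti\id)\circ\al=\Ad u\circ(\id\oti\De)\circ\al$ to collapse the $2$-cocycle computation. The bookkeeping in your final step matches the paper's verification exactly.
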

\begin{proof}
Take a Hilbert space $\meH\subs M$
with support $1$ and the same dimension $d\leq\infty$ as $H$ \cite{Ro}.
Let $\{\xi_i\}_{i=1}^{d}$ be an orthonormal basis of $\meH$.
Then we have the isomorphism
$\Ps\col B(H)\oti M\ra M$ such that
$\Ps(e_{ij}\oti x)=\xi_i x\xi_j^*$ for all $x\in M$ and $i,j\in\N$,
where $\{e_{ij}\}_{ij}$ is a canonical system of matrix units
of $B(H)$.

Define the unitary $v:=\sum_{i=1}^d(\xi_i\oti1)\al(\xi_i^*)$.
We check that $\Ps$ and $v$ satisfy the statement.
For $x\in M$ and $i,j\in\N$, we have
\begin{align*}
\Ad v\circ \al\circ \Ps(e_{ij}\oti x)
&=
v\al(\xi_i x\xi_j^*)v^*
=
(\xi_i\oti1)\al(x)(\xi_j^*\oti1)
\\
&=
(\Ps\oti\id)\circ(\id\oti\al)(e_{ij}\oti x).
\end{align*}
Hence (1) holds.
On (2), we have
\begin{align*}
&\,(v\oti1)\al(v)u(\id\oti\De)(v^*)
\\
&=
\sum_{i,j=1}^d
(\xi_i\oti1\oti1)(\al(\xi_i^*)\oti1)
\cdot
(\al(\xi_j)\oti1)\al(\al(\xi_j^*))
\cdot u(\id\oti\De)(v^*)
\\
&=
\sum_{i=1}^d
(\xi_i\oti1\oti1)u(\id\oti\De)(\al(\xi_i^*))
(\id\oti\De)(v^*)
\\
&=
\sum_{i=1}^d
(\xi_i\oti1\oti1)u(\xi_i^*\oti1\oti1)
=(\Ps\oti\id\oti\id)(1\oti u).
\end{align*}
\end{proof}

\begin{lem}\label{lem: prop-inf-coboundary}
Let $(\al,u)$ be a cocycle action on a properly infinite
von Neumann algebra $M$.
Then $u$ is a coboundary.
\end{lem}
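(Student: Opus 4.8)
The goal is to exhibit a unitary $w\in U(M\oti\lhG)$ for which the perturbed $2$-cocycle $w\,\al(w)\,u\,(\id\oti\De)(w^*)$ equals $1$; this is exactly the assertion that $u$ is a coboundary, and then $\Ad w\circ\al$ is a genuine action. Since the only hypothesis is that $M$ is properly infinite, the whole construction must be powered by the abundance of isometries in $M$, i.e. by a Hilbert space $\meH\subs M$ with support $1$ of prescribed dimension, exactly as was exploited in Lemma \ref{lem: id-otimes-al}.

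First I would record that being a coboundary is an invariant of cocycle conjugacy: if $(\al,u)$ is cocycle conjugate to $(\be,v)$ and $v$ is a coboundary, then the transformation law $w\al(w)u(\id\oti\De)(w^*)=(\th^{-1}\oti\id\oti\id)(v)$ lets one pull a trivialization of $v$ back to a trivialization of $u$. By Lemma \ref{lem: id-otimes-al}, $(\al,u)$ is cocycle conjugate to $(\id_{B(H)}\oti\al,\,1\oti u)$ on $B(H)\oti M$, so it suffices to trivialize the inflated cocycle $1\oti u$. I would choose $H=\lthG$, so that the extra tensor leg carries the left regular representation of $\bhG$; equivalently, realizing proper infiniteness as $M\cong M\oti B(\lthG)$ through $\meH$, I gain a copy of $B(\lthG)$ on which $\bhG$ acts regularly and which commutes with $\al$.

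The core is then a regular-representation trivialization, the Kac-algebra counterpart of the classical fact that a group $2$-cocycle becomes a coboundary after tensoring with $B(\ell^2(G))$ and letting $G$ act by $\Ad\la$. Writing $\hV\in B(\lthG)\oti\lhG$ for the regular corepresentation, which satisfies the corepresentation identity $(\id\oti\De)(\hV)=\hV_{12}\hV_{13}$, I would build $w$ by absorbing $u$ into a deformation of $\hV$ — the heuristic being the group formula $w_g=\sum_h u_{g,h}\oti e_{gh,h}$ — so that $1\oti u$ records precisely the failure of the deformed corepresentation to be multiplicative. The normalization $u_{\cdot,\btr}=u_{\btr,\cdot}=1$ then ensures $w$ is correctly normalized, and transporting $w$ back through Lemma \ref{lem: id-otimes-al} produces the trivializing unitary on $M$ itself.

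The step I expect to be the main obstacle is the verification that this $w$ actually collapses the cocycle, i.e. that the $w$-perturbation of $(\id_{B(H)}\oti\al,\,1\oti u)$ is trivial. Here the non-cocommutativity of $\De$ forces the naive ``sum over $h$'' to be replaced by the comultiplication implemented through $\hV$, and the computation succeeds only by feeding in the $2$-cocycle identity $(u\oti1)(\id\oti\De\oti\id)(u)=\al(u)(\id\oti\id\oti\De)(u)$ together with coassociativity. Keeping the leg placement straight in $M\oti\lhG\oti\lhG\oti\cdots$ and matching each factor against the correct slot of the cocycle relation — rather than any single algebraic identity — is where the real care is needed.
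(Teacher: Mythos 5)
Your proposal is correct and is essentially the paper's own argument: the paper reduces via Lemma \ref{lem: id-otimes-al} to the inflated cocycle $1\oti u$ on $B(\lthG)\oti M$ and trivializes it with the unitary $v=W_{31}u_{231}^*$ built from the multiplicative unitary $W$ --- exactly the ``regular corepresentation deformed by $u$'' ansatz you describe --- verified using the $2$-cocycle identity and $\De(x)=W^*(1\oti x)W$. The computation you flag as the main obstacle is carried out directly in the paper and goes through as you anticipate.
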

\begin{proof}
By the previous lemma,
it suffices to prove that $(\id_{B(\lthG)}\oti\al,1\oti u)$
can be perturbed to an action.
Write $\overline{\al}=\id_{B(\lthG)}\oti\al$ and
$\ovl{u}=1\oti u$.
Then we set a unitary
$v:=W_{31} u_{231}^*\in B(\lthG)\oti M\oti \lhG$,
where $W\in \lhG\oti \lG$ is the multiplicative unitary
defined in \cite[Section 2]{M-T-CMP}.
Using the 2-cocycle relation of $u$ and
$\De(x)=W^*(1\oti x)W$ for $x\in \lhG$,
we have
\begin{align*}
&\,v_{123}\ovl{\al}(v)\ovl{u}(\id\oti\id\oti\De)(v^*)
\\
&=
W_{31}u_{231}^*\cdot W_{41}\al(u^*)_{2341}\cdot
u_{234}\cdot (\id\oti\De\oti\id)(u)_{2341}
(\De\oti\id)(W^*)_{341}
\\
&=
W_{31}u_{231}^*\cdot W_{41}
\big{(}
\al(u^*)\cdot(u\oti1)\cdot (\id\oti\De\oti\id)(u)\big{)}_{2341}
(\De\oti\id)(W^*)_{341}
\\
&=
W_{31}u_{231}^*\cdot W_{41}
\big{(}(\id\oti\id\oti\De)(u)\big{)}_{2341}
(\De\oti\id)(W^*)_{341}
\\
&=
W_{31}u_{231}^*\cdot W_{41}
\big{(}W_{34}^*u_{124}W_{34}\big{)}_{2341}
(\De\oti\id)(W^*)_{341}
\\
&=
W_{31}u_{231}^*\cdot W_{41}
\cdot W_{41}^*u_{231}W_{41}\cdot
(\De\oti\id)(W^*)_{341}
\\
&=
W_{31}W_{41}\cdot
(\De\oti\id)(W^*)_{341}
=1.
\end{align*}
\end{proof}

Next we discuss
the cocycle conjugacy of extended actions.
For definition of the canonical extension of a cocycle action,
readers are referred to \cite{Iz-can2} and \S \ref{sec:appendix}.

\begin{lem}\label{lem: second-cocycle}
Let $\al$ be an action of $\bhG$ on a
properly infinite von Neumann algebra $M$.
Then the second canonical extension $\widetilde{\tal}$
on $\tM\rti_\th\R$ is cocycle conjugate to $\al$.
\end{lem}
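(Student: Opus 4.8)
The plan is to realize the double core $\tM\rti_\th\R$ concretely through Takesaki duality and to identify $\widetilde{\tal}$ with the amplification $\al\oti\id$. Fix a dominant weight $\vph$ on $M$, so that $\tM=M\rti_{\si^\vph}\R$ is the semifinite core, carrying the dual flow $\th\col\R\ra\Aut(\tM)$ determined by $\th_r|_M=\id$ and $\th_r(\la^\vph(s))=e^{irs}\la^\vph(s)$, where $\{\la^\vph(s)\}_s$ are the unitaries implementing $\si^\vph$ in the first crossed product. By Takesaki duality there is an isomorphism $\Ps\col\tM\rti_\th\R\ra M\oti B(L^2(\R))$ carrying $M\subs\tM\subs\tM\rti_\th\R$ onto $M\oti1$ and both families of implementing unitaries into $1\oti B(L^2(\R))$. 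The first step is to record that the first canonical extension $\tal$ is, by construction (see \S\ref{sec:appendix}), compatible with this core structure: $\tal_\pi$ restricts to $\al_\pi$ on $M$, commutes with the dual flow in the sense $(\th_r\oti\id)\circ\tal_\pi=\tal_\pi\circ\th_r$, and its standard left inverse preserves the canonical trace.

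The commutation with $\th$ is exactly what makes the second canonical extension well defined: one sets $\widetilde{\tal}_\pi|_{\tM}=\tal_\pi$ and lets $\widetilde{\tal}_\pi$ fix the unitaries implementing $\th$ in the second crossed product. The technical heart of the proof is then to transport $\widetilde{\tal}$ through $\Ps$ and verify that it becomes $\al\oti\id_{B(L^2(\R))}$. Since $\Ps$ sends $M$ onto $M\oti1$, where $\widetilde{\tal}$ acts as $\al$, while $\Ps$ sends both families of implementing unitaries into $1\oti B(L^2(\R))$, where $\widetilde{\tal}$ acts trivially, one expects $\Ps$ to intertwine $\widetilde{\tal}$ with $\al\oti\id$. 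I would verify this identity on the generators of $\tM\rti_\th\R$, keeping careful track of the multiplicative unitary and the implementing cocycles appearing in the explicit form of $\Ps$; this bookkeeping through two successive crossed products is where the main difficulty lies.

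Granting the identification $\Ps\circ\widetilde{\tal}=(\al\oti\id_{B(L^2(\R))})\circ\Ps$ in the appropriate leg-numbered sense, the conclusion is immediate. Indeed $\al\oti\id_{B(L^2(\R))}$ is, up to the obvious flip $M\oti B(L^2(\R))\cong B(L^2(\R))\oti M$, the action $\id_{B(L^2(\R))}\oti\al$, which by Lemma \ref{lem: id-otimes-al} is cocycle conjugate to $\al$ precisely because $M$ is properly infinite. Composing this cocycle conjugacy with $\Ps$ yields $\widetilde{\tal}\sim\al$, as desired.
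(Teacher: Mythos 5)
Your overall route --- Takesaki duality to identify $\tM\rti_\th\R$ with $M\oti B(L^2(\R))$, followed by Lemma \ref{lem: id-otimes-al} --- is the same as the paper's (which cites Corollary \ref{cor: second-conjugate} and Lemma \ref{lem: id-otimes-al}). But there is a genuine error in your key intermediate claim. You assert that $\Ps$ sends both families of implementing unitaries into $1\oti B(L^2(\R))$ ``where $\widetilde{\tal}$ acts trivially,'' and conclude $\Ps\circ\widetilde{\tal}=(\al\oti\id)\circ\Ps$. This is false: $\widetilde{\tal}$ does fix the unitaries implementing $\th$ in the second crossed product, but on the unitaries $\la^\vph(t)$ of the \emph{first} crossed product it acts, by the very definition of the canonical extension, as
\[
\tal_\pi(\la^\vph(t))=[D\vph\circ\Ph_\pi^\al:D\vph\oti\tr_\pi]_t\,(\la^\vph(t)\oti1),
\]
which is a nontrivial twist unless all the Connes cocycles vanish. (This twist is not incidental --- it is the entire content of the canonical extension, and is what makes results such as Lemma \ref{lem: be-th-app-cent} work.) Consequently no isomorphism can intertwine $\widetilde{\tal}$ with the plain amplification $\al\oti\id$; granting that identification grants something false.

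The repair is exactly what the paper does in Corollary \ref{cor: second-conjugate}: under Takesaki duality $\widetilde{\tal}$ corresponds not to $\id\oti\al$ but to $\Ad w\circ(\id\oti\al)$, where $w\in U(L^\infty(\R)\oti M\oti\lhG)$ is given by $w(t)=w_{-t}$ with $w_{t,\pi}=[D\vph\circ\Ph_\pi^\al:D\vph\oti\tr_\pi]_t^*$. One then has to check that $w$ is an $(\id\oti\al)$-cocycle, which is the content of Lemma \ref{lem: Connes-cocycle-alpha} (with trivial $2$-cocycle $u=1$). Since a cocycle perturbation does not change the cocycle conjugacy class, Lemma \ref{lem: id-otimes-al} (using proper infiniteness of $M$) still yields $\widetilde{\tal}\sim\id_{B(L^2(\R))}\oti\al\sim\al$. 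So your conclusion is correct and your strategy is the right one, but the step you flagged as ``bookkeeping'' in fact requires introducing and verifying this Connes-cocycle correction, and the identity you proposed to verify on generators is not the one that holds.
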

\begin{proof}
This is immediately obtained
from Lemma \ref{lem: id-otimes-al} and Corollary \ref{cor: second-conjugate}.
\end{proof}

We close this subsection with the following lemma.

\begin{lem}\label{lem: G1-G2-al-be}
Let $\bhG^i$ be a discrete Kac algebra for each $i=1,2$.
Let $\al^i$ and $\be^i$ be actions of $\bhG^i$
on von Neumann algebras $M$
and $N$, respectively.
Assume the following:
\begin{itemize}
\item
$\al^1$ and $\al^2$ commute;

\item
$\be^1$ and $\be^2$ commute;

\item
The $\bhG^1\times \bhG^2$ actions
$\al:=(\al^1\oti\id)\circ\al^2$
and $\be:=(\be^1\oti\id)\circ\be^2$ are cocycle conjugate.
\end{itemize}
Then the action $\al^1$ (resp. $\beta^1$) extends to the action $\ovl{\al}^1$
on $M\rti_{\al^2}\bhG^2$ (resp. $\overline{\beta}^2$ on $M\rti_{\be^2}\bhG^2$). 
Moreover, $\ovl{\al}^1$ and $\ovl{\be}^1$ are cocycle conjugate.
\end{lem}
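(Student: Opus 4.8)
The lemma asks me to prove that if $\alpha = (\alpha^1 \otimes \id)\circ \alpha^2$ and $\beta = (\beta^1\otimes\id)\circ\beta^2$ are cocycle conjugate actions of the product Kac algebra $\bhG^1 \times \bhG^2$, then the extensions $\ovl{\al}^1$ and $\ovl{\be}^1$ to the crossed products by the *second* factor are themselves cocycle conjugate.

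Let me think about what the extension $\ovl{\al}^1$ actually is. We have $\alpha^1$ and $\alpha^2$ commuting actions of $\bhG^1$ and $\bhG^2$ on $M$. The crossed product $M\rti_{\al^2}\bhG^2$ is the object I need to understand. For a discrete Kac algebra, the crossed product... hmm, actually the notation here is a bit unusual — typically one takes crossed products of von Neumann algebras by *groups* or by *compact* Kac algebras. But the structure $M \rti_{\al^2}\bhG^2$ should be generated by $\alpha^2(M)$ and a copy of $L^\infty(\bG^2)$ (the dual compact object) implementing the action. Since $\alpha^1$ commutes with $\alpha^2$, it descends naturally to act on the generators of the crossed product — on $\alpha^2(M)$ via $\alpha^1$ itself, and trivially (or canonically) on the $\bG^2$ part. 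This gives $\ovl{\al}^1$.

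**The strategy.**

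The key observation is that cocycle conjugacy should be *functorial* with respect to taking crossed products. So here is my plan. The hypothesis gives an isomorphism $\theta\col M \to N$ and a $\bhG^1\times\bhG^2$-cocycle $w$ implementing the cocycle conjugacy $\Ad w \circ \alpha = (\theta^{-1}\otimes\id)\circ\beta\circ\theta$. My goal is to manufacture from these data an isomorphism $\Theta\col M\rti_{\al^2}\bhG^2 \to N\rti_{\be^2}\bhG^2$ and an $\ovl{\al}^1$-cocycle implementing $\ovl{\be}^1$.

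First I would decompose the cocycle conjugacy of the product action into its two ``coordinates.'' Because $\bhG = \bhG^1\times\bhG^2$, the fixed-point / coproduct structure lets me view the $\bhG$-cocycle $w$ as carrying information along both legs. I would analyze how $\Ad w\circ\alpha = (\theta^{-1}\otimes\id)\circ\beta\circ\theta$ restricts to the second-leg action: this should show that $\theta$, possibly after perturbing by the $\bhG^2$-component $w^{(2)}$ of $w$, intertwines $\alpha^2$ with $\beta^2$ up to a $\bhG^2$-cocycle. That cocycle conjugacy of the $\bhG^2$-actions is exactly what is needed to build an isomorphism $\Theta$ between the two crossed products $M\rti_{\al^2}\bhG^2$ and $N\rti_{\be^2}\bhG^2$: a cocycle conjugacy of $\bhG^2$-actions induces an isomorphism of the associated crossed products, sending $\ovl{\al}^1$ to a perturbation of $\ovl{\be}^1$.

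Then I would track the first-leg data. The residual $\bhG^1$-component of $w$, transported through $\Theta$, should supply the $\ovl{\al}^1$-cocycle $\bar v$ with $\Ad \bar v\circ\ovl{\al}^1 = (\Theta^{-1}\otimes\id)\circ\ovl{\be}^1\circ\Theta$. The cocycle identity for $\bar v$ follows from the cocycle identity for $w$ restricted to the first leg, together with the commutation of $\alpha^1$ and $\alpha^2$ (which guarantees $\bar v$, built from $w^{(1)}$, is fixed appropriately under $\alpha^2$ so that it lives in the crossed product and is an honest $\ovl{\al}^1$-cocycle).

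**The main obstacle.**

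The hard part will be the bookkeeping in separating the two legs: a $\bhG^1\times\bhG^2$-cocycle does not split cleanly as a product of a $\bhG^1$-cocycle and a $\bhG^2$-cocycle, so I expect to write $w = w^{(1)}\cdot w^{(2)}$ (or $w^{(2)}\cdot w^{(1)}$) only after first restricting to each leg via the coproduct $\De^1\otimes\De^2$ and then verifying the mixed cocycle identity, which couples the two components. Controlling this coupling — and checking that the first-leg perturbation really descends to a cocycle on $M\rti_{\al^2}\bhG^2$ rather than merely on $M$ — is where the commutation hypotheses $\al^1\leftrightarrow\al^2$ and $\be^1\leftrightarrow\be^2$ do the essential work, and is the step I would write out most carefully.
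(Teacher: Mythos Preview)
Your approach is essentially the paper's: it sets $v^\ell := v_{\cdot\oti\btr}$ and $v^r := v_{\btr\oti\cdot}$ (automatically an $\al^1$-cocycle and an $\al^2$-cocycle by evaluating the $\al$-cocycle identity at $\btr$ in one slot), defines $\Theta(x) = (\Ps\oti\id)(v^r x (v^r)^*)$ as the crossed-product isomorphism, and takes $u := (\al^2\oti\id)(v^\ell)$ as the $\ovl{\al}^1$-cocycle, finishing by a direct check that $\Ad u\circ\ovl{\al}^1 = (\Theta^{-1}\oti\id)\circ\ovl{\be}^1\circ\Theta$. Your ``main obstacle'' is overstated: no factorization $w = w^{(1)}w^{(2)}$ is ever needed, only the evaluation of $v$ at the trivial representation in each leg, which is immediate for a product of discrete Kac algebras and makes the leg-separation step a one-liner rather than a bookkeeping challenge.
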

\begin{proof}
Let $v$ be an $\al$-cocycle and $\Ps\col M\ra N$ be an isomorphism
such that $\Ad v\circ \al=(\Ps^{-1}\oti\id)\circ \be\circ\Ps$.
Set unitaries $v^\el:=v_{\cdot\oti\btr}\in M\oti L^\infty(\bhG^1)$
and $v^r:=v_{\btr\oti\cdot}\in M\oti L^\infty(\bhG^2)$,
which are $\al^1$-cocycle and $\al^2$-cocycle, respectively.
Then we define an isomorphism
$\Theta\col M\rti_{\al^2}\bhG^1\ra N\rti_{\be^2}\bhG^2$
by $\Theta(x)=(\Ps\oti\id)(v^rx(v^r)^*)$.
We set a unitary
$u:=(\al^2\oti\id)(v^\el)\in (M\rti_{\al^2}\bhG^2)\oti L^\infty(\bhG^1)$.
Then $u$ is an $\ovl{\al}^1$-cocycle.
By direct calculation, we have
$\Ad u\circ \ovl{\al}^1=(\Theta^{-1}\oti\id)\circ \ovl{\be}^1\circ\Theta$.
\end{proof}

\subsection{Rohlin property}\label{sec:Roh}
See \cite[Section 3]{M-T-CMP} for notions of ultraproduct algebras
and actions on them.
First we recall the following definition
\cite[Definition 3.4, 3.13]{M-T-CMP}.

\begin{defn}
Let $\ga\in \Mor(M^\om,M^\om\oti\lhG)$ be an action of $\bhG$.
We say that
\begin{enumerate}
\item
$\ga$ is \emph{strongly free}
when
for any $\pi\in\IG\setm\{\btr\}$
and any countably generated von Neumann subalgebra $S\subs M^\om$,
there exists no non-zero $a\in M^\om\oti B(H_\pi)$
such that $\ga_\pi(x)a=a(x\oti1)$ for all $x\in S'\cap M_\om$.

\item
$\ga$ is \emph{semi-liftable}
when
for any $\pi\in\IG$,
there exist elements
$\be^\nu, \be\in \Mor_0(M,M\oti B(H_\pi))$,
$\nu\in\N$,
such that $\be^\nu$ converges to $\be$
and $\ga_\pi((x^\nu)_\nu)=(\be^\nu(x^\nu))_{\nu}$
for all $(x^\nu)_\nu\in M^\om$.
\end{enumerate}
\end{defn}

Note that a cocycle action $\al\in\Mor(M,M\oti \lhG)$
is centrally free if and only if $\al^\om$ is strongly free
\cite[Lemma 8.2]{M-T-CMP}.
For $(x^\nu)_\nu\in M^\om$,
we set $\displaystyle\ta^\om(x):=\lim_{\nu\to\om}x^\nu$.
Then $\ta^\om\col M^\om\ra M$ is a faithful normal conditional expectation.

\begin{defn}
Let $\bhG$ be an amenable discrete Kac algebra
and $\ga\in \Mor(M_\om,M_\om\oti\lhG)$ an action.
We say that $\ga$ has the \emph{Rohlin property}
when
for any central $F\in \Projf(\lhG)$, $\de>0$,
$(F,\de)$-invariant central $K\in\Projf(\lhG)$
with $K\geq e_\btr$,
any countable subset $S\subs M^\om$
and any faithful state $\ph\in M_*$,
there exists a projection $E\in M_\om\oti \lhG$
such that
\begin{enumerate}
\renewcommand{\labelenumi}{(R\arabic{enumi})}
\item $E$ is supported over $K$, that is, $E=E(1\oti K)$;

\item $E$ almost intertwines $\ga$ and $\De$ in the following sense:
\[
|\ga_F(E)-(\id\oti{}_F\De)(E)|_{\ph\circ\ta^\om\oti\vph\oti\vph}
\leq 5\de^{1/2}|F|_\vph;
\]

\item $E$ gives a copy of $\lhG K$, that is,
if we decompose $E$ as 
\[
E=\sum_{\pi\in \supp(K)} \sum_{i,j\in I_\pi} 
d(\pi)^{-1} E_{\pi_{i,j}}\oti e_{\pi_{i,j}}, 
\]
then, for all $i,j\in I_\pi$, $k,\el\in I_\rho$
and $\pi,\rho\in\supp(K)$, we have
\[
E_{\pi_{i,j}} E_{\rho_{k,\el}}=\de_{\pi,\rho}\de_{j,k}E_{\pi_{i,\el}};
\]

\item 
$(\id\oti\vph_\pi)(E)\in S'\cap M_\om$ for any $\pi\in\supp(K)$;

\item $E$ gives a partition of unity of $S'\cap M_\om$, that is,
$(\id\oti\vph)(E)=1$. 

\end{enumerate}
\end{defn}

The above projection $E$ is called a \emph{Rohlin projection}.

\begin{defn}\label{defn: j-Rohlin}
Let $\ga\in \Mor(M^\om,M^\om\oti\lhG)$ be
an action.
Assume that $M_\om$ is globally invariant under $\ga$
and $\ga|_{M_\om}$ has the Rohlin property.
We say that $\ga$ has the \emph{joint Rohlin property}
when 
for any $F\in \Projf(\lhG)$, $\de>0$,
$(F,\de)$-invariant central $K\in\Projf(\lhG)$
with $K\geq e_\btr$,
any countable set $S\subs M^\om$
and any countable family of $\ga$-cocycles $\meC$
which are evaluated in $M^\om$,
there exists a projection $E\in M_\om\oti \lhG$
such that

\begin{enumerate}
\renewcommand{\labelenumi}{(S\arabic{enumi})}
\item
$E$ satisfies (R1), (R2), (R3), (R4) and (R5);

\item
For any $v\in \meC$, a projection $vEv^*$
also satisfies (R3);

\item
For any $v\in \meC$ and $\pi\in\supp(K)$,
we have $(\id\oti\vph_\pi)(vEv^*)=(\id\oti\vph_\pi)(E)$.

\end{enumerate}
\end{defn}

\begin{lem}
If $\ga$ has the joint Rohlin property and $E$ is a projection as above, 
then the element $(\id\oti\vph)(vE)$ is a unitary for all $v\in\meC$.
\end{lem}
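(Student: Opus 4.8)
The plan is to reduce the claim to a finite computation with the matrix units furnished by the Rohlin projection, and to show that the two halves of unitarity of $a:=(\id\oti\vph)(vE)$ are powered by the two extra conditions (S3) and (S2) of the joint Rohlin property. Since $K\in\Projf(\lhG)$ has finite support, the decomposition of $E$ in (R3) is a finite sum, and a routine slice computation using the Haar normalization $\vph(e_{\pi_{i,j}})=\de_{i,j}d(\pi)$ (the factor $d(\pi)^{-1}$ in (R3) being exactly what cancels it) gives
\[
a=\sum_{\pi\in\supp(K)}\sum_{i,j\in I_\pi}v_{\pi_{j,i}}E_{\pi_{i,j}},
\]
where $v_{\pi_{k,\el}}\in M^\om$ are the block matrix entries of the cocycle $v$. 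By (R1) only the blocks $\pi\in\supp(K)$ contribute, and since $v$ is a unitary it restricts to a unitary in each block $M^\om\oti B(H_\pi)$, whence $\sum_k v_{\pi_{k,\el}}^*v_{\pi_{k,q}}=\de_{\el,q}$ and $\sum_\el v_{\pi_{k,\el}}v_{\pi_{m,\el}}^*=\de_{k,m}$.

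To show $aa^*=1$ I would first collapse $aa^*$ using only the matrix-unit relations (R3) of $E$, which gives $aa^*=\sum_{\pi}\sum_{i,j,k}v_{\pi_{k,i}}E_{\pi_{i,j}}v_{\pi_{k,j}}^*$. Expanding $vEv^*$ in its own matrix units and picking out the diagonal shows that this is precisely $(\id\oti\vph)(vEv^*)$; that is,
\[
(\id\oti\vph)(vE)\,(\id\oti\vph)(Ev^*)=(\id\oti\vph)(vEv^*),
\]
a ``multiplicativity through $E$'' valid because $E$ is a copy of $\lhG K$. Now (S3) yields $(\id\oti\vph_\pi)(vEv^*)=(\id\oti\vph_\pi)(E)$ for every $\pi$, so summing over $\pi$ and applying (R5) gives $aa^*=(\id\oti\vph)(vEv^*)=(\id\oti\vph)(E)=1$.

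To show $a^*a=1$ I would instead work with the matrix units $F_{\pi_{k,p}}$ of $vEv^*$, which satisfy the matrix-unit relations by (S2), writing $a=\sum_{\pi,m,n}F_{\pi_{m,n}}v_{\pi_{n,m}}$ and $a^*=\sum_{\pi,k,\el}v_{\pi_{k,\el}}^*F_{\pi_{k,\el}}$. Multiplying, collapsing the product $F_{\pi_{k,\el}}F_{\pi_{m,n}}$ by (S2), then substituting $F_{\pi_{k,n}}=\sum_{i,j}v_{\pi_{k,i}}E_{\pi_{i,j}}v_{\pi_{n,j}}^*$ and applying the blockwise unitarity of $v$ twice reduces everything to $a^*a=\sum_{\pi,\el}E_{\pi_{\el,\el}}=(\id\oti\vph)(E)=1$ by (R5).

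The individual computations are routine once the bookkeeping is fixed; the one real point is to keep the four index families straight and to notice that $aa^*$ and $a^*a$ draw on genuinely different inputs, namely the trace preservation (S3) for the former and the matrix-unit relations (S2) of $vEv^*$ for the latter, with the blockwise unitarity of $v$ used throughout. This is exactly why the plain Rohlin property, which supplies only (R1)--(R5), would produce an isometry but not a unitary. I expect the main obstacle to be purely notational rather than conceptual.
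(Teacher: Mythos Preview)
Your proof is correct and follows essentially the same route as the paper: for $aa^*$ you collapse with (R3) for $E$ and identify the result as $(\id\oti\vph)(vEv^*)$, then invoke (S3) and (R5); for $a^*a$ you collapse with (S2) (the matrix-unit relations for $E'=vEv^*$) and reduce to $(\id\oti\vph)(E)=1$. The paper's version of the second half is marginally slicker---it recognizes the collapsed sum directly as $(\id\oti\vph)(v^*E'v)=(\id\oti\vph)(E)$ rather than substituting $F_{\pi_{k,n}}$ back in terms of $E$ and cancelling via blockwise unitarity---but the content is identical.

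One small correction to your closing remark: the plain Rohlin property alone gives \emph{neither} $aa^*=1$ nor $a^*a=1$; (S3) is needed for the former and (S2) for the latter, so without the joint hypotheses you do not even get an isometry.
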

\begin{proof}
Set $\mu:=(\id\oti\vph)(vE)$ and $E':=vEv^*$.
Then,
\[
\mu=\sum_{\pi\in \IG}\sum_{i,j\in I_\pi}
v_{\pi_{i,j}}E_{\pi_{j,i}}
=
\sum_{\pi\in \IG}\sum_{i,j\in I_\pi}E'_{\pi_{i,j}}v_{\pi_{j,i}}.
\]

Using (R3) for $E$ and $E'$,
we can check $\mu\mu^*=1=\mu^*\mu$ as follows,
\begin{align*}
\mu\mu^*
&=
\sum_{\pi\in \IG}\sum_{i,j,k,\el\in I_\pi}
v_{\pi_{i,j}}E_{\pi_{j,i}} E_{\pi_{k,\el}}^*v_{\pi_{k,\el}}^*
=
\sum_{\pi\in \IG}\sum_{i,j,k\in I_\pi}
v_{\pi_{i,j}}E_{\pi_{j,k}}v_{\pi_{k,i}}^*
\\
&=
(\id\oti\vph)(E')=1,
\end{align*}
and
\begin{align*}
\mu^*\mu
&=
\sum_{\pi\in \IG}\sum_{i,j,k,\el\in I_\pi}
v_{\pi_{j,i}}^* (E')_{\pi_{i,j}}^*
(E')_{\pi_{k,\el}}v_{\pi_{\el,k}}
=
\sum_{\pi\in \IG}\sum_{j,k,\el\in I_\pi}
v_{\pi_{j,k}}^*(E')_{\pi_{j,\el}}v_{\pi_{\el,k}}
\\
&=
(\id\oti\vph)(v^* E'v)
=(\id\oti\vph)(E)=1.
\end{align*}
\end{proof}
Such an element $(\id\oti\vph)(vE)$ is called a \emph{Shapiro unitary}.

Let $\bhG^1:=\bhG=(\lhG,\De)$, $\bhG^2=(L^\infty(\bhG^2),\De^2)$
be amenable discrete Kac algebras with
the invariant weights $\vph^1:=\vph$ and $\vph^2$, respectively.
The product Kac algebra $\bhG\times\bhG^2$
is naturally constructed.
The invariant weight and the coproduct are denoted by
$\tvph=\vph_{\bhG\times\bhG^2}$ and $\tDe=\De_{\bhG\times\bhG^2}$,
respectively.

\begin{lem}\label{lem: de-invariant}
Take $(F_i,\de_i)$-invariant central projection $K_i\in L^\infty(\bhG^i)$
for $i=1,2$, respectively.
Then $K_1\oti K_2$ is $(F_1\oti F_2,\de_1+\de_2)$-invariant.
\end{lem}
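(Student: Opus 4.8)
The plan is to unwind the F\o lner-type definition of $(F,\de)$-invariance for the product Kac algebra $\bhG\times\bhG^2$ and to reduce the estimate to the two factors by a subadditivity (union-bound) argument. Throughout I use that the coproduct and the invariant weight of the product decouple over the two legs: on $L^\infty(\bhG\times\bhG^2)=\lhG\oti L^\infty(\bhG^2)$ one has $\tvph=\vph^1\oti\vph^2$ and $\tDe=(\id\oti\Sigma\oti\id)\circ(\De^1\oti\De^2)$, where $\Sigma$ is the flip interchanging the two inner legs. In particular $\tDe(K_1\oti K_2)=(\id\oti\Sigma\oti\id)(\De^1(K_1)\oti\De^2(K_2))$ and $|F_1\oti F_2|_{\tvph}=|F_1|_{\vph^1}\,|F_2|_{\vph^2}$, so the normalizations occurring in the definition of invariance are multiplicative across the tensor product.

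First I would write out the (squared $L^2$) invariance defect of $K_1\oti K_2$ relative to $F_1\oti F_2$: the distance, measured in $|\cdot|_{\tvph\oti\tvph}$, between the $(F_1\oti F_2)$-cut-down ${}_{F_1\oti F_2}\tDe(K_1\oti K_2)$ of the coproduct and its invariant approximant $(F_1\oti F_2)\oti(K_1\oti K_2)$. Because $\Sigma$ merely permutes tensor legs it is $\tvph\oti\tvph$-isometric and disappears from every norm computation, so by the factorization above this defect is precisely the corresponding quantity for the pair $(\De^1(K_1),\De^2(K_2))$ with the legs regrouped.

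The heart of the argument is the telescoping/union-bound step. Inserting the hybrid term in which the first factor is already replaced by its invariant approximant while the second is not, I would bound the product defect by the sum of a first-factor defect weighted by the second-factor normalization and a second-factor defect weighted by the first-factor normalization; this is the operator analogue of the classical F\o lner estimate $|g_1A_1\times g_2A_2\,\triangle\,A_1\times A_2|\le|g_1A_1\triangle A_1|\,|A_2|+|A_1|\,|g_2A_2\triangle A_2|$. Invoking the hypotheses that $K_i$ is $(F_i,\de_i)$-invariant, each summand is at most $\de_i$ times the full normalization $|F_1\oti F_2|_{\tvph}\,|K_1\oti K_2|_{\tvph}$, and adding the two contributions produces the constant $\de_1+\de_2$, which is the assertion.

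The main obstacle I anticipate is the union-bound step itself: in the set-theoretic picture subadditivity is immediate, but here one must verify that the relevant cross term can genuinely be discarded with the correct sign, so that one obtains a clean inequality rather than merely a triangle inequality producing $\de_1^{1/2}+\de_2^{1/2}$, and that regrouping the legs through $\Sigma$ introduces no off-diagonal contributions. Since $\tvph$ is a true tensor-product weight and all the elements in sight are central projections, the two leg-wise contributions decouple and the cross term is controlled exactly as in the classical estimate, so I expect this to go through with only bookkeeping effort.
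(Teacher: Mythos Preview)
Your plan is exactly the paper's: insert the hybrid term $(F_1\oti F_2\oti K_1\oti1)\De^2(K_2)_{24}$, apply the triangle inequality, and factor each summand across the tensor legs. The anticipated obstacle, however, rests on a misreading of the norm. The seminorm $|x|_\phi$ used here is the $L^1$-quantity $\phi(|x|)$, not a squared $L^2$-norm; for tensor products of commuting positive elements (which is all that appears, since the $F_i$, $K_i$ are central projections and $\De^i$ are homomorphisms) it is genuinely multiplicative: $|a\oti b|_{\phi\oti\psi}=|a|_\phi\,|b|_\psi$. Hence the triangle inequality already delivers $\de_1+\de_2$ directly, with no cross term to control and no danger of producing $\de_1^{1/2}+\de_2^{1/2}$.

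The only additional input you need, implicit in your classical analogy $|gA|=|A|$, is the Haar-invariance identity
\[
|(F_2\oti1)\De^2(K_2)|_{\vph^2\oti\vph^2}
=(\vph^2\oti\vph^2)\big((F_2\oti1)\De^2(K_2)\big)
=|F_2|_{\vph^2}\,|K_2|_{\vph^2},
\]
which follows from invariance of $\vph^2$. With that, the whole argument is a two-line estimate; the flip $\Sigma$ causes no trouble and the paper simply absorbs it into leg-numbering notation.
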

\begin{proof} 
\begin{align*}
&|(F_1\oti F_2\oti1\oti1)\tDe(K_1\oti K_2)
-F_1\oti F_2\oti K_1\oti K_2|_{\tvph\oti\tvph}
\\
&\leq
|(F_1\oti F_2\oti1\oti1)\tDe(K_1\oti K_2)
-(F_1\oti F_2\oti K_1 \oti1) \De^2(K_2)_{24}
|_{\tvph\oti\tvph}
\\
&\quad
+
|(F_1\oti F_2\oti K_1 \oti1) \De^2(K_2)_{24}
-
F_1\oti F_2\oti K_1\oti K_2|_{\tvph\oti\tvph}
\\
&=
|(F_1\oti1)\De(K_1)-(F_1\oti K_1)|_{\vph^1\oti\vph^1}
|(F_2\oti1)\De^2(K_2)|_{\vph^2\oti\vph^2}
\\
&\quad+
|(F_2\oti1)\De^2(K_2)-(F_2\oti K_2)|_{\vph^2\oti\vph^2}
|F_1\otimes K_1|_{\vph^1\oti \vph^1}
\\
&<
(\de_1+\de_2)|F_1\oti F_2|_\tvph |K_1\oti K_2|_\tvph. 
\end{align*}
\end{proof}

The following elementary lemma is useful.

\begin{lem}\label{lem: P-Q}
Let $P$, $Q$ be von Neumann algebras.
Let $\ph\in P_*$ and $\ps\in Q_*$
be faithful positive functionals, respectively.
Let $X, Y\in P\oti Q$ be given.
If $X\in (P\oti Q)_{\ph\oti\ps}$,
then one has 
\[
|(\id\oti\ps)(YX)|_{\ph}
\leq 
\|Y\||X|_{\ph\oti\ps}
.\]
\end{lem}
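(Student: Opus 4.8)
The plan is to realize the slice map $\id\oti\ps\col P\oti Q\ra P$ as a (sub)unital completely positive map and to run the Kadison--Schwarz inequality. Write $E:=\id\oti\ps$, $\om:=\ph\oti\ps$ and $Z:=E(YX)$. Since $\ps$ is a positive normal functional (and, as in all our applications, a state, so that $E(1)=\ps(1)=1$), the map $E$ is unital and completely positive, it is $*$\nobreakdash-preserving, and it satisfies $\ph\circ E=\om$ on $P\oti Q$, because $\ph(E(a\oti b))=\ph(a)\ps(b)=\om(a\oti b)$ on elementary tensors. The quantity to be estimated is $|Z|_\ph$, i.e.\ $\ph(Z^*Z)$ (together with $\ph(ZZ^*)$ if $|\cdot|_\ph$ denotes the symmetric seminorm).

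First I would bound the term $\ph(Z^*Z)$. By the Kadison--Schwarz inequality for the unital completely positive map $E$,
\[
Z^*Z=E(YX)^*E(YX)\leq E\big((YX)^*(YX)\big)=E(X^*Y^*YX).
\]
Since $Y^*Y\leq\|Y\|^2$ and $E$ is positive (hence order preserving), $E(X^*Y^*YX)\leq\|Y\|^2 E(X^*X)$, so $Z^*Z\leq\|Y\|^2 E(X^*X)$. Applying $\ph$ and using $\ph\circ E=\om$ gives
\[
\ph(Z^*Z)\leq\|Y\|^2\,\ph\big(E(X^*X)\big)=\|Y\|^2\,\om(X^*X)=\|Y\|^2\,|X|_\om^2 .
\]
This already yields the claimed inequality when $|\cdot|_\ph$ is the seminorm $\ph(\,\cdot^{*}\,\cdot\,)^{1/2}$, and notably the centralizer hypothesis has not yet been used.

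The hypothesis $X\in(P\oti Q)_\om$ is exactly what is needed to handle the adjoint term $\ph(ZZ^*)$. Applying Kadison--Schwarz to $(YX)^*$ and using that $E$ is $*$-preserving (so $E(YX)^*=E((YX)^*)$), one gets $ZZ^*=E(YX)E(YX)^*\leq E\big(YXX^*Y^*\big)\leq\|Y\|^2 E(XX^*)$, whence $\ph(ZZ^*)\leq\|Y\|^2\om(XX^*)$. Since $X$ lies in the centralizer, taking $W=X^*$ in the identity $\om(XW)=\om(WX)$ gives $\om(XX^*)=\om(X^*X)$, so $\ph(ZZ^*)\leq\|Y\|^2\om(X^*X)=\|Y\|^2|X|_\om^2$ as well. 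Combining the two estimates yields $|Z|_\ph\leq\|Y\|\,|X|_\om$. There is no genuine obstacle here, as the statement is elementary; the only point that must be checked carefully is that $E=\id\oti\ps$ is a unital (or subunital) completely positive $*$-map so that Kadison--Schwarz applies, and the structurally relevant observation is that the centralizer condition serves solely to replace $\om(XX^*)$ by $\om(X^*X)$ in the adjoint estimate.
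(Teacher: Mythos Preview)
Your argument is clean, but it proves a different inequality because you have misread the notation. In this paper (following \cite{Ocn-act,M-T-CMP}) the symbol $|x|_\ph$ denotes the $L^1$-type quantity $\ph(|x|)$, not the $L^2$-seminorm $\ph(x^*x)^{1/2}$ (the latter is written $\|x\|_\ph$, and its symmetrization $\|x\|_\ph^\#$). Your Kadison--Schwarz computation correctly yields
\[
\ph\big(E(YX)^*E(YX)\big)\leq \|Y\|^2\,(\ph\oti\ps)(X^*X),
\]
i.e.\ $\|E(YX)\|_\ph\leq \|Y\|\,\|X\|_{\ph\oti\ps}$, and the centralizer hypothesis then gives the matching adjoint estimate. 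But this is not what the lemma asserts, and in the applications (the Rohlin-type estimates in \S\ref{sec:Roh}--\ref{sec:Rel-Rohlin}) it is precisely the $L^1$-form that is used, since the right-hand sides there are expressions like $|E|_{\ps\oti\tvph}=(\ps\oti\tvph)(E)$ for projections $E$.

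The paper's proof is short and uses the polar decomposition of the \emph{output}: write $Z:=(\id\oti\ps)(YX)=w|Z|$, so that $|Z|_\ph=\ph(w^*Z)=(\ph\oti\ps)\big((w^*\oti1)YX\big)$. The point of the centralizer assumption is now that $|X|$ (hence $|X|^{1/2}$) lies in $(P\oti Q)_{\ph\oti\ps}$, so with $X=u|X|$ one may rewrite $(\ph\oti\ps)(AX)=(\ph\oti\ps)\big(|X|^{1/2}Au|X|^{1/2}\big)$ and apply Cauchy--Schwarz to obtain $|(\ph\oti\ps)(AX)|\leq \|A\|\,(\ph\oti\ps)(|X|)=\|A\|\,|X|_{\ph\oti\ps}$; taking $A=(w^*\oti1)Y$ gives the lemma. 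Thus the centralizer hypothesis is not merely cosmetic for the adjoint term, as in your write-up, but is what allows the passage from $\ph(|Z|)$ to an expression controlled by $(\ph\oti\ps)(|X|)$.
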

\begin{proof}
Let $(\id\oti\ps)(YX)=w|(\id\oti\ps)(YX)|$ be the 
polar decomposition. 
Since $X$ commutes with $\ph\oti\ps$, 
we have 
\begin{align*}
|(\id\oti\ps)(YX)|_{\ph}
&=
\ph(w^*(\id\oti\ps)(YX))
=
(\ph\oti\ps)((w^*\oti1)YX)
\\
&\leq
\|(w^*\oti1)Y\||X|_{\ph\oti\ps}
\leq
\|Y\||X|_{\ph\oti\ps}.
\end{align*}
\end{proof}

\subsection{Relative Rohlin theorem}\label{sec:Rel-Rohlin}

Throughout this subsection,
we are given the following:
\begin{enumerate}
\renewcommand{\labelenumi}{(A\arabic{enumi})}
\item
A von Neumann algebra $M$;
\item
A $\bhG$-action $\ga^1$ on $M^\om$ 
and a $\bhG^2$-action $\ga^2$ on $M^\om$,
and they are commuting;

\item
The $\bhG\times \bhG^2$-action
$\ga:=(\ga^1\oti \id)\circ\ga^2$ is strongly free
and semi-liftable;
\label{item: s-s}

\item
$M_\om$ is globally invariant under $\ga$;

\item
$\ta^\om\circ \Ph_{(\pi,\rho)}^\ga
=\ta^\om\oti\tr_\pi\oti\tr_\rho$
on $M_\om\oti B(H_{(\pi,\rho)})$
for all $(\pi,\rho)\in \IG\times \Irr(\bG^2)$.
\label{item: pi-rho}
\end{enumerate}

The assumption (A3) restricts not only $\ga$ but also $M_\om$.
For example, $M_\om=\C$ is excluded.
When $M$ is a factor, (A5) automatically holds.
Indeed by semi-liftability, we can take $(\be^\nu)_\nu$,
a sequence of homomorphisms on $M$ converging to $\be$ and defining
$\ga_{(\pi,\rho)}$,
that is, $\ga_{(\pi,\rho)}(x)=(\be^\nu(x))_\nu$
for $x=(x^\nu)_\nu\in M^\om$.
Then by \cite[Lemma 3.3]{M-T-CMP},
we obtain $\ta^\om\circ\Ph_{(\pi,\rho)}^\ga=\Ph^\be\circ(\ta^\om\oti\id)$
on $M^\om\oti B(H_{(\pi,\rho)})$.
Since $M$ is a factor, $\ta^\om|_{M_\om}$ is a trace.
Hence for $x\in M_\om$ and $y\in B(H_{(\pi,\rho)})$,
we have
\begin{align*}
\ta^\om\circ\Ph_{(\pi,\rho)}^\ga(x\oti y)
&=
\Ph^\be\circ(\ta^\om\oti\id)(x\oti y)
=\ta^\om(x)\Ph^\be(1\oti y)
\\
&=
\ta^\om(x)\ta^\om(\Ph_{(\pi,\rho)}^\ga(1\oti y))
=
\ta^\om(x)(\tr_\pi\oti\tr_\rho)(y).
\end{align*}
This shows $\ta^\om\circ\Ph_{(\pi,\rho)}^\ga=\ta^\om\oti\tr_\pi\oti\tr_\rho$
as desired.

Our aim is to prove the relative Rohlin theorem which assures that
a Rohlin projection for $\ga^1$ can be evaluated
in $M_\om^{\ga^2}$.

Let us take $F_i, K_i$ and $\de_i$ for $i=1,2$
as in Lemma \ref{lem: de-invariant}.
We may assume that $K_i\geq e_\btr$ for each $i$ (see \cite[\S 2.3]{M-T-CMP}).
Set $F=F_1\oti F_2$, $\de=\de_1+\de_2$ and $K=K_1\oti K_2$.
Set $\meK^i=\supp(K_i)$ for each $i$ and $\meK=\supp(K)=\meK_1\times\meK_2$.
We fix a faithful state $\ph\in M_*$
and set $\ps:=\ph\circ\ta^\om$.
Let $\meC$ be a countable family of $\ga^1$-cocycles.
Let $S\subs M^\om$ and
$T\subs (M^\om)^{\ga}$ a countably generated von Neumann subalgebras.
For a projection $E\in M^\om\oti L^\infty(\bhG\times\bhG^2)$,
we denote by $\hE$ the sliced element $(\id\oti\id\oti\vph^2)(E)$.

Define the set $\meJ$ consisting of projections
in $(T'\cap M_\om)\oti L^\infty(\bhG\times\bhG^2)K$
such that $E\in\meJ$ if and only if
$E$ satisfies (R1), (R3) and (R4)
and, in addition, $\hE$ satisfies (S2) and (S3) for $\meC$.
Since $0\in \meJ$, $\meJ$ is non-empty.
Define the following functions $a$, $b$, $c$ and $d$ from $\meJ$ to $\mathbb{R}_+$:
\begin{align*}
a_E
&=
|F|_\vph^{-1}|\ga_F(E)-(\id\oti{}_{F}\tDe)(E)|_{\ps\oti\tvph\oti\tvph};
\\
b_E
&=
|E|_{\ps\oti\tvph};
\\ 
c_E
&=
|F_2|_{\vph^2}^{-1}
|\ga_{F_2}^2(\hat{E})_{132}-\hE\oti F_2|_{\ps\oti\vph^1\oti\vph^1};
\\
d_E
&=
|F_1|_{\vph^1}^{-1}
|\ga_{F_1}^1(E)
-(\id\oti{}_{F_1}\De_{\bhG}\oti \id)(E)|_{\ps\oti\vph^1\oti\tvph}.
\end{align*}

\begin{lem}
Let $E\in\meJ$.
Assume that $b_E<1-\de^{1/2}$.
Then there exists $E'\in \meJ$ such that
\begin{enumerate}
\item 
$a_{E'}-a_E\leq 3\de^{1/2} (b_{E'}-b_E)$;
\item
$0<(\de^{1/2}/2)|E'-E|_{\ps\oti\vph}\leq b_{E'}-b_{E}$;
\item 
$c_{E'}-c_E\leq 4\de_2^{1/2}(b_{E'}-b_E)$;
\item
$d_{E'}-d_E\leq 3\de_1^{1/2} (b_{E'}-b_E)$.
\end{enumerate}
\end{lem}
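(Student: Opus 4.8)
The plan is to read this as the one-step enlargement lemma feeding an exhaustion (maximality) argument: as long as the mass $b_E$ of a tower $E\in\meJ$ is bounded away from $1$, I want to adjoin to $E$ a fresh product-action tower $E_0$, orthogonal to $E$, and set $E'=E+E_0$, so that each of the four functionals grows by an amount governed entirely by $E_0$. The guiding principle is that an \emph{exactly} $\ga$-equivariant tower over $K$ carries no intrinsic intertwining error; its residual defect in $a_E$, $c_E$, $d_E$ is only the boundary effect measured by the $(F,\de)$-invariance of $K$. This is exactly why $b$ must increase strictly while the error functionals increase only proportionally to the mass added, which is the content of (1)--(4).

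First I would produce a non-trivial defect. Since $b_E<1-\de^{1/2}$, the support of $E$ in the relative commutant $T'\cap M_\om$ does not exhaust the unit by a definite $\ps$-mass; here the trace identity (A5) and the dimension bookkeeping of $\Ph^\ga$ convert the smallness of $b_E$ into a lower bound on the $\ps$-mass of the complementary projection. Strong freeness of the product action $\ga$ (A3) is then used exactly as in the Rohlin construction of \cite{M-T-CMP}: it guarantees that inside this defect, and orthogonally to all corners $E_{\pi_{i,j}}$ of $E$, one can choose a non-zero element and spread it through $\tDe$ into a matrix-unit system $\{(E_0)_{\pi_{i,j}}\}$ over $K=K_1\oti K_2$ satisfying (R3), lying in $T'\cap M_\om$, and $\ga$-equivariant up to the tower truncation. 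Semi-liftability (A3) is what makes this spreading legitimate at the level of honest homomorphisms on $M$. One simultaneously arranges, as in the joint Rohlin property, that $\widehat{E_0}$ is compatible with every $\ga^1$-cocycle in $\meC$, so that $\widehat{E'}$ continues to satisfy (S2) and (S3); this, together with (R1), (R3), (R4) for $E'$, places $E'$ in $\meJ$.

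The estimates then follow by separating $E$ from $E_0$. By the triangle inequality each numerator of $a_{E'}$, $c_{E'}$, $d_{E'}$ is at most the corresponding numerator for $E$ plus that for $E_0$, so the three increments are bounded by the intrinsic defects of $E_0$ alone. Because $E_0$ is equivariant, those defects reduce to the failure of $\tDe$, $\De_{\bhG}$ and $\De^2$ to preserve $K$, $K_1$ and $K_2$; applying the $(F,\de)$-, $(F_1,\de_1)$- and $(F_2,\de_2)$-invariance from Lemma~\ref{lem: de-invariant}, and Lemma~\ref{lem: P-Q} to pass the $\vph^2$-slice through the product, these are at most $3\de^{1/2}$, $3\de_1^{1/2}$ and $4\de_2^{1/2}$ times $b_{E'}-b_E=|E_0|_{\ps\oti\tvph}$, giving (1), (4) and (3). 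Finally (2) is the comparison of the $\vph$- and $\tvph$-weighted sizes of the single new piece $E_0$, with strict positivity coming from $E_0\neq0$.

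The step I expect to be the main obstacle is the coupled construction of $E_0$ in the second paragraph: one must produce a single product-tower that is at once orthogonal to $E$, quantitatively equivariant in \emph{both} the $\bhG$- and $\bhG^2$-directions, located in $T'\cap M_\om$, and compatible with the cocycles $\meC$. The genuinely relative feature is that it is the $\bhG^2$-leg of the tower which, after slicing by $\vph^2$, forces the approximate $\ga^2$-invariance of $\hE$ measured by $c_E$; keeping this $\bhG^2$-equivariance coupled to the $\ga^1$-equivariance and to the cocycle conditions, all within the defect supplied by strong freeness, is where the real work lies, whereas the four inequalities themselves are bookkeeping built on Lemmas~\ref{lem: de-invariant} and~\ref{lem: P-Q}.
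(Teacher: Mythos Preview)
Your overall picture---enlarge the tower and bound the growth of the error functionals by the Paving-Lemma boundary effect---is correct, but the construction you describe is not the one that works, and the gap is exactly at the point you flag as ``the main obstacle''.

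You propose to find a new tower $E_0$ \emph{orthogonal to $E$} and set $E'=E+E_0$. Strong freeness does not give this. What \cite[Lemma~5.3]{M-T-CMP} produces is a partition of unity $\{e_i\}_{i=0}^q$ in $\tilde S'\cap M_\om$ with $|e_0|_\ps$ small and the starting condition $(e_i\oti1)\ga_{(\pi,\rho)}(e_i)=0$ for $(\pi,\rho)\in\ovl{\meK}\cdot\meK\setminus\{\btr\}$. From such an $e=e_i$ one builds the new tower as $\ga_K(e)$ (spread by the \emph{action}, not by $\tDe$); the starting condition guarantees (R3) for $\ga_K(e)$, but says nothing about orthogonality to $E$. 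The correct enlargement is therefore
\[
E'=E(f^\perp\oti1)+\ga_K(e),\qquad f=(\id\oti\tvph)(\ga_K(e)),
\]
which \emph{cuts} the old tower where the new one lands. The hypothesis $b_E<1-\de^{1/2}$ enters through a pigeonhole argument: since $\sum_i|E(f_i\oti1)|_{\ps\oti\tvph}\le b_E|K|_\tvph$, some $i\ge1$ satisfies $|E(f_i\oti1)|_{\ps\oti\tvph}<(1-\de^{1/2})|f_i|_\ps$, so the mass lost to the cut is outweighed by the mass $|f|_\ps$ of the new piece, giving $b_{E'}-b_E>\de^{1/2}|f|_\ps>0$. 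Without this cut-and-pigeonhole mechanism there is no reason $b$ should increase, and your equation $b_{E'}-b_E=|E_0|_{\ps\oti\tvph}$ is unjustified.

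This also affects the estimates. The increments of $a$, $c$, $d$ are not simply the intrinsic defects of a single added piece; one must estimate separately the contribution of the cut term $E(f\oti1)$ (controlled via $|\ga_{F_2}^2(f)-f\oti F_2|$ and similar quantities, bounded by $\de_2|F_2|_{\vph^2}|f|_\ps$ using the $(F_i,\de_i)$-invariance of $K_i$) and of the new piece $\ga_K(e)$. Both contributions are of order $|f|_\ps$, which is then traded against $b_{E'}-b_E$ via the inequality above. Finally, the verification that $\hE'$ keeps (S2) and (S3) does not come for free from ``arranging compatibility with $\meC$''; it follows from the computation $(e'\oti1)\ga^1_\pi(e')=0$ for $e'=(\id\oti\vph^2)(\ga^2_{K_2}(e))$, which is a consequence of the starting condition for $e$ with respect to the \emph{product} action $\ga$. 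That coupling is the genuinely relative ingredient here, and it is built into the choice of $e$, not imposed afterwards.
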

\begin{proof}
Our proof is similar to the one presented in \cite{Ocn-act}.
We may assume that $S$ contains $T$ and 
the matrix entries of all $v\in \meC$,
and that $S$ is globally $\ga$-invariant.
We add the matrix entries of $E$ to $S$
and denote the new countably generated von Neumann algebra
by $\tilde{S}$.
Again we may and do assume that $\tilde{S}$ is globally $\ga$-invariant.
Take $\de_3>0$ such that $b_E<(1-\de_3)(1-\de^{1/2})$.

Recall our assumptions (A\ref{item: s-s}) and (A\ref{item: pi-rho}).
Then by \cite[Lemma 5.3]{M-T-CMP},
there is a partition of unity $\{e_i\}_{i=0}^q\subs \tilde{S}'\cap M_\om$
such that
\begin{enumerate}
\item $|e_0|_\ps<\de_3$;

\item $(e_i\oti1_\pi\oti 1_\rho)\ga_{(\pi,\rho)}(e_i)=0$
for all $1\leq i\leq q$ and $(\pi,\rho)\in \ovl{\meK}\cdot\meK\setm\{\btr\}$.
\end{enumerate}

Set a projection
$f_i:=(\id\oti\tvph)(\ga_K(e_i))\in \tilde{S}'\cap M_\om$.
We claim that at least one $i$ with $1\leq i\leq q$ satisfies
$|E(f_i\oti 1\oti1)|_{\ps\oti\tvph}<(1-\de^{1/2})|f_i|_\ps$.
Since
\begin{align*}
&\hspace{16pt}|E(f_i\oti 1\oti1)|_{\ps\oti\tvph}
\\
&=
(\ps\oti\tvph)(E(f_i\oti1))
\\
&=
\ps((\id\oti\tvph)(E) (\id\oti\tvph)(\ga_K(e_i)))
\\
&=
\sum_{(\pi,\rho)\in\meK}d(\pi)^2d(\rho)^2
\ps((\id\oti\tvph)(E) \Ph_{(\opi,\orho)}^\ga(e_i\oti1_\opi\oti1_\orho))
\\
&=
\sum_{(\pi,\rho)\in\meK}d(\pi)^2d(\rho)^2
\ps(\Ph_{(\opi,\orho)}^\ga
(\ga_{(\opi,\orho)}((\id\oti\tvph)(E) )(e_i\oti1_\opi\oti1_\orho)))
\\
&=
\sum_{(\pi,\rho)\in\meK}d(\pi)^2d(\rho)^2
(\ps\oti\tr_\opi\oti\tr_\orho)
(\ga_{(\opi,\orho)}((\id\oti\tvph)(E) )(e_i\oti1_\opi\oti1_\orho))
\\
&=
(\ps\oti\tvph)(\ga_{\ovl{K}}((\id\oti\tvph)(E))(e_i\oti1\oti1)), 
\end{align*}
we have the following:
\begin{align*}
\sum_{i=1}^q |E(f_i\oti 1\oti1)|_{\ps\oti\tvph}
&=
(\ps\oti\tvph)(\ga_{\ovl{K}}((\id\oti\tvph)(E))(e_0^\per \oti1\oti1))
\\
&\leq
(\ps\oti\tvph)(\ga_{\ovl{K}}((\id\oti\tvph)(E)))
=
|K|_\tvph (\ps\oti\tvph)(E)
\\
&=
b_E |K|_\tvph
\\
&<
(1-\de_3)(1-\de^{1/2})|K|_\tvph.
\end{align*}
If $|E(f_i\oti 1\oti1)|_{\ps\oti\tvph}\geq
(1-\de^{1/2})|f_i|_\ps(=(1-\de^{1/2})|e_i|_\ps |K|_\tvph)$
for all $1\leq i\leq q$,
then we have
\[
(1-\de^{1/2})|e_0^\per|_\ps |K|_\tvph < (1-\de_3)(1-\de^{1/2})|K|_\tvph. 
\]
This is a contradiction with $|e_0|_\ps<\de_3$.
Hence there exists $f_i$ such that 
$|E(f_i\oti 1\oti1)|_{\ps\oti\tvph}<(1-\de^{1/2})|f_i|_\ps$.
Set $e:=e_i$ and $f:=(\id\oti\tvph)(\ga_K(e))\in \tilde{S}'\cap M_\om$.

Define the projection $E'\in M_\om\oti L^\infty(\bhG\times\bhG^2)$
by
\[
E'=E(f^\per\oti1\oti1)+\ga_K(e).
\]
Since $T\subs (M^\om)^\ga$ and $e\in \tilde{S}'\cap M_\om$,
$E'\in (T'\cap M_\om)\oti L^\infty(\bhG\times\bhG^2)K$.
Then $E'$ satisfies (R1), (R3) and (R4) by \cite[Lemma 5.7]{M-T-CMP}.
We have to check $\hE'$ satisfies (S2) and (S3).
Set a projection $e'=(\id\oti\vph^2)(\ga_{K^2}^2(e))\in \tilde{S}'\cap M_\om$.
If we show $(e'\oti1)\ga_\pi^1(e')=0$
for each $\pi\in\meK^1\cdot\meK^1\setm\{\btr\}$,
then we are immediately done in view of \cite[Lemma 5.7]{M-T-CMP}.
This is verified as follows.
First we compute the following: for $\pi\in \meK^1\cdot\meK^1\setm\{\btr\}$
and $\rho\in\meK^2$,
\begin{align*}
(e\oti1\oti1)\ga_\orho^2(\ga_\pi^1(e'))
&=
(e\oti1\oti1)\ga_\orho^2(\ga_\pi^1((\id\oti\vph^2)(\ga_{K^2}^2(e))))
\\
&=
\sum_{\si\in\meK^2}
(\id\oti\id \oti\id \oti\vph_\si^2)
((e\oti1\oti1\oti1) \ga_\orho^2(\ga_\pi^1(\ga_\si^2(e))))
\\
&=
(\id\oti\id \oti\id \oti\vph_\rho^2)
((e\oti1\oti1\oti1) \ga_\orho^2(\ga_\pi^1(\ga_\rho^2(e))))
\\
&=0,
\end{align*}
where we have used the starting condition for $e$.
Using this, we get
\begin{align*}
(e'\oti1)\ga_\pi^1(e')
&=
((\id\oti\vph^2)(\ga_{K^2}^2(e))\oti1)\ga_\pi^1(e')
\\
&=
\sum_{\rho\in\meK^2}
d(\rho)^2(\Ph_\orho^{\ga^2}(e\oti1)\oti1)
\ga_\pi^1(e')
\\
&=
\sum_{\rho\in\meK^2}
d(\rho)^2(\Ph_\orho^{\ga^2}\oti\id)
((e\oti1\oti1)\ga_\orho^2(\ga_\pi^1(e')))
\\
&=0.
\end{align*}
Therefore $\hE'$ satisfies (S2) and (S3), which means $E'\in\meJ$.

Next we estimate $b_{E'}$ as follows:
\begin{align*}
b_{E'}-b_E
&=
(\ps\oti\tvph)(E'-E)
\\
&=
(\ps\oti\tvph)(-E(f\oti1\oti1)+\ga_K(e))
\\
&>
-(1-\de^{1/2})|f|_\ps
+|f|_\ps
=\de^{1/2}|f|_\ps. 
\end{align*}
Hence
\begin{equation}\label{eq: fb}
\de^{1/2}|f|_\ps<b_{E'}-b_E. 
\end{equation}

We check the inequalities in the statements. 
The first, the second and the fourth ones are derived
in a similar way to the proof of \cite[Lemma 5.11]{M-T-CMP}.
We only present a proof for the third one.
Since
\begin{align*}
&\ga_{F_2}^2(\hat{E}')_{132}-\hat{E}'\oti F_2
\\
&=
\ga_{F_2}^2(\hat{E})_{132}
(\ga_{F_2}^2(f)_{13}^\per-f^\per\oti1\oti F_2)
\\
&\quad
+
(\ga_{F_2}^2(\hat{E})_{132}-(\hE\oti F_2))
(f^\per\oti 1\oti F_2)
\\
&\quad
+
\ga_{K_1}^1
(
(\id\oti\id\oti\vph^2)
((\id\oti\De^2)(\ga^2(e))(1\oti F_2\oti K_2))
)\\
&\quad
-
\ga_{K_1}^1
(
(\id\oti\id\oti\vph^2)
((\id\oti\De^2)(\ga_{K_2}^2(e))(1\oti F_2\oti 1))
), 
\end{align*}
we have 
\begin{align}
&|\ga_{F_2}^2(\hat{E}')_{132}-\hat{E}'\oti F_2|_{\ps\oti\tvph}
\notag
\\
&\leq
|
\ga_{F_2}^2(\hat{E})_{132}
(\ga_{F_2}^2(f)_{13}^\per-f^\per\oti1\oti F_2)
|_{\ps\oti\tvph}
\label{al: e01}\\
&\quad
+
|
(\ga_{F_2}^2(\hat{E})_{132}-(\hE\oti F_2))
(f^\per\oti 1\oti F_2)
|_{\ps\oti\tvph}
\label{al: e02}\\
&\quad
+
|\ga_{K_1}^1
(
(\id\oti\id\oti\vph^2)
((\id\oti\De^2)(\ga_{K_2^\per}^2(e))(1\oti F_2\oti K_2))
)|_{\ps\oti\tvph}
\label{al: f1}\\
&\quad
+
|
\ga_{K_1}^1
(
(\id\oti\id\oti\vph^2)
((\id\oti\De^2)(\ga_{K_2}^2(e))(1\oti F_2\oti K_2^\per))
)
|_{\ps\oti\tvph}.
\label{al: f2}
\end{align}
Then we have the following estimates:
\[
(\ref{al: e02})\leq c_E,
\mbox{ and }(\ref{al: f1}),\ (\ref{al: f2})< \de_2 |F_2|_{\vph^2}|f|_\ps. 
\]
On (\ref{al: e01}), we have 
\begin{align*}
(\ref{al: e01})
&=
(\ps\oti\tvph)
(\ga_{F_2}^2(\hat{E})_{132}
|\ga_{F_2}^2(f)_{13}-f\oti1\oti F_2|
)
\\
&=
(\ps\oti\vph^2)
(\ga_{F_2}^2(
(\id\oti\tvph)(E)
|\ga_{F_2}^2(f)-f\oti F_2|
))
\\
&\leq
(\ps\oti\vph^2)
(
|\ga_{F_2}^2(f)-f\oti F_2|
)
\\
&\leq
(\ps\oti\vph^2)
(
|(\id\oti\vph^1\oti\id\oti\vph^2)
\circ\ga_{K_1}^1
(
(\id\oti{}_{F_2}\De_{K_2}^2)(\ga_{K_2^\per}^2(e)))|
)
\\
&\quad
+
(\ps\oti\vph^2)
(
|(\id\oti\vph^1\oti\id\oti\vph^2)
\circ\ga_{K_1}^1
(
(\id\oti {}_{F_2}\De_{K_2^\per})(\ga_{K_2}^2(e)))|
)
\\
&\leq
\de_2|K_1|_{\vph^1}|F_2|_{\vph^2}|K_2|_{\vph^2}|e|_\ps
+
\de_2|K_1|_{\vph^1}|F_2|_{\vph^2}|K_2|_{\vph^2}|e|_\ps
\\
&=
2\de_2|F_2|_{\vph^2}|f|_\ps. 
\end{align*}
By using (\ref{eq: fb}), we have 
\[
c_{E'}\leq c_E+4\de_2|f|_\ps< c_E+4\de_2^{1/2}(b_{E'}-b_E). 
\]
\end{proof}

Thus we obtain the following as \cite[Theorem 5.9]{M-T-CMP}.

\begin{lem}
Let $\ga=(\ga^1\oti\id)\circ\ga^2$,
$F$, $K$, $S$, $T$ and $\meC$ be as before.
Then the following statements hold:
\begin{enumerate}
\item $\ga$ has the Rohlin property;

\item
In the setting of Definition \ref{defn: j-Rohlin} for $\bhG\times\bhG^2$,
we can take a Rohlin projection $E$ from $(T'\cap M_\om)\oti\lhGG$
such that $\hE$ satisfies (S1), (S2) and (S3) for $\meC$
and
\begin{align*}
&|\ga_{F_2}^2(\hE)_{132}-\hE\oti F_2|_{\ps\oti\vph^1}
<
5\de_2^{1/2}|F_2|_{\vph^2};
\\
&|\ga_{F_1}^1(E)
-(\id\oti{}_{F_1}\De\oti \id)(E)|_{\ps\oti\vph^1\oti\tvph}
<
5\de_1^{1/2}|F_1|_{\vph^1}.
\end{align*}
\end{enumerate}
\end{lem}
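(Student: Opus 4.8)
The plan is to establish both parts at once by the maximality argument of \cite[Theorem 5.9]{M-T-CMP}, treating the preceding lemma as the single improvement step and the functions $a,b,c,d\col\meJ\to\mathbb{R}_+$ as monovariants; part (1) will come out as the bare case in which the cocycle family $\meC$ and the refinements carried by $c$ and $d$ are dropped.

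First I would run the iteration. Starting from $E_0=0\in\meJ$, for which $a_{E_0}=b_{E_0}=c_{E_0}=d_{E_0}=0$, I would repeatedly apply the preceding lemma to produce projections $E_0,E_1,E_2,\dots\in\meJ$ with $b_{E_n}$ strictly increasing. By (R3) and (R4) the element $(\id\oti\tvph)(E)$ is a subprojection of $1$, so $b_E=\ps((\id\oti\tvph)(E))\le1$, and the increasing sequence $b_{E_n}$ converges to $b^\ast:=\sup_{\meJ}b$. Inequality (2) of the preceding lemma gives $|E_{n+1}-E_n|_{\ps\oti\vph}\le 2\de^{-1/2}(b_{E_{n+1}}-b_{E_n})$, whose right-hand side telescopes to a finite total, so $\{E_n\}$ is Cauchy in $|\cdot|_{\ps\oti\vph}$. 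The limit $E$ is again a projection, since the $E_n$ are uniformly bounded projections; and the closed conditions (R1), (R3), (R4) together with (S2) and (S3) for $\hE$ all pass to the limit, so $E\in\meJ$ with $b_E=b^\ast$.

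Next I would read off the quantitative conclusions by telescoping inequalities (1), (3) and (4), using $a_{E_0}=c_{E_0}=d_{E_0}=0$. Summation gives $a_{E_n}\le 3\de^{1/2}b_{E_n}$, $c_{E_n}\le 4\de_2^{1/2}b_{E_n}$ and $d_{E_n}\le 3\de_1^{1/2}b_{E_n}$; letting $n\to\infty$ and using $b_E\le1$ yields $a_E\le 3\de^{1/2}$, $c_E\le 4\de_2^{1/2}$ and $d_E\le 3\de_1^{1/2}$. Unwinding the definitions of $a_E$, $c_E$ and $d_E$, these are precisely (R2) and the two displayed estimates of part (2), with the stated constant $5$ left comfortably unused.

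The main obstacle is the passage from the maximizer to a genuine Rohlin projection, i.e. securing (R5): $(\id\oti\vph)(E)=1$. Because the preceding lemma applies precisely when $b_E<1-\de^{1/2}$, maximality of $b^\ast$ only forces $b_E\ge 1-\de^{1/2}$, so the iteration alone stalls short of full mass. The deficiency projection $q:=1-(\id\oti\vph)(E)\in T'\cap M_\om$ then has $\ps(q)\le\de^{1/2}$, and the strong freeness hypothesis (A3) --- which in particular excludes $M_\om=\C$ --- makes the corner $qM_\om q$ diffuse, hence able to carry an additional copy of $\lhG K$ that obeys (R3), (R4), (S2), (S3) and approximately intertwines $\ga^1$ and $\ga^2$ inside that corner. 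Adjoining this copy to $E$ yields $\tilde E$ with $(\id\oti\vph)(\tilde E)=1$; since the adjoined piece has small mass, the functions $a$, $c$, $d$ remain within the stated constants $5$, which is exactly why those constants exceed the $3$, $4$ produced above. Performing this completion while keeping (R1)--(R4), (S2) and (S3) intact is the delicate point, and it is carried out exactly as in \cite[Theorem 5.9]{M-T-CMP}. Part (1) is then the special case $\meC=\emptyset$, $T=\C$, where the same construction returns an ordinary Rohlin projection for the product action $\ga$ of $\bhG\times\bhG^2$.
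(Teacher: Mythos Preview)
Your proposal is correct and follows essentially the same approach as the paper, which proves the lemma by a one-line reference to \cite[Theorem 5.9]{M-T-CMP}; you have accurately reconstructed that maximality-plus-completion argument, telescoping the inequalities of the preceding lemma from $E_0=0$ and deferring the delicate corner-filling step to the same source.
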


Our main theorem in this subsection is the following.

\begin{thm}[Relative Rohlin theorem]
\label{thm: jRoh-G2-fix}
Let $M$ be a von Neumann algebra
and
$\ga=(\ga^1\oti\id)\circ\ga^2$ an action of
$\bhG\times\bhG^2$ on $M^\om$ such that
\begin{itemize}
\item
The $\bhG$-action $\ga^1$ on $M^\om$
commutes with
the $\bhG^2$-action $\ga^2$ on $M^\om$;

\item
The $\bhG\times \bhG^2$-action
$\ga:=(\ga^1\oti \id)\circ\ga^2$ is strongly free
and semi-liftable;

\item
$M_\om$ is globally invariant under $\ga$;

\item
$\ta^\om\circ \Ph_{(\pi,\rho)}^\ga
=\ta^\om\oti\tr_\pi\oti\tr_\rho$
on $M_\om\oti B(H_{(\pi,\rho)})$
for all $(\pi,\rho)\in \IG\times \Irr(\bG^2)$.
\end{itemize}
Then $\ga^1$ has the joint Rohlin property.
Moreover, for any countably generated von Neumann subalgebra
$T\subs (M^\om)^\ga$,
$\ga^1$ has a Rohlin projection
$E\in (T'\cap M_\om^{\ga^2})\oti\lhG$
satisfying (S1), (S2) and (S3).
\end{thm}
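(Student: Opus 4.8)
The plan is to read off the theorem from the preceding lemma (the analogue of \cite[Theorem 5.9]{M-T-CMP} for the product action), which already produces, for $\ga=(\ga^1\oti\id)\circ\ga^2$ and data $F=F_1\oti F_2$, $\de=\de_1+\de_2$, $K=K_1\oti K_2$ with $K_i\geq e_\btr$, a Rohlin projection $E\in(T'\cap M_\om)\oti L^\infty(\bhG\times\bhG^2)$ whose slice $\hE=(\id\oti\id\oti\vph^2)(E)\in(T'\cap M_\om)\oti\lhG$ satisfies (S1), (S2), (S3) for $\ga^1$ together with the almost $\ga^2$-invariance estimate stated there. The joint Rohlin property of $\ga^1$ is then immediate: given $(F_1,\de_1,K_1,S,\meC)$, choose any auxiliary $(F_2,\de_2,K_2)$, apply that lemma, and take $\hE$ as the desired projection in $M_\om\oti\lhG$; slicing part (1) of that lemma likewise gives the Rohlin property of $\ga^1|_{M_\om}$, so the standing hypotheses of Definition \ref{defn: j-Rohlin} hold. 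The only genuinely new content is to move the entries of the Rohlin projection from $T'\cap M_\om$ into the fixed-point algebra $T'\cap M_\om^{\ga^2}$.

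To force exact $\ga^2$-invariance I would run the preceding lemma along a sequence of auxiliary data. Keeping $(F_1,\de_1,K_1,S,\meC)$ fixed, choose projections $F_2^{(k)}$ increasing to exhaust $\bhG^2$, numbers $\de_2^{(k)}\to0$, and central $K_2^{(k)}\geq e_\btr$ that are $(F_2^{(k)},\de_2^{(k)})$-invariant. For each $k$ the lemma yields $\hE_k\in(T'\cap M_\om)\oti\lhG$ satisfying (S1), (S2), (S3) for $\ga^1$ with the \emph{fixed} estimate in (R2), and with
\[
|\ga_{F_2^{(k)}}^2(\hE_k)_{132}-\hE_k\oti F_2^{(k)}|_{\ps\oti\vph^1}<5(\de_2^{(k)})^{1/2}|F_2^{(k)}|_{\vph^2}.
\]
Since $F_2^{(k)}$ exhausts $\bhG^2$ while the relative error tends to $0$, the family $\{\hE_k\}_k$ is asymptotically invariant under the whole of $\ga^2$; moreover each $\hE_k$ already satisfies the \emph{exact} relations (R3) and (R5) and has entries commuting with $T$.

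The decisive step is a reindexation. Each $\hE_k$ lives in the asymptotic centralizer $M_\om$, which is itself realized through the ultrafilter $\om$, and $M_\om^{\ga^2}$ is the range of a conditional expectation built from the amenability of the discrete Kac algebra $\bhG^2$. Exploiting that convergence in $|\cdot|_\ps$ dominates the strong topology on the unit ball, a diagonal reindexation over $k$ and over the representing sequences of the $\hE_k$ produces a single projection $E\in(T'\cap M_\om^{\ga^2})\oti\lhG$ which is a strong limit of a reindexed subfamily of the $\hE_k$. Because the exact identities (R3) and (R5) are preserved by multiplication and slicing under strong limits on bounded sets, and the approximate conditions (R1), (R2), (S2), (S3) depend on the data only through the fixed $\de_1$, the limit $E$ is a genuine Rohlin projection for $\ga^1$ whose entries are fixed by $\ga^2$, which is exactly the asserted conclusion.

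The hard part will be this reindexation. One must simultaneously (i) manufacture exact $\ga^2$-fixedness out of the vanishing almost-invariance, (ii) retain the exact matrix-unit relations (R3) and the normalization (R5), and (iii) keep the entries inside the relative commutant $T'\cap M_\om$, all inside the second level ultraproduct $M_\om$. The subtlety is that rounding an almost-fixed projection to an exactly fixed one must neither move it appreciably in the strong topology nor destroy the algebraic Rohlin relations; making this precise is where the amenability of $\bhG^2$ is genuinely used, through the reindexation properties of $M_\om$ and the behaviour of the expectation onto $M_\om^{\ga^2}$.
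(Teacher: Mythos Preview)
Your approach is essentially the paper's: slice a product Rohlin projection $E$ for $\ga$ by $\vph^2$, let the $\bhG^2$-data run along an exhausting sequence, and reindex to force exact $\ga^2$-invariance. The paper proceeds exactly this way, choosing $F_2(n)\nearrow 1$ and $\de_2(n)>0$ with $\de_2(n)^{1/2}|F_2(n)|_{\vph^2}\to0$ (note: you need this product, not merely $\de_2^{(k)}$, to vanish), and then applies the index selection trick \cite[Lemma~3.11]{M-T-CMP} to the sequence $(\hE(n))_n\in\ell^\infty(\N,M_\om)$.

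Where you drift from the paper is in your last paragraph. You describe the reindexation as the hard part and speculate that it requires the amenability of $\bhG^2$ via an expectation onto $M_\om^{\ga^2}$. In fact no such expectation enters: the index selection map is purely an ultraproduct device, and it automatically converts (i) the asymptotic $\ga^2$-invariance into exact fixedness, while (ii) preserving the exact identities (R3), (R5) and the commutation with $T$ that already hold for each $\hE(n)$, and (iii) preserving the uniform estimate (R2) and the conditions (S2), (S3), because those depend only on the fixed $(F_1,\de_1,K_1,\meC)$ and the index selection map commutes with $\ga^1$. Amenability of $\bhG^2$ is used earlier, to guarantee the existence of the $(F_2(n),\de_2(n))$-invariant projections $K_2(n)$, not for the limiting step. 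So your worries (i)--(iii) dissolve once you invoke the right tool; there is no ``rounding'' to be done by hand.
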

\begin{proof}
Let $F_i$, $K_i$ and $\de_i$ $(i=1,2)$ be given as before.
Take a Rohlin projection $E\in (T'\cap M_\om)\oti L^\infty(\bhG\times\bhG^2)$ 
supported over $K_1\oti K_2$ as in the previous lemma.
Then we have 
\begin{align}
&|\ga_{F_2}^2(\hE)_{132}-\hE\oti F_2|_{\ps\oti\vph^1}
\leq
5\de_2^{1/2}|F_2|_{\vph^2};
\label{eqn: ga-hE}\\
&|\ga_{F_1}^1(E)
-(\id\oti{}_{F_1}\De\oti \id)(E)|_{\ps\oti\vph^1\oti\tvph}
\leq
5\de_1^{1/2}|F_1|_{\vph^1}.
\label{eqn: ga1-E}
\end{align}

We set $\hE=(\id\oti\id\oti\vph^2)(E)$.
By (R3), $\hE$ gives a partition of unity
by matrix elements along with $K_1$.
We estimate the equivariance of $\hE$ with respect to $\ga^1$.
\begin{align*}
&|\ga_{F_1}^1(\hE)
-(\id\oti{}_{F_1}\De)(\hE)
|_{\ps\oti \vph^1\oti\vph^1}
\\
&=
|
(\id\oti\id\oti\id\oti\vph^2)
(
\ga_{F_1}^1(E)
-(\id\oti{}_{F_1}\De\oti\id)(E)
)
|_{\ps\oti \vph^1\oti\vph^1}
\\
&\leq
|
\ga_{F_1}^1(E)
-(\id\oti{}_{F_1}\De\oti\id)(E)
|_{\ps\oti \vph^1\oti\tvph}
\quad (\mbox{by Lemma }\ref{lem: P-Q})
\\
&leq
5\de_1^{1/2}|F_1|_{\vph^1}
\quad (\mbox{by}\ (\ref{eqn: ga1-E})).
\end{align*}

Take an increasing sequence
$\{F_2(n)\}_{n=1}^\infty\subset \Projf(Z(L^\infty(\bhG^2)))$ with
 $F_2(n)\rightarrow1$ strongly as $n\rightarrow \infty$.
Next we take $\de_2(n)>0$ such that
$\de_2(n)^{1/2}|F_2(n)|_{\vph^2}\to0$ as $n\to\infty$.
Take a sequence of Rohlin projections $\{E(n)\}_n$ satisfying
the above inequalities (\ref{eqn: ga-hE}) and (\ref{eqn: ga1-E})
for $F_2(n)$ and $\de_2(n)$.
By using the index selection trick \cite[Lemma 3.11]{M-T-CMP}
for $(\hat{E}(n))_n\in\el^\infty(\N,M_\om)$,
we obtain a Rohlin projection 
$E_1\in (T'\cap M_\om^{\ga^2})\oti \lhG$ supported on $K_1$ such that 
\[
|\ga_{F_1}^1(E_1)-(\id\oti_{F_1}\De)(E_1)
|_{\ps\oti\vph^1\oti\vph^1}
\leq 5\de_1^{1/2}.
\]
\end{proof}

\begin{cor}[Rohlin theorem]\label{cor: Rohlin}
Let $M$ be a von Neumann algebra
and $\ga$ an action of $\bhG$ on $M^\om$.
Assume the following:
\begin{itemize}
\item
$\ga$ is strongly free and semiliftable;

\item
$M_\om$ is globally invariant under $\ga$;

\item
$\ta^\om\circ\Ph_\pi^\ga=\ta^\om\oti\tr_\pi$
on $M_\om\oti B(H_\pi)$
for all $\pi\in\IG$.
\end{itemize}
Then
\begin{enumerate}
\item
$\ga$ has the joint Rohlin property;

\item
For any countably generated von Neumann subalgebra
$T\subs (M^\om)^\ga$,
$\ga$ has a Rohlin projection
$E\in (T'\cap M_\om)\oti\lhG$
satisfying (S1), (S2) and (S3);

\item
$\ga$ is stable on $M^\om$,
that is,
for any $\ga$-cocycle $v\in M^\om\oti\lhG$,
there exists a unitary $\mu\in M^\om$
such that $v=(\mu\oti1)\ga(\mu^*)$.
If $v \in M_\om\oti\lhG$, then $\mu$ can be taken from $M_\om$.
\end{enumerate}
\end{cor}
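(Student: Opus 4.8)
The plan is to derive Corollary \ref{cor: Rohlin} from Theorem \ref{thm: jRoh-G2-fix} by specializing to the trivial second Kac algebra $\bhG^2=\{\C\}$. First I would take $\bhG^2$ to be the trivial discrete Kac algebra, so that $\ga^2=\id$ and $M_\om^{\ga^2}=M_\om$, and the product action $\ga=(\ga^1\oti\id)\circ\ga^2$ collapses to $\ga$ itself on $M^\om$. Under this identification the three hypotheses of Corollary \ref{cor: Rohlin}---strong freeness and semi-liftability of $\ga$, global $\ga$-invariance of $M_\om$, and $\ta^\om\circ\Ph_\pi^\ga=\ta^\om\oti\tr_\pi$---are precisely the four hypotheses of the theorem (the factorization hypothesis on $\Ph_{(\pi,\rho)}^\ga$ becomes the single-index trace condition, and the commutativity hypothesis is vacuous). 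Thus items (1) and (2) of the corollary follow immediately by invoking Theorem \ref{thm: jRoh-G2-fix}: $\ga$ has the joint Rohlin property, and for any countably generated $T\subs(M^\om)^\ga$ there is a Rohlin projection $E\in(T'\cap M_\om)\oti\lhG$ satisfying (S1), (S2), (S3).

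The substantive content left to prove is the stability statement (3), and this is the step I would expect to be the main work. Given a $\ga$-cocycle $v\in M^\om\oti\lhG$, the strategy is the standard Shapiro-unitary argument. I would apply item (1), the joint Rohlin property, to the countable family $\meC=\{v\}$, obtaining a Rohlin projection $E$ for which $vEv^*$ again satisfies (R3) and $(\id\oti\vph_\pi)(vEv^*)=(\id\oti\vph_\pi)(E)$ by (S2) and (S3). The preceding lemma (the one producing Shapiro unitaries) then guarantees that $\mu:=(\id\oti\vph)(vE)$ is a unitary in $M^\om$. The remaining task is to verify the coboundary identity $v=(\mu\oti1)\ga(\mu^*)$ up to the controlled error encoded in (R2), and then to run the usual successive-approximation scheme: choosing a sequence of invariance parameters $(F_n,\de_n)$ with $\de_n\to0$, one produces Shapiro unitaries $\mu_n$ whose associated coboundaries approximate $v$ ever more closely, and assembles a convergent infinite product $\mu=\prod_n\mu_n$ (or a suitable telescoping) realizing $v$ exactly as $(\mu\oti1)\ga(\mu^*)$.

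The delicate point---and the main obstacle---is the convergence and exactness of this approximation procedure: one must control the $L^2$-type norms $|{\cdot}|_\ps$ of the successive corrections using the quantitative (R2) estimate $|\ga_F(E)-(\id\oti{}_F\De)(E)|\leq 5\de^{1/2}|F|_\vph$, show that the partial products form a Cauchy sequence in the appropriate strong topology on $U(M^\om)$, and confirm that the limit cocycle relation holds on the nose rather than merely approximately. This is exactly the type of argument carried out in \cite[Section 5, 6]{M-T-CMP} and in \cite{Ocn-act}, so I would follow that template, citing the relevant stability lemma from \cite{M-T-CMP} to shortcut the routine estimates.

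Finally, for the refinement that $v\in M_\om\oti\lhG$ yields $\mu\in M_\om$, I would note that when the cocycle is evaluated in the centralizing algebra $M_\om$, the Rohlin projection $E$ can be chosen in $M_\om\oti\lhG$ as well (using $T=\C$, so that the projection lands in $M_\om$ via item (2)), whence the Shapiro unitary $\mu=(\id\oti\vph)(vE)$ and each successive correction lie in $M_\om$; since $M_\om$ is closed in the strong topology, the limiting unitary stays in $M_\om$.
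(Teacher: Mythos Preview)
Your treatment of (1) and (2) matches the paper exactly: specialize $\bhG^2=\{\btr\}$ in Theorem \ref{thm: jRoh-G2-fix}. For (3) you also correctly identify the Shapiro unitary $\mu=(\id\oti\vph)(vE)$ and the key computation showing
\[
v\ga_F(\mu)-\mu\oti F
=(\id\oti\id\oti\vph)\big((\id\oti{}_F\De)(v)\,(\ga_F(E)-(\id\oti{}_F\De)(E))\big),
\]
so that $|v\ga_F(\mu)-\mu\oti F|_{\ps\oti\vph}\leq 5\de^{1/2}$ by (R2) and Lemma \ref{lem: P-Q}.

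The divergence from the paper is in how you pass from approximate to exact trivialization. You propose an iterated correction scheme with a convergent infinite product of Shapiro unitaries, arguing via Cauchy sequences in the strong topology of $U(M^\om)$. The paper does \emph{not} do this: it takes a single sequence of Shapiro unitaries $\mu^\nu=(\id\oti\vph)(vE^\nu)$ for increasing $F_\nu\nearrow1$ and $\de_\nu\to0$, and then applies the \emph{index selection trick} \cite[Lemma 3.11]{M-T-CMP} to $(\mu^\nu)_\nu\in\el^\infty(\N,M^\om)$, obtaining $\mu\in M^\om$ with $v\ga(\mu)=\mu\oti1$ exactly. This is the natural device in ultraproduct arguments: the index selection map commutes with $\ga$ and with the embedding of $M$, so the approximate identities $|v\ga_{F_\nu}(\mu^\nu)-\mu^\nu\oti F_\nu|\leq 5\de_\nu^{1/2}$ become the exact identity after selection. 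Your infinite-product route is not obviously sound as written: to make $\mu_{n+1}$ close to $1$ you would need the residual cocycle $v_n$ close to $1$ on the \emph{support} $K_{n+1}$ of the next Rohlin projection, whereas the previous step only controls $v_n$ on the smaller set $F_n$; arranging a compatible chain $F_n\supseteq K_{n+1}$ while keeping each $K_n$ sufficiently invariant is delicate and not addressed. Index selection bypasses all of this. Likewise, for the refinement $v\in M_\om\oti\lhG\Rightarrow\mu\in M_\om$, the paper simply observes that each $\mu^\nu\in M_\om$ and that the index selection map preserves $M_\om$; no closure argument is needed.
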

\begin{proof}
In the previous theorem, we put $\bhG^2=\{\btr\}$.
Then (1) and (2) hold.

(3)
Let $\{F_\nu\}_{\nu\in \N}\subs \lhG$ be an
increasing family of finitely supported central projections
such that $F_\nu\to1$ strongly as $\nu\to\infty$.
For each $\nu$, take an $(F_\nu,1/\nu)$-invariant finite projection
$K_\nu\in \lhG$ with $e_\btr\leq K_\nu$.
Let $E^\nu\in M_\om\oti\lhG$ be a Rohlin projection
satisfying (S1), (S2) and (S3) for a faithful state
$\ps=\ph\circ\ta^\om\in (M^\om)_*$,
$F_\nu$, $K_\nu$ and $1/\nu^2$.
Then we get the Shapiro unitary
$\mu^\nu=(\id\oti\vph)(vE^\nu)$, and we have
\begin{align*}
&\hspace{16pt}
v\ga_{F_\nu}(\mu^\nu)-\mu^\nu\oti F_\nu
\\
&=
(\id\oti\id\oti\vph)((v\oti1)\ga_{F_\nu}(vE^\nu))
-
(\id\oti\id\oti\vph)((\id\oti{}_{F_\nu}\De)(vE^\nu))
\\
&=
(\id\oti\id\oti\vph)((\id\oti{}_{F_\nu}\De)(v)\ga_{F_\nu}(E^\nu))
-
(\id\oti\id\oti\vph)((\id\oti{}_{F_\nu}\De)(vE^\nu))
\\
&=
(\id\oti\id\oti\vph)((\id\oti{}_{F_\nu}\De)(v)
(\ga_{F_\nu}(E^\nu)-(\id\oti{}_{F_\nu}\De)(E^\nu))).
\end{align*}
Since the element $\ga_{F_\nu}(E^\nu)-(\id\oti{}_{F_\nu}\De)(E^\nu))$
is in the centralizer of $\ps\oti\vph\oti\vph$,
we can use Lemma \ref{lem: P-Q}, and we get
\begin{align*}
|v\ga_{F_\nu}(\mu^\nu)-\mu^\nu\oti F_\nu|_{\ps\oti\vph}
&\leq
|\ga_{F_\nu}(E^\nu)-(\id\oti{}_{F_\nu}\De)(E^\nu)|_{\ps\oti\vph\oti\vph}
\\
&\leq 5/\nu.
\end{align*}
By using the index selection map for $(\mu^\nu)_\nu\in \el^\infty(\N,M^\om)$,
we get $\mu\in M^\om$ such that $v\ga(\mu)=\mu\oti1$.
When $v$ is evaluated in $M_\om$,
each $\mu^\nu$ is in $M_\om$,
and so is $\mu$ by the property of the index selection map.
\end{proof}

\begin{cor}
\label{cor: gamma-th Rohlin}
Let $M$ be a von Neumann algebra,
$\gamma$ an action of $\bhG$ on $M^\omega$
and $\theta\in\Aut(M^\om)$.
Regard $\th$ as an action of $\Z$ on $M^\om$.
Assume the following:
\begin{itemize}
\item
$\th$ commutes with $\ga$;

\item The $\bhG\times\Z$-action $\ga\circ\th$
is strongly free and semi-liftable;

\item $M_\om$ is globally invariant under $\ga\circ\th$;

\item
$\ta^\om\circ\th=\ta^\om$ on $M_\om$
and
$\ta^\om\circ\Ph_\pi^\ga=\ta^\om\oti\tr_\pi$
on $M_\om\oti B(H_\pi)$
for all $\pi\in\IG$.
\end{itemize}
Then for any $n>0$ and any countably generated von Neumann subalgebra
$T\subs (M^\om)^{\ga\th}$,
there exists a partition of unity 
$\{E_i\}_{i=0}^{n-1}\subset T'\cap M_\omega^\gamma$
such that 
$\theta(E_i)=E_{i+1}$, where $E_n=E_0$.
\end{cor}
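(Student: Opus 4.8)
The goal is a Rohlin tower for the single automorphism $\th$: a partition of unity $\{E_i\}_{i=0}^{n-1}$ in $T'\cap M_\om^\ga$ cyclically permuted by $\th$. The plan is to view $\th$ as generating a $\Z$-action and to apply the relative Rohlin theorem (Theorem \ref{thm: jRoh-G2-fix}) with the roles of the two Kac algebras exchanged: take $\bhG^2:=\bhG$ and let the ``first'' factor be the discrete abelian group $\Z$ (equivalently its dual, the amenable discrete Kac algebra $\wdh{\T}=\Z$). Concretely, set $\ga^1:=\th$ as a $\Z$-action and $\ga^2:=\ga$ as the $\bhG$-action, so that the composite $\bhG\times\Z$-action $\ga\circ\th$ is precisely the $\ga$ of the hypotheses. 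The three structural assumptions---commutation, strong freeness and semi-liftability of $\ga\circ\th$, global invariance of $M_\om$, and the trace-scaling identity $\ta^\om\circ\Ph_{(\pi,\rho)}=\ta^\om\oti\tr_\pi\oti\tr_\rho$---are exactly (A1)--(A5) for this labeling, so Theorem \ref{thm: jRoh-G2-fix} applies and yields a Rohlin projection for $\th$ living in the relative commutant $(T'\cap M_\om^\ga)\oti L^\infty(\Z)$.

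First I would make explicit that the harmonic analysis of $\bhG$ in the Rohlin machinery specializes, for the group $\Z$, to ordinary finite Rohlin towers: the finitely supported central projections $K\in\Projf(L^\infty(\Z))$ correspond to finite intervals in $\widehat{\Z}=\T$, and the $(F,\de)$-invariance condition is the usual Følner almost-invariance for $\Z$. Choosing $K$ to be the characteristic function of $\{0,1,\dots,n-1\}$ and $F$ a sufficiently large interval with small $\de$, the Rohlin projection $E$ decomposes as $E=\sum_{j=0}^{n-1}E_j\oti e_{jj}$ where the $E_j$ are mutually orthogonal projections in $T'\cap M_\om^\ga$ summing to $1$ by (R3) and (R5)/(S1). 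The almost-intertwining condition (R2), which compares $\th_F(E)$ with $(\id\oti{}_F\De)(E)$, forces $\th(E_j)$ to be approximately $E_{j+1}$ for $0\le j\le n-2$, since for $\Z$ the coproduct is the shift; this is where the Rohlin tower structure emerges.

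The main obstacle is upgrading the \emph{approximate} intertwining of Theorem \ref{thm: jRoh-G2-fix} to the \emph{exact} relation $\th(E_i)=E_{i+1}$ with cyclic wrap-around $E_n=E_0$. The plan is to run the standard perturbation-and-limit argument: apply the theorem for a sequence $\de_\nu\to 0$ (and $F_\nu\uparrow 1$) to get projections $E^\nu_j\in T'\cap M_\om^\ga$ with $\|\th(E^\nu_j)-E^\nu_{j+1}\|_{\ps}\to 0$, then feed the sequence $(E^\nu_\bullet)_\nu$ through the index selection map \cite[Lemma 3.11]{M-T-CMP} to obtain genuine central sequences in $M_\om$. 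Because $E^\nu_{n-1}$ approximately maps onto $E^\nu_0$ under $\th$, one pays careful attention to the cyclic index so that the limiting family satisfies $\th(E_i)=E_{i+1 \bmod n}$ exactly; this is precisely the kind of exactification carried out for Shapiro unitaries in the proof of Corollary \ref{cor: Rohlin}(3), and I expect the same mechanism to close the gap here.

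Finally I would verify that the output sits in the prescribed algebra: each $E_i$ lies in $T'\cap M_\om$ by construction, the invariance under $\ga^2=\ga$ places it in $M_\om^\ga$ via property (S3) (which ensures compatibility with the $\bhG$-direction), and the relative commutant with $T\subs (M^\om)^{\ga\th}$ is preserved throughout the index selection because that procedure respects countably generated subalgebras. Thus $\{E_i\}_{i=0}^{n-1}\subset T'\cap M_\om^\ga$ is a partition of unity cyclically permuted by $\th$, as required.
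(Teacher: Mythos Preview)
Your overall strategy---swap the roles so that $\ga^1=\th$ and $\ga^2=\ga$, then invoke Theorem~\ref{thm: jRoh-G2-fix} to get a Rohlin tower for $\th$ inside $T'\cap M_\om^\ga$, and finally use index selection to make the approximate shift exact---is correct and matches the paper. There is, however, a genuine gap in the step where you fix $K=\{0,1,\dots,n-1\}$ and then try to send $\de_\nu\to0$. For the group $\Z$ with $F=\{1\}$, the set $\{0,\dots,n-1\}$ is only $(\{1\},\de)$-invariant for $\de$ on the order of $2/n$; since $n$ is fixed, $\de$ is bounded away from zero and the error term $5\de^{1/2}$ in (R2) never tends to $0$. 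Enlarging $F$ does not help (it only makes invariance harder), so the index-selection argument cannot produce the exact relation $\th(E_i)=E_{i+1}$ from a tower of fixed length $n$.

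The paper's remedy is a folding trick: take $K_m=\{0,\dots,nm-1\}$ with $\de_m=2/(nm)\to0$, obtain a Rohlin partition $\{F_i^m\}_{i=0}^{nm-1}\subs T'\cap M_\om^\ga$ with $\sum_i|\th(F_i^m)-F_{i+1}^m|_\ps\le 5\de_m^{1/2}$, and then set $E_i^m:=\sum_{k=0}^{m-1}F_{kn+i}^m$ for $0\le i\le n-1$. This collapses the long tower to one of length exactly $n$, and the error for each $E_i^m$ is still bounded by $5\de_m^{1/2}\to0$. Index selection on the sequence $(E_i^m)_m$ then gives the exact equalities $\th(E_i)=E_{i+1}$ for $0\le i\le n-2$; the wrap-around $\th(E_{n-1})=E_0$ follows automatically from $\sum_i E_i=1$ and bijectivity of $\th$. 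Your proposal needs precisely this folding step to close the gap.
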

\begin{proof}
For $m>0$, set $\delta_m=2/nm$ and  $\meK_m:=\{0,1,2,nm-1\}$. 
Then $\meK_m$ is $(\{1\},\delta_m)$-invariant subset of $\Z$. 
By Theorem \ref{thm: jRoh-G2-fix},
we have a partition of unity $\{F^m_i\}_{i\in \meK_m}$ in 
$M_\omega^\gamma$ such that
$\sum_{i=0}^{nm-2}|\theta(F_i^m)-F_{i+1}^m|_\ps
\leq 5\delta_m^{\frac{1}{2}}$.
For $0\leq i\leq n-1$, set $E^m_i:=\sum_{k=0}^{m-1}F_{kn+i}$. 
Then for $0\leq i\leq n-2 $, we have 
\[
|\theta(E_i^m)-E_{i+1}^m|_\ps\leq 
\sum_{k=0}^{m-1}|\theta(F_{kn+i})-F_{kn+i+1}|_\psi\leq 5
\delta_m^{\frac{1}{2}}.
\] 
By applying the index selection trick to $\{E_i^m\}_{m=1}^\infty$,
$0\leq i \leq n-1$,
we get $\theta(E_i)=E_{i+1}$ for $0\leq i\leq n-2$.
Then $\theta(E_{n-1})=E_0$ follows automatically.
\end{proof}

Recall the following result \cite[Lemma 4.3]{M-T-CMP}.
The statement is slightly strengthened here, but the same proof
is applicable if we replace $M_\om$ with $A'\cap M_\om$.
Note that $A'\cap M_\om$ is of type II$_1$ for any countably
generated von Neumann subalgebra $A\subs M^\om$
when $M_\om$ is of type II$_1$.

\begin{thm}[2-cohomology vanishing]
\label{thm: 2-coho-vanish}
Let $M$ be a von Neumann algebra such that
$M_\om$ is of type II$_1$.
Let $A\subs M^\om$ be a countably generated von Neumann subalgebra.
Let $(\ga,w)$ be a cocycle action of $\bhG$ on $M^\om$.
Assume the following:
\begin{itemize}
\item $A'\cap M_\om$ is globally invariant under $\ga$;

\item $w\in (A'\cap M_\om)\oti\lhG\oti\lhG$;

\item $\ga$ is of the form $\ga=\Ad U\circ\be$,
where $U\in U(M^\om\oti \lhG)$ and $\be\in \Mor(M^\om,M^\om\oti\lhG)$
is semi-liftable.
\end{itemize}
Then the 2-cocycle $w$ is a coboundary in $A'\cap M_\om$.
\end{thm}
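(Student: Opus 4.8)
The plan is to reduce the statement to the standard Rohlin-type cohomology-vanishing argument by exploiting the joint Rohlin property established in Corollary~\ref{cor: Rohlin}. First I would fix notation: since $M_\om$ is of type II$_1$, the algebra $A'\cap M_\om$ is again of type II$_1$ for any countably generated $A\subs M^\om$, and it carries a unique trace $\ta^\om$ restricted to it. The cocycle action $(\ga,w)$ restricts to a cocycle action on $A'\cap M_\om$ by the hypotheses that $A'\cap M_\om$ is globally $\ga$-invariant and that $w$ is evaluated in $(A'\cap M_\om)\oti\lhG\oti\lhG$. I would work throughout on this smaller algebra.

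The central idea is to produce, from the Rohlin projections, a \emph{unitary path} that absorbs the $2$-cocycle $w$. Concretely, one treats $w$ as an obstruction to $\ga$ being an honest action and corrects it by an averaging procedure over a nearly $\De$-invariant finite projection $K$. The key technical input is that $\ga=\Ad U\circ\be$ with $\be$ semi-liftable: this lets me verify that $\ga$ (or rather its lift to $M^\om$) satisfies the strong freeness and trace-preservation hypotheses needed to invoke the Rohlin theorem, so that for each $(F,\de)$-invariant $K$ I obtain a Rohlin projection $E\in(A'\cap M_\om)\oti\lhG$ satisfying (S1), (S2), (S3). Using such an $E$, I would define an approximate coboundary $\mu$ by slicing $wE$ against $\vph$, mirroring the Shapiro-unitary construction, and estimate how far $\Ad\mu$ brings $w$ toward $1$ in the $\ps\oti\vph\oti\vph$-norm. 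The $(F,\de)$-invariance of $K$ together with the intertwining estimate (R2) controls the error by a quantity tending to $0$ as $K\to1$ and $\de\to0$.

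Running this over an increasing family $F_\nu\to1$ with $\de_\nu\to0$ yields a sequence of unitaries $\mu^\nu\in(A'\cap M_\om)\oti\lhG$ such that the perturbed cocycles $w^\nu:=\Ad(\cdots)(w)$ converge to $1$. The final step is to pass from this asymptotic vanishing to an exact coboundary: I would apply the index selection trick \cite[Lemma 3.11]{M-T-CMP} to the sequence $(\mu^\nu)_\nu\in\el^\infty(\N,A'\cap M_\om)$, exactly as in the proof of stability in Corollary~\ref{cor: Rohlin}(3), to extract a single unitary $\mu\in(A'\cap M_\om)\oti\lhG$ realizing $w$ as a coboundary, i.e. $w=(\mu\oti1)\ga(\mu)(\id\oti\De)(\mu^*)$ in $A'\cap M_\om$.

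The main obstacle I anticipate is the \emph{bookkeeping of the $2$-cocycle identity under perturbation}: unlike the $1$-cocycle case handled in Corollary~\ref{cor: Rohlin}(3), here one must track how the associativity constraint (condition (2) in the definition of a cocycle action) behaves under each correction step, and ensure that the successive perturbations remain compatible so that the limiting $\mu$ trivializes $w$ rather than merely shrinking its norm. The semi-liftability hypothesis is what makes the approximating homomorphisms $\be^\nu$ available and keeps the estimates uniform, but verifying that the error terms genuinely close up under the $2$-cocycle relation — rather than accumulating — is the delicate point. I expect this is precisely why the theorem is stated for the strengthened algebra $A'\cap M_\om$ and why the original proof of \cite[Lemma 4.3]{M-T-CMP} transfers verbatim: the type II$_1$ structure of $A'\cap M_\om$ supplies the trace needed for all the $|\cdot|_\ps$ estimates to go through unchanged.
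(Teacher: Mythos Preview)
Your proposal has a genuine gap at its foundation. You plan to invoke Corollary~\ref{cor: Rohlin}, but that result requires $\ga$ to be a strongly free \emph{action}, whereas Theorem~\ref{thm: 2-coho-vanish} assumes neither freeness nor that $\ga$ is a genuine action. Semi-liftability of $\be$ does not imply strong freeness of $\ga$: the trivial map $\be=\id\oti1$ is semi-liftable, and with $U=1$ the resulting $\ga$ is trivial, hence not strongly free. More to the point, in the paper's main application of this theorem (just after Lemma~\ref{lem: ga1-th}) it is applied to the $\bhGG$-cocycle action whose $\bhG^\opp$-component $\ga^0=\Ad U^*(\cdot\oti1)$ is implemented on $M^\om$ by the unitary $U$ itself, so strong freeness is out of reach there. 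Your argument therefore cannot get off the ground.

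There is a second, independent problem. Even granting a Rohlin projection $E$, the Shapiro-type averaging you sketch---slicing $wE$ against $\vph$---is designed to absorb a \emph{1-cocycle} $v\in M^\om\oti\lhG$ into a single unitary in $M^\om$. A 2-cocycle $w$ lives in three tensor factors, and you give no mechanism for producing a one-variable unitary $\mu\in(A'\cap M_\om)\oti\lhG$ from it; the phrase ``slicing $wE$ against $\vph$'' does not specify an object with the right covariance.

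The paper's route (via \cite[Lemma 4.3]{M-T-CMP}) avoids Rohlin entirely. It works directly with the type II$_1$ structure of $A'\cap M_\om$: one exhausts $\IG$ by finite sets and, at each stage, uses comparison of projections in the II$_1$ algebra to build partial isometries between $\ga_\pi(p)$ and $p\oti1_\pi$, assembling these into a unitary that renders the perturbed 2-cocycle trivial on the given finite set. The 2-cocycle identity guarantees compatibility of successive corrections, and semi-liftability together with a diagonal/reindexation argument in $M^\om$ passes to the limit. This is an Ocneanu-style inductive construction, not an averaging argument; the type II$_1$ hypothesis is used for comparison of projections, not merely to supply a trace for norm estimates.
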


\begin{cor}\label{cor: fixed-typeII}
Let $\ga$ be a strongly free and semi-liftable action of $\bhG$ 
on $M_\om$.
Let $S\subs (M^\om)^\ga$ is countably generated von Neumann 
subalgebra.
If $M_\om$ is of type II$_1$,
then the von Neumann algebra $S'\cap M_\om^\ga$ is also of type II${}_1$.
\end{cor}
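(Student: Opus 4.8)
The plan is to reduce the statement to a claim about fixed points of a strongly free action on a type II$_1$ algebra, and then to exhibit a unital copy of the hyperfinite factor $\meR_0$ inside those fixed points. First I would set $N:=S'\cap M_\om$. By the remark preceding Theorem \ref{thm: 2-coho-vanish}, $N$ is of type II$_1$; since $S\subs(M^\om)^\ga$, the subalgebra $S$ is fixed pointwise by $\ga$, so $N$ is globally invariant under $\ga$ and its fixed-point algebra is exactly $S'\cap M_\om^\ga=N^\ga$. Applying the Rohlin theorem (Corollary \ref{cor: Rohlin}) to the given $\ga$ with $T=S$ then produces Rohlin projections $E\in N\oti\lhG$, which is the source of the structure I want to transport into $N^\ga$.

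The finiteness of $N^\ga$ is immediate: it is a von Neumann subalgebra of the finite algebra $N$, so the canonical trace $\ta^\om|_N$ restricts to a faithful normal finite trace on $N^\ga$. The whole difficulty is to exclude a type I part, that is, to show $N^\ga$ has no nonzero abelian projection. My route is to embed a unital copy of $\meR_0$ into $N^\ga$: once this is available, finiteness together with the presence of a unital II$_1$ subfactor forces $N^\ga$ to be of type II$_1$, because a normal homomorphism from a II$_1$ factor into a finite type I algebra cannot be unital, so no type I$_n$ direct summand of $N^\ga$ can survive.

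To build such a copy I would feed the Rohlin towers into an iterative construction. For $\pi\in\IG$ the matrix units $\{E_{\pi_{i,j}}\}$ supplied by (R3) lie in $N$, while (R2) says they intertwine $\ga$ and $\De$ only approximately. Since an exactly equivariant copy of $\lhG$ (on which $\ga$ acts as $\De$) has trivial fixed points, the matrix units themselves are not $\ga$-fixed; the genuinely fixed elements must instead be manufactured from the intertwiner spaces $\Hom(\pi\oti\opi,\btr)$, by combining two commuting, approximately equivariant towers so that the coproduct parts cancel. After straightening the approximate relations by the $2$-cohomology vanishing (Theorem \ref{thm: 2-coho-vanish}) and stability, I expect this to produce, for each finite $F\in\Projf(\lhG)$, a finite-dimensional $\ga$-fixed $*$-subalgebra of $N^\ga$ of the form $\bigoplus_\pi M_{d(\pi)}(\C)$; an index-selection argument over an increasing sequence of such $F$ then assembles these into a unital copy of $\meR_0$ in $N^\ga$.

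The main obstacle is precisely this last construction. The tower is only approximately $\ga$-equivariant, so one cannot read off fixed matrix units directly, and the naive candidates for fixed elements degenerate to scalars. This is where strong freeness is indispensable: the absence of nonzero intertwiners $\ga_\pi(x)a=a(x\oti1)$ on the relevant relative commutants is what keeps the intertwiner spaces extracted from two towers nondegenerate, so that $N^\ga$ genuinely acquires noncommutative pieces rather than collapsing. Quantifying the equivariance defect through (R2) and Lemma \ref{lem: P-Q}, and bounding the errors uniformly enough that index selection applies, is the technical heart I would expect to occupy most of the work.
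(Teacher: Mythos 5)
Your reduction is sound: $N^\ga=S'\cap M_\om^\ga$ is finite, and it suffices to produce unital finite-dimensional subfactors of arbitrarily large dimension (or a unital copy of $\meR_0$) inside it, since a finite von Neumann algebra with a type I direct summand cannot unitally contain such subfactors. The problem is that the step which would actually produce those subfactors is not carried out. You propose to manufacture $\ga$-fixed matrix units out of the Rohlin towers, ``combining two commuting, approximately equivariant towers so that the coproduct parts cancel,'' but this is a description of a hope, not a construction: the tower entries $E_{\pi_{i,j}}$ satisfy only the $L^1$-approximate intertwining (R2), the claimed output $\bigoplus_\pi M_{d(\pi)}(\C)$ is not specified as a $*$-algebra with identified unit, and you yourself flag this as ``the technical heart I would expect to occupy most of the work.'' As written, the proof has a hole exactly where the content is.

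The paper's argument shows the towers are the wrong place to look; the Rohlin property enters only indirectly, through stability of cocycles. Since $N=S'\cap M_\om$ is of type II$_1$, one simply picks a system of matrix units $\{e_{i,j}\}_{i,j\in I}$ in $N$ and observes that $\{\ga_\pi(e_{i,j})\}$ and $\{e_{i,j}\oti 1_\pi\}$ are two $|I|\times|I|$ systems of matrix units in the type II$_1$ algebra $N\oti B(H_\pi)$, so the corner projections $\ga_\pi(e_{i_0,i_0})$ and $e_{i_0,i_0}\oti1$ are equivalent in $N\oti B(H_\pi)$; the resulting partial isometry is amplified to a unitary $\tilde v$ with $\Ad\tilde v\circ\ga$ fixing $Q=\{e_{i,j}\}''$ pointwise. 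Theorem \ref{thm: 2-coho-vanish} applied in $Q'\cap N$ corrects $\tilde v$ to an honest $\ga$-cocycle $w\tilde v$ still trivializing $\ga$ on $Q$, and stability of $\ga|_{M_\om}$ (Corollary \ref{cor: Rohlin}(3)) writes $w\tilde v=(\nu^*\oti1)\ga(\nu)$ with $\nu\in U(S'\cap M_\om)$, whence $\nu Q\nu^*\subs S'\cap M_\om^\ga$. This is both shorter and avoids the equivariance-defect estimates you anticipated. If you want to salvage your route, you would at minimum need to explain how exactly fixed (not approximately fixed) matrix units emerge from the towers and why their linear span is nondegenerate and unital in $N^\ga$; none of that is present.
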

\begin{proof}
Let $I$ be a finite index set. 
Since $S'\cap M_\om$ is of type II${}_1$, 
we can take a system of matrix units 
$\{e_{i,j}\}_{i,j\in I}$ in $S'\cap M_\om$. 
Let $\pi\in\IG$. 
Let $Q$ be a finite dimensional subfactor generated 
by $\{e_{i,j}\}_{i,j\in I}$. 
Take an index $i_0\in I$. 
Since $\{\ga_\pi(e_{i,j})\}_{i,j\in I}$ and 
$\{e_{i,j}\oti1_\pi\}_{i,j\in I}$ are systems of matrix units 
in $(S'\cap M_\om)\oti B(H_\pi)$, 
$e_{i_0,i_0}\oti1_\pi$ and $\ga_\pi(e_{i_0,i_0})$ 
are equivalent. 
Hence there exists $v_\pi\in (S'\cap M_\om)\oti B(H_\pi)$ 
such that 
$e_{i_0,i_0}\oti1_\pi=v_\pi v_\pi^*$ 
and $\ga_\pi(e_{i_0,i_0})=v_\pi^* v_\pi$. 
Set the unitary
\[
\tilde{v}_\pi
=\sum_{i\in I}(e_{i,i_0}\oti1_\pi)v_\pi\ga_\pi(e_{i_0,i}). 
\]
Then $\tilde{v}_\pi$ is satisfying 
$\tilde{v}_\pi\ga_\pi(e_{i,j})\tilde{v}_\pi^*=e_{i,j}\oti1$. 
Setting $\tilde{v}=(\tilde{v}_\pi)_\pi\in M_\om\oti \lhG$, 
we have 
\[
\tilde{v}\ga(x)\tilde{v}^*=x\oti1 
\quad\mbox{for all}
\ x\in Q. 
\]
Hence the map $\Ad\tilde{v}\circ \ga$ is a cocycle action 
on $Q'\cap (S'\cap M_\om)$. 
Using the previous 2-cohomology vanishing result for $Q'\cap (S'\cap M_\om)$, 
we obtain a unitary
$w\in (Q'\cap (S'\cap M_\om))\oti\lhG$
such that
$w\tilde{v}$ is an $\ga$-cocycle.
Now we have 
\[
w\tilde{v}\ga(x)\tilde{v}^* w^*=x\oti1 
\quad\mbox{for all}
\ x\in Q. 
\]
Since $\ga$ has the joint Rohlin property,
the action $\ga|_{M_\om}$ is stable
by Corollary \ref{cor: Rohlin}.
Hence the $M_\om$-valued 
$\ga$-cocycle $w\tilde{v}$ is of the form
$w\tilde{v}=(\nu^*\oti1)\ga(\nu)$
where $\nu\in U(S'\cap M_\om)$.
This implies that a subfactor $\nu Q\nu^*$ is fixed by $\ga$. 
Hence $S'\cap M_\om^\ga$ contains a subfactor
with arbitrary finite dimension,
and it is of type II${}_1$. 
\end{proof}

\subsection{Approximately inner actions}\label{sec:appr}
Let $M$ be a von Neumann algebra,
$\bhG$ an amenable discrete Kac algebra
and $\Ga$ a discrete amenable group with the neutral element $e$.
In this subsection, we study the following situation:

\begin{itemize}
\item We are given two actions
$\al\in\Mor(M,M\oti\lhG)$, $\th\col \Ga\ra \Aut(M)$
and unitaries $(v_g)_{g\in\Ga}\in U(M\oti\lhG)$
such that
\[
(\th_g\oti\id)\circ \al\circ \th_g^{-1}=\Ad v_g^* \circ\al;
\]

\item
$M_\om$ is of type II$_1$ and $Z(M)\subs M^\th$;

\item
$(v_g)_{g\in\Ga}$ is a $(\th\oti\id)$-cocycle;

\item $v_g^*$ is an $\al$-cocycle for each $g\in\Ga$;

\item $\al$ is approximately inner;

\item
$\al_\pi \th_g$
is properly centrally non-trivial
for each $(\pi,g)\in \IG\times\Ga\setm(\btr,e)$.
\end{itemize}

Take $U_\pi^\nu\in U(M\oti B(H_\pi))$ for $\nu\in\N$
such that $\Ad U_\pi^\nu$
converges to $\al_\pi$ for each $\pi\in\IG$.
Set $U:=(U_\pi)_\pi\in M^\om\oti\lhG$
where $U_\pi:=(U_\pi^\nu)_\nu\in M^\om\oti B(H_\pi)$.
Then $\al=\Ad U$ on $M$.
Our first task is to replace $U$ with a new one which well behaves
to the action $\th^\om$.

\begin{lem}
For each $\pi\in\Irr(\bhG)$ and $g\in\Ga$,
the sequence
$(v_g(\th_g\oti\id)(U_\pi^\nu))_\nu$ approximates $\al_\pi$.
In particular, $U^* v_g (\th_g^\om\oti\id)(U)\in M_\om\oti \lhG$.
\end{lem}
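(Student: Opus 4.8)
The plan is to check that $W_\pi^\nu:=v_{g,\pi}(\th_g\oti\id)(U_\pi^\nu)$ satisfies the defining convergence of approximate innerness with the \emph{same} standard left inverse $\Ph_\pi^\al$, and then to read off the central sequence from the pointwise relation $\al=\Ad U$. Throughout, $v_{g,\pi}\in M\oti B(H_\pi)$ denotes the $\pi$-component of $v_g$, so that the covariance hypothesis reads $(\th_g\oti\id)\circ\al_\pi\circ\th_g^{-1}=\Ad v_{g,\pi}^*\circ\al_\pi$ for each $\pi\in\IG$.

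First I would rewrite the conjugation by $W_\pi^\nu$. Since $\Ad(W_\pi^\nu)^*=(\th_g\oti\id)\circ\Ad(U_\pi^\nu)^*\circ(\th_g^{-1}\oti\id)\circ\Ad v_{g,\pi}^*$ and $(\vph\oti\tr_\pi)\circ(\th_g\oti\id)=(\vph\circ\th_g)\oti\tr_\pi$, the approximate innerness of $(U_\pi^\nu)_\nu$ applied to the functional $\vph\circ\th_g$ gives
\[
\bigl\|(\vph\oti\tr_\pi)\circ\Ad(W_\pi^\nu)^*-(\vph\circ\th_g)\circ\Ph_\pi^\al\circ(\th_g^{-1}\oti\id)\circ\Ad v_{g,\pi}^*\bigr\|\lra0,
\]
because precomposing with the fixed normal unital maps $(\th_g^{-1}\oti\id)$ and $\Ad v_{g,\pi}^*$ contracts the predual and so preserves norm convergence of functionals. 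Hence the first assertion reduces entirely to the covariance identity
\[
\th_g\circ\Ph_\pi^\al\circ(\th_g^{-1}\oti\id)\circ\Ad v_{g,\pi}^*=\Ph_\pi^\al. \quad(\star)
\]

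To prove $(\star)$ I would match two descriptions of the standard left inverse of $\be_\pi:=\Ad v_{g,\pi}^*\circ\al_\pi=(\th_g\oti\id)\circ\al_\pi\circ\th_g^{-1}$. Conjugation by the automorphism $\th_g$ produces $\Ph_\pi^\be=\th_g\circ\Ph_\pi^\al\circ(\th_g^{-1}\oti\id)$, while viewing $\be_\pi$ as the $\al$-cocycle perturbation by $v_g^*$ produces $\Ph_\pi^\be(x)=\Ph_\pi^\al(v_{g,\pi}\,x\,v_{g,\pi}^*)$ (one checks each is a left inverse of $\be_\pi$ directly). Equating them and substituting $x\mapsto v_{g,\pi}^*\,x\,v_{g,\pi}$ yields $(\star)$. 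Alternatively, and without invoking factoriality or preservation of standardness, $(\star)$ can be verified directly from the explicit formula $\Ph_\pi^\al(x)=(1\oti T_{\opi,\pi}^*)\al_\opi(x)(1\oti T_{\opi,\pi})$ (here $u=1$ as $\al$ is an action), by feeding in the covariance relation on the $\opi$-leg together with the $\al$-cocycle identity for $v_g^*$. I expect this to be the main obstacle: one must track the intertwiners $T_{\opi,\pi}$ carefully and confirm that the standardness used in the cocycle/automorphism descriptions is legitimate in the present, possibly non-factor, setting, which is exactly why the explicit-formula route is the safe one.

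For the final ``in particular'' I would not re-use the functional convergence, but rather the pointwise identity $\al=\Ad U$ on $M$ coming from the construction. Running the computation above at the level of $U=(U_\pi)_\pi\in M^\om\oti\lhG$ shows $\al=\Ad W$ on $M$ for $W:=v_g(\th_g^\om\oti\id)(U)$: given $x\in M$, writing $x\oti1=(\th_g^\om\oti\id)(\th_g^{-1}(x)\oti1)$ and applying $\Ad U_\pi=\al_\pi$ followed by the covariance relation gives $W_\pi(x\oti1)W_\pi^*=v_{g,\pi}v_{g,\pi}^*\al_\pi(x)v_{g,\pi}v_{g,\pi}^*=\al_\pi(x)$. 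Thus $\Ad U_\pi=\Ad W_\pi$ on $M$, so $W_\pi^*U_\pi$ commutes with $M\oti1$, i.e.\ $W_\pi^*U_\pi\in M'\cap(M^\om\oti B(H_\pi))=M_\om\oti B(H_\pi)$ by finite-dimensionality of $B(H_\pi)$. Taking adjoints and collecting over $\pi$ gives $U^*v_g(\th_g^\om\oti\id)(U)=(W^*U)^*\in M_\om\oti\lhG$, as required.
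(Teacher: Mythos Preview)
Your argument is correct and, for the main assertion, takes essentially the same route as the paper: the paper's chain of equalities is exactly your identity $(\star)$, written as $\th_g\circ\Ph_\pi^\al\circ(\th_g^{-1}\oti\id)=\Ph_\pi^{\Ad v_g^*\circ\al}=\Ph_\pi^\al\circ\Ad v_{g,\pi}$ and then composed with $\vph$.

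For the ``in particular,'' the paper simply invokes \cite[Lemma 3.6]{M-T-CMP}, which says that two unitary sequences approximating the same $\al_\pi$ differ by an element of $M_\om\oti B(H_\pi)$. Your commutant argument (using $\Ad U_\pi=\Ad W_\pi$ on $M$, hence $W_\pi^*U_\pi\in (M\oti1)'\cap(M^\om\oti B(H_\pi))=M_\om\oti B(H_\pi)$) is a direct unpacking of what that lemma provides, relying on the standard identification $M'\cap M^\om=M_\om$ for the Ocneanu ultraproduct. Either route works; yours avoids the external citation at the cost of invoking that identification.
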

\begin{proof}
Take $\ph\in M_*$.
We verify that $(\ph\oti\tr_\pi)\circ \Ad (\th_g\oti\id)((U_\pi^\nu)^*)v_g^*$
converges to $\ph\circ \Ph_\pi^\al$ as follows:
\begin{align*}
&\lim_{\nu\to\infty}
(\ph\oti\tr_\pi)\circ \Ad (\th_g\oti\id)((U_\pi^\nu)^*)v_g^*
\\
&=
\lim_{\nu\to\infty}
\big{(}(\th_g^{-1}\oti\id)(v_g)U_\pi^\nu(\ph\circ\th_g\oti\tr_\pi)
(U_\pi^\nu)^*(\th_g^{-1}\oti\id)(v_g^*)
\big{)}\circ (\th_g^{-1}\oti\id)
\\
&=
\big{(}(\th_g^{-1}\oti\id)(v_g)
(\ph\circ\th_g\circ\Ph_\pi^\al)(\th_g^{-1}\oti\id)(v_g^*)
\big{)}\circ (\th_g^{-1}\oti\id)
\\
&=
v_g
(\ph\circ\th_g\circ\Ph_\pi^\al\circ(\th_g^{-1}\oti\id))
v_g^*
=
v_g
(\ph\circ\Ph_\pi^{\Ad v_g^*\circ \al})
v_g^*
\\
&=
v_g
(\ph\circ\Ph_\pi^{\al}\circ\Ad v_g)
v_g^*
=\ph\circ\Ph_\pi^{\al}.
\end{align*}
The latter statement follows from
\cite[Lemma 3.6]{M-T-CMP}.
\end{proof}

\begin{lem}
There exists $u\in U(M_\om\oti\lhG)$ such that
$v_g(\th_g^\om\oti\id)(Uu)=Uu$. 
\end{lem}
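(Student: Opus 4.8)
The plan is to read the desired equation as the statement that a certain $\Ga$-valued $1$-cocycle is a coboundary, and then to kill that cocycle using the stability furnished by the Rohlin property. First I would abbreviate $\si_g:=\th_g^\om\oti\id$ on $M^\om\oti\lhG$ and set $w_g:=U^*v_g\si_g(U)$. By the previous lemma each $w_g$ is a unitary in $M_\om\oti\lhG$. Since $v_g\si_g(U)=Uw_g$, the target identity $v_g\si_g(Uu)=Uu$ is equivalent, after cancelling $U$, to $w_g\si_g(u)=u$, i.e. to $u\si_g(u^*)=w_g$. Thus the lemma is exactly the assertion that $(w_g)_{g\in\Ga}$ is a $\si$-coboundary.

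Next I would verify that $(w_g)_g$ is a $\si$-cocycle. A short computation gives $w_g\si_g(w_h)=U^*v_g\si_g(v_h)\si_{gh}(U)$; because $v_h\in M\oti\lhG$ and $\th_g^\om|_M=\th_g$, the hypothesis that $(v_g)_g$ is a $(\th\oti\id)$-cocycle gives $v_g\si_g(v_h)=v_{gh}$, whence $w_g\si_g(w_h)=w_{gh}$. Note that $\si$ genuinely acts on $M_\om\oti\lhG$, since an automorphism of $M$ preserves the asymptotic centralizer $M_\om$ and $\si$ is trivial on the coefficient algebra $\lhG$.

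The main work is then to trivialize this cocycle, i.e. to produce $u\in U(M_\om\oti\lhG)$ with $u\si_g(u^*)=w_g$. For this I would invoke the stability of the amenable group action $\th^\om$, established exactly as the stability statement of Corollary \ref{cor: Rohlin}(3), but with $\Ga$ playing the role of $\bhG$. The hypotheses are checked as follows: $\th^\om$ is semi-liftable and leaves $M_\om$ globally invariant; it is strongly free because, taking $\pi=\btr$ in the standing assumption, $\th_g$ is properly centrally non-trivial for every $g\neq e$; and $\ta^\om$ is $\th^\om$-invariant on the type II$_1$ algebra $M_\om$, since $\th_g^\om$ commutes with $\ta^\om$ and preserves its canonical trace. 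Hence $\th^\om$ has the (joint) Rohlin property, and the usual tower-averaging argument trivializes the $\Ga$-cocycle $(w_g)_g$; the coefficient algebra $\lhG=\bigoplus_\pi B(H_\pi)$ is inert here, since $\si$ fixes it, so the trivializing unitary is produced inside $M_\om\oti\lhG$. Taking $u$ to be this unitary finishes the argument.

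I expect the delicate point to be this last step: transporting the Rohlin and stability machinery of Corollary \ref{cor: Rohlin}, phrased for a discrete Kac algebra, to the amenable group $\Ga$ with coefficients in $\lhG$, and in particular making sure the trivializing unitary lands in the centralizer part $M_\om\oti\lhG$ rather than merely in $M^\om\oti\lhG$. The cocycle identity and the reformulation of the target equation are routine bookkeeping, and I emphasize that only the freeness of $\th$ (the $\pi=\btr$ case of the hypothesis) is needed for this lemma — the stronger joint freeness of $\al_\pi\th_g$ will be used only later, once $U$ has been corrected to $Uu$ and $\al$ and $\th$ genuinely commute.
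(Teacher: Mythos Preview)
Your proposal is correct and follows essentially the same route as the paper. The paper also reduces to trivializing the $\si$-cocycle $w_g=U^*v_g(\th_g^\om\oti\id)(U)\in M_\om\oti\lhG$ and does so by the Rohlin--Shapiro averaging you describe: it takes a Rohlin partition $(E_k)_{k\in K}\subs S'\cap M_\om$ for $\th^\om$, sets $u=\sum_{k\in K}w_k(E_k\oti1)$, estimates $|w_g(\th_g^\om\oti\id)(u)-u|_{\ps\oti\chi}\leq 10\de^{1/2}$, and applies index selection. The only cosmetic difference is that the paper writes this computation out explicitly rather than citing Corollary~\ref{cor: Rohlin}(3), precisely because of the point you flag---the coefficient tensor factor $\lhG$ is not literally covered by that statement, though the argument goes through verbatim since the Rohlin projections lie in $M_\om$ and the action is trivial on $\lhG$.
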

\begin{proof}
Since the $\Ga$-action $\th^\om$ is strongly free,
it has the joint Rohlin property.
Let $S\subs M^\om$ be a von Neumann subalgebra generated by
all matrix entries of $(\th_g^\om\oti\id)(U)$ and $v_g$
for all $g\in\Ga$.
Let $F\subs \Ga$ be a finite subset and $\de>0$.
Since $\Ga$ is amenable, there exists a finite subset $K\subs \Ga$
such that $\sum_{g\in F}|gK\De K|<\de|F||K|$.
Fix a faithful state $\ph\in M_*$ and set $\ps:=\ph\circ\ta^\om$.
Take a Rohlin projection
$(E_g)_{g\in\Ga}\subs (S'\cap M_\om)$
such that
$E_g=0$ for $g\nin K$ and
$
\sum_{\el\in \Ga}|\th_g^\om(E_\el)-E_{g\el}|_{\ps}
\leq
5\de^{1/2}$.
Define $u\in M^\om\oti \lhG$ by
$u=\sum_{k\in K}U^* v_k(\th_{k}^\om\oti\id)(U)(E_{k}\oti1)$.
By the previous lemma, $u$ is in $M_\om\oti \lhG$.
Then it is easy to see that $u$ is a unitary element,
and for $g\in F$ we have
\begin{align*}
&U^*v_g(\th_g^\om\oti\id)(U)\cdot(\th_g^\om\oti\id)(u)
\\
&=
U^*v_g(\th_g^\om\oti\id)(U)
\sum_{k\in K}
(\th_g^\om\oti\id)(U^*) (\th_g\oti\id)(v_k)
(\th_{gk}^\om\oti\id)(U)(\th_g^\om(E_{k})\oti1)
\\
&=
\sum_{k\in K}
U^*
v_{gk}(\th_{gk}^\om\oti\id)(U)(\th_g^\om(E_{k})\oti1)
\\
&=
\sum_{k\in K}
U^*
v_{gk}(\th_{gk}^\om\oti\id)(U)((\th_g^\om(E_{k})-E_{gk})\oti1)
+
\sum_{\el\in gK}
U^*
v_{\el}(\th_{\el}^\om\oti\id)(U)(E_{\el}\oti1).
\end{align*}

Take a partial isometry $w_g\in M^\om\oti \lhG$
such that
\[
|U^*v_g(\th_g^\om\oti\id)(U)\cdot(\th_g^\om\oti\id)(u)-u|
=w_g^*(U^*v_g(\th_g^\om\oti\id)(U)(\th_g^\om\oti\id)(u)-u).
\]
Let $\chi\in \lhG_*$ be a faithful state.
Then we have
\begin{align}
&|U^*v_g(\th_g^\om\oti\id)(U)\cdot(\th_g^\om\oti\id)(u)-u|_{\ps\oti\chi}
\notag\\
&=
\sum_{k\in K}
(\ps\oti\chi)
\big{(}
w_g^*U^*
v_{gk}(\th_{gk}^\om\oti\id)(U)((\th_g^\om(E_{k})-E_{gk})\oti1)
\big{)}
\label{eq: gk}
\\
&\quad
-
\sum_{\el\in K\setm gK}
(\ps\oti\chi)
\big{(}
w_g^*U^*
v_{\el}(\th_{\el}^\om\oti\id)(U)(E_{\el}\oti1)
\big{)}.
\label{eq: K-gK}
\end{align}
Since $E_k\in (M_\om)_\ps$,
we can use Lemma \ref{lem: P-Q}, and we have
\[
|(\ref{eq: gk})|
\leq
\sum_{k\in K}
|\th_g^\om(E_{k})\oti1-E_{gk}\oti1|_{\ps\oti\chi}
\leq 5\de^{1/2}.
\]
By the assumption of $K$,
we have
\begin{align*}
|(\ref{eq: K-gK})|
&\leq
\sum_{\el\in K\setm gK}|E_\el|_\ps
\leq
\sum_{\el\in K\De gK}|E_\el|_\ps
\\
&=
\bigg{|}\sum_{\el\in gK}E_\el-\sum_{k\in K}E_k
\bigg{|}_\ps
=
\bigg{|}\sum_{k\in K}E_{gk}-1\bigg{|}_\ps
\\
&=
\bigg{|}\sum_{k\in K}(E_{gk}-\th_g^\om(E_k))\bigg{|}_\ps
\leq
\sum_{k\in \Ga}\big{|}E_{gk}-\th_g^\om(E_k)\big{|}_\ps
\leq
5\de^{1/2}.
\end{align*}
Hence we have
\begin{equation}\label{eq: th-u}
|U^*v_g(\th_g^\om\oti\id)(U)\cdot(\th_g^\om\oti\id)(u)-u|_{\ps\oti\chi}
\leq 10\de^{1/2}.
\end{equation}
For each $\nu\in \N$, take $u_\nu\in M_\om\oti \lhG$ satisfying
(\ref{eq: th-u}) for $\de=1/\nu$.
Take an increasing sequence $F_\nu\Subs \Ga$
with $\bigcup_{\nu=1}^\infty F_\nu=\Ga$.
Applying the index selection trick to $(u^\nu)_\nu$,
we get $u\in M_\om\oti \lhG$ with
$U^*v_g(\th_g^\om\oti\id)(Uu)=u$ for all $g\in\Ga$.
\end{proof}

Replacing $U$ with $Uu$,
we may assume that $U=(U^\nu)_\nu$ also satisfies
\[
v_g(\th_g^\om\oti\id)(U)=U.
\]
As in \cite{M-T-CMP},
we consider the cocycle actions $\ga^{-1}=\Ad U^*\circ \al^\om$
and $\ga^{0}=\Ad U^*$ on $M_\om$.
Their 2-cocycles $w^{-1}$ and $w^0$ are given by
\[
w^{-1}=(U^*\oti1)\al^\om(U^*)(\id\oti\De)(U),
\quad
w^0=(U^*\oti1)U_{13}^*(\id\oti\De^\opp)(U).
\]
Here note that $\ga^{-1}$ and $\ga^0$ are cocycle actions
of $\bhG$ and $\bhG^\opp$, respectively.

\begin{lem}\label{lem: ga1-th}
In the above setting,
$\ga^{-1}$ and $\ga^0$ are cocycle actions on $M_\om^{\th_\om}$.
\end{lem}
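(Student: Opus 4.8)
The plan is to verify two separate facts, after which the lemma is immediate: that $M_\om^{\th_\om}$ is globally invariant under both $\ga^{-1}$ and $\ga^0$, and that the $2$-cocycles $w^{-1}$ and $w^0$ lie in $M_\om^{\th_\om}\oti\lhG\oti\lhG$. Since $\ga^{-1}$ and $\ga^0$ are already cocycle actions on $M_\om$, these two facts say precisely that they restrict to cocycle actions on $M_\om^{\th_\om}$. The whole computation rests on the two relations we have arranged: the normalization $(\th_g^\om\oti\id)(U)=v_g^*U$ for every $g\in\Ga$, and the intertwining $(\th_g^\om\oti\id)\circ\al^\om=\Ad v_g^*\circ\al^\om\circ\th_g^\om$; throughout I use that $v_g\in M\oti\lhG$ commutes with $M_\om\oti\C$.

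First I would show that $\th_g^\om$ commutes with $\ga^0=\Ad U^*$ and with $\ga^{-1}=\Ad U^*\circ\al^\om$. For $x\in M_\om$ one computes
\[
(\th_g^\om\oti\id)(\ga^0(x))=U^*v_g(\th_g^\om(x)\oti1)v_g^*U=U^*(\th_g^\om(x)\oti1)U=\ga^0(\th_g^\om(x)),
\]
the middle equality holding because $\th_g^\om(x)\oti1$ commutes with $v_g$, and similarly, inserting the intertwining relation,
\[
(\th_g^\om\oti\id)(\ga^{-1}(x))=U^*v_g\cdot v_g^*\al^\om(\th_g^\om(x))v_g\cdot v_g^*U=\ga^{-1}(\th_g^\om(x)).
\]
Evaluating on $x\in M_\om^{\th_\om}$ yields $(\th_g^\om\oti\id)(\ga^{-1}(x))=\ga^{-1}(x)$ and $(\th_g^\om\oti\id)(\ga^0(x))=\ga^0(x)$ for all $g$, hence $\ga^{-1}(x),\ga^0(x)\in M_\om^{\th_\om}\oti\lhG$, which is the desired invariance.

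Next I would check that the $2$-cocycles are fixed by $\Th_g:=\th_g^\om\oti\id\oti\id$. For $w^0=(U^*\oti1)U_{13}^*(\id\oti\De^\opp)(U)$, applying $\Th_g$ replaces $U$ by $v_g^*U$ throughout; after the explicit $v_g$-factors cancel, the one remaining point is to commute $(v_g)_{12}$ past $U_{13}^*$. The key identity is
\[
U_{13}(v_g)_{12}U_{13}^*=(\id\oti\De^\opp)(v_g)(v_g^*)_{13},
\]
which follows from $\al=\Ad U$ on $M$ (so conjugation by $U_{13}$ realizes $\si_{23}\circ(\al\oti\id)$ on the $M$-leg of $v_g$, $\si_{23}$ the flip of the last two legs) together with the adjoint $(\al\oti\id)(v_g)=(\id\oti\De)(v_g)(v_g^*)_{12}$ of the $\al$-cocycle relation for $v_g^*$; the flip is exactly what converts $\De$ into $\De^\opp$. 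Feeding this in gives a telescoping cancellation returning $w^0$. The cocycle $w^{-1}=(U^*\oti1)\al^\om(U^*)(\id\oti\De)(U)$ is treated by the same bookkeeping: one uses $(\th_g^\om\oti\id)(\al^\om(U^*))=(v_g^*)_{12}\al^\om(U^*v_g)(v_g)_{12}$, the homomorphism factorization $\al^\om(U^*v_g)=\al^\om(U^*)(\id\oti\De)(v_g)(v_g^*)_{12}$, and $\Th_g((\id\oti\De)(U))=(\id\oti\De)(v_g^*)(\id\oti\De)(U)$; all $v_g$-factors cancel and $w^{-1}$ reappears. Thus $\Th_g(w^0)=w^0$ and $\Th_g(w^{-1})=w^{-1}$ for all $g$, so both cocycles lie in $M_\om^{\th_\om}\oti\lhG\oti\lhG$.

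The main obstacle is the $w^0$ computation. Unlike everywhere else, where the normalization and intertwining relations make the $v_g$'s cancel purely formally, here $(v_g)_{12}$ and $U_{13}^*$ genuinely fail to commute — their first legs sit in $M$ and $M^\om$ respectively — so one is forced to convert the conjugation $U_{13}(v_g)_{12}U_{13}^*$ into $\al$-data through $\al=\Ad U$ and then apply the $\al$-cocycle identity with the correct leg-flip. Getting this leg bookkeeping right, in particular that $\si_{23}$ is precisely what turns the $\al$-cocycle relation into the $\De^\opp$-form appearing in $w^0$, is the delicate point; the rest is routine telescoping.
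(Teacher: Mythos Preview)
Your proof is correct and follows essentially the same route as the paper's: first show $\th_\om$ commutes with $\ga^{-1}$ and $\ga^0$ on $M_\om$, then verify the $2$-cocycles are $\th_\om$-fixed using the normalization $(\th_g^\om\oti\id)(U)=v_g^*U$, the intertwining relation, and the $\al$-cocycle property of $v_g^*$. Your ``key identity'' $U_{13}(v_g)_{12}U_{13}^*=(\id\oti\De^\opp)(v_g)(v_g^*)_{13}$ is precisely the paper's two-step move---first $U_{13}(v_g)_{12}U_{13}^*=\al(v_g)_{132}$ from $\al=\Ad U$, then the cocycle relation---packaged into one formula, so the computations are the same up to presentation.
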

\begin{proof}
First, we show that $\ga^{-1}$ and $\ga^0$ commute with $\th_\om$.
Using $v_g(\th_g^\om\oti\id)(U)=U$,
we have $(\th_g^\om\oti\id)\circ\ga^{-1}=\ga^{-1}\circ\th_g^\om$.
This equality holds on $M^\om$, and in particular,
$\ga^{-1}$ commutes with $\th_\om$ on $M_\om$.

Let $x\in M_\om$.
Since $v_g$ commutes with $\th_g^\om(x)\oti1$,
we have 
\[
(\th_g^\om\oti\id)(\ga^0(x))=U^* v_g(\th_g^\om(x)\oti1)v_g^* U
=
U^*(\th_g^\om(x)\oti1)U=\ga^0(\th_g^\om(x)).
\]
Hence $\ga^0$ also commutes with $\th^\om$.

Second, we check that the 2-cocycles $w^{-1}$ and $w^0$ are evaluated
in $M_\om^{\th_\om}$.
Since $v_g^*$ is an $\al$-cocycle,
we have
\begin{align*}
(\th_g^\om\oti\id\oti\id)(w^{-1})
&=
((\th_g^\om\oti\id)(U^*)\oti1)\cdot
(\th_g^\om\oti\id\oti\id)(\al^\om(U^*))
\cdot(\th_g^\om\oti\De)(U)
\\
&=
(U^*\oti1)(v_g\oti1)
\cdot
(v_g^*\oti1)(\al^\om((\th_g\oti\id)(U^*)))(v_g\oti1)
\\
&\quad\cdot(\id\oti\De)(v_g^*U)
\\
&=
(U^*\oti1)\al^\om(U^*v_g)(v_g\oti1)(\id\oti\De)(v_g^*)(\id\oti\De)(U)
\\
&=
w^{-1},
\end{align*}
and
\begin{align*}
(\th_g^\om\oti\id\oti\id)(w^{0})
&=
(\th_g^\om\oti\id)(U^*\oti1)
\cdot(\th_g^\om\oti\id)(U^*)_{13}\cdot(\th_g^\om\oti\De^\opp)(U)
\\
&=
(U^*\oti1)(v_g\oti1)
\cdot
U_{13}^*(v_g)_{13}
\cdot
(\id\oti\De^\opp)(v_g^* U)
\\
&=
(U^*\oti1)U_{13}^*\al(v_g)_{132}(v_g)_{13}(\id\oti\De)(v_g^*)_{132}
(\id\oti\De^\opp)(U)
\\
&=
w^0.
\end{align*}
\end{proof}

Define the cocycle action $\ga$ of $\bhG\times\bhG^\opp$ on $M_\om$
by $\ga:=(\ga^{-1}\oti\id)\circ \ga^0$.
Its 2-cocycle $w$ is given by
\[
w:=U_{12}^*\al^\om(U^*)_{123}\al^\om(U_{12}^*\al^\om(U^*))_{1245}
(\id\oti\De_{\bhG\times\bhG^\opp})(\al^\om(U)U_{12}).
\]
By direct computation, we have
\[
w=\ga^{-1}(w_{123}^0 (w_{132}^{-1})^*)_{1234}w_{124}^{-1}
(\id\oti\De\oti\id\oti\id)(\ga^{-1}(w^0))_{12435}.
\]
Hence $w$ is evaluated in $M_\om^{\th_\om}$,
that is, $\ga$ is a cocycle action on $M_\om^{\th_\om}$.

Then we apply Theorem \ref{thm: 2-coho-vanish} to $\ga$
and get $c\in M_\om^{\th_\om}\oti\lhGG$ such that
\[
c_{123}\ga(c)w(\id\oti\De_{\bhGG})(c^*)=1.
\]
Here we note that the proof of \cite[Lemma 4.3]{M-T-CMP}
works in our case by replacing $M_\om$ with $M_\om^{\th_\om}$.
Also note that $M_\om^{\th_\om}$ is of type II${}_1$.

Set the unitaries $c^\el:=c_{\cdot\oti\btr}$ and $c^r:=c_{\btr\oti\cdot}$
in $M_\om^{\th_\om}\oti\lhG$.
Then the proof similar to that of \cite[Lemma 4.6]{M-T-CMP}
shows that
\begin{itemize}
\item 
$c^\el U^*$ is an $\al^\om$-cocycle;

\item
$U(c^r)^*$ is a unitary representation of $\bhG$;

\item
$U(c^r)^*$ is fixed by the perturbed action
$\Ad (c^\el U^*)\circ \al^\om$.
\end{itemize}

Exchanging $U$ with $U(c^r)^*$, we obtain the following.

\begin{lem}\label{lem: U-c-app}
Let $\al$, $\th$ and $(v_g)_{g\in\Ga}$ be as before.
Then there exists $U\in U(M^\om\oti\lhG)$ and $c\in U(M_\om\oti\lhG)$
such that
\begin{enumerate}

\item
$(\Ad U_\pi^\nu)_\nu$ approximates $\al_\pi$ for all $\pi\in \IG$;

\item
$U$ is a unitary representation of $\bhG$, that is,
we have $(\id\oti\De)(U)=U_{12}U_{13}$;

\item 
$cU^*$ is an $\al^\om$-cocycle;

\item 
$U$ is fixed by the perturbed action $\Ad cU^*\circ \al^\om$;

\item 
$v_g(\th_g^\om\oti\id)(U)=U$ and $(\th_g^\om\oti\id)(c)=c$ for all $g\in\Ga$.
\end{enumerate}
\end{lem}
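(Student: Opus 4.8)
The plan is to read off the required $U$ and $c$ directly from the construction carried out just above the statement and then to verify the five items. Concretely, I would replace the present $U$ by $U':=U(c^r)^*$ and put $c':=c^\el(c^r)^*$, where $c^\el,c^r\in M_\om^{\th_\om}\oti\lhG$ are the two slices of the trivialising cocycle $c\in M_\om^{\th_\om}\oti\lhGG$ furnished by Theorem~\ref{thm: 2-coho-vanish}. Both $c^\el$ and $c^r$ are unitaries in $M_\om^{\th_\om}\oti\lhG$, so $c'$ is a unitary in $M_\om\oti\lhG$ and $U'\in U(M^\om\oti\lhG)$, as required. Since $(U')^*=c^r U^*$, one has $c'(U')^*=c^\el(c^r)^*c^r U^*=c^\el U^*$, so items (2), (3) and (4) are literally the three bullet points established before the statement: $U'=U(c^r)^*$ is a unitary representation of $\bhG$, the product $c'(U')^*=c^\el U^*$ is an $\al^\om$-cocycle, and $U'$ is fixed by $\Ad(c'(U')^*)\circ\al^\om=\Ad(c^\el U^*)\circ\al^\om$.

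For item (5) I would use that $c$ lies in $M_\om^{\th_\om}\oti\lhGG$, so each slice is fixed by every $\th_g^\om\oti\id$; together with the normalisation $v_g(\th_g^\om\oti\id)(U)=U$ arranged earlier this gives $v_g(\th_g^\om\oti\id)(U')=U(\th_g^\om\oti\id)((c^r)^*)=U(c^r)^*=U'$ and $(\th_g^\om\oti\id)(c')=c'$ at once. Thus (2)--(5) reduce to bookkeeping with the two identities $(U')^*=c^rU^*$ and $c'(U')^*=c^\el U^*$.

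The one point that needs a genuine argument, and the place I expect the only real difficulty, is item (1): that the $\pi$-components $(U'_\pi)^\nu=(U_\pi)^\nu((c^r_\pi)^\nu)^*$ still approximate $\al_\pi$ in the sense of Definition~\ref{defn: app-cent}. The key observation is that $c^r$ is evaluated in the asymptotic centraliser $M_\om\oti\lhG$, so every representing sequence $((c^r_\pi)^\nu)_\nu$ is centralising and therefore satisfies $\|(\vph\oti\tr_\pi)\circ\Ad(c^r_\pi)^\nu-\vph\oti\tr_\pi\|\to0$ for each $\vph\in M_*$. Writing $\Ad((U'_\pi)^\nu)^*=\Ad(c^r_\pi)^\nu\circ\Ad((U_\pi)^\nu)^*$ and using that $\Ad((U_\pi)^\nu)^*$ is a unital, hence contractive, map on functionals, this centralising factor can be absorbed without disturbing the limit, so $(\vph\oti\tr_\pi)\circ\Ad((U'_\pi)^\nu)^*$ and $(\vph\oti\tr_\pi)\circ\Ad((U_\pi)^\nu)^*$ share the limit $\vph\circ\Ph_\pi^\al$. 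This secures (1) and completes the verification.
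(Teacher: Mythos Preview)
Your proposal is correct and follows exactly the paper's approach: the paper simply says ``Exchanging $U$ with $U(c^r)^*$, we obtain the following,'' leaving the five verifications implicit, and your choices $U'=U(c^r)^*$, $c'=c^\el(c^r)^*$ are precisely what this exchange amounts to (so that $c'(U')^*=c^\el U^*$ matches the three bullet points preceding the lemma). Your explicit treatment of item~(1)---using that $c^r\in M_\om\oti\lhG$ forces $\|[\vph\oti\tr_\pi,(c^r_\pi)^\nu]\|\to0$ and that precomposition with the automorphism $\Ad((U_\pi)^\nu)^*$ is isometric on the predual---is the natural argument the paper omits; there is no gap.
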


Now we set the following maps on $M^\om$ as before:
\[
\ga^1:=\Ad cU^* \circ \al^\om,
\quad\ga^2:=\Ad U^* (\cdot\oti1),
\]
which are actions of $\bhG$ and $\bhG^\opp$, respectively.
They preserve $M_\om$ and $M_\om^{\th_\om}$.

\begin{lem}\label{lem: vU-phi}
In the above settings, one has the following:
\begin{enumerate}
\item
$v_g^*U$ is a unitary representation of $\bhG$;

\item
For all $\pi\in\IG$ and $X\in M^\om\oti B(H_\pi)$,
$\Ph_\pi^{\ga^2}(X)
=(\id\oti\tr_\pi)(UXU^*)$.
\end{enumerate}
\end{lem}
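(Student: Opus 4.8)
The plan is to derive both assertions from the single fact, established in Lemma \ref{lem: U-c-app}(2), that $U$ is now a genuine unitary representation, combined with the $\al$-cocycle relation for $v_g^*$.

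For (1) I would first use $\al=\Ad U$ on $M$ to rewrite $\al(v_g^*)=U_{12}(v_g^*)_{13}U_{12}^*$, so that the $\al$-cocycle identity $(v_g^*\oti1)\al(v_g^*)=(\id\oti\De)(v_g^*)$ becomes
\[
(\id\oti\De)(v_g^*)=(v_g^*)_{12}U_{12}(v_g^*)_{13}U_{12}^*.
\]
Since $\id\oti\De$ is a homomorphism and $(\id\oti\De)(U)=U_{12}U_{13}$, multiplying gives
\[
(\id\oti\De)(v_g^*U)=(v_g^*)_{12}U_{12}(v_g^*)_{13}U_{12}^*U_{12}U_{13}
=(v_g^*)_{12}U_{12}(v_g^*)_{13}U_{13}=(v_g^*U)_{12}(v_g^*U)_{13},
\]
which is exactly the representation identity for the unitary $v_g^*U$. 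This is a short purely algebraic computation with no real obstacle.

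For (2) the guiding idea is that $\ga_\pi^2=\Ad U_\pi^*\circ(\,\cdot\oti1)$ is the trivial amplification conjugated by the unitary $U_\pi$, so its standard left inverse ought to be the canonical partial trace conjugated back, namely $(\id\oti\tr_\pi)(U_\pi\,\cdot\,U_\pi^*)$. Concretely I would expand the defining formula
\[
\Ph_\pi^{\ga^2}(X)=(1\oti T_{\opi,\pi}^*)\ga_\opi^2(X)(1\oti T_{\opi,\pi}),\qquad
\ga_\opi^2(X)=(U_\opi^*)_{12}X_{13}(U_\opi)_{12},
\]
then invoke the conjugate equations for $U$, that is, the $(\opi,\pi)$-component of $(\id\oti\De)(U)=U_{12}U_{13}$, namely $(U_\opi)_{12}(U_\pi)_{13}(1\oti T_{\opi,\pi})=1\oti T_{\opi,\pi}$ together with its $\btr\subs\pi\oti\opi$ counterpart, to convert the $U_\opi$-legs into $U_\pi$-legs. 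Finally the normalized-trace identity $T_{\opi,\pi}^*(1_\opi\oti y)T_{\opi,\pi}=\tr_\pi(y)$ collapses the surviving intertwiners into the partial trace, yielding $\Ph_\pi^{\ga^2}(X)=(\id\oti\tr_\pi)(U_\pi XU_\pi^*)$.

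The hard part will be the last bookkeeping step. Because $\ga^2$ is an action of $\bhG^\opp$ rather than $\bhG$, one must keep track of the conjugate structure and of the leg positions, and in particular $X_{13}$ shares the $M$-leg with $(U_\opi)_{12}$ and does not commute with it; resolving this cleanly requires feeding the two conjugate equations (for $T_{\opi,\pi}$ and for $T_{\pi,\opi}$) into the correct slots so that the $U_\opi$ on the left of $X$ and the $U_\opi$ on the right of $X$ are both transported onto the single $H_\pi$-leg. As an independent sanity check that fixes the normalization, I would verify directly that $(\id\oti\tr_\pi)(U_\pi\,\cdot\,U_\pi^*)$ is a left inverse of $\ga_\pi^2$, since $(\id\oti\tr_\pi)(U_\pi\ga_\pi^2(x)U_\pi^*)=(\id\oti\tr_\pi)(x\oti1)=x$, and that $\ga_\pi^2\circ(\id\oti\tr_\pi)(U_\pi\,\cdot\,U_\pi^*)$ is precisely the $U_\pi^*$-conjugate of the canonical expectation $M^\om\oti B(H_\pi)\ra M^\om\oti\C$; matching this with the standard left inverse then pins down the formula.
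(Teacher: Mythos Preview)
Your proposal is correct and follows essentially the same approach as the paper for both parts. For (2), the paper streamlines your bookkeeping by working directly with the $\bhG^\opp$-intertwiner $S_{\opi,\pi}$ and the identity $(\id\oti\De^\opp)(U)=U_{13}U_{12}$, so that $(\id\oti\De^\opp)(U)(1\oti S_{\opi,\pi})=1\oti S_{\opi,\pi}$ kills the extra legs in one step, avoiding the separate handling of $T_{\opi,\pi}$ and $T_{\pi,\opi}$ you anticipated.
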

\begin{proof}
(1)
Since $v_g^*$ is an $\al$-cocycle, we have
\[
(v_g^*U)_{12}(v_g^*U)_{13}
=(v_g^*)_{12}\al(v_g^*)U_{12}U_{13}
=(\id\oti\De)(v_g^*U).
\]

(2)
Let $S_{\opi,\pi}$ be an isometric
intertwiner from $\btr$ into $\opi\otimes\pi$
for $\bhG^\opp$.
For $X\in M^\om\oti B(H_\pi)$,
we have
\begin{align*}
\Ph_\pi^{\ga^2}(X)
&=
(1\oti S_{\opi,\pi}^*)
(U^*)_{12}
X_{13}
(U)_{12}
(1\oti S_{\opi,\pi})
\\
&=
(1\oti S_{\opi,\pi}^*)
(\id\oti\De^\opp)(U^*)
U_{13}
X_{13}
U_{13}^*
\\
&
\hspace{16pt}\cdot(\id\oti\De^\opp)(U)
(1\oti S_{\opi,\pi})
\\
&=
(1\oti S_{\opi,\pi}^*)
U_{13}
X_{13}
U_{13}^*
(1\oti S_{\opi,\pi})
\\
&=
(\id\oti \tr_\pi)(U X U^* ).
\end{align*}
\end{proof}

Our next aim is to replace $U$ with a new one
such that we can retake $c=1$.
\begin{lem}\label{lem: U-cocycle}
There exists $z\in U(M_\om^{\th_\om})$ such that
$UcU^*=(z\oti1)\al^\om(z^*)$.
\end{lem}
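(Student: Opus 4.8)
The plan is to recognise $UcU^*$ as an $\al^\om$-cocycle and then to trivialise it by a coboundary coming from a central, $\th^\om$-fixed unitary. For the first point I would argue structurally. By Lemma \ref{lem: U-c-app}, $U$ is a unitary representation of $\bhG$ which is fixed by $\ga^1=\Ad cU^*\circ\al^\om$; together these say exactly that $U$ is a $\ga^1$-cocycle, because once $(\ga^1\oti\id)(U)=U_{13}$ the cocycle identity $(U\oti1)(\ga^1\oti\id)(U)=U_{12}U_{13}=(\id\oti\De)(U)$ is just the representation property. Since $\ga^1$ is the perturbation of the action $\al^\om$ by the $\al^\om$-cocycle $cU^*$, the standard dictionary---namely that $v$ is a $\ga^1$-cocycle if and only if $vcU^*$ is an $\al^\om$-cocycle---carries the $\ga^1$-cocycle $U$ to the $\al^\om$-cocycle $U\cdot cU^*=UcU^*$. (Alternatively one verifies this by a direct leg computation, feeding $(\al^\om\oti\id)(U)=(Uc^*)_{12}U_{13}(cU^*)_{12}$, coming from $U$ being $\ga^1$-fixed, and the $\al^\om$-cocycle identity for $cU^*$ into $UcU^*$; the representation property of $U$ then produces the cancellation to $(\id\oti\De)(UcU^*)$.)

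Next I would make the statement intrinsic to the central algebra. Conjugating the desired identity $UcU^*=(z\oti1)\al^\om(z^*)$ by $U^*$ and using $\ga^2=\Ad U^*$ together with $\al^\om=\Ad(Uc^*)\circ\ga^1$, one sees that it is equivalent to
\[
\ga^1(z)c=\ga^2(z),\qquad\mbox{equivalently}\qquad c=\ga^1(z^*)\ga^2(z).
\]
This equation involves only the commuting actions $\ga^1$ of $\bhG$ and $\ga^2$ of $\bhG^\opp$ on the type II$_1$ algebra $M_\om^{\th_\om}$, whose data $c,\ga^1,\ga^2$ are all $\th^\om$-fixed; hence a solution $z\in U(M_\om^{\th_\om})$ automatically yields a $\th^\om$-compatible coboundary. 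This is consistent: for $z\in M_\om^{\th_\om}$ one computes $(\th_g^\om\oti\id)\big((z\oti1)\al^\om(z^*)\big)=v_g^*(z\oti1)\al^\om(z^*)v_g$, using that $z\oti1$ commutes with $v_g\in M\oti\lhG$ and the covariance $(\th_g^\om\oti\id)\circ\al^\om=\Ad v_g^*\circ\al^\om\circ\th_g^\om$, and this matches $(\th_g^\om\oti\id)(UcU^*)=v_g^*(UcU^*)v_g$, which follows from property (5) of Lemma \ref{lem: U-c-app}.

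Finally I would solve $c=\ga^1(z^*)\ga^2(z)$ using the stability supplied by the relative Rohlin theorem. The commuting actions $\ga^1,\ga^2$ and $\th^\om$ fit the framework of \S\ref{sec:Rel-Rohlin} (the combined action being strongly free, by proper central non-triviality of $\al_\pi\th_g$ for $(\pi,g)\neq(\btr,e)$, and semi-liftable), so Theorem \ref{thm: jRoh-G2-fix} provides Rohlin projections for this system that lie in $M_\om^{\th_\om}$. I would then feed $c$ into the Shapiro-unitary construction attached to such $\th^\om$-fixed Rohlin projections and pass to the limit by the index selection trick, exactly as in the stability argument of Corollary \ref{cor: Rohlin}, obtaining $z\in U(M_\om^{\th_\om})$ with $c=\ga^1(z^*)\ga^2(z)$. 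The main obstacle is precisely this final step: ordinary stability of $\al^\om$ is unavailable here because $\al^\om$ does not commute with $\th^\om$ (only up to the cocycle $v_g$), so one cannot cobound $UcU^*$ directly inside $M_\om^{\th_\om}$; it is the passage to the commuting pair $\ga^1,\ga^2$ and the relative Rohlin theorem, which yields $\th^\om$-fixed Rohlin projections, that repairs this, and confirming that $c$ has the cocycle structure needed for the Shapiro construction to converge is the technical heart.
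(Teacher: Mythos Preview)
Your first two steps are sound: $UcU^*$ is indeed an $\al^\om$-cocycle by the perturbation dictionary, and the reformulation $c=\ga^1(z^*)\ga^2(z)$ is correct. The gap is the last step. The equation $c=\ga^1(z^*)\ga^2(z)$ is not a cocycle identity for either $\ga^1$ or $\ga^2$ alone, so the stability of Corollary~\ref{cor: Rohlin} does not apply, and ``feeding $c$ into the Shapiro construction'' is not a well-defined procedure: the Shapiro unitary $(\id\oti\vph)(vE)$ requires $v$ to be a cocycle for the very action whose Rohlin projection $E$ one uses, and $c$ is neither a $\ga^1$- nor a $\ga^2$-cocycle. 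You flag this as ``the technical heart'' but supply no mechanism; the relative Rohlin theorem only hands you $\th^\om$-fixed Rohlin projections for $\ga^1$, not a way to solve a mixed $\ga^1$/$\ga^2$ equation.

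The paper's device is different. One takes a Rohlin projection $E\in M_\om^{\th_\om}\oti\lhG$ for $\ga^1$ (via Theorem~\ref{thm: jRoh-G2-fix} applied to $\ga^1\circ\th^\om$) satisfying the joint conditions for the \emph{two} $\ga^1$-cocycles $U$ and $Uc^*$, and forms two Shapiro unitaries $\mu=(\id\oti\vph)(UE)$ and $\nu=(\id\oti\vph)(Uc^*E)$. Individually $\mu,\nu\in M^\om$ only, since $U\notin M_\om$; the key point is that the product $\mu\nu^*=(\id\oti\vph)(UEcU^*)$ can be rewritten, via Lemma~\ref{lem: vU-phi}(2), as $\sum_\pi d(\pi)^2\Ph_\pi^{\ga^2}(Ec)$. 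Since $Ec\in M_\om^{\th_\om}\oti\lhG$ and $\ga^2$ commutes with $\th_\om$ on $M_\om$, this forces $\mu\nu^*\in M_\om^{\th_\om}$. After index selection the exact identities $U\ga^1(\mu)=\mu\oti1$ and $Uc^*\ga^1(\nu)=\nu\oti1$ combine to give $UcU^*=(z\oti1)\al^\om(z^*)$ with $z=\mu\nu^*$. Thus $\ga^2$ enters through its left inverse $\Ph^{\ga^2}$, not through any Rohlin or stability argument for $\ga^2$ itself; this is the missing idea in your proposal.
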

\begin{proof}
By definition of $\ga^1$, we have
$\Ph_{(\pi,g)}^{\ga^1\circ\th^\om}
=\th_{g^{-1}}^\om\circ\Ph_\pi^{\al^\om}\circ \Ad Uc^*$.
Since $\th_g\circ\ta^\om=\ta^\om$ on $M_\om$,
we get
$\ta^\om\circ \Ph_{(\pi,g)}^{\ga^1\circ\th^\om}=\ta^\om\oti\tr_\pi$
on $M_\om\oti B(H_\pi)$ for all $(\pi,g)\in\IG\times\Ga$.
By Lemma \ref{lem: ga1-th},
$\ga^1\circ\th^\om$ is a $\bhG\times \Ga$-action.
It is easy to see that $\ga^1\circ\th^\om$
is strongly free.
Since $\Ad (c^\nu U^{\nu*})\circ \al$ converges to
the trivial action, $\ga^1$ is semiliftable.
Hence $\ga^1\circ\th^\om$ has the joint Rohlin property.

Now we have two $\ga^1$-cocycles $Uc^*$ and $U$.
Let $K\in\Projf(Z(\lhG))$ be an $(F,\de)$-invariant projection
with $K\geq e_\btr$.
By Theorem \ref{thm: jRoh-G2-fix},
we can take a Rohlin projection $E\in M_\om^{\th_\om}\oti\lhG K$
for $\meC=\{U, Uc^*\}$.
Set the Shapiro unitaries
$\mu^\de:=(\id\oti\vph)(UE)$ and $\nu^\de:=(\id\oti\vph)(Uc^*E)$.
Then we claim the following:
\\

\noindent\textbf{Claim 1.}
\[
\mu^\de \nu^{\de*}=(\id\oti\vph)(UEcU^*),\quad
\mu^\de \nu^{\de*}\in M_\om^{\th_\om}.
\]
Indeed, the first equality is shown by using (R3)
and $\vph=\oplus_{\pi\in\IG}d(\pi)\Tr_\pi$.
Next we show that
$\mu^\de \nu^{\de*}\in M_\om^{\th_\om}$.
By Lemma \ref{lem: vU-phi},
we have
\begin{align*}
\mu^\de \nu^{\de*}
&=
(\id\oti\vph)(UEcU^*)
=\sum_{\pi\in\IG}
d(\pi)^2 (\id\oti\tr_\pi)(UEcU^*)
\\
&=
\sum_{\pi\in\IG}
d(\pi)^2 \Ph_\pi^{\ga^2}(Ec).
\end{align*}
Since $Ec\in (M_\om)^{\th_\om}\oti B(H_\pi)$,
$\mu^\de \nu^{\de*}$ is in $M_\om$.
Using the commutativity of $\ga^2|_{M_\om}$ and $\th_\om$,
we have
\begin{align*}
\th_g^\om(\mu^\de \nu^{\de*})
&=
\sum_{\pi\in\IG}
d(\pi)^2
\th_g^\om
\big{(}
\Ph_\pi^{\ga^2}(Ec)
\big{)}
=
\sum_{\pi\in\IG}
d(\pi)^2
\Ph_\pi^{\ga^2}((\th_g^\om\oti\id)(Ec))
\\
&=
\sum_{\pi\in\IG}
d(\pi)^2
\Ph_\pi^{\ga^2}(Ec)
=
\mu^\de \nu^{\de*}.
\end{align*}
\\

Next we claim the following:
\\

\noindent\textbf{Claim 2.}
\begin{equation}\label{ineq: UE}
|
U\ga_F^1(\mu^\de)-\mu^\de\oti F
|_{\ps\oti\vph}
\leq5\de^{1/2};
\end{equation}
\begin{equation}\label{ineq: Uc*E}
|
Uc^*\ga_F^1(\nu^\de)-\nu^\de\oti F
|_{\ps\oti\vph}
\leq5\de^{1/2}. 
\end{equation}

Let 
$U\ga_F^1(\mu^\de)-\mu^\de\oti F
=
v|U\ga_F^1(\mu^\de)-\mu^\de\oti F|$ 
be the polar decomposition. 
Then we have 
\begin{align*}
|U\ga_F^1(\mu^\de)-\mu^\de\oti F|
&=
v^*(U\ga_F^1(\mu^\de)-\mu^\de\oti F)
\\
&=
v^*(\id\oti\id\oti\vph)(U_{12}U_{13}\ga^1(E))\\
&\quad
-
v^*(\id\oti\id\oti\vph)(U_{12}U_{13}(\id\oti{}_F\De)(E)))
\\
&=
v^* (\id\oti\id\oti\vph)(U_{12}U_{13}(\ga^1(E)-(\id\oti{}_F\De)(E))). 
\end{align*}
Using Lemma \ref{lem: P-Q}, we have 
\begin{align*}
|U\ga_F^1(\mu^\de)-\mu^\de\oti F|_{\ps\oti\vph}
&=
|
(\id\oti\id\oti\vph)
(v_{12}^*U_{12}U_{13}(\ga^1(E)-(\id\oti{}_F\De)(E)))|_{\ps\oti\vph}
\\
&\leq
|\ga^1(E)-(\id\oti{}_F\De)(E)|_{\ps\oti\vph\oti\vph}
\leq5\de^{1/2}. 
\end{align*}
Similarly we can prove (\ref{ineq: Uc*E}).
\\

Now we use the index selection trick. 
For decreasing $\de_n=1/n \to 0$ and increasing finite rank central
projections $F_n\to1$ in $\lhG$ as $\N\ni n\to \infty$,
we take $\mu(n):=\mu^{1/n}$ and $\nu(n):=\nu^{1/n}$ in $U(M^\om)$
for $n\in\N$.
Set $\tilde{\mu}=(\mu(n))_n$ and $\tilde{\nu}=(\nu(n))_n$. 
From them,
we construct $\mu$ and $\nu$ in $U(M^\om)$ by index selection.
Since $\mu(n)\nu(n)^*\in M_\om^{\th_\om}$, $\mu\nu^*\in M_\om^{\th_\om}$.
By definition of an index selection map (i.e. it commutes with $\ga^1$),
we have
$U\ga^1(\mu)=\mu\oti1$ and $U c^*\ga^1(\nu)=\nu\oti1$.
These imply
\[
\al^\om(\nu\mu^*)
=
Uc^*\ga^1(\nu\mu^*)cU^*
=
(\nu\mu^*\oti1)UcU^*.
\]
Therefore, $z:=\mu\nu^*$ is a desired solution.
\end{proof}

By the previous lemma,
we get $z\in U(M_\om^{\th_\om})$ such that
$UcU^*=(z\oti1)\al^\om(z^*)$.
Then we consider $V=(z^*\oti1)U(z\oti1)$,
which is a representation of $\bhG$ in $M^\om$.
By the previous lemma,
we have
\[
V^*
=
(z^*\oti1)cU^* \cdot Uc^*U^*(z\oti1)
=
(z^*\oti1)cU^* \al^\om(z).
\]
Since $cU^*$ is an $\al^\om$-cocycle,
so is $V^*$.
Moreover we have
\[
v_g(\th_g\oti\id)(V)
=
v_g(z^*\oti1)v_g^*U(z\oti1)
=
(z^*\oti1)U(z\oti1)=V.
\]
Finally we again replace $U$ with $V=(z^*\oti1)U(z\oti1)$,
and we get the following.

\begin{thm}\label{thm: app-cocycle}
Let $M$ be a von Neumann algebra.
Assume the following:
\begin{itemize}
\item We are given two actions
$\al\in\Mor(M,M\oti\lhG)$, $\th\col \Ga\ra \Aut(M)$
and unitaries $(v_g)_{g\in\Ga}\in U(M\oti\lhG)$
such that
\[
(\th_g\oti\id)\circ \al\circ \th_g^{-1}=\Ad v_g^* \circ\al;
\]

\item
$M_\om$ is of type II$_1$ and $Z(M)\subs M^\th$;

\item
$(v_g)_{g\in\Ga}$ is a $(\th\oti\id)$-cocycle;

\item $v_g^*$ is an $\al$-cocycle for each $g\in\Ga$;

\item $\al$ is approximately inner;

\item
$\al_\pi \th_g$
is properly centrally non-trivial
for each $(\pi,g)\in \IG\times\Ga\setm(\btr,e)$.
\end{itemize}

Then there exists $U=(U^\nu)_\nu\in U(M^\om\oti\lhG)$ 
such that 
\begin{enumerate}
\item 
$(\Ad U_\pi^\nu)_\nu$ converges to $\al_\pi$ for all $\pi\in\IG$;

\item 
$U$ is a representation of $\bhG$
that is, 
$(\id\oti\De)(U)=U_{12}U_{13}$;

\item 
$U^*$ is an $\al^\om$-cocycle,
that is, 
$U_{12}^* \al^\om(U^*)=(\id\oti\De)(U^*)$;

\item
$v_g(\th_g^\om\oti\id)(U)=U$ for all $g\in\Ga$.
\end{enumerate}
\end{thm}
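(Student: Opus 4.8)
The plan is to assemble the conclusions of the lemmas proved earlier in this subsection, perturbing $U$ in three stages so that it acquires properties (1)--(4) one after another while retaining those already obtained.

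First I would fix unitaries $U_\pi^\nu\in U(M\oti B(H_\pi))$ with $\Ad U_\pi^\nu$ approximating $\al_\pi$, assemble them into $U=(U_\pi)_\pi\in M^\om\oti\lhG$, and apply the two opening lemmas: the first gives $U^*v_g(\th_g^\om\oti\id)(U)\in M_\om\oti\lhG$, and the second produces $u\in U(M_\om\oti\lhG)$ with $v_g(\th_g^\om\oti\id)(Uu)=Uu$. Replacing $U$ by $Uu$ secures property (4). Property (1) is untouched, since $u$ commutes with $M\oti1$ and hence $\Ad(Uu)_\pi|_M=\Ad U_\pi|_M=\al_\pi$.

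The core is the cohomological stage. With this $U$, I would form $\ga^{-1}=\Ad U^*\circ\al^\om$ and $\ga^0=\Ad U^*$; by Lemma \ref{lem: ga1-th} these descend to cocycle actions on $M_\om^{\th_\om}$, so $\ga=(\ga^{-1}\oti\id)\circ\ga^0$ is a cocycle action of $\bhG\times\bhG^\opp$ there whose $2$-cocycle $w$ lies in $M_\om^{\th_\om}\oti\lhGG$. Because $M_\om^{\th_\om}$ is of type II$_1$ by Corollary \ref{cor: fixed-typeII}, the $2$-cohomology vanishing theorem (Theorem \ref{thm: 2-coho-vanish}) supplies $c\in M_\om^{\th_\om}\oti\lhGG$ trivializing $w$. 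Splitting $c$ into $c^\el=c_{\cdot\oti\btr}$ and $c^r=c_{\btr\oti\cdot}$ and replacing $U$ by $U(c^r)^*$ yields exactly the package of Lemma \ref{lem: U-c-app}: $U$ is now a representation (property (2)), $cU^*$ is an $\al^\om$-cocycle, $U$ is fixed by $\Ad(cU^*)\circ\al^\om$, and the invariance $v_g(\th_g^\om\oti\id)(U)=U$ together with $(\th_g^\om\oti\id)(c)=c$ persists.

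Finally I would bring the $\al^\om$-cocycle $cU^*$ into the required shape. By Lemma \ref{lem: U-cocycle} there is $z\in U(M_\om^{\th_\om})$ with $UcU^*=(z\oti1)\al^\om(z^*)$; setting $V=(z^*\oti1)U(z\oti1)$, the resulting identity $V^*=(z^*\oti1)cU^*\al^\om(z)$ presents $V^*$ as a perturbation of the $\al^\om$-cocycle $cU^*$ by $z$, hence again an $\al^\om$-cocycle (property (3)), while $V$ stays a representation (property (2)). Property (4) survives because $z\in M_\om^{\th_\om}$ is fixed by each $\th_g^\om$ and $z\oti1$ commutes with $v_g$, so $v_g(\th_g^\om\oti\id)(V)=(z^*\oti1)v_g(\th_g^\om\oti\id)(U)(z\oti1)=V$; and property (1) survives because $z\in M_\om$ is a central sequence, giving $\Ad V_\pi|_M=\Ad U_\pi|_M=\al_\pi$. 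Replacing $U$ by $V$ completes the proof. I expect the cohomological stage to be the main obstacle: one must confirm that $w$ genuinely lands in $M_\om^{\th_\om}\oti\lhGG$ and that the hypotheses of Theorem \ref{thm: 2-coho-vanish}---in particular the type II$_1$ structure of $M_\om^{\th_\om}$ (Corollary \ref{cor: fixed-typeII}) and semi-liftability---are met, since every later step rests on the clean trivialization of $w$.
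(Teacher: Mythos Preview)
Your proposal is correct and follows essentially the same route as the paper's proof: the three-stage perturbation (first securing $\th$-invariance via the Shapiro-type averaging, then trivializing the $\bhGG$-cocycle $w$ inside $M_\om^{\th_\om}$ to obtain Lemma~\ref{lem: U-c-app}, finally conjugating by $z$ from Lemma~\ref{lem: U-cocycle}) is exactly the paper's argument, and your verification that each stage preserves the earlier properties matches the paper's computations. One small remark: the paper does not invoke Corollary~\ref{cor: fixed-typeII} for the type~II$_1$ structure of $M_\om^{\th_\om}$ but simply asserts it (it follows since $M_\om^{\th_\om}\cong (M\rti_\th\Ga)_\om$ and $M_\om$ is type~II$_1$); otherwise your outline and the paper's coincide.
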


\begin{cor}\label{cor: wU-cocycle}
Let $w\in M^\om\oti\lhG$ be an $\al^\om$-cocycle.
Take $U\in U(M^\om\oti\lhG)$ as in the previous theorem.
If $U^* w U$ is in $M_\om^{\th_\om}\oti\lhG$,
then there exists $z\in U(M_\om^{\th_\om})$
such that
$w=(z\oti1)\al^\om(z^*)$.
\end{cor}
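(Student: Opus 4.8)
The plan is to run the same Shapiro-unitary argument as in the proof of Lemma \ref{lem: U-cocycle}, but now with the two relevant $\ga^1$-cocycles taken to be $U$ and $wU$, the role of the $\th^\om$-fixed element being played by $a:=U^*wU\in M_\om^{\th_\om}\oti\lhG$.

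First I would record the algebra. Since $U^*$ is an $\al^\om$-cocycle, $\ga^1=\Ad U^*\circ\al^\om$ is an action and $\al^\om(U)=U_{12}U_{13}U_{12}^*$; a direct check then shows that both $U$ and $wU$ are $\ga^1$-cocycles, the latter because $(wU\oti1)\ga^1(wU)=w_{12}\al^\om(w)U_{12}U_{13}=(\id\oti\De)(wU)$ by the $\al^\om$-cocycle identity for $w$ and the representation identity for $U$. The hypothesis enters only through $w=UaU^*$, equivalently $wU=Ua$.

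Next, exactly as in Lemma \ref{lem: U-cocycle}, the $\bhG\times\Ga$-action $\ga^1\circ\th^\om$ is strongly free and semi-liftable, so Theorem \ref{thm: jRoh-G2-fix} gives it the joint Rohlin property with Rohlin projections $E\in M_\om^{\th_\om}\oti\lhG$; I would apply this to $\meC=\{U,wU\}$ and form the Shapiro unitaries $\mu^\de:=(\id\oti\vph)(wUE)$ and $\nu^\de:=(\id\oti\vph)(UE)$. The key point is that $\mu^\de\nu^{\de*}\in M_\om^{\th_\om}$: collapsing the product by (R3) and substituting $wU=Ua$ yields
\[
\mu^\de\nu^{\de*}=(\id\oti\vph)(wUEU^*)=(\id\oti\vph)(U(aE)U^*)=\sum_{\pi\in\IG}d(\pi)^2\Ph_\pi^{\ga^2}((aE)_\pi),
\]
where I used $\vph=\oplus_\pi d(\pi)\Tr_\pi$ and $\Ph_\pi^{\ga^2}(X)=(\id\oti\tr_\pi)(UXU^*)$ from Lemma \ref{lem: vU-phi}(2). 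Since $aE\in M_\om^{\th_\om}\oti B(H_\pi)$ and $\ga^2$ commutes with $\th^\om$ and preserves $M_\om^{\th_\om}$, the right-hand side lies in $M_\om^{\th_\om}$. This is precisely where the assumption $U^*wU\in M_\om^{\th_\om}\oti\lhG$ is needed, and it is the main obstacle: without it the construction only produces a trivialiser in $M^\om$.

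Finally, the approximate-equivariance estimates and the index selection trick, verbatim as in Lemma \ref{lem: U-cocycle}, produce $\mu,\nu\in U(M^\om)$ with $wU=(\mu\oti1)\ga^1(\mu^*)$ and $U=(\nu\oti1)\ga^1(\nu^*)$, and $z:=\mu\nu^*\in U(M_\om^{\th_\om})$. Unwinding these two relations through $\ga^1=\Ad U^*\circ\al^\om$ gives $\al^\om(\nu)=(\nu\oti1)U^*$ and $\al^\om(\mu)=w^*(\mu\oti1)U^*$, whence
\[
\al^\om(z^*)=\al^\om(\nu)\al^\om(\mu^*)=(\nu\oti1)U^*U(\mu^*\oti1)w=(z^*\oti1)w,
\]
that is $w=(z\oti1)\al^\om(z^*)$ with $z\in U(M_\om^{\th_\om})$, as required.
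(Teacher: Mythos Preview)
Your proof is correct and follows essentially the same approach as the paper's own proof: form the Shapiro unitaries for the two $\ga^1$-cocycles $U$ and $wU$, use Lemma \ref{lem: vU-phi}(2) together with the hypothesis $U^*wU\in M_\om^{\th_\om}\oti\lhG$ to see that their ratio lies in $M_\om^{\th_\om}$, and finish by index selection. The only cosmetic difference is that the paper names the Shapiro unitaries in the opposite order (its $\mu^\de$ is your $\nu^\de$ and vice versa), obtaining $z=\nu\mu^*$ where you obtain $z=\mu\nu^*$; the computation and the conclusion are identical.
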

\begin{proof}
The proof is  similar to that of Lemma \ref{lem: U-cocycle}.
Let $\ga^1=\Ad U^*\circ\al^\om$, $\ga^2=\Ad U^*(\cdot\oti1)$
and $\ga=(\ga^1\oti\id)\circ\ga^2$ as before.

Now we have two $\ga^1$-cocycles $U$ and $wU$.
Let $K\in\Projf(\lhG)$ be an $(F,\de)$-invariant central projection.
By Theorem \ref{thm: jRoh-G2-fix},
we can take a Rohlin projection $E\in M_\om^{\th_\om}\oti\lhG K$
as in Definition \ref{defn: j-Rohlin} for $\meC=\{U, wU\}$.
Set the Shapiro unitaries
$\mu^\de:=(\id\oti\vph)(UE)$ and $\nu^\de:=(\id\oti\vph)(wUE)$.
Then we have
\[
\mu^\de \nu^{\de*}=(\id\oti\vph)(UEU^*w^*),\quad
\mu^\de \nu^{\de*}\in M_\om^{\th_\om}.
\]

Next we show that
$\mu^\de \nu^{\de*}\in M_\om^{\th_\om}$.
By Lemma \ref{lem: vU-phi},
we have
\begin{align*}
\mu^\de \nu^{\de*}
&=
(\id\oti\vph)(UEU^*w^*)
=\sum_{\pi\in\IG}
d(\pi)^2 (\id\oti\tr_\pi)(UEU^*w^*)
\\
&=
\sum_{\pi\in\IG}
d(\pi)^2 \Ph_\pi^{\ga^2}(EU^*w^*U).
\end{align*}
Since $EU^* w^* U\in (M_\om)^{\th_\om}\oti B(H_\pi)$
by our assumption on $w$,
$\mu^\de \nu^{\de*}$ is in $M_\om$.
Using the commutativity of $\ga^2|_{M_\om}$ and $\th_\om$,
we have
$\th_g^\om(\mu^\de \nu^{\de*})
=
\mu^\de \nu^{\de*}$.
Now we get
$\mu$ and $\nu$ in $U(M^\om)$ by the index selection
as before.
Then $\mu\nu^*\in M_\om^{\th_\om}$.
By definition of an index selection map (i.e. it commutes with $\ga^1$), 
we have
$U\ga^1(\mu)=\mu\oti1$ and $wU \ga^1(\nu)=\nu\oti1$.
These imply
$w \al^\om(\nu\mu^*)
=
wU\ga^1(\nu\mu^*)U^*
=
(\nu\mu^*\oti1)$.
Therefore, $z:=\nu\mu^*$ is a desired solution.
\end{proof}

The previous result yields the following,
which can be also proved by using \cite[Theorem 7.2]{M-III1}.

\begin{cor}\label{cor: modular-app}
Let $M$ be an injective factor
and $\al$ an
 approximately inner and centrally free cocycle action of $\bhG$ on $M$.
Let $\vph\in W(M)$ and $T>0$.
Then there exists a sequence $\{w_n\}_n\subs U(M)$
such that
\begin{itemize}
\item
$\si_T^\vph=\displaystyle\lim_{n\to\infty}\Ad w_n$ in $\Aut(M)$;

\item
$[D\vph\circ\Ph_\pi^\al:D\vph\oti\tr_\pi]_T
=\displaystyle\lim_{n\to\infty}\al_\pi(w_n)(w_n^*\oti1)$
for all $\pi\in\IG$,
\end{itemize}
where the latter limit is taken in the strong* topology.
\end{cor}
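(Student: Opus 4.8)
The plan is to produce the unitaries $w_n$ by running the relative approximate-innerness machinery of \S\ref{sec:appr} for the modular flow. Write $v_t\in M\oti\lhG$ for the element whose $\pi$-component is the Connes cocycle $[D\vph\circ\Ph_\pi^\al:D\vph\oti\tr_\pi]_t$. Since $\al_\pi$ intertwines $\si^\vph$ and $\si^{\vph\circ\Ph_\pi^\al}$ while $\tr_\pi$ is a trace, the chain rule for Connes cocycles yields the basic identities
\[
\al_\pi\circ\si_t^\vph=\Ad v_{t,\pi}\circ(\si_t^\vph\oti\id)\circ\al_\pi,
\qquad
v_{s+t}=v_s\,(\si_s^\vph\oti\id)(v_t),
\]
and, by the properties of the canonical extension (\S\ref{sec:appendix}), each $v_t^*$ is an $\al$-cocycle. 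Reformulated in the ultraproduct, it suffices to produce a single $W\in U(M^\om)$ with $\Ad W|_M=\si_T^\vph$ and $\al^\om(W)(W^*\oti1)=v_T$; a representing sequence $W=(w_n)_n$ then realizes both limits along $\om$ (the second in the strong$*$ topology), and a diagonal extraction over a countable dense subset of $M_*$ and over the (countable) set $\IG$ produces an honest sequence with the asserted convergence.

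I would first reduce to the case that $\al$ is a genuine action, perturbing the $2$-cocycle away by Lemma \ref{lem: prop-inf-coboundary} when $M$ is properly infinite. Next I apply Theorem \ref{thm: app-cocycle} with $\Ga=\Z$ acting on $M$ by $n\mapsto\si_{nT}^\vph$ and with the accompanying family $(v_{nT})_n$: the first displayed identity is the required covariance, the second is the cocycle relation, and the statement that $v_g^*$ is an $\al$-cocycle is the fact just recalled, while $M_\om$ is of type II$_1$, $Z(M)=\C$, and $\al$ is approximately inner by hypothesis. This furnishes $U=(U^\nu)_\nu\in U(M^\om\oti\lhG)$ that is a unitary representation of $\bhG$ whose adjoint is an $\al^\om$-cocycle, with $(\Ad U_\pi^\nu)_\nu\to\al_\pi$ and with the modular covariance $v_{nT}((\si_{nT}^\vph)^\om\oti\id)(U)=U$.

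Because $M$ is injective we have $\si_T^\vph\in\oInt(M)$, so I can choose an implementer $W_0\in U(M^\om)$ with $\Ad W_0|_M=\si_T^\vph$ which is moreover fixed by $(\si_T^\vph)^\om$. Put
\[
B:=(W_0^*\oti1)\,v_T^*\,\al^\om(W_0)\in U(M^\om\oti\lhG),
\]
an $\al^\om$-cocycle obtained as a coboundary perturbation of the $\al^\om$-cocycle $v_T^*$. Conjugating the first displayed identity into the ultraproduct shows that each $B_\pi$ commutes with $\al_\pi(M)=U_\pi(M\oti1)U_\pi^*$, so that $U^*BU$ is evaluated in the relative commutant; a short computation using the chain rule, the invariance of $W_0$, and the covariance $v_{nT}((\si_{nT}^\vph)^\om\oti\id)(U)=U$ then shows that $U^*BU$ is fixed by the modular flow and hence lies in $M_\om^{\th_\om}\oti\lhG$. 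Corollary \ref{cor: wU-cocycle} now provides $z\in U(M_\om^{\th_\om})$ with $B=(z\oti1)\al^\om(z^*)$. Setting $W:=W_0z$, the factor $z$ commutes with $M$, so $\Ad W|_M=\si_T^\vph$, while $\al^\om(W)(W^*\oti1)=\al^\om(W_0)B^*(W_0^*\oti1)=v_T$ by the definition of $B$; this is the required $W$.

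The routine points are the modular identities and the final computation. The genuine obstacle is twofold but is really one phenomenon: one must verify the central-freeness hypothesis of Theorem \ref{thm: app-cocycle}, namely that $\al_\pi\si_{nT}^\vph$ is properly centrally non-trivial for all $(\pi,n)\ne(\btr,0)$, and one must secure the $(\si_T^\vph)^\om$-invariant implementer $W_0$ above. Both amount to controlling the modular flow on the asymptotic centralizer (for $\pi\ne\btr$ this is a $\si^\vph$-twisted form of the central freeness of $\al$, and for $\pi=\btr$ it is the proper central non-triviality of the modular flow itself). This is where injectivity enters decisively, through the characterizations of approximate innerness and central triviality in \cite{M-T-endo-pre}. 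As the remark records, this whole step can instead be deduced from \cite[Theorem 7.2]{M-III1}.
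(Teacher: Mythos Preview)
Your approach has a genuine gap that cannot be repaired along the lines you indicate. You propose to invoke Theorem \ref{thm: app-cocycle} with $\Ga=\Z$ acting by $\th_n=\si_{nT}^\vph$, and you correctly isolate the hypothesis you must verify: that $\al_\pi\si_{nT}^\vph$ is properly centrally non-trivial for all $(\pi,n)\neq(\btr,0)$. For $\pi=\btr$ and $n\neq0$ this says that $\si_{nT}^\vph$ itself is properly centrally non-trivial. But modular automorphisms lie in $\Cnt(M)$; in particular $(\si_{nT}^\vph)^\om$ acts trivially on $M_\om$, so this hypothesis is simply \emph{false}. No appeal to injectivity or to \cite{M-T-endo-pre} changes this: those references characterize $\Cnt$, they do not move $\si_T^\vph$ out of it. The same failure blocks the Rohlin-type argument in the second lemma of \S\ref{sec:appr}, which is how the covariance $v_g(\th_g^\om\oti\id)(U)=U$ is obtained; that argument needs a Rohlin partition for $\th^\om$ acting on $M_\om$, and there is none when $\th^\om$ is trivial there. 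For the same reason you cannot cheaply upgrade an arbitrary implementer $W_0$ of $\si_T^\vph$ to a $(\si_T^\vph)^\om$-fixed one.

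The paper avoids this entirely by applying Theorem \ref{thm: app-cocycle} with the \emph{trivial} group $\Ga=\{e\}$, so only the central freeness of $\al$ is needed to produce $U$. The modular automorphism is then handled \emph{after the fact}: one picks any approximating sequence $v=(v^\nu)_\nu$ for $\si_T^\vph$, forms the $\al^\om$-cocycle
\[
w_\pi=(v^*\oti1)\,[D\vph\circ\Ph_\pi^\al:D\vph\oti\tr_\pi]_T^*\,\al_\pi^\om(v),
\]
and checks by a direct computation in $(M\oti B(H_\pi))_*$, using only the identity $\Ph_\pi^\al\circ\si_T^{\vph\circ\Ph_\pi^\al}=\si_T^\vph\circ\Ph_\pi^\al$, that $U^*wU$ lands in $M_\om\oti\lhG$. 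Corollary \ref{cor: wU-cocycle} (again with $\Ga=\{e\}$, so $M_\om^{\th_\om}=M_\om$) then gives $z\in M_\om$ with $w=(z\oti1)\al^\om(z^*)$, and a representing sequence of $vz$ does the job. The point is that one never needs the modular flow to satisfy any freeness condition; its interaction with $\al$ is captured by the $M_*$-level identity above rather than by any Rohlin property.
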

\begin{proof}
By \cite[Theorem 6.2]{M-T-CMP} and Lemma \ref{lem: prop-inf-coboundary},
we can perturb $\al$ to be an action.
Considering the chain rule of Connes' cocycles,
we may and do assume that $\al$ is an action.
Applying the previous theorem to $\al$ and $\Ga=\{e\}$,
we can take a unitary $U=(U^\nu)_\nu$ in $M^\om\oti \lhG$
such that
$\Ad U^\nu$ approximates $\al$ and $U^*$ is an $\al^\om$-cocycle.

Take a sequence of unitaries $\{v^\nu\}_\nu\subs M$ such that
$\si_T^\vph=\displaystyle\lim_{\nu\to\infty}\Ad v^\nu$.
This is possible because $\si_T^\vph$ is approximately inner
\cite{Co-III1, Ha-III1, KST}.
We set $v:=(v^\nu)_\nu\in M^\om$.

For $\pi\in\IG$,
we set a unitary
$w_\pi^\nu
:=((v^\nu)^*\oti1)[D\vph\circ\Ph_\pi^\al:D\vph\oti\tr_\pi]_T^*
\al_\pi(v^\nu)$ in $M\oti B(H_\pi)$,
and also set $w^\nu:=(w_\pi^\nu)_\pi\in M\oti\lhG$
and $w=(w^\nu)_\nu\in M^\om\oti\lhG$.
Then by Lemma \ref{lem: Connes-cocycle-alpha},
we see that $w$ is an $\al^\om$-cocycle.
We will check that $U^* w U\in M_\om\oti \lhG$.
Take any $\pi\in \IG$ and $\ps\in M_*$.
Recall the notation $\ph^\nu\sim \ps^\nu$ for sequences
$(\ph^\nu)_\nu,(\ps^\nu)_\nu\subs (M\oti B(H_\pi))_*$
with $\displaystyle\lim_{\nu\to\om}\|\ph^\nu-\ps^\nu\|=0$.
Using $\Ph_\pi^\al\circ\si_T^{\vph\circ\Ph_\pi^\al}=\si_T^\vph\circ\Ph_\pi^\al$
(see \cite[\S 3.2]{M-T-endo-pre}),
we have
\begin{align*}
&\hspace{16pt}(U_\pi^\nu)^*w_\pi^\nu U_\pi^\nu\cdot
(\ps\oti\tr_\pi)\cdot(U_\pi^\nu )^* (w_\pi^\nu)^* U_\pi^\nu
\\
&\sim
(U_\pi^\nu)^*w_\pi^\nu
\cdot(\ps\circ\Ph_\pi^\al)\cdot (w_\pi^\nu)^* U_\pi^\nu
\\
&=
(U^\nu)^*
((v^\nu)^*\oti1)
[D\vph\circ\Ph_\pi^\al:D\vph\oti\tr_\pi]_T^*
\cdot
\al_\pi(v^\nu)
\cdot(\ps\circ\Ph_\pi^\al)
\\
&\hspace{16pt}
\cdot
\al_\pi((v^\nu)^*)
[D\vph\circ\Ph_\pi^\al:D\vph\oti\tr_\pi]_T
(v^\nu\oti1)
U^\nu
\\
&=
(U^\nu)^*
((v^\nu)^*\oti1)
[D\vph\circ\Ph_\pi^\al:D\vph\oti\tr_\pi]_T^*
\cdot
((v^\nu\cdot\ps\cdot (v^\nu)^*)\circ\Ph_\pi^\al)\cdot
\\
&\hspace{16pt}
\cdot[D\vph\circ\Ph_\pi^\al:D\vph\oti\tr_\pi]_T
(v^\nu\oti1)
U^\nu
\\
&\sim
(U^\nu)^*
((v^\nu)^*\oti1)
[D\vph\circ\Ph_\pi^\al:D\vph\oti\tr_\pi]_T^*
\cdot
((\ps\circ\si_{-T}^\vph)\circ\Ph_\pi^\al)\cdot
\\
&\hspace{16pt}
\cdot[D\vph\circ\Ph_\pi^\al:D\vph\oti\tr_\pi]_T
(v^\nu\oti1)
U^\nu
\\
&\sim
(U^\nu)^*
\cdot
\Big{(}
\big{(}
[D\vph\circ\Ph_\pi^\al:D\vph\oti\tr_\pi]_T^*
\cdot
((\ps\circ\si_{-T}^\vph)\circ\Ph_\pi^\al)\cdot
[D\vph\circ\Ph_\pi^\al:D\vph\oti\tr_\pi]_T
\big{)}
\\
&\hspace{60pt}
\circ(\si_T^{\vph\oti\tr_\pi})
\Big{)}
\cdot
U^\nu
\\
&=
(U^\nu)^*
\cdot
\big{(}
(\ps\circ\si_{-T}^\vph)\circ\Ph_\pi^\al)\circ\si_T^{\vph\circ\Ph_\pi^\al}
\big{)}
\cdot
U^\nu
\\
&=
(U^\nu)^*
\cdot
(\ps\circ\Ph_\pi^\al)
\cdot
U^\nu
\\
&\sim
\ps\oti\tr_\pi.
\end{align*}

By \cite[Lemma 3.6]{M-T-CMP},
we see that $U^* w U$ is in $M_\om\oti\lhG$.
Using Corollary \ref{cor: wU-cocycle},
we can take a unitary $z\in M_\om$
such that $w=(z\oti1)\al^\om(z^*)$,
that is,
$[D\vph\circ\Ph_\pi^\al:D\vph\oti\tr_\pi]_T^*
=(vz\oti1)\al^\om(z^*v^*)$.
Then a representing sequence of $vz$ satisfies the desired properties.
\end{proof}

\section{Classification for type III$_\lambda$ case}\label{sec:lamclass}

\subsection{Canonical extension to discrete cores
and the main result}\label{sec:ext}

As explained in Introduction,
our idea in type
III$_\lambda$ case is that we reduce the classification problem
to type II$_\infty$
case by using the discrete decomposition.
For this purpose, we have to consider the canonical extension
of endomorphisms of a type
III$_\lambda$ factor to its discrete core.
This is possible for endomorphisms with trivial
Connes-Takesaki modules as follows \cite[Proposition 4.5]{Iz-can2}.
Readers are referred to \S\ref{sec:appendix} for relations
between the results of \cite{Iz-can2} and \cite{M-T-endo-pre}.

Let $R$ be a type III$_\lambda$ factor, $0<\la<1$,
and $\phi$ a generalized trace,
that is, $\ph(1)=\infty$ and
$\sigma^\phi_T=\id$, $T=-2\pi/\log \lambda$, hold.
Then $R\rti_{\si^\ph}\T$ is called
the discrete core.
We denote by $\la^\ph(t)$ the unitary implementing $\si_t^\ph$ for $t\in\T$.

\begin{defn}
Let $R$ be a type III$_\la$ factor
and $K$ a finite dimensional Hilbert space.
For $\be\in \Mor_0(R,R\oti B(K))$
with the
standard left inverse $\Phi$ and $\mo(\be)=\id$,
we define the \emph{canonical extension}
$\wdt{\be}
\in\Mor(R\rti_{\si^\ph}\T,(R\rti_{\si^{\ph}}\T)\oti B(K))$
by
\begin{enumerate}
\item 
$\wdt{\be}(x)=\be(x)$ 
for all $x\in R$;

\item 
$\wdt{\be}(\la^\ph(t))
=[D\ph\circ \Ph:D\ph\oti\tr_K]_t 
(\la^\ph(t)\oti1)$ for all $t\in\R/T\Z$.
\end{enumerate}
\end{defn}

For a cocycle action $\al\in\Mor(R,R\oti\lhG)$,
we can prove that $\tal:=(\tal_\pi)_\pi$ is
a cocycle action in a similar way to the proof of
Theorem \ref{thm: canonical-ext-action}.

\begin{lem}\label{lem: be-th-app-cent}
If $\be\in\Mor(\meR_\la,\meR_\la\oti\lhG)$
is an approximately inner and centrally free cocycle action,
then
$\tbe$ is also approximately inner and centrally free.
\end{lem}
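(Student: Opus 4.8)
The plan is to treat the two assertions—approximate innerness and central freeness—separately, in each case reducing to a statement about a single finite-index endomorphism and its canonical extension. Since both notions are defined componentwise over $\pi\in\IG$ (approximate innerness of $\be$ means every $\be_\pi$ is approximately inner, and central freeness means every $\be_\pi$ with $\pi\neq\btr$ is properly centrally non-trivial), and since the canonical extension is itself componentwise, $\tbe=(\tbe_\pi)_\pi$ with $\tbe_\pi=\widetilde{\be_\pi}$, it suffices to show: for $\be_\pi\in\Mor_0(\meR_\la,\meR_\la\oti B(H_\pi))$ with $\mo(\be_\pi)=\id$, approximate innerness of $\be_\pi$ passes to $\tbe_\pi$, and proper central non-triviality of $\be_\pi$ (for $\pi\neq\btr$) passes to $\tbe_\pi$. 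Here $\tbe_\pi$ is an endomorphism of the discrete core $N:=\meR_\la\rti_{\si^\ph}\T$, which is a factor of type II$_\infty$.

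For approximate innerness I would exploit that $N$ is semifinite. Let $\ta$ denote the canonical trace on $N$. By Corollary \ref{cor: app-cent-infinite}, $\tbe_\pi$ is approximately inner precisely when $\ta\circ\Ph_\pi^{\tbe}=\ta\oti\tr_\pi$ on $N\oti B(H_\pi)$, so the whole task becomes the verification of this single trace identity. The point is that $\ta$ is the dual weight $\hat{\ph}$ twisted by the positive generator $h$ of the modular unitaries, $h^{it}=\la^\ph(t)$, and that the defining relation $\tbe_\pi(\la^\ph(t))=[D\ph\circ\Ph_\pi^\be:D\ph\oti\tr_\pi]_t\,(\la^\ph(t)\oti1)$ makes the extended left inverse $\Ph_\pi^{\tbe}$ intertwine $h$ with exactly the Connes cocycle $[D\ph\circ\Ph_\pi^\be:D\ph\oti\tr_\pi]$. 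Thus the discrepancy between $\hat{\ph}\circ\Ph_\pi^{\tbe}$ and $\hat{\ph}\oti\tr_\pi$ is governed by this cocycle, and it is precisely the one cancelled by the twist $h^{-1}$ defining $\ta$, yielding $\ta\circ\Ph_\pi^{\tbe}=\ta\oti\tr_\pi$. Corollary \ref{cor: modular-app}, which ties the modular automorphisms of $\ph$ to these very Connes cocycles, is the technical device I would use to control the cocycle and make the cancellation rigorous; this is essentially the characterization recorded in \cite{M-T-endo-pre}.

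For central freeness I would argue at the level of asymptotic centralizers, aiming to transport proper central non-triviality from $\meR_\la$ to $N$. Because $\tbe_\pi$ restricts to $\be_\pi$ on $\meR_\la$, its induced map $\tbe_\pi^\om$ on asymptotically central sequences coming from $\meR_\la$ coincides with $\be_\pi^\om$; the plan is to use this to match a hypothetical central-triviality witness $a\in N\oti B(H_\pi)$ for $\tbe_\pi$ (a non-zero $a$ with $\tbe_\pi^\om(x)a=(x\oti1)a$ for all $x\in N_\om$) against the proper central non-triviality of $\be_\pi$ on $\meR_\la$, deriving a contradiction. The heart of the matter is the identification of $N_\om$ in terms of the central sequences of $\meR_\la$—concretely, the modular-invariant ones—which is supplied by the central-triviality half of the characterization in \cite{M-T-endo-pre}.

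The step I expect to be the main obstacle is this comparison of asymptotic centralizers in the central-freeness part. The subtlety is that central sequences of the type II$_\infty$ core $N$ need not come from $\meR_\la$: a sequence central in $\meR_\la$ is central in $N$ only if it is asymptotically invariant under the modular flow, while $N$ also contains the modular unitaries $\la^\ph(t)$ themselves. Showing that these modular directions contribute nothing new to the detection of central triviality—so that proper central non-triviality can indeed be read off from $\meR_\la$-central sequences—relies on the aperiodicity and proper outerness of $\si^\ph$ together with the explicit form of the canonical extension, and it is here that the real work, furnished by \cite{M-T-endo-pre}, is concentrated. By contrast the approximate-innerness half is clean, since Corollary \ref{cor: app-cent-infinite} converts it into the single trace computation above.
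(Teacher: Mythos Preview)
Your treatment of approximate innerness is essentially the paper's: reduce via Corollary~\ref{cor: app-cent-infinite} to the trace identity $\ta\circ\Ph_\pi^{\tbe}=\ta\oti\tr_\pi$ on the type II$_\infty$ core, and verify this by a Connes-cocycle chain-rule computation exploiting $\ta=\hat\ph_{h^{-1}}$ and the defining formula for $\tbe_\pi(\la^\ph(t))$. One correction: Corollary~\ref{cor: modular-app} plays no role here---it is a statement about approximating $\si_T^\vph$ by inner automorphisms, not about the trace identity. The cancellation you describe is a direct calculation with $[D\hat\ph_h\circ\Ph_\pi^{\tbe}:D\hat\ph_h\oti\tr_\pi]_t$, and nothing more is needed.

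The central-freeness argument, however, has a genuine gap. First, your appeal to ``aperiodicity of $\si^\ph$'' is false: $\ph$ is a generalized trace, so $\si_T^\ph=\id$ and the modular flow is periodic; this is precisely why the discrete core exists. Second, and more seriously, your plan to match a hypothetical witness $a\in N\oti B(H_\pi)$ for central triviality of $\tbe_\pi$ against the proper central non-triviality of $\be_\pi$ on $\meR_\la$ does not close. The inclusion $(\meR_\la)_\om\subs N_\om$ lets you restrict the intertwining relation to $(\meR_\la)_\om$, but $a$ still lives in $N\oti B(H_\pi)$, not in $\meR_\la\oti B(H_\pi)$, so it is not a witness against $\be_\pi$. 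Projecting $a$ down to $\meR_\la\oti B(H_\pi)$ via the $\th$-expectation may kill it, and there is no Fourier decomposition indexed by a discrete group to fall back on (the dual action $\th$ is by $\Z$, not by a compact group).

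The paper circumvents this by a two-step argument you do not anticipate. It first proves \emph{freeness} of $\tbe$ directly: if some $\tbe_\pi$ were not properly outer, irreducibility of $\be_\pi$ forces $\tbe_\pi$ to be inner, and then the inclusion $(\meR_\la)_\om\subs N_\om$ makes $\be_\pi$ centrally trivial, a contradiction. With freeness in hand (hence irreducibility of each $\tbe_\pi$), it suffices to show mere central non-triviality of $\tbe_\pi$ on $N_\om$. For this the paper passes to the \emph{second} canonical extension $\widetilde{\tbe}$ on $N\rti_\th\Z$, which by Takesaki duality (Lemma~\ref{lem: second-cocycle}) is cocycle conjugate to $\be$ and hence centrally free; since $(N\rti_\th\Z)_\om\cong N_\om^{\th_\om}$ and $\widetilde{\tbe}^\om|_{N_\om}=\tbe^\om|_{N_\om}$, one gets non-triviality of $\tbe_\pi^\om$ on $N_\om^{\th_\om}\subs N_\om$. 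The duality detour is the key idea you are missing.
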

\begin{proof}
We check $\mo(\tbe_\pi)=\id$ for each $\pi\in\IG$.
Let $\hat{\ph}$ be the dual weight on $M$.
Then $\si_t^{\hat{\ph}}=\Ad \la^\ph(t)$ for $t\in\T$.
Take a positive operator $h$ such that $\la^\ph(t)=h^{-it}$
for $t\in\T$.
Then $\hat{\ph}_{h}$ is a trace on $M:=\meR_\la\rti_{\si^\ph}\T$.
Note that $\Ph_\pi^{\tbe}$ commutes with the dual action $\th$.
Let $T_\th\col M\ra \meR_\la$ be the operator valued weight
obtained by averaging the $\Z$-action $\th$.
Using
$\hat{\ph}\circ\Ph_\pi^{\tbe}=\ph\circ\Ph_\pi^\be\circ(T_\th\oti\id)$,
we can compute as follows:
\begin{align*}
[D\hat{\ph}_{h}\circ \Ph_\pi^{\tbe}: D \hat{\ph}_h\oti\tr_\pi]_t
&=
[D\hat{\ph}_{h}\circ \Ph_\pi^\tbe:D\hat{\ph}\circ \Ph_\pi^\tbe]_t
[D\hat{\ph}\circ \Ph_\pi^\tbe: D\hat{\ph}\oti\tr_\pi]_t
\\
&\hspace{16pt}
\cdot[D\hat{\ph}\oti\tr_\pi: D \hat{\ph}_h\oti\tr_\pi]_t
\\
&=
\tbe_\pi(h^{it})
[D\ph\circ \Ph_\pi^\be\circ(T_\th\oti\id): D\ph\circ T_\th\oti\tr_\pi]_t
(h^{-it}\oti1)
\\
&=
\tbe_\pi(\la^\ph(t)^*)
[D\ph\circ \Ph_\pi^\be: D\ph\oti\tr_\pi]_t
(\la^\ph(t)\oti1)
=1.
\end{align*}
By Corollary \ref{cor: app-cent-infinite},
$\tbe$ is approximately inner.

Next we check the freeness of $\tbe$.
If $\tbe_\pi$ is not properly outer for some $\pi\neq\btr$,
then $\tbe_\pi$ is actually implemented by a unitary.
This fact is proved as in the proof of \cite[Proposition 3.4]{Iz-can2}
because of the irreducibility of $\be_\pi$
\cite[Lemma 2.8]{M-T-CMP}.
Also note Lemma \ref{lem: bijection}.
Using $(\meR_\la)_\om\subs M_\om$
(see the proof of \cite[Lemma 4.11]{M-T-endo-pre}),
we see that $\be_\pi$ is centrally trivial,
and this is a contradiction.

We show that $\tbe$ is centrally free action.
The second canonical extension $\widetilde{\tbe}$ is
cocycle conjugate to $\be$
by Lemma \ref{lem: second-cocycle}.
Hence $\widetilde{\tbe}$ is centrally free on $M\rti_\th\Z$,
and $(\widetilde{\tbe}_\pi)^\om$ is non-trivial on $(M\rti_\th\Z)_\om$
for any $\pi\neq\btr$.
Since $(M\rti_\th\Z)_\om$ is naturally isomorphic to
$M_\om^{\th_\om}$
and $(\widetilde{\tbe})^\om|_{M_\om}=(\tbe)^\om|_{M_\om}$,
$(\tbe_\pi)^\om$ is non-trivial on $M_\om^{\th_\om}$
for any $\pi\neq\btr$.
In particular,
$\tbe$ is a centrally free action because $\tbe$ is free.
\end{proof}

Though the action $\tbe$ is unique up to cocycle conjugacy,
we need to consider the $\Z$-action $\th$ to obtain
the uniqueness of the original $\be$.
Our aim is to classify the $\bhG\times\Z$-action $\tbe\th$
on $\meR_{0,1}$.
The following is our main theorem in this section.

\begin{thm}\label{thm: cocycle-al-th}
Let $M\cong \meR_{0,1}$ with a trace $\ta$,
$\th\in\Aut(M)$,
$\al$ be an action of $\bhG$ on $M$,
$\be$ an action of $\bhG$ on $\meR_0$.
Assume the following:
\begin{itemize}
\item
$\th\in\Aut(M)$ satisfies $\ta\circ\th=\la\ta$, $0<\la<1$;

\item 
$\al$ is approximately inner and centrally free;

\item 
$\al$ and $\th$ commute;

\item
$\be$ is free.
\end{itemize}
Then $\alpha\theta$ is cocycle conjugate to $\th\oti\be$.
\end{thm}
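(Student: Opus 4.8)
The overall plan is a model action splitting in the style of Connes \cite{Con-auto}: I would use the approximately inner machinery of \S\ref{sec:appr} to replace $\al$ by an asymptotic inner perturbation compatible with $\th$, then realize the trace-scaling $\th$ as a shift on a Rohlin tower so as to cancel its Connes--Takesaki module, and finally match the residual $\bhG$-dynamics with the free model $\be$ by the type II$_1$ classification \cite[Theorem 7.1]{M-T-CMP}. The first move is to view $\al\th$ as an action of $\bhG\times\Z$ and to apply Theorem \ref{thm: app-cocycle} to $\al$ together with the $\Z$-action generated by $\th$. Its hypotheses are met: $\al$ and $\th$ commute, so the relevant cocycles are trivial; $M$ is a factor, so $Z(M)=\C\subs M^\th$; $M_\om$ is of type II$_1$; $\al$ is approximately inner; and $\al_\pi\th^n$ is properly centrally non-trivial for $(\pi,n)\neq(\btr,0)$, since for $n\neq0$ the module of $\th^n$ is $\la^n\neq1$ (forcing proper central non-triviality independently of $\pi$) while for $n=0$, $\pi\neq\btr$ it is the central freeness of $\al$. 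I thus obtain $U\in U(M^\om\oti\lhG)$ which is a $\bhG$-representation, whose adjoint $U^*$ is an $\al^\om$-cocycle with $(\Ad U_\pi^\nu)_\nu$ approximating $\al_\pi$, and which is fixed by $\th^\om$.

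Next I would pass to the type II$_1$ core relative to $\th$. The $\Z$-action $\th^\om$ on the type II$_1$ algebra $M_\om$ is strongly free (its module is $\la\neq1$) and semi-liftable, so Corollary \ref{cor: fixed-typeII} shows that $M_\om^{\th_\om}$ is again of type II$_1$. Set $\ga^1:=\Ad U^*\circ\al^\om$; the identity $(\th^\om\oti\id)(U)=U$ guarantees that $\ga^1$ and $\th^\om$ commute, and the perturbation places us exactly in the situation of the relative Rohlin theorem. Theorem \ref{thm: jRoh-G2-fix} then produces Rohlin projections for $\ga^1$ inside the $\th$-fixed algebra $M_\om^{\th_\om}$, while Corollary \ref{cor: gamma-th Rohlin} produces periodic Rohlin towers for $\th$ inside the $\ga^1$-fixed algebra $M_\om^{\ga^1}$. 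The two devices together are what allow the simultaneous handling of the $\bhG$-part and the $\Z$-part.

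Using the $\th$-towers I would realize $M$, up to cocycle conjugacy, as a decomposition in which $\th$ becomes a shift carrying all of the module $\la$ while acting trivially on a large injective type II$_1$ subfactor, on which $\al$ restricts to an approximately inner and centrally free action commuting with the now-trivial $\th$. On that type II$_1$ subfactor I would then run the equivariant splitting powered by the relative Rohlin theorem \ref{thm: jRoh-G2-fix} and the 2-cohomology vanishing of Theorem \ref{thm: 2-coho-vanish}, and match the $\bhG$-action with the free model $\be$ via \cite[Theorem 7.1]{M-T-CMP}. Transporting this back along the decomposition yields the cocycle conjugacy $\al\th\sim\th\oti\be$, with $\th$ going to $\th\oti\id_{\meR_0}$ and $\al$ to $\id_M\oti\be$.

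The step I expect to be the main obstacle is precisely the simultaneous cancellation of the module and splitting of the free model: one must arrange the $\meR_0$-copy carrying $\be$ to sit inside $M_\om^{\th_\om}$ (not merely $M_\om$), so that the trace-scaling $\th$ restricts trivially to it, while the shift that absorbs the module is kept disjoint from the $\bhG$-dynamics. This is exactly the content for which the relative Rohlin theorem was developed, and the delicacy lies in verifying the trace-compatibility hypotheses ($\ta^\om\circ\Ph^{\ga^1}_\pi=\ta^\om\oti\tr_\pi$ and $\ta^\om\circ\th^\om=\ta^\om$ on the central sequence algebra) that make both Rohlin devices applicable at once.
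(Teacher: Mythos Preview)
Your opening move---applying Theorem \ref{thm: app-cocycle} with $\Ga=\Z$ and $v_g=1$ to produce a $\bhG$-representation $U$ fixed by $\th^\om$, then setting $\ga^1=\Ad U^*\circ\al^\om$---matches the paper exactly, as does the verification that $\al\th$ is centrally free and the invocation of the relative Rohlin tools.

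The divergence is in \emph{what gets split off}. You aim to split a $\th$-fixed copy of $\meR_0$ carrying the full $\bhG$-dynamics, so that $\al\th$ decomposes directly as $\th\oti\be$. The paper instead splits off only a $\Z$-piece: Lemma \ref{lem: smu} produces matrix units in $M_\om^\ga$ on which $\th_\om$ acts by roots of unity (\emph{not} trivially), and iterating gives $\al\th\sim\si\oti\al\th$ with $\si$ aperiodic on $\meR_0$ (Lemma \ref{lem: alth-id}). Then comes the manoeuvre you are missing: Connes' classical $\th\oti\th^{-1}\sim\id_{B(\el_2)}\oti\si$ turns this into $\al\th\sim\th\oti(\th^{-1}\oti\al\th)$, and the factor $\th^{-1}\oti\al\th$ is now \emph{trace-preserving}, hence approximately inner and centrally free on $\meR_{0,1}$, so the already-proven type II$_\infty$ case of Theorem \ref{thm:main} finishes the argument as a black box.

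Your direct route asks for more: a $\th$-fixed $\meR_0$ on which $\al$ acts freely while acting trivially on the complement. But the matrix units the machinery hands you (via Lemma \ref{lem: smu} or Corollary \ref{cor: fixed-typeII}) live in $M_\om^\ga$, hence are $\al^\om$-fixed---so the subfactor they generate carries \emph{trivial} $\bhG$-dynamics, the opposite of what you need. Arranging a $\th$-fixed subfactor that is $\al$-invariant with $\al$ free on it and trivial on its commutant is precisely the simultaneous constraint you flag as the main obstacle; the paper's doubling trick sidesteps it entirely by never asking the split piece to carry any $\bhG$-dynamics, deferring the $\bhG$-classification to the trace-preserving residual where Theorem \ref{thm:main} already applies.
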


Once proving the above theorem,
we can show Theorem \ref{thm:main} for $\meR_\la$ as follows.
\\

\noindent
$\bullet$\textit{Proof of Theorem \ref{thm:main}
for $\meR_\la$, $0<\lambda<1$.}

Let $\varphi$ be a generalized trace on $\meR_\lambda$,
and $M:=\meR_\lambda\rtimes_{\sigma^\varphi}\mathbb{T}$.
Then $M$ is isomorphic to $\meR_{0,1}$.
Let $\theta$ be a dual action by $\Z$ on $M$, and 
$\tal$ the canonical extension of $\al$.
Then $\tal$ is approximately inner and centrally free
by Lemma \ref{lem: be-th-app-cent}.
Applying the previous theorem to $\tal\th$,
we get $\tal\theta\sim \th\oti\be$. 

By Lemma \ref{lem: G1-G2-al-be},
the second extension $\wdt{\tal}$ on $M\rti_\th\Z$
is cocycle conjugate to $\id\oti\be$ on
$(M\rti_\th\Z)\oti\meR_0$.
By Lemma \ref{lem: second-cocycle},
$\wdt{\tal}$ is
cocycle conjugate to $\alpha$.
Hence $\alpha$ is cocycle conjugate
to $\id_{\meR_\la}\oti\be$.
\hfill$\Box$
\\

\subsection{Model action splitting}\label{sec:lam}

The rest of this section is devoted
to prove Theorem \ref{thm: cocycle-al-th}.
Let $M$, $\ta$, $\al$ and $\th$ be as in that theorem.
We also take a faithful normal state $\ph$ on $M$.
We fix their notations from here.
Since $\th$ scales the trace, the $\bhG\times\Z$-action $\al\th$
is not approximately inner.

\begin{lem}
The $\bhG\times\Z$-action $\al\th$ is centrally free.
\end{lem}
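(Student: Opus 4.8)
The plan is to check that every component $(\al\th)_{(\pi,n)}=\al_\pi\th^n$ with $(\pi,n)\in(\IG\times\Z)\setm\{(\btr,0)\}$ is properly centrally non-trivial. First I would reduce ``proper'' to ``ordinary'' central non-triviality. Note that $\al$ is free: if $\al_\pi$ failed to be properly outer, a constant intertwiner $0\neq a\in M\oti B(H_\pi)$ with $\al_\pi(x)a=(x\oti1)a$ for all $x\in M$ would, viewed as a constant sequence, also satisfy $\al_\pi^\om(x)a=(x\oti1)a$ for all $x\in M_\om$, contradicting central freeness of $\al$. Hence each $\al_\pi$ $(\pi\neq\btr)$ is irreducible by \cite[Lemma 2.8]{M-T-CMP}; since $\th^n\in\Aut(M)$ gives $(\al_\pi\th^n)(M)=\al_\pi(M)$, the component $(\al\th)_{(\pi,n)}$ has the same (trivial) relative commutant, and for $\pi=\btr$ it is the automorphism $\th^n$, which is irreducible as well. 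By \cite[Lemma 8.3]{M-T-CMP} it then suffices to show each $(\al\th)_{(\pi,n)}$ is merely centrally non-trivial. For $n=0$ and $\pi\neq\btr$ this is exactly central freeness of $\al$.

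The main case is $n\neq0$, where I would use that $\th$ scales $\ta$. Since $\al$ is approximately inner, $\ta\circ\Ph_\pi^\al=\ta\oti\tr_\pi$ by Corollary \ref{cor: app-cent-infinite}, so $(\ta\oti\tr_\pi)\circ\al_\pi=\ta$; together with $\ta\circ\th^n=\la^n\ta$ this yields the identity $(\ta\oti\tr_\pi)\circ(\al\th)_{(\pi,n)}=\la^n\ta$. Fix a finite projection $e_0\in M$ and a central sequence of projections $q=(q_\nu)_\nu\in M_\om$ with $q_\nu\le e_0$ and $\ta(q_\nu)=\ta(e_0)/2$, which exists because a finite corner of $M$ is the injective type II$_1$ factor. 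Suppose $((\al\th)_{(\pi,n)})^\om(q)=q\oti1$; then $(\al\th)_{(\pi,n)}(q_\nu)-q_\nu\oti1\to0$ along $\om$. Both sequences lie under the fixed finite projection $(e_0\oti1)\vee\al_\pi(\th^n(e_0))$, so evaluating $\ta\oti\tr_\pi$ and invoking the identity gives $\la^n\,\ta(e_0)/2=\ta(e_0)/2$ in the limit, i.e. $\la^n=1$, a contradiction. Thus $(\al\th)_{(\pi,n)}$ is centrally non-trivial for every $n\neq0$, completing the reduction.

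I expect the only delicate point to be this final passage to the limit: one must know that the normal semifinite trace $\ta\oti\tr_\pi$ is continuous along the bounded, asymptotically central sequence $(\al\th)_{(\pi,n)}(q_\nu)-q_\nu\oti1$, even though $\th^n$ carries the corner $e_0$ off itself. This is the standard fact that a normal trace is continuous on bounded central sequences, since such sequences asymptotically commute with the modular data and all faithful normal weights act tracially on them; I would choose the faithful state defining $M^\om$ compatibly with $\ta$ on the common finite corner so that this continuity is available. Everything else is routine bookkeeping.
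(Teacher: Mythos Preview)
Your reduction from proper central non-triviality to ordinary central non-triviality is correct and matches the implicit use in the paper: each $\al_\pi\th^n$ is irreducible (because $(\al_\pi\th^n)(M)=\al_\pi(M)$ for $\pi\neq\btr$, and $\th^n$ is an automorphism for $\pi=\btr$), so \cite[Lemma 8.3]{M-T-CMP} applies. The case $n=0$ is also handled correctly.

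The gap is in your treatment of $n\neq0$. The central sequence you need, a projection $q=(q_\nu)_\nu\in M_\om$ with $q_\nu\le e_0$ and $\ta(q_\nu)=\ta(e_0)/2$, does not exist. A central sequence in the corner $e_0 M e_0\cong\meR_0$ is \emph{not} central in $M$: since $M$ is properly infinite, take a partial isometry $v\in M$ with $v^*v=e_0$ and $vv^*\perp e_0$; then $q_\nu v=q_\nu(vv^*)v=0$, so $[q_\nu,v]=-vq_\nu$ and $\|vq_\nu\|_\psi^2=\psi(q_\nu e_0 q_\nu)=\psi(q_\nu)$. Centrality forces $\psi(q_\nu)\to0$, hence $q_\nu\to0$ strongly under the finite projection $e_0$, and therefore $\ta(q_\nu)\to0$. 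In other words, every central sequence in $M$ living under a fixed finite projection is trivial, and your trace hypothesis $\ta(q_\nu)=\ta(e_0)/2$ cannot be met. The delicate point is thus not the continuity of $\ta\oti\tr_\pi$ in the limit (that part would be fine, for exactly the reason you give) but the very existence of the sequence $q$.

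The paper instead routes the trace-scaling obstruction through the Connes--Takesaki module. If $\al_\pi\th^n$ is centrally trivial, Corollary~\ref{cor: app-cent-infinite} forces it to be implemented by a unitary, so $\mo(\al_\pi\th^n)=\id$. But $\al_\pi$ is approximately inner, whence $\mo(\al_\pi)=\id$ and $\mo(\al_\pi\th^n)=\mo(\th^n)$; since $\ta\circ\th=\la\ta$ with $\la\neq1$, $\mo(\th^n)\neq\id$ for $n\neq0$. This packages the ``$\la^n\neq1$'' obstruction at the level of the core, where the bookkeeping with infinite traces disappears.
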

\begin{proof}
Since $\ta\circ\th=\la\ta$ with $\la\neq1$,
$\th$ is centrally free.
Assume that $\al_\pi\th^n$ is centrally trivial
for
some $\pi\in\IG$ and $n\in\Z$.
Then the map $\al_\pi\th^n$ is implemented by
a unitary by Corollary \ref{cor: app-cent-infinite},
but we have $\mo(\al_\pi\th^n)=\mo(\th^n)$
because $\mo(\al_\pi)=\id$.
Hence $n=0$, and $\pi=\btr$ by central freeness of $\al$.
\end{proof}

Take $U\in U(M^\om\oti\lhG)$
as in Theorem \ref{thm: app-cocycle} with $\Ga=\Z$ and $v_g=1$.
Define the $\bhGG$-action $\ga=(\ga^1\oti\id)\circ\ga^2$
as before,
where
\[
\ga^1(x)=U^*\al^\om(x)U,
\quad
\ga^2(x)=U^* (x\oti1)U
\quad \mbox{for}\ x\in M^\om.
\]
Since $U$ is fixed by $\th^\om$, $\ga$ commutes with $\th^\om$
on $M^\om$.
Hence $\ga\th^\om$ is a $\bhGG\times\Z$-action on $M^\om$.
Applying Corollary \ref{cor: gamma-th Rohlin}
to the strongly free and semi-liftable action
$\ga^1\circ \th$ and
the set $T=\{U_{\pi_{i,j}}\}_{i,j,\pi}''$,
we have the following result.
Note that $T'\cap (M^\om)^{\ga^1}=(M^\om)^\ga$.

\begin{lem}
For any $n\in \N$, there exists 
a partition of unity $\{E_i\}_{i=0}^{n-1}$ in $M_\om^\ga$ 
such that $\th_\om(E_i)=E_{i+1}$ for $0\leq i\leq n-1$ ($E_n:=E_0$). 
\end{lem}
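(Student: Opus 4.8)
The plan is to obtain the desired partition of unity directly from the relative Rohlin corollary, namely Corollary \ref{cor: gamma-th Rohlin}, applied to the single $\bhG$-action $\ga^1$ (in place of the action $\ga$ there), to $\th^\om$ regarded as a $\Z$-action (in place of $\theta$), and to the countably generated subalgebra $T=\{U_{\pi_{i,j}}\}_{i,j,\pi}''$. Granting its hypotheses, the corollary produces a partition of unity $\{E_i\}_{i=0}^{n-1}\subs T'\cap M_\om^{\ga^1}$ with $\th^\om(E_i)=E_{i+1}$; since $T'\cap(M^\om)^{\ga^1}=(M^\om)^\ga$, this partition already lies in $M_\om^\ga$, and $\th_\om=\th^\om|_{M_\om}$ performs the required cyclic shift. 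Thus the whole task reduces to verifying the four assumptions of Corollary \ref{cor: gamma-th Rohlin} for $\ga^1$, $\th^\om$ and $T$.

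First I would check that $\ga^1$ commutes with $\th^\om$: since $v_g=1$, the relation $(\th_g\oti\id)\circ\al=\al\circ\th_g$ lifts to $(\th_g^\om\oti\id)\circ\al^\om=\al^\om\circ\th_g^\om$, and as $U$ is fixed by $\th^\om$ this gives $(\th_g^\om\oti\id)\circ\ga^1=\ga^1\circ\th_g^\om$. Next, $T\subs(M^\om)^{\ga^1\th^\om}$: each $U_{\pi_{i,j}}$ is fixed by $\th^\om$ by Theorem \ref{thm: app-cocycle}(4), and it is fixed by $\ga^1=\Ad U^*\circ\al^\om$ because $U$ is a representation with $U^*$ an $\al^\om$-cocycle; indeed, taking adjoints in the cocycle relation and using $(\id\oti\De)(U)=U_{12}U_{13}$ gives $(\al^\om\oti\id)(U)=U_{12}U_{13}U_{12}^*$, so that $(\ga^1\oti\id)(U)=U_{12}^*(\al^\om\oti\id)(U)U_{12}=U_{13}$, i.e. $\ga^1(U_{\pi_{i,j}})=U_{\pi_{i,j}}\oti1$. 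Semi-liftability and strong freeness of the $\bhG\times\Z$-action $\ga^1\th^\om$ come from the construction: $\ga^1$ is liftable by the homomorphisms $\Ad(U^\nu)^*\circ\al$, which converge to the trivial one, so $\ga^1\th^\om$ is semi-liftable; and since $\ga^1\th^\om$ differs from $(\al\th)^\om$ only by the inner perturbation $\Ad U_\pi^*$ in the $\bhG$-leg, the bijection $a\mapsto U_\pi a$ on intertwiner spaces transports the strong freeness of $(\al\th)^\om$ (equivalent to the already established central freeness of $\al\th$) to $\ga^1\th^\om$. Global invariance of $M_\om$ under $\ga^1\th^\om$ is immediate, as both $\ga^1$ and $\th^\om$ preserve $M_\om$.

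The only delicate point is the trace hypothesis $\ta^\om\circ\th^\om=\ta^\om$ on $M_\om$, which looks suspicious because $\th$ scales the global trace, $\ta\circ\th=\la\ta$. The resolution is that this scaling is invisible on the asymptotic centralizer: for a centralizing sequence $x=(x^\nu)_\nu\in M_\om$ the value $\ta^\om(x)=\lim_{\nu\to\om}\ps(x^\nu)$ is independent of the normal state $\ps$, so applying this with $\ps=\ph$ and with $\ps=\ph\circ\th$ yields $\ta^\om(\th^\om(x))=\lim_{\nu\to\om}(\ph\circ\th)(x^\nu)=\ta^\om(x)$. The remaining identity $\ta^\om\circ\Ph_\pi^{\ga^1}=\ta^\om\oti\tr_\pi$ follows because $\ga^1$ is approximately trivial: lifting $\ga^1_\pi$ by $\be^\nu=\Ad(U_\pi^\nu)^*\circ\al_\pi\to(\cdot\oti1)$ and arguing with \cite[Lemma 3.3]{M-T-CMP} as in the verification of (A5) gives $\ta^\om\circ\Ph_\pi^{\ga^1}=\Ph^{\be}\circ(\ta^\om\oti\id)$ with $\Ph^\be=\id\oti\tr_\pi$. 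With all hypotheses in hand, Corollary \ref{cor: gamma-th Rohlin} applies and, after the identification $T'\cap M_\om^{\ga^1}=M_\om^\ga$, delivers the partition of unity $\{E_i\}_{i=0}^{n-1}$ in $M_\om^\ga$ with $\th_\om(E_i)=E_{i+1}$. I expect the state-independence argument for the $\th^\om$-invariance of $\ta^\om$ on $M_\om$ to be the main conceptual obstacle; the rest is bookkeeping already built into the construction of $U$, $\ga^1$ and $\ga^2$.
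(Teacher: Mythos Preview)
Your proposal is correct and follows exactly the paper's approach: the paper proves this lemma in one sentence by applying Corollary~\ref{cor: gamma-th Rohlin} to $\ga^1$, $\th$, and $T=\{U_{\pi_{i,j}}\}_{i,j,\pi}''$, noting that $T'\cap (M^\om)^{\ga^1}=(M^\om)^\ga$. Your write-up simply makes explicit the hypothesis checks the paper leaves implicit, including the only subtle one (that $\ta^\om\circ\th^\om=\ta^\om$ on $M_\om$ despite the trace-scaling of $\th$), and your argument for that via state-independence of $\ta^\om$ on centralizing sequences is the standard one.
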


As in \cite{Con-auto}, we obtain the following stability result by using
the above lemma.
\begin{lem}\label{lem: th-stab}
The $\Z$-action $\th_\om$ on $M_\om^\ga$ is stable,
that is,
for any $u\in U(M_\om^\ga)$,
there exists $w\in U(M_\om^\ga)$ 
such that $u=w\th_\om(w^*)$. 
\end{lem}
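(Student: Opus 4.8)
The plan is to recognize this as the classical stability of a single, trace-preserving, aperiodic automorphism of a type II$_1$ factor possessing the Rohlin property, and to run the tower argument of \cite{Con-auto}. Write $N:=M_\om^\ga$; this is of type II${}_1$ by Corollary \ref{cor: fixed-typeII}, and $\th_\om$ acts on it preserving the trace $\ta^\om$. A $\th_\om$-cocycle for the $\Z$-action is freely determined by its value $u$ at the generator, and the coboundary condition is exactly $u=w\th_\om(w^*)$; so the goal is to produce, for a given $u\in U(N)$, a unitary $w\in U(N)$ solving this equation.

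First I would fix $n$ and invoke the preceding lemma to obtain a Rohlin partition $\{E_i\}_{i=0}^{n-1}\subs N$ with $\th_\om(E_i)=E_{i+1}$ ($E_n=E_0$), chosen so that it interacts well with $u$ in the sense of the joint Rohlin property, i.e.\ so that the conjugated partition is again a partition and the resulting Shapiro-type element is unitary. Fix a faithful state $\ph\in M_*$ and put $\ps:=\ph\circ\ta^\om$; since $\ps\circ\th_\om=\ps$ on $M_\om$ one has $\ps(E_i)=1/n$ for every $i$. Setting $u_0:=1$ and $u_i:=u\,\th_\om(u)\cdots\th_\om^{i-1}(u)$, so that $u_i=u\,\th_\om(u_{i-1})$ and hence $u_i\,\th_\om(u_{i-1}^*)=u$ for $1\le i\le n$, I form the Shapiro unitary $w_n:=\sum_{i=0}^{n-1}u_i E_i$.

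The computation then proceeds as in the $\bhG$-case of Corollary \ref{cor: Rohlin}(3): using $\th_\om(E_i)=E_{i+1}$, the orthogonality of the $E_i$, and their commutation with the orbit $\{\th_\om^k(u)\}$,
\[
w_n\th_\om(w_n^*)=\sum_{i=1}^{n-1}u_i\,\th_\om(u_{i-1}^*)\,E_i+\th_\om(u_{n-1}^*)\,E_0
=u\,(1-E_0)+\th_\om(u_{n-1}^*)\,E_0 .
\]
Thus $w_n\th_\om(w_n^*)-u=(\th_\om(u_{n-1}^*)-u)E_0$ is supported on the bottom level $E_0$, so $|w_n\th_\om(w_n^*)-u|_\ps\le 2\ps(E_0)=2/n\to0$. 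Applying the index selection trick (exactly as in the proofs of Corollary \ref{cor: Rohlin}(3) and Lemma \ref{lem: U-cocycle}) to the sequence $(w_n)_n$ upgrades these approximate solutions to a single $w\in U(N)$ with $u=w\th_\om(w^*)$, which is the assertion.

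The main obstacle is the unitarity of $w_n$. Because $N$ is noncommutative, $\sum_i u_i E_i$ is in general not a unitary, and the telescoping identity above genuinely uses that the tower projections commute with $u$ and its translates. This is precisely where the joint Rohlin property is needed: the $(S2)$--$(S3)$-type conditions, guaranteeing that the conjugated family $\{u\,E_i\,u^*\}$ is again a partition of unity, are exactly what makes $w_n$ unitary, in the same manner that the Shapiro-unitary lemma makes $(\id\oti\vph)(vE)$ unitary. Arranging the Rohlin tower so as to be compatible with the orbit $\{\th_\om^k(u)\}_k$ is therefore the one delicate ingredient; once it is in place, the telescoping computation and the index selection are routine.
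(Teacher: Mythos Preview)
Your argument is correct and is exactly the Connes tower argument the paper is invoking with its one-line reference to \cite{Con-auto}. One clarification on the point you single out as the obstacle: the commutation of the tower with $u$ and its $\th_\om$-orbit does not come from the conditions (S2)--(S3) of the joint Rohlin property (those concern $\ga^1$-cocycles in the Kac-algebra sense and only yield $[u_i,E_i]=0$, not $[\th_\om^k(u),E_i]=0$ for all $i,k$). The correct mechanism is the basic Rohlin condition (R4): the relative Rohlin theorem underlying the preceding lemma (Theorem~\ref{thm: jRoh-G2-fix}, applied as in Corollary~\ref{cor: gamma-th Rohlin}) produces the tower in $S'\cap M_\om^\ga$ for any prescribed countable $S\subs M^\om$, so one simply takes $S\supset\{\th_\om^k(u):k\in\Z\}$. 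With that commutation in hand, $w_n=\sum_i u_iE_i$ is unitary, your telescoping identity $w_n\th_\om(w_n^*)=u(1-E_0)+\th_\om(u_{n-1}^*)E_0$ holds verbatim, and the index-selection passage to an exact solution $w\in M_\om^\ga$ goes through since the selection map preserves both $M_\om$ and $\ga$-invariance.
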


\begin{lem}\label{lem: smu}
For any $n\in\N$, there exists a system of matrix units 
$\{f_{ij}\}_{i,j=0}^{n-1}
\subset M_\omega^\gamma $ 
with $\theta_\om(f_{ij})=\mu^{i-j}f_{ij}$, 
where $\mu=e^{2\pi \sqrt{-1}/n}$.
\end{lem}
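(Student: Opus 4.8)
The plan is to manufacture the eigenvalue matrix units from a cyclically permuted partition of unity (a Rohlin tower of height $n$) and then pass to the ``Fourier/clock-shift dual'' basis, using the stability of $\th_\om$ to remove the cocycle that obstructs $\th_\om$-covariance.

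First I would invoke the previous lemma to fix, for the given $n$, a partition of unity $\{E_i\}_{i=0}^{n-1}\subs M_\om^\ga$ with $\th_\om(E_i)=E_{i+1}$ (indices mod $n$, $E_n:=E_0$). Since $M_\om^\ga$ is a type II$_1$ factor by Corollary \ref{cor: fixed-typeII} and $\th_\om$ preserves $\ta^\om$, the $E_i$ all have trace $1/n$ and are mutually equivalent. Choosing a partial isometry $v_0$ with $v_0^* v_0=E_0$, $v_0 v_0^*=E_1$ and setting $s_0:=\sum_{i=0}^{n-1}\th_\om^i(v_0)$, I obtain a unitary $s_0\in U(M_\om^\ga)$ implementing the cyclic shift, $s_0 E_i s_0^*=E_{i+1}$. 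Together with the ``clock'' $p_0:=\sum_{i=0}^{n-1}\mu^i E_i$ it satisfies the Weyl relation $p_0 s_0=\mu s_0 p_0$, as well as $p_0^n=1$ and $\th_\om(p_0)=\mu^{-1}p_0$; the only defect is the behaviour of $s_0$ under $\th_\om$.

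The heart of the matter is to replace $s_0$ by a shift unitary $s$ that is genuinely $\th_\om$-fixed. A direct computation gives $\th_\om(s_0)=s_0 c_0$ with $c_0=(1-E_0)+\rho$ a diagonal unitary, where $\rho:=v_0^*\th_\om^n(v_0)\in U(P)$ and $P:=E_0 M_\om^\ga E_0$. Trying a correction $b=\sum_{i}\th_\om^i(b_0)$ with $b_0\in U(P)$, so that $b$ is diagonal and $s:=s_0 b$ both still shifts the tower and still commutes past $p_0$, the equation $c_0=b\,\th_\om(b^*)$ collapses to the single coboundary equation $\rho=b_0\,\th_\om^n(b_0^*)$ in the corner $P$. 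Solving this is the main obstacle: it is exactly the stability of the return automorphism $\th_\om^n|_P$ of the type II$_1$ factor $P$, which I would obtain from Lemma \ref{lem: th-stab} (re-running, if necessary, the tower/stability argument for $\th^n$ on the corner, as in \cite{Con-auto}). Once $b_0$ is found, $s:=s_0 b$ satisfies $\th_\om(s)=s$ and $s E_i s^*=E_{i+1}$; since a $\th_\om$-fixed shift commutes with every $E_i$ after $n$ steps and fixes $E_0$, one checks $s^n=1$ automatically, so $s$ and $p_0$ generate a copy of $M_n$.

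Finally I would read off the matrix units by diagonalizing $s$. The $\th_\om$-fixed unitary $s=\sum_k\mu^k Q_k$ has $\th_\om$-fixed spectral projections $Q_k=\tfrac1n\sum_j\mu^{-kj}s^j$, and from $p_0 s p_0^*=\mu s$ one gets $p_0 Q_k p_0^*=Q_{k-1}$, so $p_0^*$ transports $Q_k$ to $Q_{k+1}$ while being a $\th_\om$-eigenvector of eigenvalue $\mu$ (indeed $\th_\om(p_0^*)=\mu p_0^*$). Setting $f_{kk}:=Q_k$ and $f_{k+1,k}:=p_0^* Q_k$ and extending multiplicatively then yields a system of matrix units $\{f_{ij}\}_{i,j=0}^{n-1}\subs M_\om^\ga$ with $\th_\om(f_{ij})=\mu^{i-j}f_{ij}$, as required.
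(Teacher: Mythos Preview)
Your approach has a genuine gap: the claim that $s^n=1$ ``automatically'' is not correct. What you have arranged is $\th_\om(s)=s$ and $s E_i s^*=E_{i+1}$; from this one only concludes that $s^n$ commutes with every $E_i$ (and is $\th_\om$-fixed), i.e.\ $s^n$ is a \emph{diagonal} unitary $\zeta=\sum_i\th_\om^i(\zeta_0)$ with $\zeta_0\in U(E_0M_\om^\ga E_0)$. Concretely, with $w_0:=v_0b_0$ one has $\zeta_0=\th_\om^{n-1}(w_0)\cdots\th_\om(w_0)\,w_0$, and the equation $\rho=b_0\th_\om^n(b_0^*)$ you solved only forces $\th_\om^n(w_0)=w_0$, not $\zeta_0=E_0$. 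Without $s^n=1$ the spectral decomposition $s=\sum_k\mu^kQ_k$ is unavailable and the final matrix-unit construction collapses. (There is also a softer issue one step earlier: the equation $\rho=b_0\th_\om^n(b_0^*)$ lives in the corner $E_0M_\om^\ga E_0$ under the return map $\th_\om^n$, and Lemma~\ref{lem: th-stab} as stated gives stability of $\th_\om$ on $M_\om^\ga$, not of $\th_\om^n$ on a corner; your parenthetical ``re-run the argument'' is plausible but is additional work, not a citation.)

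The paper's proof sidesteps all of this and is much shorter. Rather than building up from a Rohlin tower, it uses Corollary~\ref{cor: fixed-typeII} (applied to $\ga^1\th^\om$) to see that the \emph{fully fixed} algebra $M_\om^{\ga\th_\om}$ is already of type~II$_1$, so one may pick a system of matrix units $\{e_{ij}\}_{i,j=0}^{n-1}\subset M_\om^{\ga\th_\om}$ outright. Form the clock $u:=\sum_i\mu^i e_{ii}\in M_\om^{\ga}$ and invoke Lemma~\ref{lem: th-stab} to write $u=w\th_\om(w^*)$ with $w\in U(M_\om^\ga)$; then $f_{ij}:=w^*e_{ij}w$ satisfies
\[
\th_\om(f_{ij})=\th_\om(w^*)\,e_{ij}\,\th_\om(w)=w^*u\,e_{ij}\,u^*w=\mu^{i-j}f_{ij}.
\]
So the single use of stability is for a unitary in $M_\om^\ga$ (exactly the hypothesis of Lemma~\ref{lem: th-stab}), and no tower, corner equation, or Weyl pair is needed.
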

\begin{proof}
By Corollary \ref{cor: fixed-typeII} for $\ga^1\th^\om$,
we see that $(T'\cap M_\om)^{\ga^1\th^\om}=M_\om^{\ga\th^\om}$
is of type II${}_1$.
Hence
we can take a system of matrix units
$\{e_{ij}\}_{i,j=0}^{n-1}\subset M_\om^{\gamma\theta^\om}$.
Set $u:=\sum_{i=0}^{n-1}\mu^i{e_{ii}}$,
and by Lemma \ref{lem: th-stab}, we have
$w\in U(M_\om^\ga)$ such that $u=w\theta_\om(w^*)$. 
Set $f_{ij}:=w^*e_{ij}w\in M^{\ga}$. 
Then we have 
\[
\theta_\om(f_{ij})
=\theta_\om(w^*)e_{ij}\theta_\om(w)=w^*ue_{ij}u^*w=\mu^{i-j}f_{ij}. 
\]
\end{proof}

Recall the following result \cite[Proposition 7.1]{Ocn-act}.

\begin{lem}\label{lem: pi}
Let $e,f$ be projections in $M^\omega$ with $v^*v=e$. $vv^*=f$ for an
element $v\in M^\om$.
Let
$e=(e(\nu))_\nu$ and $f=(f(\nu))_\nu$ be representing sequences
such that $e(\nu)$ and $f(\nu)$ are equivalent for each $\nu\in\N$.
Then we can choose a representing sequence of $v$,
$v=(v(\nu))_\nu$
so that $v^*(\nu)v(\nu)=e(\nu)$ and $v(\nu)v(\nu)^*=f(\nu)$
for each $\nu\in\N$.
\end{lem}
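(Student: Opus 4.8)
The plan is to start from an arbitrary representing sequence of $v$ and correct it in two stages: first cut it down and take polar parts, so that one obtains \emph{exact} partial isometries whose supports agree with $e(\nu)$ and $f(\nu)$ up to an infinitesimal defect, and then fill in that defect using the hypothesis $e(\nu)\sim f(\nu)$.

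First I would choose any bounded sequence $(x(\nu))_\nu$ representing $v$ and replace it by $w(\nu):=f(\nu)x(\nu)e(\nu)$. Since $v=fve$ in $M^\om$ (because $f=vv^*$ and $e=v^*v$), the sequence $(w(\nu))_\nu$ still represents $v$, and now $w(\nu)\in f(\nu)Me(\nu)$ for every $\nu$. From $v^*v=e$ and $vv^*=f$ it follows that $(w(\nu)^*w(\nu))_\nu$ represents $e$ and $(w(\nu)w(\nu)^*)_\nu$ represents $f$; in particular $w(\nu)^*w(\nu)$ and $w(\nu)w(\nu)^*$ converge $*$-strongly along $\om$ to $e$ and $f$ respectively.

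Next I would take the polar decomposition $w(\nu)=u(\nu)|w(\nu)|$ with $|w(\nu)|=(w(\nu)^*w(\nu))^{1/2}$. Because $t\mapsto t^{1/2}$ is continuous on a bounded interval and $w(\nu)^*w(\nu)$ converges to the projection $e$, the positive parts $|w(\nu)|$ converge to $e$ along $\om$; a routine estimate then shows $u(\nu)-w(\nu)=u(\nu)(p(\nu)-|w(\nu)|)\to0$, so $(u(\nu))_\nu$ also represents $v$. Each $u(\nu)$ is now an exact partial isometry, with initial projection $p(\nu):=u(\nu)^*u(\nu)\le e(\nu)$ and final projection $q(\nu):=u(\nu)u(\nu)^*\le f(\nu)$. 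Since $(p(\nu))_\nu$ represents $v^*v=e$ and $(q(\nu))_\nu$ represents $vv^*=f$, the defect projections $e(\nu)-p(\nu)$ and $f(\nu)-q(\nu)$ both represent $0$.

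The heart of the matter is to fill in these defects, i.e. to produce partial isometries $r(\nu)$ with $r(\nu)^*r(\nu)=e(\nu)-p(\nu)$ and $r(\nu)r(\nu)^*=f(\nu)-q(\nu)$; granting these, one sets $v(\nu):=u(\nu)+r(\nu)$, and since $u(\nu)$ and $r(\nu)$ have orthogonal initial and orthogonal final projections one checks directly that $v(\nu)^*v(\nu)=e(\nu)$ and $v(\nu)v(\nu)^*=f(\nu)$, while $(r(\nu))_\nu$ represents $0$ so that $(v(\nu))_\nu=(u(\nu)+r(\nu))_\nu$ still represents $v$. The existence of $r(\nu)$ is exactly the assertion $e(\nu)-p(\nu)\sim f(\nu)-q(\nu)$, and this is where the hypothesis $e(\nu)\sim f(\nu)$ enters, combined with $p(\nu)\sim q(\nu)$ (witnessed by $u(\nu)$). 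I expect this comparison of the residual projections to be the main obstacle, since in a general von Neumann algebra equivalence is not cancellative. In the setting where the lemma is applied the relevant projections lie in a finite (type II$_1$) algebra, so $\ta(e(\nu)-p(\nu))=\ta(e(\nu))-\ta(p(\nu))=\ta(f(\nu))-\ta(q(\nu))=\ta(f(\nu)-q(\nu))$ and the equivalence of the defects is immediate from the comparison theorem; more generally one uses that the defects are infinitesimal, together with the comparison theory of von Neumann algebras, to match them.
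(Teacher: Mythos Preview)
The paper does not prove this lemma itself but only cites \cite[Proposition~7.1]{Ocn-act}, so there is no argument in the paper to compare against; the question is whether your proof stands on its own. The first two stages (cutting down to $f(\nu)Me(\nu)$ and passing to the polar part $u(\nu)$) are fine, but the decisive ``fill in'' step has a real gap. You need $e(\nu)-p(\nu)\sim f(\nu)-q(\nu)$, and you justify this by a trace argument in the II$_1$ case, with a vague appeal to comparison theory otherwise. Neither applies here: in the paper's actual use of the lemma (the proof of Lemma~\ref{lem: single-matrix-model}) the ambient algebra is $M\otimes B(H_\pi)$ with $M\cong\meR_{0,1}$, so it is type II$_\infty$, and the projections $e(\nu)=\al_\pi(f_{00}(\nu))$ and $f(\nu)=f_{00}(\nu)\otimes 1$ have infinite trace. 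In a properly infinite factor the implication ``$e\sim f$, $p\le e$, $q\le f$, $p\sim q$ $\Rightarrow$ $e-p\sim f-q$'' is false. Concretely, take $M=B(H)$ with $\dim H=\infty$, $e(\nu)=f(\nu)=1$, $v=1$, and let the initial lift $w(\nu)$ be a sequence of non-unitary isometries converging $*$-strongly to $1$; then your $u(\nu)=w(\nu)$ has $p(\nu)=1$ and $q(\nu)<1$, so the defects are $0$ and $1-q(\nu)\neq 0$, which are not equivalent. Your construction yields nothing, although $v(\nu)=1$ obviously works.

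A clean repair uses the hypothesis $e(\nu)\sim f(\nu)$ to reduce to a \emph{unitary} lifting problem rather than to cancel defects. Fix partial isometries $s(\nu)$ with $s(\nu)^*s(\nu)=e(\nu)$ and $s(\nu)s(\nu)^*=f(\nu)$; then $s:=(s(\nu))_\nu\in M^\omega$ has $s^*s=e$, $ss^*=f$, and $z:=s^*v$ is a unitary in $eM^\omega e$. Write $z=\exp(ih)$ for some self-adjoint $h\in eM^\omega e$ with $\|h\|\le\pi$, lift $h$ to self-adjoints $h(\nu)\in e(\nu)Me(\nu)$ with $\|h(\nu)\|\le\pi$ (take any lift, symmetrize, compress by $e(\nu)$, truncate), and set $c(\nu):=\exp(ih(\nu))$, which is unitary in $e(\nu)Me(\nu)$. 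Since continuous functional calculus is $*$-strongly continuous on bounded sets, $(c(\nu))_\nu=z$. Then $v(\nu):=s(\nu)c(\nu)$ satisfies $v(\nu)^*v(\nu)=e(\nu)$, $v(\nu)v(\nu)^*=f(\nu)$, and $(v(\nu))_\nu=sz=ss^*v=fv=v$.
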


\begin{lem}\label{lem: single-matrix-model}
Let $n\in\N$ and $\mu=e^{2\pi\sqrt{-1}/n}$.
Then for any $\mF\Subset \IG$,
$\Psi \Subset (M_*)_+$, and $\epsilon>0$,
there exists a unitary $u\in M\oti L^\infty(\bhG)$,
a unitary $w\in M$
and a system of matrix units $\{f_{ij}\}_{i,j=0}^{n-1}$ in $M$
such that 
\begin{enumerate}
\renewcommand{\labelenumi}{(\roman{enumi})}
\item 
$\|u_{\pi}-1\|_{\ph\oti\tr_\pi}^\#<\epsilon$
for all $\pi\in \mF$;

\item
$\|w-1\|_{\ph}^\#<\epsilon$;

\item 
$\|[f_{ij},\psi]\|<\epsilon$
for all $\psi\in\Psi$ and $0\leq i,j \leq n-1$;

\item
$\Ad u\circ\alpha(f_{ij})=f_{ij}\oti1$
for all $0\leq i,j \leq n-1$;

\item
$\Ad w\circ\th(f_{ij})=\mu^{i-j}f_{ij}$
for all $0\leq i,j \leq n-1$.
\end{enumerate}
\end{lem}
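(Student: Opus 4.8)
The plan is to descend the matrix units produced at the level of the asymptotic centralizer in Lemma~\ref{lem: smu} down to the finite level $M$, and then to repair the actions of $\al$ and $\th$ by suitable near-identity unitaries. First I would apply Lemma~\ref{lem: smu} to obtain a system of matrix units $\{f_{ij}\}_{i,j=0}^{n-1}\subset M_\om^\ga$, a partition of unity, with $\th_\om(f_{ij})=\mu^{i-j}f_{ij}$. The key observation is that membership in $M_\om^\ga=M_\om^{\ga^1}\cap M_\om^{\ga^2}$ already forces $\al^\om(f_{ij})=f_{ij}\oti1$: being fixed by $\ga^2=\Ad U^*(\cdot\oti1)$ means precisely that $f_{ij}\oti1$ commutes with $U$, and being fixed by $\ga^1=\Ad U^*\circ\al^\om$ then gives $\al^\om(f_{ij})=U(f_{ij}\oti1)U^*=f_{ij}\oti1$. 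Thus the $f_{ij}$ are simultaneously central, $\al^\om$-fixed, and $\th_\om$-eigenvectors with eigenvalue $\mu^{i-j}$.

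Next I would lift. Using Lemma~\ref{lem: pi} (after first lifting the diagonal $\{f_{ii}\}$ to a partition of unity by equivalent projections at each index, which is possible since $M$ is of type II$_\infty$), I choose representing sequences $(f_{ij}^\nu)_\nu$ that are honest systems of matrix units in $M$ with $\sum_i f_{ii}^\nu=1$ for every $\nu$. The three ultraproduct relations then translate into approximate relations along $\om$: centrality gives $\lim_{\nu\to\om}\|[f_{ij}^\nu,\ps]\|=0$; the identity $\al^\om(f_{ij})=f_{ij}\oti1$ gives $\lim_{\nu\to\om}\|\al_\pi(f_{ij}^\nu)-f_{ij}^\nu\oti1_\pi\|_{\ph\oti\tr_\pi}^\#=0$ for each fixed $\pi$; and the eigenvector relation gives $\lim_{\nu\to\om}\|\th(f_{ij}^\nu)-\mu^{i-j}f_{ij}^\nu\|_\ph^\#=0$. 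Since $\mF$ and $\Psi$ are finite, I fix a single index $\nu$ in the $\om$-filter for which all of these are smaller than a prescribed $\de\ll\ep$ simultaneously, and set $f_{ij}:=f_{ij}^\nu$; this already yields (iii).

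To produce $w$, note that $\{\th(f_{ij})\}$ and $\{\mu^{i-j}f_{ij}\}$ are two systems of matrix units in $M$, each a partition of unity, which are $\de$-close in $\|\cdot\|_\ph^\#$. By the standard perturbation fact that two close systems of matrix units (both summing to $1$) are conjugate by a unitary close to the identity, there is $w\in U(M)$ with $\Ad w\circ\th(f_{ij})=\mu^{i-j}f_{ij}$ and $\|w-1\|_\ph^\#<\ep$, giving (v) and (ii). For $u$ the same idea applies componentwise, but here (iv) must hold on \emph{every} $\pi\in\IG$, whereas proximity to $1$ is demanded only on $\mF$. On each $\pi$ both $\{\al_\pi(f_{ij})\}$ and $\{f_{ij}\oti1_\pi\}$ are systems of matrix units summing to $1\oti1_\pi$ in the type~II$_\infty$ factor $M\oti B(H_\pi)$; since $\al$ is approximately inner, $\ta\circ\Ph_\pi^\al=\ta\oti\tr_\pi$ (Corollary~\ref{cor: app-cent-infinite}), whence $(\ta\oti\tr_\pi)(\al_\pi(f_{00}))=\ta(f_{00})=(\ta\oti\tr_\pi)(f_{00}\oti1_\pi)$, so the two diagonal corners have equal trace and are equivalent in the factor. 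Hence for \emph{each} $\pi$ there exists $u_\pi\in U(M\oti B(H_\pi))$ with $u_\pi\al_\pi(f_{ij})u_\pi^*=f_{ij}\oti1_\pi$. For $\pi\in\mF$ the two systems are $\de$-close, so $u_\pi$ may be chosen with $\|u_\pi-1\|_{\ph\oti\tr_\pi}^\#<\ep$; for $\pi\nin\mF$ I take any such $u_\pi$. Setting $u:=(u_\pi)_\pi\in U(M\oti\lhG)$ then yields (iv) on all of $\IG$ and (i) on $\mF$.

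The main obstacle is exactly this last point: condition (iv) is an identity in $M\oti\lhG$ and therefore constrains every $\pi$, while (i) controls only the finitely many $\pi\in\mF$. The resolution is that the trace-preservation coming from approximate innerness makes the relevant $(0,0)$-corners equivalent for \emph{all} $\pi$, which is precisely what guarantees the existence of a conjugating unitary $u_\pi$ on the uncontrolled components. The only remaining care is the routine perturbation estimate converting $\de$-closeness of systems of matrix units into a near-identity conjugating unitary, with $\de$ (depending on $n$ and $\ep$) chosen small enough at the outset.
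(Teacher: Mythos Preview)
Your proposal is correct and follows the same overall route as the paper: lift the matrix units of Lemma~\ref{lem: smu} to genuine matrix units at each index, observe that $\ga$-fixedness forces $\al^\om(f_{ij})=f_{ij}\oti1$, and then repair $\al$ and $\th$ at a single $\nu$ by near-identity unitaries.

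The one technical difference is in how you build $u_\pi$ and $w$. You fix $\nu$ first and then invoke a perturbation lemma for close matrix-unit systems on $\pi\in\mF$, handling $\pi\nin\mF$ separately by a trace argument. The paper instead applies Lemma~\ref{lem: pi} a second time, now to the partial isometry $f_{00}\oti1$ witnessing $\al_\pi^\om(f_{00})=f_{00}\oti1$ (and similarly to $f_{00}$ witnessing $\th_\om(f_{00})=f_{00}$), to obtain explicit lifts $v_\pi(\nu)$ and $v(\nu)$; the unitary $u_\pi(\nu)=\sum_i(f_{i0}(\nu)\oti1)v_\pi(\nu)\al_\pi(f_{0i}(\nu))$ then satisfies (iv) exactly at every $\nu$ and every $\pi$, and one computes directly $(u_\pi(\nu))_\nu=1$ in $M^\om\oti B(H_\pi)$. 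This uniform construction removes the case split and replaces the perturbation estimate by an ultraproduct limit, but the content is the same. Incidentally, your appeal to trace preservation via Corollary~\ref{cor: app-cent-infinite} is stronger than needed: since the $f_{ii}(\nu)$ are mutually equivalent and sum to $1$ in the type~II$_\infty$ factor, they (and their images under $\al_\pi$ or $\th$) are already infinite projections, hence automatically equivalent.
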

\begin{proof}
Let $\{f_{ij}\}_{i,j=0}^{n-1}$ be a system of matrix units
in $M_\om^\ga$ as in Lemma \ref{lem: smu}.
Then
$\gamma(f_{ij})=f_{ij}\otimes 1$
implies $\alpha^\om(f_{ij})=f_{ij}\otimes 1$.
Take a representing sequence of $f_{ij}$, $(f_{ij}(\nu))_{\nu}$
such that $\{f_{ij}(\nu)\}_{i,j=0}^{n-1}$ is a system of matrix units
in $M$ for all $\nu$.

By Lemma \ref{lem: pi},
for each $\pi\in\IG$,
there exists $v_\pi(\nu)\in M\oti B(H_\pi)$
such that $v_\pi(\nu) v_\pi(\nu)^*=f_{00}(\nu)\otimes 1$,
$v_\pi(\nu)^* v_\pi(\nu)=\alpha_\pi(f_{00}(\nu))$ and
$(v_\pi(\nu))_\nu=f_{00}\otimes 1$.
Set a unitary
$u_\pi(\nu):=\sum_{i=0}^{n-1}
(f_{i0}(\nu)\otimes1)v_\pi(\nu)\alpha_\pi(f_{0i}(\nu))$.
Then
$\Ad u_\pi(\nu)\circ\alpha_\pi(f_{ij}(\nu))=f_{ij}(\nu)\otimes 1$ 
holds.
We have $(u_\pi(\nu))_\nu=1$ in $M^\om \oti B(H_\pi)$.
Indeed,
\[
(u_\pi(\nu))_\nu
=\sum_{i=0}^{n-1}(f_{i0}(\nu)\otimes1)_\nu (v_\pi(\nu))_\nu 
\alpha_\pi^\om((f_{0i}(\nu))_\nu)=
\sum_{i=0}^{n-1}(f_{i0}\otimes 1)(f_{00}\otimes 1)(f_{0i}\otimes 1)
=1.
\]
Set a unitary $u(\nu)=(u_\pi(\nu))_{\pi}$ in $M\oti\lhG$.

Next we construct $w$.
Applying  Lemma \ref{lem: pi} to $\theta_\om(f_{00})=f_{00}$,
there exists $v(\nu)\in M$
such that $v(\nu) v(\nu)^*=f_{00}$,
$v(\nu)^* v(\nu)=\theta(f_{00}(\nu))$ and
$(v(\nu))_\nu=f_{00}$.
Set a unitary
$w(\nu):=\sum_{i=0}^{n-1}\mu^{i}f_{i0}(\nu)
v(\nu)\theta(f_{0i}(\nu))$. 
Then
$\Ad w(\nu)\circ\theta(f_{ij}(\nu))=\mu^{i-j}f_{ij}(\nu)$ holds
for all $0\leq i,j\leq n-1$ and $\nu\in\N$.
We can show $w(\nu)\to1$ strongly* as $\nu\to\om$ as above.

Hence we can choose $\nu\in\N$ such that $u=u(\nu)$, $w=w(\nu)$
and $f_{ij}(\nu)$ satisfy the desired conditions.
\end{proof}

Let $\Psi_n\Subset M_*$ be an increasing subset such that
$\Psi=\bigcup_{n=1}^\infty\Psi_n$ is total in $M_*$.
We recall the following result due to Connes \cite[Lemma 2.3.6]{Con-auto}.
\begin{lem}\label{lem:con-split}
Let $M_1,M_2,\cdots M_n\subset M$ be mutually commuting
finite dimensional subfactors.
Denote $\bigvee_{k=1}^\infty M_k:=N$.
If $\sum_{k=1}^\infty\|\psi\circ E_{M_k'\cap M}-\psi\|< \infty$
for all $\psi\in \Psi$,
then $N$ is a hyperfinite subfactor of type II$_1$ and
we have $M=N\vee N'\cap M\cong N\otimes N'\cap M $.
Here $E_{M_k'\cap M}=\tr_{M_k}\otimes \id_{M_k'\cap M}$.
\end{lem}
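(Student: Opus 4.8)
The plan is to realize $N$ as the limit of the increasing finite-dimensional subfactors $N_n:=M_1\vee\cdots\vee M_n$. Since the $M_k$ mutually commute and are factors, $N_n\cong M_1\oti\cdots\oti M_n$ is again a finite-dimensional factor, so $N=\big(\bigcup_n N_n\big)''$ is AFD and carries the consistent normalized traces $\tau_N:=\bigotimes_k\tr_{M_k}$. Writing $E_n:=E_{N_n'\cap M}=\tr_{N_n}\oti\id$, the commuting subfactors give $E_n=E_{n-1}\circ E_{M_n'\cap M}=E_{M_n'\cap M}\circ E_{n-1}$, and since the preadjoint of a normal unital completely positive map is a contraction I would estimate
\[
\|\psi\circ E_n-\psi\circ E_{n-1}\|
=\|(\psi\circ E_{M_n'\cap M}-\psi)\circ E_{n-1}\|
\le\|\psi\circ E_{M_n'\cap M}-\psi\|.
\]
By the summability hypothesis $(\psi\circ E_n)_n$ is then norm-Cauchy for every $\psi$ in the total set $\Psi$, hence, by uniform boundedness of the $E_n$, for every $\psi\in M_*$; the limit functionals define via duality a normal unital completely positive map $E$ on $M$ with $\psi\circ E=\lim_n\psi\circ E_n$.

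Second, I would check that $E$ is a normal conditional expectation onto $N'\cap M$ and compute its restriction to $N$. Each $E_n$ fixes $N_n'\cap M\supseteq N'\cap M$, so $E$ fixes $N'\cap M$; and for $a\in M_k$ one has $[E(x),a]=\text{weak*-}\lim_n[E_n(x),a]=0$, since $E_n(x)\in M_k'\cap M$ for $n\ge k$, so $E(x)\in N'\cap M$. On the weakly dense algebra $\bigcup_n N_n$ one computes $E(a_1\cdots a_m)=\prod_j\tr_{M_{k_j}}(a_j)\,1$, whence by normality $E|_N=\tau_N(\cdot)1$, with $\tau_N$ a faithful normal trace (traciality is inherited from the $\tr_{N_n}$). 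Factoriality of $N$ then drops out: any $z\in Z(N)\subseteq N\cap(N'\cap M)$ satisfies both $E(z)=z$ (as $z\in N'\cap M$) and $E(z)=\tau_N(z)1$ (as $z\in N$), forcing $z\in\C 1$. Being AFD, a factor, finite, and infinite-dimensional (as $\dim N_n\to\infty$), $N\cong\meR_0$ is the hyperfinite II$_1$ factor.

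Third, for the splitting I would use that, $N$ being a factor and $E\col M\to N'\cap M$ a normal conditional expectation with $E|_N=\tau_N$, the algebras $N$ and $N'\cap M$ sit in tensor position: the product map $\Phi(a\oti b)=ab$ extends to a normal $*$-isomorphism $N\oti(N'\cap M)\xrightarrow{\ \sim\ }N\vee(N'\cap M)$, injectivity being visible from the faithful normal state $(\phi\circ E)\circ\Phi=\tau_N\oti(\phi|_{N'\cap M})$ for a faithful $\phi\in M_*$. The one genuinely nontrivial point — and the step where the summability is used in an essential rather than merely qualitative way — is the surjectivity $N\vee(N'\cap M)=M$; note that the mere existence of $E$ does not suffice (e.g. for an outer crossed product $N\rtimes G$ one has $N'\cap M=\C$ yet $N\vee(N'\cap M)=N\neq M$). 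Here I would exploit the decompositions $M=N_n\oti(N_n'\cap M)$ together with the norm-convergence $E_n\to E$: the summable tail $\sum_{k>n}\|\psi\circ E_{M_k'\cap M}-\psi\|$ controls how far $N_n'\cap M$ deviates from $(\bigvee_{k>n}M_k)\vee(N'\cap M)$, so that an approximant $\sum_i a_ib_i\in N_n\odot(N_n'\cap M)$ of a given $x\in M$ can be rewritten inside $N\vee(N'\cap M)$ up to an error vanishing as $n\to\infty$. I expect this approximation — pushing the $N_n'\cap M$-legs into $N'\cap M$ while absorbing the tail factors into $N$ — to be the main obstacle; everything else is either soft or a direct computation.
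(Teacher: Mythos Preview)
The paper does not prove this lemma; it merely quotes \cite[Lemma~2.3.6]{Con-auto}. Your outline is the standard argument and is correct up to the point you flag yourself. The only real gap is the surjectivity $N\vee(N'\cap M)=M$, and your proposed approach of pushing each $N_n'\cap M$-leg separately into $N'\cap M$ would run into the difficulty that the number of matrix-unit terms in the expansion grows with $n$, so term-by-term estimates do not obviously sum.

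The clean fix uses only what you have already built. Fix matrix units $\{e_{ij}^{(m)}\}$ for $N_m$ with $d_m^2=\dim N_m$, and set
\[
F_m(x):=\sum_{i,j} e_{ij}^{(m)}\, d_m\, E\bigl(e_{ji}^{(m)} x\bigr).
\]
Since $e_{ij}^{(m)}\in N$ and $E(e_{ji}^{(m)}x)\in N'\cap M$, each $F_m(x)$ lies in $N\vee(N'\cap M)$. On the other hand, writing $E=E^{(m)}\circ E_m$ with $E^{(m)}$ the tail expectation averaging over $M_{m+1},M_{m+2},\dots$, and using that $E^{(m)}$ is $N_m$-bilinear, one checks directly that $F_m=E^{(m)}$. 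Your telescoping estimate then gives $\|\psi\circ F_m-\psi\|\le\sum_{k>m}\|\psi\circ E_{M_k'\cap M}-\psi\|\to0$, so $F_m(x)\to x$ weak* for every $x\in M$, and weak*-closedness of $N\vee(N'\cap M)$ finishes the argument. The key is not to estimate the $d_m^2$ summands individually but to recognize their sum as a single conditional expectation whose predual distance to $\id$ is controlled by the summable tail.
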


Let $\{n_k\}_{k=1}^\infty\subset \mathbb{N}$ be
such that any $n\in \mathbb{N}$ appears infinitely many times.
Set $\mu_k:=e^{2\pi\sqrt{-1}/n_k}$.
For a system of $n_k\times n_k$ matrix units
$\{e_{ij}\}_{i,j=0}^{n_k-1}$,
set $u_{n_k}:=\sum_{j=0}^{n_k-1}\mu_k^j e_{jj}$, 
and 
$\sigma:=\bigotimes_{k=1}^\infty \Ad u_{n_k}$. 
Then $\sigma$ is an aperiodic automorphism on
$\bigotimes_{k=1}^\infty M_{n_k}(\mathbb{C })\cong \meR_0$. 
We will prove the following model action splitting result.

\begin{lem}\label{lem: alth-id}
The action $\al\th$ is cocycle conjugate
to the action $\si\oti\al\th$.
\end{lem}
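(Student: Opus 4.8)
The plan is to adapt Connes' infinite-tensor-product construction \cite{Con-auto} to the equivariant setting, using Lemma \ref{lem: single-matrix-model} as the one-step device and Lemma \ref{lem:con-split} as the splitting criterion. Fix increasing finite sets $\mF_k\Subset\IG$ exhausting $\IG$, finite sets $\Psi_k\Subset M_*$ with $\bigcup_k\Psi_k$ total, and positive reals $\epsilon_k$ with $\sum_k\epsilon_k<\infty$. I would build, by induction on $k$, mutually commuting finite dimensional subfactors $M_k\cong M_{n_k}(\C)\subs M$ together with perturbing unitaries $u^{(k)}$ and $w^{(k)}$. At stage $k$ set $Q_{k-1}:=M_1\vee\cdots\vee M_{k-1}$ and $P_{k-1}:=Q_{k-1}'\cap M$; since $Q_{k-1}$ is a finite dimensional subfactor of the injective factor $M\cong\meR_{0,1}$, one has $M=Q_{k-1}\oti P_{k-1}$ with $P_{k-1}\cong\meR_{0,1}$. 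The cumulative perturbations $\al_{k-1}$ and $\th_{k-1}$ fix $Q_{k-1}$, hence split as $\id_{Q_{k-1}}\oti(\cdot)|_{P_{k-1}}$, so their restrictions to $P_{k-1}$ again satisfy the hypotheses of Theorem \ref{thm: cocycle-al-th} (approximate innerness and central freeness of the $\bhG$-part, and trace scaling by $\la$ of the $\Z$-part, being inherited by the tensor factor $P_{k-1}$). Applying Lemma \ref{lem: single-matrix-model} inside $P_{k-1}$ with data $(\mF_k,\Psi_k,\epsilon_k)$ and $n=n_k$ yields $u^{(k)}\in P_{k-1}\oti\lhG$, $w^{(k)}\in P_{k-1}$ and a system of matrix units $\{f^{(k)}_{ij}\}$ generating $M_k\subs P_{k-1}$, which therefore commutes with $Q_{k-1}$.

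Next I would assemble the global perturbation. Because $u^{(k)}$ and $w^{(k)}$ lie in $P_{k-1}$, they commute with all previously constructed $M_j$ $(j<k)$, so the telescoping products $U_k:=u^{(k)}\cdots u^{(1)}$ and $W_k:=w^{(k)}\cdots w^{(1)}$ converge in the strong$*$ topology to unitaries $U$ and $W$ by the estimates (i)--(ii) of Lemma \ref{lem: single-matrix-model} and $\sum_k\epsilon_k<\infty$. Writing $\al':=\Ad U\circ\al$ and $\th':=\Ad W\circ\th$, the commutation of the tail $(u^{(j)},w^{(j)})_{j>k}$ with $M_k$, together with (iv)--(v), shows that $\al'$ fixes every $M_k$ and that $\th'$ acts on $M_k$ as $\Ad u_{n_k}$. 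Condition (iii) with the summability of $\epsilon_k$ gives $\sum_k\|\psi\circ E_{M_k'\cap M}-\psi\|<\infty$ for all $\psi\in\Psi$, so Lemma \ref{lem:con-split} applies: $N:=\bigvee_k M_k$ is a hyperfinite type II$_1$ subfactor, $N\cong\bigotimes_k M_{n_k}(\C)\cong\meR_0$, and $M=N\oti(N'\cap M)$.

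Under this tensor splitting, $\al'$ and $\th'$ both preserve $N$ and $N'\cap M$: on $N$ they are, respectively, trivial and $\bigotimes_k\Ad u_{n_k}=\si$, while on $N'\cap M$ they restrict to some $\al''$ and $\th''$. Hence the perturbed $\bhG\times\Z$-action $\al'\th'$ equals $\si\oti(\al''\th'')$ on $N\oti(N'\cap M)$, with $\bhG$ acting trivially on $N$ and $\Z$ acting by $\si$. Since $\al'\th'=\Ad(U,W)\circ\al\th$ is a unitary perturbation of $\al\th$, it is cocycle conjugate to $\al\th$ (its $2$-cocycle is a coboundary by Lemma \ref{lem: prop-inf-coboundary}). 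It therefore remains to identify $(N'\cap M,\al''\th'')$ with $(M,\al\th)$. Here it is essential that the matrix units $\{f^{(k)}_{ij}\}$ are drawn from the asymptotic centralizer $M_\om^\ga$: condition (iii), with $\bigcup_k\Psi_k$ total and errors summable, makes $N$ approximately central, so the standard argument of \cite{Con-auto} produces an isomorphism $N'\cap M\cong M$ carrying $\al''\th''$ to $\al\th$ up to cocycle conjugacy. Combining the two identifications yields $\al\th\sim\si\oti\al\th$.

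The main obstacle I anticipate is this last step, namely showing that the residual action $\al''\th''$ on $N'\cap M$ is conjugate to the original $\al\th$ and not merely of the same type; this self-absorption is precisely what the approximate centrality of the $f^{(k)}_{ij}$, inherited from their construction in $M_\om^\ga$, is designed to guarantee, following \cite{Con-auto}. A secondary technical point is verifying that approximate innerness and central freeness, together with the trace-scaling property, genuinely pass to the relative commutants $P_{k-1}$ at each inductive step; I would handle this through the factorization $M=Q_{k-1}\oti P_{k-1}$ and the identity $\al_{k-1}=\id_{Q_{k-1}}\oti\al_{k-1}|_{P_{k-1}}$, which reduces the claim to the behaviour of these properties under tensoring with the trivial action on a finite dimensional factor.
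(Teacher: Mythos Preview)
Your inductive construction of commuting matrix algebras and the use of Lemma \ref{lem:con-split} mirror the paper's Step 1 closely. The genuine gap is in the final identification. You propose to show that $(N'\cap M,\al''\th'')$ is cocycle conjugate to $(M,\al\th)$ by ``the standard argument of \cite{Con-auto}'' and approximate centrality; but there is no such argument that does this without already invoking a classification of $\bhG\times\Z$-actions of this type, which is precisely what Theorem \ref{thm: cocycle-al-th} is building toward. Approximate centrality of the $f^{(k)}_{ij}$ gives you the tensor splitting $M\cong N\oti(N'\cap M)$ and the isomorphism $N'\cap M\cong M$ of \emph{algebras}, but it does not by itself carry $\al''\th''$ back to $\al\th$ up to cocycle conjugacy.

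The paper sidesteps this entirely. After obtaining $\al\th\sim\si\oti\be''$ for \emph{some} action $\be''$ on $E'\cap M$ (with no attempt to identify $\be''$), it uses the self-absorption $\si\oti\si\approx\si$ to conclude
\[
\al\th\;\sim\;\si\oti\be''\;\approx\;\si\oti\si\oti\be''\;\sim\;\si\oti\al\th.
\]
This is the missing idea: one does not need $\be''\sim\al\th$, only that $\si$ absorbs itself. A secondary point: at each inductive stage the perturbed $\bhG$-action on $E_m'\cap M$ is only a \emph{cocycle} action, so re-applying Lemma \ref{lem: single-matrix-model} verbatim requires some care; the paper handles this by working directly with $M_\om^\ga$ via the identification $(E_m'\cap M)_\om=M_\om$ and Lemma \ref{lem: smu}, rather than by checking that all hypotheses descend.
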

\begin{proof}
\textbf{Step 1.} 
Let $\{\epsilon_k\}_{k=1}^\infty$ 
be a decreasing sequence of positive numbers with 
$\sum_{k=1}^\infty\epsilon_k< \infty$.
Let $\{\mF_m\}_{m=1}^\infty$
be a family of increasing finite subsets of
$\IG$
such that $\bigcup_{m=1}^\infty\mF_m=\IG$.

Recall that we have fixed a faithful normal state $\phi\in M_*$.
We will construct the following families:
\begin{enumerate}
\item
Matrix units, 
$\{f_{ij}^{k}\}_{i,j=0}^{n_k-1}\subs M$ for $k\in\N$
such that
they are mutually commuting for $k$ and satisfy
$\|[\psi, f_{ij}^k]\|\leq \epsilon_k/n_k$
for all $0\leq i,j\leq n_k$, $\psi\in \Psi_k$ and $k\in\N$.
\\
We set $M_{n_k}:=(\{f_{ij}^k\}_{i,j=0}^{n_k-1})''$ and
$E_m:=\bigvee_{k=1}^m M_{n_k}$;

\item
Unitaries $u^{m}\in (E_{m-1}'\cap M)\otimes \lhG$
and $w^m\in E_{m-1}'\cap M$
satisfying the following for each $m\in\N$:
\begin{itemize}
\item
$\|u_{\pi}^m-1\|_{\phi\oti\tr_\pi}^\#<\epsilon_m$
and
$\|w^m-1\|_{\phi}^\#<\epsilon_m$
for all $\pi\in\mF_m$;

\item
We set
$\bar{u}^m:=u^{m}u^{m-1}\cdots u^1$
and $\bar{w}^m:=w^m w^{m-1}\cdots w^1$.
Then we have, for all $0\leq i,j\leq n_k-1$ and $1\leq k\leq m$,
\\
$\Ad\bar{u}^m\circ \al(f_{ij}^k)
=f_{ij}^k\oti1$;
\\
$\Ad \bar{w}^m\circ\th(f_{ij}^k)
=\mu_k^{(i-j)}f_{ij}^k$.
\end{itemize}
\end{enumerate}

Assume we have constructed up to $k=m$.
Set $\alpha^m:=\Ad \bar{u}^m\circ \alpha$,
and $\theta_{(m)}:=\Ad \bar{w}^m\circ\th$.
Since $\al^m$ fixes $E_m$, $\al^m$ is a cocycle action
on $E_m'\cap M$.

Let $\{\hat{e}^{i}\}$ a basis for $E_m^*$.
Let us decompose 
$\psi\in \Psi_{m+1}$ as
$\psi=\sum_{i=1}^{\dim(E_m)} \hat{e}^{i}\oti \psi_{i}$,
$\psi_{i}\in (E'_m\cap M)_*$, and
denote by $\hat{\Psi}_{m+1}$ the set of all such $\psi_{i}$.
Fix $\delta_{m+1}>0$ so that $\delta_{m+1}\leq
\epsilon_{m+1}\left(n_{m+1}\dim E_m\right)^{-1}$.
\\

\noindent\textbf{Claim.}
There exist the following elements:
\begin{enumerate}
\item
A system of matrix units
$\{f_{ij}^{m+1}\}_{i,j=0}^{n_{m+1}-1}\subset E_m'\cap M$
such that
$\|[\psi, f_{ij}^{m+1}]\|\leq \delta_{m+1}$
for $\psi\in \hat{\Psi}_{m+1}$.
\\
Set $M_{m+1}:=(\{f_{ij}^{m+1}\}_{i,j=0}^{n_{m+1}-1})''$
and $E_{m+1}=E_m\vee M_{m+1}$.
\item
Unitaries $u^{m+1}\in (E_{m}'\cap M)\oti\lhG$
and $w^{m+1}\in E_{m}'\cap M$
satisfying the following
\begin{itemize}
\item
$\|u_{\pi}^{m+1}-1\|_{\phi\oti\tr_\pi}^\#<\epsilon_{m+1}$
and $\|w^{m+1}-1\|_\phi^\#<\epsilon_{m+1}$
for all $\pi\in\mF_{m+1}$;

\item
$\Ad u^{m+1}\circ\alpha^m(f_{ij}^{m+1})
=f_{ij}^{m+1}$
for all $0\leq i,j\leq n_{m+1}-1$;
\\
$\Ad w^{m+1}\circ \th_{(m)}(f_{ij}^{m+1})
=\mu_{m+1}^{\el(i-j)} f_{ij}^{m+1}$
for all $0\leq i,j\leq n_{m+1}-1$.
\end{itemize}
\end{enumerate}

Indeed, we can prove this as follows.
By the natural isomorphism 
$(E_m'\cap M)^\om=E_m'\cap M^\om$, 
we have 
\begin{equation}\label{eq: rel-com}
\big{(}
(E_m'\cap M)_\om\subs (E_m'\cap M)^\om
\big{)}
=
\big{(}
M_\om\subs E_m'\cap M^\om
\big{)}.
\end{equation}
On $E_m'\cap M$,
we have a $\bhG$-cocycle action $\al^m$
and a $\Z$-action $\th_{(m)}$.
Using Lemma \ref{lem: smu}, we take 
a system of matrix units $\{f_{ij}\}_{i,j=0}^{n_{m+1}-1}\subs M_\om^\ga$
such that $\th_\om(f_{ij})=\mu_{m+1}^{i-j}f_{ij}$
for $0\leq i,j\leq n_{m+1}-1$.
Then we get
$\th_{(m)}^\om(f_{ij})
=\bar{w}^m \th_\om(f_{ij})(\bar{w}^m)^*=\mu_{m+1}^{i-j}f_{ij}$.
Since $f_{ij}$ is fixed by $\ga$,
$\al^\om(f_{ij})=f_{ij}\oti1$ as before.
Hence we have
$(\al^m)^\om(f_{ij})=\bar{u}^m(f_{ij}\oti1)(\bar{u}^m)^*=f_{ij}\oti1$.
By using (\ref{eq: rel-com}), we can represent
$\{f_{ij}\}_{i,j=0}^{n_{m+1}-1}$ as sequences
$\{(f_{ij}(\nu))_\nu\}_{i,j=0}^{n_{m+1}-1}$ in $E_m'\cap M$.
Then we can take a desired elements in the Claim
as in Lemma \ref{lem: single-matrix-model}.
\\

Now the condition (1) in the Claim implies
$\|[\psi, f_{ij}^{m+1}]\|\leq \epsilon_m/n_m$ for $\psi\in \Psi_{m+1}$.
Thus we complete induction.
We have constructed families $u^m$, $w^m$ and $E^m$ for $m\in\N$.
Since for $\ps\in \Psi_k$ we have
\begin{align*}
\|\psi\circ
E_{M_{n_k}'\cap M}-\psi\| 
&=
\left\|\frac{1}{n_k}\sum_{ij=0}^{n_k-1}f_{ij}^k\psi f_{ji}^k-\psi\right\|
= 
\left\|\frac{1}{n_k}\sum_{ij=0}^{n_k-1}f_{ij}^k [\psi,f_{ji}^k]\right\| \\
&\leq \frac{1}{n_k} \sum_{ij=0}^{n_k-1}\left\|[\psi,f_{ji}^k]\right\|
\leq \frac{1}{n_k}\cdot n_k^2 \cdot \frac{\epsilon_k}{n_k}
=\epsilon_k,
\end{align*}
we can check
$\sum_{k=1}^\infty\|\psi\circ E_{M_{n_k}'\cap M}-\psi\|< \infty$
for $\psi\in \Psi$.
Then Lemma \ref{lem:con-split} implies
that $E:=\bigvee_{k=1}^\infty E_k$ is isomorphic to $\meR_0$
and yields a tensor product splitting
$M=E\vee (E'\cap M)\cong E\otimes (E'\cap M)$.

\textbf{Step 2.}
From the condition (2),
the strong* limits $\displaystyle\ovl{u}^\infty=\lim_{m\to\infty}\ovl{u}^m$ 
and $\displaystyle\ovl{w}^\infty=\lim_{m\to\infty}\ovl{w}^m$ exist,
and together with (1),
we have
$\Ad \ovl{u}^\infty\circ\al(x)=x\oti1$
and $\Ad \ovl{w}^\infty\circ\th(x)=\si(x)$
for $x\in E$.
Extend $\ovl{w}^\infty$ to the $\th$-cocycle naturally
and denote it also by $\ovl{w}^\infty\in M\oti \el^\infty(\Z)$.
Then we get the perturbation from the $\bhG\times\Z$-action
$\al\th$ to the $\bhG\times\Z$-cocycle action
$(\Ad\ovl{u}^\infty\al(\ovl{w}^\infty)\circ\al\th,v)$.
Set $\be:=\Ad\ovl{u}^\infty\al(\ovl{w}^\infty)\circ\al\th$.
Then $\be$ is of the form $\si\oti \be'$ on $E\oti (E'\cap M)$,
where $\be'=\be|_{E'\cap M}$.
We claim that $v$ is evaluated in $E'\cap M$,
and $(\be',v)$ is a cocycle action.

By definition of $v$,
$(\be\oti\id)\circ \be=\Ad v \circ(\id\oti\De_{\bhG\times\Z})\circ\be$.
Let $k\in\N$ and $0\leq i,j\leq n_k-1$.
Then we have the following:
\[
(\be_{(\pi,\el)}\oti\id)(\be_{(\rho,m)}(f_{ij}^k))
=
\mu_k^{m(i-j)} \be_{(\pi,\el)}(f_{ij}^k\oti1_\rho)
=
\mu_k^{(\el+m)(i-j)}(f_{ij}^k\oti1_\pi\oti1_\rho)
\]
and 
\begin{align*}
(\id\oti\De_{\bhG\times\Z})(\be(f_{ij}^k))_{(\pi,\el),(\rho,m)}
&=
(\id\oti\De)(\be(f_{ij}^k)_{(\cdot,\el+m)})_{\pi,\rho}
\\
&=
\mu_k^{(\el+m)(i-j)}(\id\oti\De)(f_{ij}^k\oti1)_{\pi,\rho}
\\
&=
\mu_k^{(\el+m)(i-j)}(f_{ij}^k\oti1_\pi\oti1_\rho). 
\end{align*}
Hence $v$ is evaluated in $M_k'\cap M$ for any $k\in\N$, and hence in  $E'\cap M$.

We have shown that $\al\th$ is cocycle conjugate to the
cocycle action $\si\oti \be'$.
Since $E'\cap M$ is type III, we can perturb $(\be',v)$
to a $\bhG\times\Z$-action $\be''$
by Lemma \ref{lem: prop-inf-coboundary}.
Hence $\al\th\sim \si\oti\be''$.
Since $\si\oti\si\approx\si$,
we get $\al\th\sim \si\oti \si\oti\be''\sim\si\oti \al\th$.
\end{proof}

\begin{rem}
We can use the Jones-Ocneanu cocycle argument in
\cite[Lemma 2.4]{Ocn-act} to obtain cocycle conjugacy
$\al\th\sim\si\oti \al\th$ in Step 2 above.
We set $\nu:=\ovl{u}^\infty\al(\ovl{w}^\infty)$.
Then we have $\Ad \nu\circ\al\th=\be=\si\oti\be'$.
Since $\si$ is conjugate to $\si\oti\si$,
there exists an isomorphism $\gamma$ from $E\otimes E$ onto $E$
with $\gamma^{-1}\circ \si \circ \gamma=\si\otimes \si$.
So we have
\begin{align*}
(\gamma^{-1}\oti\id\oti\id_{\lhGZ})\circ \Ad \nu\circ\alpha\theta
\circ (\gamma\oti\id)
&=
\gamma^{-1}\circ\sigma \circ \gamma\oti \be'
=\si\otimes \si \oti\be'
\\
&=\si\oti \Ad \nu\circ\al\th.
\end{align*}
Then the following holds:
\[
\Ad (\ga\oti\id\oti\id)(1\oti\nu^*)\nu\circ\al\th
=
(\ga\oti\id\oti\id)\circ(\si\oti \al\th)
\circ (\ga^{-1}\oti\id).
\]
We will verify that $(\ga\oti\id\oti\id)(1\oti\nu^*)\nu$
is an $\alpha\theta$-cocycle.
Here note that $(\gamma\oti\id\oti\id)(1\oti v)=v$ holds
because the 2-cocycle $v$ is evaluated in $E'\cap M$.
Then the following holds:
\begin{align*}
&\,
((\ga\oti\id\oti\id)(1\oti\nu^*)\nu\oti1)
\cdot\alpha\theta((\ga\oti\id\oti\id)(1\oti\nu^*)\nu)
\\
&=
((\ga\oti\id\oti\id)(1\oti\nu^*)\oti1)
\cdot
(\si\oti \be')((\ga\oti\id\oti\id)(1\oti\nu^*))
(\nu\oti1)\alpha\theta(\nu) \\ 
&=
(\gamma\oti\id\oti\id)
((1\oti\nu^*\oti1)(\si\otimes \si\oti\be')(1\oti\nu^*))
v(\id\otimes \Delta)(\nu) \\
&=
(\gamma\oti\id\oti\id)
(1\oti \alpha\theta(\nu^*)\nu^*)
v(\id\otimes \Delta)(\nu) \\
&=
(\gamma\oti\id\oti\id)
(1\oti (\id\otimes \Delta)(\nu^*)v^*)
v(\id\otimes \Delta)(w) \\
&=
(\id\otimes \Delta)((\ga\oti\id\oti\id)(1\oti \nu^*)\nu).
\end{align*}
Hence $\alpha\theta$ and $\alpha\theta\otimes\si$ are cocycle conjugate.
\end{rem}

\noindent
$\bullet$\textit{ Proof of Theorem \ref{thm: cocycle-al-th}.}

Note that $\theta\otimes \theta^{-1} $ is cocycle conjugate to
$\id_{B(\el_2)}\oti \sigma$ by Connes \cite{Con-auto}. 
Then the following holds:
\begin{align*} 
\alpha\theta 
&\sim \id_{B(\el_2)}\oti \al\th
\quad(\mbox{by}\ \mbox{Lemma}\ \ref{lem: id-otimes-al})\\
&\sim \id_{B(\el_2)}\oti \si\oti \al\th
\quad(\mbox{by}\ \mbox{Lemma}\ \ref{lem: alth-id})\\
&\sim\theta \otimes \theta^{-1}\oti \alpha\theta.  
\end{align*}
Since the action 
$\th^{-1}\oti\alpha\theta$ preserves the trace of 
$\meR_{0,1}$,
it is approximately inner. 
The central freeness is clear.
Then $\th^{-1}\oti\alpha\theta$ is
cocycle conjugate to $\id_{B(\el_2)}\oti \si\oti \be$
by Theorem \ref{thm:main} for type II$_\infty$ case,
and the following holds:
\begin{align*}
\theta \otimes \theta^{-1}\oti \alpha\theta
&\sim 
\th\oti \id_{B(\el_2)}\oti \si \oti \beta 
\\
&\sim
\th \oti \si \oti \beta
\\
&\sim 
\th\oti \beta.
\end{align*}
Therefore we get $\al\th\sim\th\oti\be$.
\hfill$\Box$
\\

We close this section with the following lemma
which is used in Section \ref{sec:III1}.

\begin{lem}\label{lem: gen-tr-cocycle}
Let $N$ be a type III$_\la$ factor with $0<\la<1$
and
$\al$ an approximately inner action of $\bhG$ on $N$.
Let $\ps$ be a generalized trace on $N$.
Then there exists a $\bhG$-action $\be$ on $N$
such that
\begin{itemize}
\item
$\be\sim\al$;

\item
$\ps\circ\Ph_\pi^\be=\ps\oti\tr_\pi$
for all $\pi\in\IG$.
\end{itemize}
\end{lem}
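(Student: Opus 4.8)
The plan is to produce $\be$ as a cocycle perturbation $\be=\Ad u\circ\al$ of $\al$ by an $\al$-cocycle $u=(u_\pi)_\pi\in U(N\oti\lhG)$; then automatically $\be\sim\al$. Since such a perturbation transforms the standard left inverse by $\Ph_\pi^{\be}=\Ph_\pi^\al\circ\Ad u_\pi^*$, the requirement $\ps\circ\Ph_\pi^\be=\ps\oti\tr_\pi$ is equivalent to $\ps\circ\Ph_\pi^\al=(\ps\oti\tr_\pi)\circ\Ad u_\pi$, that is, to
\[
d_\pi(t):=[D\ps\circ\Ph_\pi^\al:D\ps\oti\tr_\pi]_t=u_\pi\,\si_t^{\ps\oti\tr_\pi}(u_\pi^*)\qquad(t\in\R,\ \pi\in\IG).
\]
So the whole problem reduces to exhibiting the Connes cocycle $d_\pi(\cdot)$ as a coboundary for the modular flow $\si^{\ps\oti\tr_\pi}=\si^\ps\oti\id$ by unitaries $u_\pi$ that, in addition, assemble into a single $\al$-cocycle. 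Here one keeps in mind the ``double cocycle'' structure of $\de(t):=(d_\pi(t))_\pi$: for each fixed $t$ it satisfies the $\al$-cocycle identity (Lemma \ref{lem: Connes-cocycle-alpha}), while in $t$ it is a $\si^{\ps\oti\tr_\pi}$-cocycle.

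First I would show that $d_\pi$ is periodic. As $\al$ is approximately inner, each $\al_\pi$ has trivial Connes--Takesaki module $\mo(\al_\pi)=\id$ \cite{M-T-endo-pre}, so the canonical extension $\tal_\pi$ to the discrete core $N\rti_{\si^\ps}\T$ is defined \cite{Iz-can2}. Its defining formula $\tal_\pi(\la^\ps(t))=d_\pi(t)(\la^\ps(t)\oti1)$ is indexed by $t\in\R/T\Z$ with $T=-2\pi/\log\la$; evaluating at $t=T$, where $\la^\ps(T)=\la^\ps(0)=1$, forces $d_\pi(T)=\tal_\pi(1)=1$. Consequently $d_\pi(t+T)=d_\pi(t)\si_t^{\ps\oti\tr_\pi}(d_\pi(T))=d_\pi(t)$, so $d_\pi$ descends to a $1$-cocycle for the compact group $\T=\R/T\Z$ acting by $\si^{\ps\oti\tr_\pi}$; equivalently $\si_T^{\ps\circ\Ph_\pi^\al}=\Ad d_\pi(T)=\id$, so $\ps\circ\Ph_\pi^\al$ is again a generalized trace on the type III$_\la$ factor $N\oti B(H_\pi)$.

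Next I would trivialize $d_\pi$ for each fixed $\pi$. A $1$-cocycle for a compact group action is a coboundary, and the scalar (trace-scaling) part is disposed of using the unitary eigenoperators of $\si^\ps$ present in any type III$_\la$ factor, namely unitaries $w$ with $\si_t^\ps(w)=\la^{int}w$ for $n\in\Z$. Averaging over $\T$ then yields $u_\pi^{(0)}\in U(N\oti B(H_\pi))$ with $d_\pi(t)=u_\pi^{(0)}\si_t^{\ps\oti\tr_\pi}((u_\pi^{(0)})^*)$; equivalently, the two generalized traces $\ps\circ\Ph_\pi^\al$ and $\ps\oti\tr_\pi$ on the III$_\la$ factor $N\oti B(H_\pi)$ are unitarily equivalent, the standardness of $\Ph_\pi^\al$ fixing the normalization. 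Each $u_\pi^{(0)}$ is unique up to right multiplication by a unitary of the centralizer $(N\oti B(H_\pi))_{\ps\oti\tr_\pi}=N_\ps\oti B(H_\pi)$, which is of type II.

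The remaining and hardest step is to spend this freedom to make $u=(u_\pi)_\pi$ a genuine $\al$-cocycle. Setting $c:=(u\oti1)\al(u)(\id\oti\De)(u)^*$ for the defect, the fact that $\de(t)$ obeys the $\al$-cocycle identity for every $t$ forces $c$ to commute with $\si^\ps$, hence to take values in the type II centralizer $N_\ps$, where it is an $\al$-$2$-cocycle; trivializing it by the type II $2$-cohomology vanishing machinery (Theorem \ref{thm: 2-coho-vanish} together with \cite[Theorem 6.2]{M-T-CMP}) corrects the $u_\pi^{(0)}$ into the sought $\al$-cocycle $u$ without disturbing the relation $\ps\circ\Ph_\pi^\be=\ps\oti\tr_\pi$ (this is where injectivity, standing throughout, enters). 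In practice I expect this to be organized in the ultraproduct exactly as in Corollaries \ref{cor: modular-app} and \ref{cor: wU-cocycle}: perturb $\al$ to an action, produce via Theorem \ref{thm: app-cocycle} a representation $U$ with $U^*$ an $\al^\om$-cocycle, verify that the conjugate $U^*\de(t)U$ lies in the centralizer piece, and run the Shapiro-unitary and index-selection argument to descend the coboundary to a unitary of $N$. Reconciling the quantum-group cocycle identity with the modular ($\T$) coboundary simultaneously is the point demanding the most care.
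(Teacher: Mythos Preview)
Your outline is correct and matches the paper's proof in its skeleton: perturb $\al$ by unitaries $v_\pi$ chosen so that $\ps\circ\Ph_\pi^\al=(\ps\oti\tr_\pi)\circ\Ad v_\pi$, observe that the resulting $2$-cocycle defect $c=(v\oti1)\al(v)(\id\oti\De)(v^*)$ lands in the centralizer $N_\ps$, and then kill $c$ there by a unitary $w\in N_\ps\oti\lhG$, so that $\be:=\Ad(wv)\circ\al$ works.

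Where you diverge is in the last step, and there you make things harder than necessary. You propose invoking Theorem~\ref{thm: 2-coho-vanish}, the ultraproduct machinery of Corollaries~\ref{cor: modular-app} and~\ref{cor: wU-cocycle}, Shapiro unitaries, and index selection; you also flag injectivity as essential. None of this is needed. The centralizer $N_\ps$ of a generalized trace on a type III$_\la$ factor is a type II$_\infty$ factor, hence properly infinite, and Lemma~\ref{lem: prop-inf-coboundary} already says that every $2$-cocycle of a cocycle action on a properly infinite von Neumann algebra is a coboundary, by the short multiplicative-unitary trick. That is exactly what the paper uses. Since the perturbing unitary $w$ is found inside $N_\ps\oti\lhG$, it is fixed by $\si^\ps\oti\id$, and the verification $\ps\circ\Ph_\pi^\be=\ps\circ\Ph_\pi^{\Ad v\circ\al}\circ\Ad w_\pi^*=(\ps\oti\tr_\pi)\circ\Ad w_\pi^*=\ps\oti\tr_\pi$ is immediate. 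A minor further difference: the paper checks $c\in N_\ps$ by a direct computation that $c^*(\ps\oti\tr_\pi\oti\tr_\rho)c=\ps\oti\tr_\pi\oti\tr_\rho$, rather than via the double-cocycle identity of Lemma~\ref{lem: Connes-cocycle-alpha}; either route is fine.
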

\begin{proof}
Since $\al$ is approximately inner,
we see that $\ps\circ\Ph_\pi^\al$ is a generalized trace
for all $\pi\in\IG$.
Hence there exists a unitary $v_\pi\in N\oti B(H_\pi)$
such that
$\ps\circ\Ph_\pi^\al=(\ps\oti\tr_\pi)\circ\Ad v_\pi$.
Set $v=(v_\pi)_\pi\in N\oti\lhG$, and consider 
the cocycle action $\de:=\Ad v\circ\al$,
whose 2-cocycle is given by $u:=(v\oti1)\al(v)(\id\oti\De)(v^*)$.
Then we have $\ps\circ\Ph_\pi^\de=\ps\oti\tr_\pi$,
and $\si^\ps$ and $\de$ commute in particular.

We check that $u$ is evaluated in $N_\ps$ as follows:
for $\pi,\rho\in\IG$,
\begin{align*}
&u^*(\ps\oti\tr_\pi\oti\tr_\rho)u
\\
&=
u^*
\cdot(\ps\circ\Ph_\rho^\de\circ(\Ph_\pi^\de\oti\id))\cdot
u
\\
&=
(\id\oti\De)(v)\al_\pi(v_\rho^*)(v_\pi^*\oti1)
\cdot(\ps\circ\Ph_\rho^\de\circ(\Ph_\pi^\de\oti\id))\cdot
(v_\pi\oti1)\al_\pi(v_\rho)(\id\oti\De)(v^*)
\\
&=
(\id\oti\De)(v)\al_\pi(v_\rho^*)
\cdot(\ps\circ\Ph_\rho^\de\circ(\Ph_\pi^\al\oti\id))\cdot
\al_\pi(v_\rho)(\id\oti\De)(v^*)
\\
&=
(\id\oti\De)(v)
\cdot\Big{(}
\big{(}v_\rho^*\cdot(\ps\circ\Ph_\rho^\de)\cdot v_\rho\big{)}
\circ(\Ph_\pi^\al\oti\id)
\Big{)}\cdot
(\id\oti\De)(v^*)
\\
&=
(\id\oti\De)(v)
\cdot\big{(}
\ps\circ\Ph_\rho^\al
\circ(\Ph_\pi^\al\oti\id)
\big{)}\cdot
(\id\oti\De)(v^*)
\\
&=
\sum_{\si\prec\pi\oti\rho}
\sum_{S\in\ONB(\si,\pi\oti\rho)}
\frac{d(\si)}{d(\pi)d(\rho)}
(\id\oti\De)(v)
(1\oti S)\cdot(\ps\circ\Ph_\si^\al)\cdot(1\oti S^*)
(\id\oti\De)(v^*)
\\
&=
\sum_{\si\prec\pi\oti\rho}
\sum_{S\in\ONB(\si,\pi\oti\rho)}
\frac{d(\si)}{d(\pi)d(\rho)}
(1\oti S)v_\si\cdot(\ps\circ\Ph_\si^\al)\cdot v_\si^*(1\oti S^*)
\\
&=
\sum_{\si\prec\pi\oti\rho}
\sum_{S\in\ONB(\si,\pi\oti\rho)}
\frac{d(\si)}{d(\pi)d(\rho)}
(1\oti S)\cdot(\ps\circ\Ph_\si^\de)\cdot(1\oti S^*)
\\
&=
\sum_{\si\prec\pi\oti\rho}
\sum_{S\in\ONB(\si,\pi\oti\rho)}
\frac{d(\si)}{d(\pi)d(\rho)}
(1\oti S)\cdot(\ps\oti\tr_\si)\cdot(1\oti S^*)
\\
&=\ps\oti\tr_\pi\oti\tr_\rho.
\end{align*}
Hence $u\in N_\ps\oti\lhG\oti\lhG$, and $(\de|_{N_\ps},u)$
is a cocycle action on the type II$_\infty$ factor $N_\ps$.
By Lemma \ref{lem: prop-inf-coboundary},
there exists a unitary $w\in N_\ps\oti\lhG$ perturbing
$(\de,u)$ to the action $(\Ad w\circ\de,1)$.
Then $wv$ is an $\al$-cocycle and we set $\be:=\Ad wv\circ \al$.
We check that $\be$ satisfies the second condition.
Since $\Ph_\pi^\be=\Ph_\pi^\de\circ\Ad w_\pi^*$ and $w\in N_\ps\oti\lhG$,
we have
\[
\ps\circ\Ph_\pi^\be
=\ps\circ\Ph_\pi^\de\circ\Ad w_\pi^*=(\ps\oti\tr_\pi)\circ\Ad w_\pi^*
=\ps\oti\tr_\pi.
\]
\end{proof}

\section{Groupoid actions and type III$_0$ case}\label{sec:III0}
Let $M$ be an injective factor of type III$_0$
and $\{\tM, \theta, \tau_{\tM} \}$ the canonical core of $M$.
Let $(X, \nu, \mathcal{F}_t)$ be the flow of weights for $M$,
that is,
$Z(\tM)=L^\infty(X,\nu)$,
$\theta_t(f)(x)=f(\mathcal{F}_{-t}x)$
and $\nu$ is a measure on $X$.
We represent $(X,\nu,\mathcal{F}_t)$
as a flow built under the ceiling function, that is, there
exist a measure space $(Y,\mu)$, $f\in L^\infty(Y,\mu)$ with
$f(x)\geq R$ for some $R>0$,
and a nonsingular transformation $T$ on $(Y, \mu)$
such that $X$ is identified with
$\{(y,t)\mid y\in Y,0\leq t < f(y)\}$, $\nu =\mu\times dt$,
and
$\mathcal{F}_t(y,s)=(y,t+s)$ where we identify $(y,f(y))$ and $(Ty,0)$.
Then we have two kinds of measured groupoids,
$\widetilde{\meG}:=  \R\ltimes_{\mathcal{F}} X$ and
$\meG:=\mathbf{Z} \ltimes_T Y$.
In fact, $\meG$ is characterized
as $\meG=\{\gamma\in \widetilde{\meG}\mid s(\gamma),r(\gamma)\in Y\}$.
Here for a $\Gamma$-space $Z$, the groupoid $\Gamma \ltimes Z$ is
defined as $(g,hx)(h,x)=(gh, x)$ for $g,h\in \Gamma$ and $x\in Z$. 
The source map $s$ and the range map $r$ are defined by $s(g,x)=x$ and
$r(g,x)=gx$, respectively. 

Let $\al$ be an approximately inner action
of $\bhG$ on $M$.
Then $\mo(\al)=\id$ by Theorem \ref{thm: app-cent-prop},
that is, the canonical extension $\tal$ fixes $L^\infty(X,\nu)$.
We first discuss
the reduction of the study of $\bhG\times\R$-action $\tal\th$
to the groupoid actions.

Let $\tM=\int_{X}^{\oplus} \tM(x) dx$ be the central decomposition.
Since $\tM(x)$ are injective for almost every $x\in X$,
$M(x) \cong \meR_{0,1}$ holds for almost every $x\in X$.
As in \cite{Su-Tak-act},
we obtain a family of actions
$\{\tal_x\}_{x\in X}$ of $\bhG$ on $\tM(x)$ determined by 
\[
\tal(a)=\int_{X}^{\oplus}\tal(a)(x)d\mu(x)
=\int_{X}^{\oplus}\tal_x(a(x))d\mu(x),
\]
and an action $\{\theta_{\gamma}\}_{\ga\in\wdt{\meG}}$
of
$\widetilde{\meG}$ by 
\[
\theta_t(a)=\int_{X}^{\oplus}\theta_t(a)(x)d\mu(x)=\int_X^{\oplus} 
\theta_{\gamma}(a(\mathcal{F}_{-t}x))d\mu(x),
\]
where $\gamma=(t, \mathcal{F}_{-t}x)$.
Of course $\theta_\gamma$ is an isomorphism
from $\tM(s(\gamma))$ onto $\tM(r(\gamma))$.
Then $\theta_\gamma$ and $\tal_x$
commute in the following sense:
$\tal_{r(\gamma)}\circ\theta_{\gamma}
=\left(\theta_{\gamma}\otimes \id\right)\circ \tal_{s(\gamma)}$.
Since
$\tal$ preserves $\tau_{\tM}$ by Lemma \ref{lem: trace-inv},
each $\tal_x$ preserves $\ta_x$, a trace on $\tM(x)$.
We denote
the $\pi$-component of $\alpha_x$ by $\alpha_{\pi,x}$.
We introduce the notion of a $\bhG$-$\meG$-action.

\begin{defn}
Let $R$ be a von Neumann algebra,
$\bhG$ a discrete Kac algebra, and $\meG$ a groupoid.
\begin{enumerate}
\item
Let $\{\alpha_x\}_{x\in\meG^{(0)}}$ be a family of actions of $\bhG$ and
$\{\alpha_\gamma\}_{\ga\in\meG}$ an action of $\meG$ on $R$.
We say that $\alpha $ is a $\bhG$-$\meG$-\emph{action}
if
$\alpha_{r(\gamma)}\circ\alpha_\gamma=(\alpha_\gamma\otimes \id)
\circ
\alpha_{s(\gamma)}$
for all $\ga$.
We denote $\alpha_{r(\gamma)}\alpha_\gamma$
and
$\alpha_{\pi,r(\gamma)}\alpha_\gamma$ 
by
$\alpha_{\cdot, \gamma}$ and $\alpha_{\pi,\gamma}$ for simplicity.
We call $\{\alpha_x\}_{x\in \meG^{(0)}}$
and
$\{\alpha_\gamma\}_{\gamma\in \meG}$
the $\bhG$-\emph{part} and the $\meG$-\emph{part} of $\alpha$, respectively.

\item
For two $\bhG$-$\meG$ actions $\alpha$ and $\beta$ on $R$,
we say that $\alpha$ and $\beta$ are \emph{cocycle conjugate}
if there exist a Borel function
$\sigma\col X\ra\Aut(R)$,
a $\beta_x$-cocycle $u^x$ for $x\in X$ and
a $\beta_\gamma$-cocycle $u_\gamma$ for $\ga\in\meG$
satisfying, for all $x\in X$ and $\ga\in\meG$,
\[(\sigma_{r(\gamma)}\otimes \id)
\circ \alpha_{\cdot,\gamma}\circ \sigma^{-1}_{s(\gamma)}
=\Ad (u^{r(\gamma)}\beta_{r(\gamma )}(u_\gamma))\circ
\beta_{\cdot,\gamma}
\]
and
\[
u^{r(\gamma)}\beta_{r(\gamma)}(u_\gamma)
=
(u_\gamma\otimes 1)(\beta_\gamma\otimes \id)(u^{s(\gamma)}).
\]
In this case, we simply say that
$u^{r(\gamma)}\beta_{r(\gamma)}(u_\gamma)$ is a $\beta$-cocycle.
\end{enumerate}
\end{defn}

The following can be shown as \cite[p.430]{Su-Tak-act}.

\begin{lem}\label{lem: al-be-cocycle}
Let $\alpha$, $\beta$ be actions of $\bhG$
on a type III$_0$ injective factor $M$.
Suppose that $\mo(\al)=\id=\mo(\be)$.
\begin{enumerate}
\item
The $\bhG\times\R$-actions
$\tal\theta$ and $\tbe\theta$ on $\tM$ are cocycle
conjugate if and only if
the $\bhG$-$\wdt{\meG}$-actions $\tal_{r(\gamma)}\th_\ga$
and
$\tbe_{r(\gamma)}\theta_\gamma$ on $\meR_{0,1}$
are cocycle conjugate.

\item
If the $\bhG$-$\meG$-actions
$\tal_{r(\gamma)}\theta_\gamma$
and $\tbe_{r(\gamma)}\theta_\gamma $ on $\meR_{0,1}$
are cocycle conjugate,
then they are also cocycle conjugate
as the $\bhG $-$\widetilde{\meG}$-actions.
\end{enumerate}
\end{lem}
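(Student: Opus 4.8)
The plan is to prove both assertions by central disintegration over $Z(\tM)=L^\infty(X,\nu)$, exploiting that $\mo(\al)=\id=\mo(\be)$ forces the canonical extensions $\tal,\tbe$ to fix the centre. Consequently $\tal$ and $\tbe$ split as measurable fields $\{\tal_x\}$, $\{\tbe_x\}$ of $\bhG$-actions on the fibres $\tM(x)\cong\meR_{0,1}$, while the flow $\th$ of the $\bhG\times\R$-action becomes the $\widetilde{\meG}$-part $\{\th_\ga\}$, exactly as recorded before the statement. A cocycle conjugacy of the $\bhG\times\R$-actions consists of an isomorphism $\Theta\in\Aut(\tM)$ together with a $\bhG\times\R$-cocycle $W=(W^\bhG,W^\R)$, and I would read off the fibrewise data from these two objects; conversely, I would reconstruct $(\Theta,W)$ from the fibrewise data. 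The identification to keep in mind is that, since $\widetilde{\meG}=\R\ltimes_{\mathcal F}X$, a $\th$-cocycle $W^\R=(W_t)_t$ is literally a $\widetilde{\meG}$-cocycle $u_\ga$ for $\ga=(t,x)$, and $W^\bhG\in\tM\oti\lhG$ disintegrates into fibrewise $\bhG$-cocycles $u^x\in\tM(x)\oti\lhG$.

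For the ``if'' direction of (1) I would integrate the fibrewise data back up: given $\{\si_x\}$, $\{u^x\}$ and $u_\ga$ implementing the $\bhG$-$\widetilde{\meG}$-cocycle conjugacy, set $\Theta:=\int_X^\oplus\si_x\,dx$, $W^\bhG:=\int_X^\oplus u^x\,dx$, and recover $W^\R$ from $u_\ga$. Measurability of the field guarantees these are well defined, and the two displayed groupoid identities in the definition integrate to the two conditions defining a $\bhG\times\R$-cocycle conjugacy. The ``only if'' direction carries the real content: disintegrating $\Theta$ yields a base transformation $\phi\in\Aut(X,\nu)$ which, since $\Theta$ intertwines $\tal\th$ and $\tbe\th$ up to a cocycle valued in $\tM$ (and the centre is invariant under such inner perturbations), must commute with the flow $\mathcal F$. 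Such a $\phi$ is a self-equivalence of the flow of weights, and the delicate point is that it can be absorbed into the $\widetilde{\meG}$-part of the cocycle-conjugacy data, which is precisely the normalisation step carried out in \cite{Su-Tak-act}. Once $\Theta$ fixes the centre it disintegrates as a field $\{\si_x\}$, and $W^\bhG,W^\R$ disintegrate into $\{u^x\}$ and $u_\ga$ as above, yielding the $\bhG$-$\widetilde{\meG}$-cocycle conjugacy.

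For (2) I would use that $\meG=\widetilde{\meG}|_Y$ is the reduction of the flow groupoid to the complete transversal $Y=\{(y,0)\}$, and extend the $\meG$-data from $Y$ to all of $X$ by transporting along the flow. For $x=(y,t)$ with $0\le t<f(y)$ I connect $x$ to $(y,0)\in Y$ by the flow element $\ga_x=(t,(y,0))\in\widetilde{\meG}$ and define $\si_x$ by conjugating $\si_{(y,0)}$ through $\th_{\ga_x}$, folding the implementing unitaries into the extended cocycle; I extend $u_\ga$ to an arbitrary $\ga\in\widetilde{\meG}$ by factoring $\ga$ through the transversal as (flow)(element of $\meG$)(flow) and inserting the given $\meG$-cocycle on the middle factor. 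The consistency to verify is at the ceiling: flowing from $(y,0)$ up to time $f(y)$ and crossing to $(Ty,0)$ must agree with applying the first-return element of $\meG$ corresponding to $T$, and this matching is exactly guaranteed by the $\meG$-cocycle identity for that generator.

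The main obstacle, in both parts, is the measure-theoretic bookkeeping: the Borel measurability of every disintegrated and flow-transported field, the uniqueness of the disintegration, and above all the normalisation of the base automorphism $\phi$ in the ``only if'' half of (1). Granting the measurable selection and disintegration machinery of \cite{Su-Tak-act}, the cocycle identities reduce to a routine fibrewise translation, so in the write-up I would state those verifications schematically and concentrate on the $\phi$-normalisation and the ceiling-consistency check in (2).
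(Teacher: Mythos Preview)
Your proposal is correct and follows the same route the paper takes: the paper's own proof is the one-line reference ``The following can be shown as \cite[p.430]{Su-Tak-act}'', i.e.\ exactly the Sutherland--Takesaki disintegration/suspension argument you sketch. You have in fact supplied considerably more detail than the paper, and you have correctly isolated the two genuine issues --- the normalisation of the base automorphism $\phi=\Theta|_{Z(\tM)}$ in the ``only if'' half of (1), and the ceiling-consistency check when extending along the flow in (2).

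One remark on scope: in the paper's application (the proof of Theorem~\ref{thm:main} for type III$_0$), only part (2) together with the ``if'' direction of (1) is ever used, so the $\phi$-normalisation you flag as delicate is not load-bearing for the main result. That said, your deferral of this step to \cite{Su-Tak-act} is exactly what the paper does, so your treatment is on par with the original.
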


Hence we only have to classify two $\bhG$-$\meG$
actions $\tal_{r(\gamma)}\theta_\gamma$
and $\tbe_{r(\gamma)}\theta_\gamma$ on $\meR_{0,1}$.
Here the $\bhG$-parts preserve the trace,
and the $\meG$-parts come from $\theta$,
which are independent from $\alpha$ and $\beta$.
Now we consider the following situation:

\begin{itemize}
\item
We are given two $\bhG$-$\meG$-actions $\alpha$ and $\beta$
on $\meR_{0,1}$;

\item
The $\bhG$-parts of $\alpha$ and $\beta$ are free actions;

\item
The $\bhG$-parts of $\alpha$ and $\beta$ preserve the trace on $\meR_{0,1}$;

\item
$\mathrm{mod}(\alpha_\gamma)=\mathrm{mod}(\beta_\gamma)$
for $\gamma\in \meG$.
\end{itemize}

Note that $\meG$ is an ergodic,
approximately finite (AF), orbitally discrete principal
groupoid, and the following
Krieger's cohomology lemma provides a powerful tool
for study of actions of such groupoids \cite{Kri-erg}.
(Also see \cite[Appendix]{JT}.)

\begin{thm}\label{lem:Krieger}
Let $G$ be a Polish group, and $N$ a normal subgroup.
Let $\meG$
be an ergodic AF orbitally discrete principal groupoid. Let $\theta^1$
and $\theta^2$ be  homomorphisms from $\meG$ to $G$ with
$\theta^1_\gamma \equiv \theta^2_\gamma \mod \ovl{N}$.
Then there exist Borel maps
$\sigma \col\meG^{(0)}\rightarrow \ovl{N}$ and 
$u \col \meG \rightarrow N$ 
such that
$\sigma_{r(\gamma)}\theta^1_\gamma
\sigma_{s(\gamma)}^{-1}
= u_\gamma \theta^2_\gamma$.
\end{thm}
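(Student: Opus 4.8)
The plan is to reduce the statement to a cohomology-vanishing argument that exploits the approximately finite structure of $\meG$, following Krieger's original method. First I would rewrite the desired identity as the requirement that the Borel map
\[
u_\gamma:=\sigma_{r(\gamma)}\,\theta^1_\gamma\,\sigma_{s(\gamma)}^{-1}\,(\theta^2_\gamma)^{-1}
\]
take values in $N$. Setting $d_\gamma:=\theta^1_\gamma(\theta^2_\gamma)^{-1}$, the hypothesis $\theta^1_\gamma\equiv\theta^2_\gamma\mod\ovl{N}$ together with normality of $N$ (hence of $\ovl{N}$) shows $d_\gamma\in\ovl{N}$, and the homomorphism property of $\theta^1,\theta^2$ yields $d_{\gamma_1\gamma_2}=d_{\gamma_1}\,\Ad(\theta^2_{\gamma_1})(d_{\gamma_2})$, so $d$ is a $1$-cocycle for the action $\gamma\mapsto\Ad(\theta^2_\gamma)$ of $\meG$ on $\ovl{N}$. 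The problem thus becomes: trivialize the $\ovl N$-valued cocycle $d$ into $N$ by a coboundary coming from a Borel map $\sigma\col\meG^{(0)}\ra\ovl N$.

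Next I would use that $\meG$ is an ergodic AF orbitally discrete principal groupoid to write it as an increasing union $\meG=\bigcup_n\meR_n$ of finite measured subequivalence relations (mod null sets). The core local observation is that on a finite orbit the cocycle $d$ is determined by its values along a spanning tree rooted at a chosen base point; since $N$ is dense in $\ovl N$ and $\ovl N$ normalizes $N$, one can choose $\sigma$ (valued in $\ovl N$) on the non-base points so that each spanning value of the modified cocycle lands exactly in $N$, whence, $N$ being a subgroup stable under $\Ad(\theta^2_\gamma)$, every value along that finite orbit lies in $N$. Carrying this out Borel-measurably across all orbits, via a measurable selection theorem, solves the problem exactly at the level $\meR_n$.

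The construction then proceeds by induction on $n$. Having arranged $u_\gamma\in N$ for $\gamma\in\meR_n$ by a map $\sigma^{(n)}$, I would correct at the next stage by a Borel map $\rho^{(n+1)}\col\meG^{(0)}\ra\ovl N$ that is constant on each $\meR_n$-orbit, and set $\sigma^{(n+1)}:=\rho^{(n+1)}\sigma^{(n)}$. Being constant on $\meR_n$-orbits, $\rho^{(n+1)}$ alters $u_\gamma$ for $\gamma\in\meR_n$ only by conjugation by an element of $\ovl N$; since $\ovl N$ normalizes $N$, membership in $N$ is preserved, while $\rho^{(n+1)}$ is chosen to push the finitely many new spanning generators of $\meR_{n+1}$ into $N$ exactly as above. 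Because $\meG=\bigcup_n\meR_n$, every fixed $\gamma$ eventually lies in some $\meR_n$ and thereafter keeps $u_\gamma\in N$.

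The main obstacle is to make the infinite product $\sigma=\lim_n\rho^{(n)}\cdots\rho^{(1)}$ converge to a genuine Borel function $\sigma\col\meG^{(0)}\ra\ovl N$ while retaining exact membership in $N$ rather than mere approximation. I would handle this by exploiting the freedom in the choice of base points and of the $N$-approximants of $\ovl N$-elements to arrange that each $\rho^{(n+1)}$ is the identity off a sequence of sets of summable measure, and that the distance $d(\rho^{(n+1)}_y,e)$ is controlled by a summable sequence off that set; a Borel--Cantelli argument then gives almost-everywhere convergence in the Polish group $\ovl N$, and completeness of $\ovl N$ secures the limit. This separation between convergence, which only needs closeness in $\ovl N$, and membership, which must be exact in $N$, is the delicate point, and it is precisely where density of $N$ in $\ovl N$, normality of $N$, and the Polish (hence complete) structure of $G$ are used jointly.
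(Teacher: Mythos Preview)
The paper does not prove this theorem; it is quoted as Krieger's cohomology lemma with references to \cite{Kri-erg} and \cite[Appendix]{JT}. Your outline follows the right general strategy (reduce to a twisted $\ovl N$-valued cocycle, exhaust $\meG$ by finite subrelations $\meR_n$, correct inductively, control convergence via Borel--Cantelli), and this is indeed how the cited proofs proceed.

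There is, however, a real gap in your inductive step. You assert that if $\rho^{(n+1)}$ is constant on $\meR_n$-orbits then for $\gamma\in\meR_n$ the element $u_\gamma$ changes only by conjugation in $\ovl N$, hence stays in $N$. Computing directly, with $\sigma^{(n+1)}=\rho^{(n+1)}\sigma^{(n)}$ and $\rho^{(n+1)}_{r(\gamma)}=\rho^{(n+1)}_{s(\gamma)}=:\rho_0$, one finds
\[
u^{(n+1)}_\gamma=\rho_0\,u^{(n)}_\gamma\,\Ad(\theta^2_\gamma)(\rho_0^{-1})
=\big(\rho_0\,u^{(n)}_\gamma\,\rho_0^{-1}\big)\cdot\big(\rho_0\,\theta^2_\gamma\,\rho_0^{-1}\,(\theta^2_\gamma)^{-1}\big).
\]
The first factor lies in $N$ by normality, but the second factor is a commutator $[\rho_0,\theta^2_\gamma]$ with $\rho_0\in\ovl N$, which in general lies only in $\ovl N$, not in $N$. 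So exact membership in $N$ is \emph{not} preserved by your correction, and the sentence ``every fixed $\gamma$ eventually lies in some $\meR_n$ and thereafter keeps $u_\gamma\in N$'' is unjustified as written.

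The standard remedy is to organise the induction so that the successive correction factors $\rho^{(n+1)}$ themselves take values in $N$ rather than merely in $\ovl N$; then all three factors in the formula above lie in $N$ and the problem disappears. This forces you to be more careful at the base step and at each bridge between $\meR_n$-classes inside an $\meR_{n+1}$-class: one first uses density of $N$ in $\ovl N$ to approximate the ideal $\ovl N$-valued correction by an $N$-valued one, absorbing the resulting small $\ovl N$-error into the choice of $\sigma$ on the base class, and it is this interplay (rather than a single coboundary trivialisation per level) that makes the argument in \cite[Appendix]{JT} somewhat delicate. Your convergence discussion is fine once this is fixed.
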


We need some preparations as in \cite{JT,Su-Tak-RIMS}.
Let $\sigma$ be a trace
preserving free action of $\bhG$ on $\meR_{0,1}$.
Let $C^{(1)}_\sigma$ be the set of pairs $(\theta, v)$,
where $\theta\in
\overline{\Int}(\meR_{0,1})$ and $v$ is a $\sigma$-cocycle
such that $\Ad v\circ\sigma=(\theta\otimes \id)\circ \sigma \circ \theta^{-1}$.
We define the multiplication on
$C^{(1)}_\sigma$ by
$(\theta_1, v_1)(\theta_2,v_2)
:=(\theta_1\theta_2, (\theta_1\otimes \id)(v_2)v_1)$.
Let $\Aut_{\hat{\sigma}}(\meR_{0,1}\rtimes_\sigma \bhG)$
be the set of all
automorphisms which commute with the dual action of $\bG$.
Then we have 
$C^{(1)}_\sigma \subset \Aut_{\hat{\sigma}}(\meR_{0,1}\rtimes_\sigma \bhG)$
in a canonical way, and $C^{(1)}_\sigma$ is a Polish group.
In fact,
$C^{(1)}_\sigma=\Aut_{\hat{\sigma}}(\meR_{0,1}\rtimes_\sigma \bhG)\cap
\mathrm{Ker(\mo)}$ holds.
Let $C^{(0)}_\sigma:=\{(\Ad v, (v\otimes 1)\sigma (v^*))\mid v\in
U(\meR_{0,1})\}$.
Then $C^{(0)}_\sigma$ is a normal subgroup of $C^{(1)}_\sigma$. 

\begin{lem}\label{lem: c0-dense}
$C^{(0)}_\sigma$ is dense in $C^{(1)}_\sigma$.
\end{lem}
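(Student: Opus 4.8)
The plan is to work in the ultrapower $M^\om$ of $M:=\meR_{0,1}$ and to reduce the density assertion to the following existence statement: for a given $(\th,v)\in C^{(1)}_\si$ there is a unitary $W'\in U(M^\om)$ with
\[
\Ad W'|_M=\th,\qquad (W'\oti1)\si^\om(W'^*)=v.
\]
Indeed, a representing sequence $(w_n)_n$ of such a $W'$ satisfies $\Ad w_n\to\th$ in $\Aut(M)$ and $(w_n\oti1)\si(w_n^*)\to v$ along $\om$, so that $(\Ad w_n,(w_n\oti1)\si(w_n^*))\in C^{(0)}_\si$ converges to $(\th,v)$ in the $u$-topology of $\Aut_{\hat\si}(M\rti_\si\bhG)$; choosing a suitable sequence realizing these limits yields density.

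To construct $W'$, I would first use $\th\in\oInt(M)$ to pick $W\in U(M^\om)$ with $\Ad W|_M=\th$, and set $V:=(W\oti1)\si^\om(W^*)$, which is a $\si^\om$-coboundary and hence a $\si^\om$-cocycle. A short computation using the defining relation $\Ad v\circ\si=(\th\oti\id)\circ\si\circ\th^{-1}$ of $C^{(1)}_\si$ gives, for $x\in M$,
\[
\Ad V(\si(x))=(W\oti1)\si^\om(\th^{-1}(x))(W^*\oti1)=(\th\oti\id)\circ\si\circ\th^{-1}(x)=\Ad v(\si(x)),
\]
so that $c:=v^*V$ lies in $\si(M)'\cap(M^\om\oti\lhG)$. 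Since $v$ and $V$ are $\si^\om$-cocycles and $c$ commutes with $\si(M)$, a direct manipulation of the cocycle identities shows that $c$ is itself a $\si^\om$-cocycle.

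The heart of the argument is to show that $c$ is a coboundary implemented by a \emph{centralizing} unitary. Recall that $\si$, being trace preserving, is approximately inner by Corollary \ref{cor: app-cent-infinite} and is centrally free, and that $M_\om$ is of type II$_1$. Applying Theorem \ref{thm: app-cocycle} to $\al=\si$ with $\Ga=\{e\}$ produces $U=(U^\nu)_\nu\in U(M^\om\oti\lhG)$ which is a representation of $\bhG$, whose adjoint is a $\si^\om$-cocycle, and with $(\Ad U_\pi^\nu)_\nu\to\si_\pi$ for all $\pi\in\IG$. In particular $\si(x)=U(x\oti1)U^*$ for $x\in M$, whence $\si(M)=U(M\oti1)U^*$ and $U^*cU$ commutes with $M\oti1$. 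A computation parallel to the one in the proof of Corollary \ref{cor: modular-app} then shows $U^*cU\in M_\om\oti\lhG$; this is the step I expect to be the main obstacle. Granting it, Corollary \ref{cor: wU-cocycle} yields $z\in U(M_\om)$ with $c=(z\oti1)\si^\om(z^*)$.

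Finally I would set $W':=z^*W$. Since $z\in M_\om$ commutes with $M$, we still have $\Ad W'|_M=\th$, while
\[
(W'\oti1)\si^\om(W'^*)=(z^*\oti1)V\si^\om(z)=v\,(z^*\oti1)c\,\si^\om(z)=v\,\si^\om(z^*)\si^\om(z)=v,
\]
using $V=vc$, that $z^*\oti1$ commutes with $v$, and $c=(z\oti1)\si^\om(z^*)$. This is the desired $W'$, and the reduction above completes the proof. The only genuinely technical point is the verification that $U^*cU$ is asymptotically central, i.e.\ lies in $M_\om\oti\lhG$; everything else is the stability and cohomology-vanishing machinery of Section \ref{sec:pre} combined with approximate innerness of $\th$.
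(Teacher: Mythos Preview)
Your proof is correct and follows the same route as the paper's: pick $W$ implementing $\theta$, form the $\sigma^\omega$-cocycle $c=v^*(W\otimes1)\sigma^\omega(W^*)$ (the paper's $w$), use Theorem \ref{thm: app-cocycle} and Corollary \ref{cor: wU-cocycle} to trivialize it by some $z\in M_\omega$, and take $W'=z^*W$. Your observation that $c\in\sigma(M)'$ in fact already settles the step you flag as the main obstacle, since then $U^*cU\in(M\otimes1)'\cap(M^\omega\otimes\lhG)=M_\omega\otimes\lhG$ without any computation in the style of Corollary \ref{cor: modular-app}.
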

\begin{proof}
Since $\si$ is trace preserving and free,
$\si$ is approximately inner and centrally free
by Corollary \ref{cor: app-cent-infinite}.
Then we can take a unitary
$U=(U^\nu)_\nu\in \meR_{0,1}^\omega\oti\lhG$
as in Theorem \ref{thm: app-cocycle} with $\Ga=\{e\}$.

Take $(\theta,v)\in C^{(1)}_\sigma$ and
choose $\{v^\nu\}_\nu\subs U(\meR_{0,1})$
with
$\theta=\lim\limits_{\nu\rightarrow\infty}\Ad v^\nu$.
Then
\begin{align*}
\Ad v \circ\sigma
&=
(\theta\otimes \id)\circ \sigma \circ \theta^{-1}
=
\lim_{\nu\rightarrow \infty}
\Ad (v^\nu\otimes 1)\circ \sigma \circ \Ad (v^\nu)^* 
\\
&=
\lim_{\nu\rightarrow \infty}
\Ad (v^\nu\otimes 1)\sigma((v^\nu)^*)\circ \sigma.
\end{align*}
Set $V:=(v^\nu)_\nu\in \meR_{0,1}^\omega$.
Then $w:=v^*(V\otimes 1)\sigma^\om(V^*)$
is a $\sigma^\om$-cocycle,
and
$U^*wU \in (\meR_{0,1})_\omega \otimes L^\infty(\bhG)$.
By Corollary \ref{cor: wU-cocycle},
there exists $z\in (\meR_{0,1})_\omega$
such that
$w=(z\otimes 1)\sigma^\om(z^*)$.
This implies
$(z^*V\otimes 1)\sigma^\om(V^*z)=v$.
Let $(\mu^\nu)_\nu $ be a representing
sequence of $z^*V$.
Then $\theta=\lim\limits_{\nu\rightarrow\omega}\Ad \mu^\nu$ and
$v=\lim\limits_{\nu\rightarrow \om}(\mu^\nu\otimes 1)\sigma((\mu^\nu)^*)$.  
\end{proof}

\begin{lem}
Suppose that
$\beta_x$ is constant,
that is, $\be_x=\beta_{x_0}$ for some $x_0\in\meG^{(0)}$.
Then there exist Borel families of automorphisms
$\{\sigma_x\}_{x\in\meG^{(0)}}\subset  \overline{\Int}(\meR_{0,1})$
and $\beta_x$-cocycles  $\{w^x\}_{x\in\meG^{(0)}}\subs U(\meR_{0,1}\oti\lhG)$
such that
$(\sigma_x\otimes \id)\circ\alpha_x\circ \sigma_{x}^{-1}
=\Ad w^x\circ \beta_x$.
\end{lem}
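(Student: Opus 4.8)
The plan is to first solve the problem pointwise in $x$ using the type $\mathrm{II}_\infty$ uniqueness theorem, and then to assemble the pointwise solutions into Borel families via a measurable selection argument.

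First I would fix $x\in\meG^{(0)}$ and observe that $\alpha_x$ and $\beta_{x_0}$ are both free and trace-preserving actions of $\bhG$ on $\meR_{0,1}$, hence approximately inner and centrally free by Corollary \ref{cor: app-cent-infinite}. Applying the classification theorem at the end of Section \ref{sec:main} (the $\oInt$-refinement of Theorem \ref{thm:main} for $\meR_{0,1}$) to the pair $(\alpha_x,\beta_{x_0})$, I obtain $\th_x\in\oInt(\meR_{0,1})$ and an $\alpha_x$-cocycle $v_x\in U(\meR_{0,1}\oti\lhG)$ with $\Ad v_x\circ\alpha_x=(\th_x^{-1}\oti\id)\circ\beta_{x_0}\circ\th_x$. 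Conjugating this identity by $(\th_x\oti\id)\circ(\,\cdot\,)\circ\th_x^{-1}$ and setting $\sigma_x:=\th_x$, $w^x:=(\th_x\oti\id)(v_x^*)$, I would rewrite it as
\[
(\sigma_x\oti\id)\circ\alpha_x\circ\sigma_x^{-1}=\Ad w^x\circ\beta_{x_0};
\]
since $v_x$ is an $\alpha_x$-cocycle, a direct check shows that $w^x$ is a $\beta_{x_0}$-cocycle, and $\sigma_x\in\oInt(\meR_{0,1})$ as required. This produces, for every $x$, an admissible pair $(\sigma_x,w^x)$.

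To promote these pointwise choices to Borel families, I would use that $\oInt(\meR_{0,1})$ (with the $u$-topology) and $U(\meR_{0,1}\oti\lhG)$ (with the strong$^*$ topology) are Polish and that the field $x\mapsto\alpha_x$ coming from the central decomposition is Borel. Consider the set
\[
R=\bigl\{(x,\sigma,w)\ \big|\ w\text{ is a }\beta_{x_0}\text{-cocycle and }(\sigma\oti\id)\circ\alpha_x\circ\sigma^{-1}=\Ad w\circ\beta_{x_0}\bigr\}
\]
in $\meG^{(0)}\times\oInt(\meR_{0,1})\times U(\meR_{0,1}\oti\lhG)$. The cocycle relation and the intertwining relation are Borel conditions in $(x,\sigma,w)$, so $R$ is Borel, and the previous paragraph shows that $R$ projects onto $\meG^{(0)}$. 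Applying the von Neumann--Jankov measurable selection theorem (in the manner of \cite{Su-Tak-act}) then yields a $\mu$-measurable map $x\mapsto(\sigma_x,w^x)$ with $(x,\sigma_x,w^x)\in R$ for almost every $x$, which is Borel after discarding a $\mu$-null set. These are the required families $\{\sigma_x\}$ and $\{w^x\}$.

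I expect the essential mathematical input to be the pointwise step, which is precisely the $\mathrm{II}_\infty$ uniqueness theorem, while the principal technical obstacle will be the measurable organization: ensuring that $R$ is genuinely Borel inside standard Borel spaces and that the selection theorem applies. This in turn rests on the standard but delicate construction of a Borel field of isomorphisms $\tM(x)\cong\meR_{0,1}$ trivializing the central decomposition, which I would import from \cite{Su-Tak-act}.
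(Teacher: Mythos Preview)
Your proof is correct and follows the same two-step strategy as the paper: establish cocycle conjugacy pointwise via the type $\mathrm{II}_\infty$ uniqueness theorem, then assemble the solutions into Borel families by measurable selection. The only difference is in the packaging of the selection step: where you work directly with the Borel relation $R$ and invoke von Neumann--Jankov, the paper identifies the solution set $B_x$ at each $x$ with the set of isomorphisms from $N=\meR_{0,1}\rtimes_{\beta_{x_0}}\bhG$ onto $N(x)=\meR_{0,1}\rtimes_{\alpha_x}\bhG$ that preserve $\meR_{0,1}$---realized via Haagerup's standard form as a closed set of unitaries on the common Hilbert space $L^2(\meR_{0,1})\oti L^2(\bhG)$---and then invokes the measurable cross section theorem \cite[Theorem A.16]{Tak-book} in the manner of \cite[Theorem IV.8.28]{Tak-book}.
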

\begin{proof}
Set $N:=\meR_{0,1}\rtimes_{\be_{x_0}} \bhG$ and 
$N(x):=\meR_{0,1}\rtimes_{\al_x}\bhG$ for each $x\in X$.
Note that $N$ and $N(x)$ act on the
common Hilbert space $L^2(\meR_{0,1})\otimes L^2(\bhG)$.

Let $B_x$ be the set of pairs $(\sigma, v)$,
where $\sigma\in\Aut(\meR_{0,1})$ and $v$ is a 1-cocycle
for $\al_x$ such that 
$(\sigma^{-1}\otimes \id)\circ\Ad v \circ \al_x \circ \sigma=\be_{x_0}$.
Then $B_x$ is non-empty because of
Theorem \ref{thm:main} for $\meR_{0,1}$
and it is identified with the set of isomorphisms
from $N$ onto $N(x)$ preserving $\meR_{0,1}$.
Moreover, $B_x$ is a Polish group
because it is
identified with a closed subset of unitary maps $L^2(N)$ onto $L^2(N(x))$
which intertwine $N$ and $N(x)$,
preserve positive cones and $L^2(\meR_{0,1})$
and commute with modular conjugation\cite{Ha-st}.
Then thanks to
the measurable cross section theorem \cite[Theorem A.16, vol.I]{Tak-book},
we can
choose a Borel family $(\sigma_x, v^x)\in B_x$
as in the proof of \cite[Theorem IV.8.28, Proposition IV.8.29]{Tak-book}.
\end{proof}

\begin{thm}\label{thm:class-groupoid}
Let $\alpha$ and $\beta$ be $\bhG$-$\meG$-actions on $\meR_{0,1}$
as before.
Assume that $\beta_x$ is constant.
Then $\alpha$ and $\beta$ are cocycle conjugate as $\bhG$-$\meG$ actions.
\end{thm}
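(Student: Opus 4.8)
The plan is to first normalize the $\bhG$-parts and then dispose of the $\meG$-parts by a measurable cohomology argument over $\meG$. Write $\sigma:=\be_{x_0}$, a free, trace preserving action of $\bhG$ on $\meR_{0,1}$. First I would invoke the preceding lemma (available since $\be_x$ is constant) to obtain a Borel family $\{\sigma_x\}_{x\in\meG^{(0)}}\subset\oInt(\meR_{0,1})$ and $\be_x$-cocycles $\{w^x\}_x$ with $(\sigma_x\oti\id)\circ\al_x\circ\sigma_x^{-1}=\Ad w^x\circ\be_x$. Conjugating $\al$ by $\{\sigma_x\}$ and carrying the $w^x$ along as the $\be_x$-cocycles permitted in the definition of cocycle conjugacy, I may assume both $\bhG$-parts equal $\sigma$. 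Since $\sigma$ is constant, the compatibility relation $\be_{r(\ga)}\be_\ga=(\be_\ga\oti\id)\be_{s(\ga)}$ reads $\sigma\circ\be_\ga=(\be_\ga\oti\id)\circ\sigma$, so each $\be_\ga$ is $\sigma$-equivariant and extends to an automorphism of $\meR_{0,1}\rti_\sigma\bhG$ commuting with the dual action $\hat{\sigma}$; likewise each $\al_\ga$ (the residual $w^x$ being absorbed into the extension). Thus $\ga\mapsto\al_\ga$ and $\ga\mapsto\be_\ga$ furnish groupoid homomorphisms $\th^1,\th^2\col\meG\ra\Aut_{\hat{\sigma}}(\meR_{0,1}\rti_\sigma\bhG)=:G$ with $\mo(\th^1_\ga)=\mo(\al_\ga)$ and $\mo(\th^2_\ga)=\mo(\be_\ga)$.

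Next I would set up the hypotheses of Krieger's lemma. The group $G$ is Polish; I take $N:=C^{(0)}_\sigma$, which is normal in $G$ because every $g\in G$ preserves the dual fixed-point algebra $\meR_{0,1}$, whence $g\,\Ad v\,g^{-1}=\Ad g(v)\in C^{(0)}_\sigma$ for $v\in U(\meR_{0,1})$. By Lemma \ref{lem: c0-dense} together with the identity $C^{(1)}_\sigma=G\cap\ker(\mo)$ recorded earlier, I obtain $\ovl{N}=C^{(1)}_\sigma=G\cap\ker(\mo)$. Consequently, for $\ga\in\meG$ the relation $\th^1_\ga(\th^2_\ga)^{-1}\in\ovl{N}$ is equivalent to $\mo(\th^1_\ga)=\mo(\th^2_\ga)$, i.e. to $\mo(\al_\ga)=\mo(\be_\ga)$, which is exactly the standing hypothesis. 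Hence $\th^1_\ga\equiv\th^2_\ga \mod \ovl{N}$ for every $\ga$.

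Then I would apply Theorem \ref{lem:Krieger} to $G$, $N$, $\th^1$, $\th^2$ over the ergodic AF orbitally discrete principal groupoid $\meG$, obtaining Borel maps $\Sigma\col\meG^{(0)}\ra\ovl{N}=C^{(1)}_\sigma$ and $u\col\meG\ra N=C^{(0)}_\sigma$ with $\Sigma_{r(\ga)}\th^1_\ga\Sigma_{s(\ga)}^{-1}=u_\ga\th^2_\ga$. Unwinding the definitions, each $\Sigma_x=(\rho_x,s^x)$ consists of an automorphism $\rho_x\in\oInt(\meR_{0,1})$ and a $\sigma$-cocycle $s^x$, while each $u_\ga=(\Ad m_\ga,(m_\ga\oti1)\sigma(m_\ga^*))$ comes from a unitary $m_\ga\in U(\meR_{0,1})$; reading this identity on $\meR_{0,1}$ and on the canonical implementing unitaries of $\bhG$ recovers precisely a Borel field of automorphisms together with $\be$-cocycles realizing the cocycle conjugacy of the two $\bhG$-$\meG$-actions, and composing with the first normalization by $\{\sigma_x\}$ and $\{w^x\}$ yields the cocycle conjugacy between the original $\al$ and $\be$. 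The main obstacle will be the careful, Borel-in-$(x,\ga)$ translation between the analytic data of a $\bhG$-$\meG$-action and the group-cocycle formalism of Theorem \ref{lem:Krieger}, keeping all choices measurable; once the homomorphisms $\th^1,\th^2$ into $G$ and the identification $\ovl{C^{(0)}_\sigma}=C^{(1)}_\sigma$ are in place, the module hypothesis is the sole substantive input and the remainder is formal.
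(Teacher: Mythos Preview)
Your strategy matches the paper's: normalize the $\bhG$-parts via the preceding lemma, pass to groupoid homomorphisms into $G=\Aut_{\hat\sigma}(\meR_{0,1}\rti_\sigma\bhG)$, use $\ovl{C^{(0)}_\sigma}=C^{(1)}_\sigma=G\cap\ker(\mo)$ together with the module hypothesis, and apply Krieger's lemma. Two points deserve care. First, you cannot literally reduce to ``both $\bhG$-parts equal $\sigma$'': after conjugating by $\{\sigma_x\}$ one only has $\al_x=\Ad w^x\circ\sigma$, and the compatibility $\al_{r(\ga)}\al_\ga=(\al_\ga\oti\id)\al_{s(\ga)}$ then shows $\al_\ga$ is \emph{not} $\sigma$-equivariant; the correct encoding (which your parenthetical ``residual $w^x$ being absorbed'' gestures at, and the paper keeps explicit) is $\ga\mapsto(\al_\ga,(\al_\ga\oti\id)(w^{s(\ga)*})w^{r(\ga)})\in G$. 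Second, Krieger's lemma yields only a Borel map $u\col\meG\ra C^{(0)}_\sigma$, not a priori a $\be_\ga$-cocycle; the paper uses that $\meG=\Z\ltimes_T Y$ is generated by a single transformation to arrange the cocycle condition, which is needed to verify the compatibility $u^{r(\ga)}\be_{r(\ga)}(u_\ga)=(u_\ga\oti1)(\be_\ga\oti\id)(u^{s(\ga)})$ in the definition of cocycle conjugacy.
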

\begin{proof}
By the previous lemma, we can take Borel families
$\{\sigma_x\}_{x\in\meG^{(0)}}\subset  \overline{\Int}(\meR_{0,1})$
and
$\{w^x\}_{x\in\meG^{(0)}}\subs U(\meR_{0,1}\oti\lhG)$, $\beta_x$-cocycles
such that
$(\sigma_x\otimes \id)\circ\alpha_x\circ \sigma_{x}^{-1}
=\Ad w^x\circ \beta_x$.
By replacing
$\alpha_{r(\gamma)}\alpha_\gamma$ with
$(\sigma_{r(\gamma)}\otimes\id)\circ\alpha_{r(\gamma)}
\alpha_\gamma\circ \sigma_{s(\gamma)}^{-1}$,
we
may assume $\alpha_x=\Ad w^x\circ\beta_x$
and
$\mathrm{mod}(\alpha_\gamma)=\mathrm{mod} (\beta_\gamma)$.
Since $(\beta_\gamma\otimes \id)\beta_{s(\gamma)}
=\beta_{r(\gamma)}\beta_\gamma$,
we can regard $\beta_\gamma$ as a homomorphism from $\meG$
to
$\Aut_{\hat{\sigma}}(\meR_{0,1}\rtimes_{\beta_{x_0}} \bhG)$
by
$\gamma\rightarrow (\beta_\gamma,1)$.
We also have
\begin{align*}
(\alpha_\gamma\otimes \id)\beta_{s(\gamma)}
&=
(\alpha_\gamma\otimes \id)\circ \Ad w^{s(\gamma)*}\circ
\alpha_{s(\gamma)}\\ 
&= \Ad (\alpha_\gamma\otimes \id)(w^{s(\gamma)*})\circ
(\alpha_\gamma\otimes \id)
\alpha_{s(\gamma)}\\ 
&=
\Ad (\alpha_\gamma\otimes \id)(w^{s(\gamma)*})\circ
\alpha_{r(\gamma)}\alpha_\gamma\\ 
&=
\Ad (\alpha_\gamma\otimes \id)(w^{s(\gamma)*})w^{r(\gamma)}
\circ\beta_{r(\gamma)}\alpha_\gamma,
\end{align*}
where
$(\alpha_\gamma\otimes \id)\left(w^{s(\gamma)*}\right)
w^{r(\gamma)}$ is a $\beta_{r(\gamma)}$-cocycle.
So we can regard $\alpha$ as a homomorphism from $\meG$
to $\Aut_{\hat{\sigma}}(\meR_{0,1}\rtimes_{\beta_{x_0}} \bhG)$
by 
$\gamma\rightarrow
(\alpha_\gamma,(\alpha_\gamma\otimes \id)(w^{s(\gamma)*})w^{r(\gamma)})$.
Here note
that $C^{(1)}_{\beta_x}=C^{(1)}_{\beta_{x_0}}$
because $\beta_x$ is constant.

We next show that
$\alpha_\gamma\equiv\beta_\gamma
\,\,
\mathrm{mod}
(C^{(1)}_{\beta_{r(\gamma)}})$.
Since $\mathrm{mod}(\alpha_\gamma)=\mathrm{mod}(\beta_\gamma)$,
it is clear that 
$\alpha_\gamma\beta_\gamma^{-1}
\in
\overline{\Int}(\meR_{0,1})$.
By the above computation, we also have the following:
\begin{align*}
(\alpha_\gamma\beta^{-1}_\gamma\otimes \id)\circ\beta_{r(\gamma)}
&=
(\alpha_\gamma\otimes \id)
\circ\beta_{s(\gamma)}\beta^{-1}_\gamma\\
&=
\Ad (\alpha_\gamma\otimes \id)(w^{s(\gamma)*})w^{r(\gamma)}
\circ\beta_{r(\gamma)}\alpha_\gamma\beta_\gamma^{-1} .
\end{align*}
Hence 
$\alpha_\gamma\beta_\gamma^{-1}\in C^{(1)}_{\beta_{r(\gamma)}}$.
Applying Theorem \ref{lem:Krieger} and Lemma \ref{lem: c0-dense}
to the two maps $\al,\be\col \meG\ra C_{\be_{x_0}}^{(1)}$
and $C_{\be_{x_0}}^{(0)}$,
we get Borel maps
$\meG^{(0)}\ni x\mapsto(\sigma_x, v^x)\in C^{(1)}_{\beta_x}$
and $u\col \meG\ni\ga\mapsto u_\gamma\in U(\meR_{0,1})$
such that
\begin{align*}
\lefteqn{(\Ad u_\gamma, u_\gamma\beta_{r(\gamma)}(u_\gamma^*))
\cdot(\beta_\gamma, 1)} \\
&=
(\sigma_{r(\gamma)}, v^{r(\gamma)})
\cdot
(\alpha_\gamma,
(\alpha_\gamma\otimes \id)
(w^{s(\gamma)*})w^{r(\gamma)})
\cdot
(\sigma_{s(\gamma)}^{-1},
\sigma_{s(\gamma)}^{-1}(v^{s(\gamma)*})).
\end{align*}
The left hand side is equal to
$(\Ad u_\gamma\circ \beta_\gamma , u_\gamma\beta_{r(\gamma)}(u_\gamma^*))$.
We compute the right hand side.
For simplicity we write $\alpha_\gamma$ for $\alpha_\gamma\otimes \id$
and so on.
\begin{align*}
\lefteqn{
(\sigma_{r(\gamma)}, v^{r(\gamma)})
\cdot
(\alpha_\gamma,
\alpha_\gamma(w^{s(\gamma)*})w^{r(\gamma)})
\cdot
(\sigma_{s(\gamma)}^{-1},
\sigma_{s(\gamma)}^{-1}(v^{s(\gamma)*}))}
\\
&= 
\left(\sigma_{r(\gamma)}\alpha_\gamma,
\sigma_{r(\gamma)}\left(\alpha_\gamma(w^{s(\gamma)*})w^{r(\gamma)}\right)
v^{r(\gamma)}
\right)
\cdot
\left(\sigma_{s(\gamma)}^{-1},
\sigma_{s(\gamma)}^{-1}(v^{s(\gamma)*})\right) \\
&= 
\left(\sigma_{r(\gamma)}\alpha_\gamma\sigma_{s(\gamma)}^{-1}, 
\sigma_{r(\gamma)}\alpha_\gamma\sigma_{s(\gamma)}^{-1}(v^{s(\gamma)*})
\sigma_{r(\gamma)}\left(\alpha_\gamma(w^{s(\gamma)*})w^{r(\gamma)}\right)
v^{r(\gamma)}
\right).
\end{align*}

By comparing the first component,
we have $\Ad u_\gamma \circ\beta_\gamma =
\sigma_{r(\gamma)}\circ \alpha_\gamma \circ \sigma^{-1}_{s(\gamma)}$.
Since $\meG$ is generated by a single transformation,
we may assume that
$u_\gamma$ is a $\beta$-cocycle.  

The second component is computed as follows:
\begin{align*}
\lefteqn{ \sigma_{r(\gamma)}
\alpha_\gamma\sigma_{s(\gamma)}^{-1}(v^{s(\gamma)*})
\sigma_{r(\gamma)}\left(\alpha_\gamma(w^{s(\gamma)*})w^{r(\gamma)}\right)v^{r(\gamma)}}\\
 &=
 \Ad u_\gamma\beta_\gamma(v^{s(\gamma)*})\cdot
\sigma_{r(\gamma)}\alpha_\gamma(w^{s(\gamma)*})
\sigma_{r(\gamma)}(w^{r(\gamma)})v^{r(\gamma)} \\
 &=
 \Ad u_\gamma\beta_\gamma\left(v^{s(\gamma)*}
\sigma_{s(\gamma)}(w^{s(\gamma)*})\right)
\sigma_{r(\gamma)}(w^{r(\gamma)})v^{r(\gamma)}.
\end{align*}
Set $u^x:=\sigma_x(w^x)v^x$.
By using
$\sigma_x\circ \be_x \circ
\sigma_x^{-1}=\Ad v^x \circ\be_x$,
it follows that $u^x$ is a $\beta_x$-cocycle and
$\sigma_x\circ \alpha_x \circ\sigma_x^{-1}
=\Ad u^x \circ\beta_x$.
By comparing the second component, we have 
$\beta_\gamma(u_\gamma^*)=
\beta_\gamma(u^{s(\gamma)*})u_\gamma^*u^{r(\gamma)}$,
and equivalently
$u^{r(\gamma)}\beta_\gamma(u_\gamma)=
u_\gamma \beta_\gamma(u^{s(\gamma)})$.
This shows that
$u_{\cdot,\gamma}$ is a $\beta$-cocycle, and
$\sigma_{r(\gamma)}\circ \alpha_{\cdot,\gamma}
\circ\sigma_{s(\gamma)}
=\Ad u_{\cdot,\gamma}\circ\beta_{\cdot,\gamma}$.
Thus
$\alpha$ and $\beta$ are cocycle conjugate.
\end{proof}

\noindent
$\bullet$
\textit{Proof of Theorem \ref{thm:main} for type III$_0$ factors.}
Let $M$, $\alpha$ and $\alpha^{(0)}$ be as in Theorem \ref{thm:main}.
Then $\tal$ and $\wdt{\id_M\oti \al^{(0)}}=\id_\tM\oti \al^{(0)}$
act on $Z(\tM)$ trivially
and free on $\tM$ by Theorem \ref{thm: app-cent-prop}.
By using an isomorphism $\meR_{0,1}\cong\meR_{0,1}\oti \meR_0$,
we see that $(\tal_x)_{x\in X}$
and $(\id_{\tM(x)}\oti\al^{(0)})_{x\in X}$ are satisfying
the condition of Theorem \ref{thm:class-groupoid}.
Then the two $\bhG$-$\meG$-actions on $\meR_{0,1}$
arising from
$\tal\th$ and $\th\otimes \alpha^{(0)}$
are cocycle conjugate.
This implies
the cocycle conjugacy of the $\bhG\times\R$-actions
$\tal\theta$ and $(\th\otimes \alpha^{(0)})$
by Lemma \ref{lem: al-be-cocycle}.
Considering the
partial crossed product by $\theta$,
we get $\wdt{\tal}\sim\id_{\widetilde{\tM}}\otimes \alpha^{(0)}$
as in the proof of Lemma \ref{lem: G1-G2-al-be}. 
Thus $\al$ and $\id_{M}\oti\al^{(0)}$ are cocycle conjugate
by Lemma \ref{lem: second-cocycle}.
\hfill$\Box$

\begin{rem}
In general, there may appear some obstructions
in combining the $\bhG$-part and the $\meG$-part.
In \cite{JT, Su-Tak-RIMS, Kw-Tak},
model actions absorbing obstructions are constructed.
In our case, however,
we are treating only free actions, and no obstructions appear.
Hence we do not need such model actions.
\end{rem}

\section{Classification for type III$_1$ case}\label{sec:III1}
\subsection{Basic results on canonical extensions}
In \S \ref{sec:lamclass}, we obtained the classification of
approximately inner and centrally free actions of an amenable discrete Kac
algebra on the injective factor of type III$_\lambda$.
Using this result
together with ideas of \cite{Co-III1,Ha-III1}
(also see \cite{M-III1}),
we classify actions on the injective factor of type III$_1$.

Let $M\cong\meR_\infty$ and $\varphi$ be a faithful normal state on $M$.
Fix $T>0$.
Set $N:=M \rtimes_{\sigma^\varphi_T}\Z$,
which is an injective factor of
type III$_\lambda$, $\lambda:=e^{-\frac{2\pi}{T} }$,
and let $U\in N$ be the unitary implementing $\si_T^\vph$.
The dual action of the torus $\T=\R/2\pi\Z$ is denoted by $\th$,
which acts on $U$ by $\th_t(U)=e^{-\sqrt{-1}t}U$ for $t\in\T$.
Using the averaging expectation $E_\th\col N\ra M$ by $\th$,
we extend $\vph$ to $\hvph:=\vph\circ E_\th$.
Throughout this section, we keep these notations.

Now we introduce the extension $\hat{}\,\col \End_0(M)\ra \End_0(N)$
defined by
\begin{align*}
&\hrho(x)=\rho(x)\quad\mbox{for all }x\in M;
\\
&\hrho(U)=d(\rho)^{iT}[D\vph\circ\ph_\rho:D\vph]_T U.
\end{align*}
Note that $\hrho$ is one of the variants of the canonical extension.
Indeed, regarding $N\subs \tM$ by $U=\la^\vph(T)$,
we see that $\hrho=\trho|_N$.

\begin{lem}\label{lem: rho-hrho-app}
For any $\rho\in\End_0(M)$, $\mo(\hrho)=\id$.
\end{lem}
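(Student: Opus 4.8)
The plan is to read off the Connes--Takesaki module of $\hrho$ from the canonical extension of $\hrho$ to the continuous core of $N$, and to show that this extension fixes the generator of the center. Let $\ps$ be a generalized trace on the type III$_\la$ factor $N$, so $\si_T^{\ps}=\id$; then $\la^{\ps}(T)$ lies in the center $Z(\wdt N)$ of the core $\wdt N=N\rti_{\si^{\ps}}\R$ and generates it. Since $\mo(\hrho)=\wdt{\hrho}|_{Z(\wdt N)}$, where $\wdt{\hrho}$ is the canonical extension of $\hrho$ and is given on the core unitaries by $\wdt{\hrho}(\la^{\ps}(t))=d(\rho)^{it}[D\ps\circ\Ph_{\hrho}:D\ps]_t\,\la^{\ps}(t)$ (note $d(\hrho)=d(\rho)$), the lemma reduces to the single identity
\[
d(\rho)^{iT}\,[D\ps\circ\Ph_{\hrho}:D\ps]_T=1 ,
\]
that is, to showing that the cocycle factor equals $d(\rho)^{-iT}$.

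First I would pin down $[D\ps:D\hvph]_T$. Because $\hvph=\vph\circ E_\th$ is the dual weight, its modular flow satisfies $\si_t^{\hvph}(U)=U$ and $\si_T^{\hvph}=\Ad U$ on $N$ (the latter since $\vph\circ\si_{-T}^{\vph}=\vph$). Comparing with $\si_T^{\ps}=\id$ forces $\Ad([D\ps:D\hvph]_T)=\Ad U^{*}$, hence $[D\ps:D\hvph]_T=\ze U^{*}$ for a scalar $\ze\in\T$ whose value will turn out to be irrelevant. I would then expand $[D\ps\circ\Ph_{\hrho}:D\ps]_T$ by the chain rule as the product
\[
[D\ps\circ\Ph_{\hrho}:D\hvph\circ\Ph_{\hrho}]_T\,[D\hvph\circ\Ph_{\hrho}:D\hvph]_T\,[D\hvph:D\ps]_T .
\]

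The three factors are handled as follows. For the first, the standard left inverse satisfies $\hrho\,\Ph_{\hrho}=E_{\hrho}$, the minimal expectation onto $\hrho(N)$, whence $[D\om_1\circ\Ph_{\hrho}:D\om_2\circ\Ph_{\hrho}]_t=\hrho([D\om_1:D\om_2]_t)$; so this factor is $\hrho(\ze U^{*})=\ze\,\hrho(U)^{*}=\ze\,d(\rho)^{-iT}U^{*}w^{*}$, where $w:=[D\vph\circ\Ph_\rho:D\vph]_T$ and I used the defining formula $\hrho(U)=d(\rho)^{iT}wU$. For the second, the dual-weight compatibility $\hvph\circ\Ph_{\hrho}=\vph\circ\Ph_\rho\circ E_\th$ together with the fact that the Connes cocycle of weights lifted through the expectation $E_\th\col N\ra M$ coincides with the one computed in $M$ gives $[D\vph\circ\Ph_\rho:D\vph]_T=w$. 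The third factor is $[D\hvph:D\ps]_T=([D\ps:D\hvph]_T)^{*}=\bar\ze U$. Multiplying, the scalars $\ze,\bar\ze$ cancel and $U^{*}w^{*}wU=1$, so that $[D\ps\circ\Ph_{\hrho}:D\ps]_T=d(\rho)^{-iT}$; hence $\wdt{\hrho}(\la^{\ps}(T))=d(\rho)^{iT}d(\rho)^{-iT}\la^{\ps}(T)=\la^{\ps}(T)$ and $\mo(\hrho)=\id$.

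The main obstacle is not the algebra, which cancels cleanly, but the two structural inputs that feed it: the identity $[D\om_1\circ\Ph_{\hrho}:D\om_2\circ\Ph_{\hrho}]_t=\hrho([D\om_1:D\om_2]_t)$ for the standard left inverse, and the dual-weight compatibility $\hvph\circ\Ph_{\hrho}=\vph\circ\Ph_\rho\circ E_\th$. These are the single-endomorphism analogues of the relations already used in the proof of Lemma \ref{lem: be-th-app-cent}, and I would quote them from \S \ref{sec:appendix} and \cite{M-T-endo-pre}. I would also verify carefully that the dimension factor $d(\rho)^{iT}$ occurring in the extension formula for $\wdt{\hrho}$ is normalized exactly as in the defining formula for $\hrho(U)$, since it is precisely this matching that produces the cancellation against the $d(\rho)^{-iT}$ coming from the Connes cocycle.
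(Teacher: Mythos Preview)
Your proof is correct and follows essentially the same route as the paper: both reduce $\mo(\hrho)=\id$ to the identity $d(\hrho)^{iT}[D\ps\circ\ph_{\hrho}:D\ps]_T=1$ for a generalized trace $\ps$ on $N$, expand the cocycle by the same three-term chain rule, and evaluate the factors via $\hrho([D\ps:D\hvph]_T)$, the dual-weight compatibility $\hvph\circ\ph_{\hrho}=\vph\circ\ph_\rho\circ E_\th$, and $[D\hvph:D\ps]_T$. The only cosmetic difference is that the paper fixes $\ps:=\hvph_{h^{-1}}$ with $h^{iT}=U$, so that $[D\ps:D\hvph]_T=U^*$ on the nose and your auxiliary scalar $\ze$ never appears; your observation that $\ze$ cancels against $\bar\ze$ is of course equivalent. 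Both arguments close by invoking $d(\hrho)=d(\rho)$, which the paper attributes to \cite[Theorem~2.8]{Hi}.
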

\begin{proof}
Since $\si_T^\hvph=\Ad U$.
We can take a positive operator $h$ affiliated with $N_\hvph$
such that $U=h^{iT}$.
We set $\ps:=\hvph_{h^{-1}}$, whose modular automorphism
has the period $T$.
Note that $E_\th\circ\ph_\hrho=\ph_\hrho\circ E_\th=\ph_\rho\circ E_\th$
because $\ph_{\hrho}|_M=\ph_\rho$
(see Theorem \ref{thm: extension-app-cent} (2)).
Then we can compute $[D\ps\circ\ph_\hrho:D\ps]_T$ as follows:
\begin{align*}
[D\ps\circ\ph_\hrho:D\ps]_T
&=
[D\ps\circ\ph_\hrho:D\hvph\circ\ph_\hrho]_T
[D\hvph\circ\ph_\hrho:D\hvph]_T
[D\hvph:D\ps]_T
\\
&=
\hrho([D\ps:D\hvph]_T)
[D\hvph\circ\ph_\hrho:D\hvph]_T
[D\hvph:D\ps]_T
\\
&=
\hrho(U^*)
[D\vph\circ\ph_\rho\circ E_\th:D\vph\circ E_\th]_T
U
\\
&=
\hrho(U^*)
[D\vph\circ\ph_\rho:D\vph]_T
U
=d(\rho)^{-iT}.
\end{align*}
By \cite[Theorem 2.8]{Hi}
 $d(\rho)=d(\hrho)$, so
the above equality means $\mo(\hrho)=\id$.
\end{proof}

We denote by $\End_0^\th(N)$ the set of endomorphisms with finite indices
on $N$ which commute with $\th$,
and by $\ker(\mo)$ the set of 
endomorphisms with finite indices in $\End(N)_{\rm CT}$
with trivial Connes-Takesaki modules.
Note that $\hrho\in\End_0^\th(N)$ for all $\rho\in\End_0(N)$.
We will analyze the relative commutant $\hrho(N)'\cap N$,
which admits the torus action $\th$.
Define the following linear space for each $n\in\Z$:
\[I_n
:=\{a\in
\hat{\rho}(N)'\cap N\mid \theta_t(a)=e^{\sqrt{-1}nt}a \mbox{ for all }
t\in \mathbb{T}\}.
\]

\begin{lem}\label{lem: hrho-relcom}
For each $n\in\Z$, one has
$I_n=U^{-n}(\rho,\sigma^{\varphi}_{nT}\rho)$.
\end{lem}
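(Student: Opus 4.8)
The plan is to identify $I_n$ by Fourier-analyzing the dual torus action $\th$ and then imposing the commutation with $\hrho(N)$ on its two generators $\rho(M)=\hrho(M)$ and $\hrho(U)$. First I would record that the $\th$-fixed points of $N$ are $M$ and that the spectral subspace of $N$ for the character $t\mapsto e^{\sqrt{-1}nt}$ is exactly $U^{-n}M=MU^{-n}$, since $\th_t(U)=e^{-\sqrt{-1}t}U$ and $\th_t|_M=\id$. Because $\hrho\in\End_0^\th(N)$ commutes with $\th$, the relative commutant $\hrho(N)'\cap N$ is globally $\th$-invariant, so every $a\in I_n$ has the form $a=U^{-n}y$ with $y\in M$. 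Since $N$ is generated by $M$ and $U$ and $\hrho$ is normal, $\hrho(N)=(\rho(M)\cup\{\hrho(U)\})''$, so $a\in\hrho(N)'$ if and only if $a$ commutes with $\rho(M)$ and with $\hrho(U)$.

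Next I would treat these two conditions separately. Using $U^{-n}zU^{n}=\si^\vph_{-nT}(z)$ for $z\in M$, a direct computation shows that $U^{-n}y$ commutes with $\rho(M)$ if and only if $y\rho(x)=\si^\vph_{nT}(\rho(x))y$ for all $x\in M$, i.e. $y\in(\rho,\si^\vph_{nT}\rho)$. As every $a\in I_n$ automatically commutes with $\rho(M)$, this already yields one inclusion $I_n\subseteq U^{-n}(\rho,\si^\vph_{nT}\rho)$, using nothing about $\hrho(U)$.

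For the reverse inclusion it remains to show that, for $y\in(\rho,\si^\vph_{nT}\rho)$, the element $U^{-n}y$ commutes with $\hrho(U)=cU$, where $c:=d(\rho)^{iT}[D\vph\circ\ph_\rho:D\vph]_T$. Writing this out and cancelling the scalar $d(\rho)^{iT}$ reduces the commutation to the identity $y\,u_T=\si^\vph_{nT}(u_T)\,\si^\vph_T(y)$, with $u_t:=[D\vph\circ\ph_\rho:D\vph]_t$. I would obtain this in two steps. Setting $\si:=\si^\vph_{nT}\rho$, the standard left inverse is $\ph_\si=\ph_\rho\circ\si^\vph_{-nT}$, so naturality of Connes cocycles under the automorphism $\si^\vph_{-nT}$ together with the modular invariance $\vph\circ\si^\vph_{-nT}=\vph$ gives $[D\vph\circ\ph_\si:D\vph]_t=\si^\vph_{nT}(u_t)$. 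Then, since $\hrho=\trho|_N$ with $U=\la^\vph(T)$, I would invoke that the canonical extension preserves intertwiners: $y\in(\rho,\si)$ extends to $y\in(\trho,\tsi)$ on the continuous core $\tM$, and evaluating the intertwining relation at $\la^\vph(T)=U$ produces $y\,u_T=[D\vph\circ\ph_\si:D\vph]_T\,\si^\vph_T(y)$. Combining the two steps gives the required identity, hence $U^{-n}y\in I_n$.

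The main obstacle is the second step of the last paragraph: the compatibility of intertwiners with the cocycles $[D\vph\circ\ph_\rho:D\vph]_t$ — equivalently, that an intertwiner in $(\rho,\si)\subset M$ remains an intertwiner between the canonical extensions $\trho$ and $\tsi$. This is the only place where genuine modular (KMS) analysis of the weights $\vph\circ\ph_\rho$ is needed; I would either quote it from the theory of canonical extensions (\cite{Iz-can2}, or the basic results collected in \S\ref{sec:appendix}) or prove it directly by testing the relation $y\,[D\vph\circ\ph_\rho:D\vph]_s=[D\vph\circ\ph_\si:D\vph]_s\,\si^\vph_s(y)$ against the KMS condition for the balanced weight of $\vph\circ\ph_\rho$ and $\vph\circ\ph_\si$. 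Everything else is bookkeeping with the crossed-product relations and the chain rule for Connes cocycles.
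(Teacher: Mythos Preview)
Your proof is correct, but the route you take for the reverse inclusion is genuinely different from the paper's.

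The paper's argument for $U^{-n}(\rho,\si^\vph_{nT}\rho)\subseteq I_n$ is indirect and does not use functoriality of the canonical extension on intertwiners. Instead, having reduced (as you do) to the identity $b=\si^\vph_{nT}(u)\,\si^\vph_T(b)\,u^*$ with $u=d(\rho)^{iT}[D\vph\circ\ph_\rho:D\vph]_T$, the paper observes that $\mu\col b\mapsto \si^\vph_{nT}(u)\,\si^\vph_T(b)\,u^*$ is a unitary operator on the finite-dimensional Hilbert space $(\rho,\si^\vph_{nT}\rho)$ (with inner product $\langle a,b\rangle=\ph_\rho(b^*a)$), and then rules out nontrivial eigenvalues: if $\mu(b)=e^{\sqrt{-1}s}b$, a direct check shows $U^{-n}b\in(\th_{-s}\hrho,\hrho)$, so by Frobenius reciprocity $\th_{-s}\prec\hrho\bar{\hrho}$, whence $\mo(\th_{-s})=\id$ by Lemma~\ref{lem: rho-hrho-app}, forcing $s=0$.

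Your approach instead invokes (or reproves) the basic fact that the canonical extension is functorial on intertwiner spaces, i.e.\ $(\rho,\si)\subseteq(\trho,\tsi)$, and reads off the needed cocycle identity by evaluating at $\la^\vph(T)$. This is cleaner and conceptually more transparent; the identity you need is exactly \cite[Lemma~2.5]{Iz-can2}. The paper's spectral argument, by contrast, is self-contained within this section: it only uses the module computation of the immediately preceding lemma, and does not require importing the intertwiner compatibility from \cite{Iz-can2}. It also illustrates a technique (analyzing eigenvalues of a unitary on a finite-dimensional intertwiner space via module obstructions) that recurs in the subject. Both arguments are of comparable length; yours is more direct, the paper's more internal.
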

\begin{proof}
Take $a\in I_n$.
Then $\theta_t(U^n a)=U^n a$ for $t\in\T$, and $b:=U^n a\in M$.
We check $b\in (\rho,\si_{nT}^\vph\rho)$ as follows:
for $x\in M$,
\[
b\rho(x)=U^n\rho(x)a=U^n\rho(x)U^{n*}U^na=\sigma^\varphi_{nT}(\rho(x))b.
\]
Hence $I_n\subs U^{-n}(\rho,\sigma^{\varphi}_{nT}\rho)$.

Next we show the converse inclusion.
Set a unitary
$
u:=d(\rho)^{iT}\left[D\varphi\circ \ph_\rho: d\varphi\right]_T
$.
Take $b\in (\rho,\sigma^{\varphi}_{nT}\rho)$.
By direct computation, we see that $U^{-n}b\in I_n$
if and only if
$b=\si_{nT}^\vph(u)\si_T^\vph(b)u^*$ holds.
Consider the map
$\mu\col(\rho,\sigma^{\varphi}_{nT}\rho)\ni b
\mapsto
\si_{nT}^\vph(u)\si_T^\vph(b)u^*
\in (\rho,\sigma^{\varphi}_{nT}\rho)$.
Then $\mu$ is a well-defined unitary, here
the inner product is given by
$\langle a,b \rangle=\ph_\rho(b^*a)$
for $a,b\in(\rho,\sigma^\varphi_T\rho)$.
Hence it suffices to prove
that $\mu$ is actually an identity map.
Since $(\rho,\sigma^{\varphi}_{nT}\rho)$ is finite dimensional,
it is spanned by eigenvectors of $\mu$.
Let $b$ be an eigenvector
$\mu(b)=e^{\sqrt{-1}s}b$ for some $s\in[0,2\pi)$.
We claim that $U^{-n}b\in (\theta_{-s}\hat{\rho},\hat{\rho})$.
For $x\in M$, we have the following.
\[
U^{-n}b \theta_{-s}(\hrho(x))
=U^{-n}\si_{nT}^\vph(\rho(x))b  
=\rho(x)U^{-n} b
=\hrho(x)U^{-n}b.
\]
We also have the following.
\begin{align*}
U^{-n}b\hat{\theta}_{-s}(\hat{\rho}(U))
&=U^{-n}b \hat{\theta}_{-s}(uU)
= U^{-n}b \cdot e^{{\sqrt{-1}}s}uU
=U^{-n}\mu(b)uU\\
&=uU^{-n}\si_T^\vph(b)U
=uU^{1-n}b
=\hat{\rho}(U) U^{-n}b.
\end{align*} 
Thus we have verified the claim.
By the Frobenius reciprocity,
$\mathrm{dim}(\theta_{-s}\hat{\rho},\hat{\rho})
=\mathrm{dim}(\theta_{-s},\hat{\rho}\bar{\hat{\rho}})$,
and hence
$\hat{\rho}\bar{\hat{\rho}}$
contains $\theta_{-s}$ as an irreducible component.
However by the previous lemma,
$\hat{\rho}$ has trivial Connes-Takesaki module,
and $\mo(\th_{-s})=\mo(\hat{\rho}\bar{\hat{\rho}})=\id$.
This is possible only if $s=0$.
Therefore $\mu=\id$.
\end{proof}

\begin{thm}\label{thm: extension-app-cent}
Let $\rho\in\End_0(M)$.
Then one has the following:
\begin{enumerate}
\item
$\hrho$ is irreducible if and only if
$\rho$ is irreducible.
In this case, the inclusion $\rho(M)\subs N$ is irreducible;

\item
The standard left inverse $\ph_\hrho$ is given by:
\[
\ph_\hrho(xU^n)=d(\rho)^{-inT}\ph_\rho(x[D\vph\circ\ph_\rho:D\vph]_{nT}^*)U^n
\quad\mbox{for all }x\in M, n\in\Z;
\]

\item
The extension $\hat{\cdot}$ is a bijection
from $\End_0(M)$ onto $\End_0^\th(N)\cap\ker(\mo)$;

\item
$\hrho\in\Cnd(N)$ if and only if $\rho\in \Cnd(M)$.
\end{enumerate}
\end{thm}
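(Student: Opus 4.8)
The plan is to read everything off the $\th$-weight decompositions of the relevant relative commutants, which are controlled by Lemma~\ref{lem: hrho-relcom}, and to use Lemma~\ref{lem: rho-hrho-app} together with $d(\rho)=d(\hrho)$ to stay inside $\End_0^\th(N)\cap\ker(\mo)$. First I would record that $\hrho$ is well defined and $\th$-equivariant: on $M$ both $\th_t\hrho$ and $\hrho\th_t$ equal $\rho$, while on $U$ both equal $e^{-\sqrt{-1}t}\hrho(U)$ since $[D\vph\circ\ph_\rho:D\vph]_T\in M$ is $\th$-fixed; combined with $\mo(\hrho)=\id$ and $d(\hrho)=d(\rho)<\infty$, this places $\hrho$ in $\End_0^\th(N)\cap\ker(\mo)$. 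For (1), both $\hrho(N)'\cap N$ and $\rho(M)'\cap N$ are $\th$-globally invariant, so I decompose each into $\th$-weight spaces. A short computation (conjugating by $U^{n}$ to land in $M$) shows that in both cases the $n$-th component is $U^{-n}(\rho,\si_{nT}^\vph\rho)$, i.e. $I_n$; hence $\hrho(N)'\cap N=\rho(M)'\cap N=\bigoplus_n I_n$. Taking $n=0$ gives $I_0=(\rho,\rho)$, so reducibility of $\hrho$ forces reducibility of $\rho$. Conversely, if $\rho$ is irreducible then each $\si_{nT}^\vph\rho$ is irreducible, so $\dim I_n\in\{0,1\}$ and $\Gamma:=\{n\mid \rho\cong\si_{nT}^\vph\rho\}$ is a subgroup of $\Z$; were $\Gamma\neq\{0\}$ it would be infinite, forcing $\dim(\hrho(N)'\cap N)=\infty$ and contradicting $d(\hrho)=d(\rho)<\infty$. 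Thus $\Gamma=\{0\}$, $\hrho(N)'\cap N=\C$, and the inclusion $\rho(M)\subs N$ is irreducible as well.

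For (2), I would verify that the displayed right-hand side is a normal unital completely positive left inverse and is the standard one. Writing $c_t:=[D\vph\circ\ph_\rho:D\vph]_t$, the cocycle identity gives $\hrho(U)^n=d(\rho)^{inT}c_{nT}U^n$, so applying the displayed map to $\hrho(xU^n)=\rho(x)d(\rho)^{inT}c_{nT}U^n$ and using $\ph_\rho\circ\rho=\id$ together with $c_{nT}c_{nT}^*=1$ recovers $xU^n$; hence it is a left inverse. Standardness I would obtain by identifying it with the restriction to $N$ of the standard left inverse $\widetilde{\ph_\rho}$ of the canonical extension $\trho$ on $\tM$ (so that $\hrho=\trho|_N$), the canonical extension of a standard left inverse being standard; the displayed formula is then just the value of $\widetilde{\ph_\rho}$ on $xU^n$. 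In particular $\ph_\hrho|_M=\ph_\rho$ and $\ph_\hrho$ commutes with $\th$, which are precisely the facts already invoked in Lemma~\ref{lem: rho-hrho-app}.

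For (3), injectivity is immediate because $\rho=\hrho|_M$. For surjectivity take $\sigma\in\End_0^\th(N)\cap\ker(\mo)$; since $\sigma$ commutes with $\th$ and $M=N^\th$, one has $\sigma(M)\subs M$, so $\rho:=\sigma|_M\in\End(M)$, and by preservation of the dimension under restriction $d(\rho)=d(\sigma)<\infty$, i.e. $\rho\in\End_0(M)$. By $\th$-equivariance $\sigma(U)$ lies in the weight-$(-1)$ space $MU$, say $\sigma(U)=mU$ with $m\in U(M)$; running the modular computation of Lemma~\ref{lem: rho-hrho-app} in reverse, the hypothesis $\mo(\sigma)=\id$ forces $m=d(\rho)^{iT}[D\vph\circ\ph_\rho:D\vph]_T$, which is exactly $\hrho(U)$. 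Hence $\sigma=\hrho$, and $\hat{\cdot}$ is onto.

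For (4), I would compare the asymptotic centralizers. Since modular automorphisms are centrally trivial, $\si_T^\vph$ acts trivially on $M_\om$, so $U$ asymptotically commutes with $M_\om$ and $M_\om\subs N_\om$; conversely proper outerness of $\si_T^\vph$ (available since $M$ is type III$_1$ and thus has trivial $T$-set) lets one push every central sequence of $N$ into $M$, giving $N_\om\subs M_\om$. Thus $N_\om=M_\om$, and since $\hrho|_M=\rho$ we get $\hrho^\om=\rho^\om$ on this common algebra; therefore $\hrho^\om|_{N_\om}=\id$ iff $\rho^\om|_{M_\om}=\id$, i.e. $\hrho\in\Cnd(N)$ iff $\rho\in\Cnd(M)$ in the sense of Definition~\ref{defn: app-cent}. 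I expect the genuine difficulties to be concentrated in (4)—the equality $N_\om=M_\om$ needs the proper-outerness input and a careful ultraproduct argument—and, to a lesser extent, in the standardness assertion of (2); the relative-commutant bookkeeping in (1) and (3) is then routine, the only clever point being the subgroup-plus-finite-dimensionality dichotomy used to kill the off-diagonal $I_n$.
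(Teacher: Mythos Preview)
Parts (1)--(3) are essentially the paper's argument; your subgroup-plus-finite-index dichotomy in (1) is a mild rephrasing of the paper's use of $T(M)=\{0\}$, and in (3) the paper also reverses the modular computation but cites \cite{Hi} for the nontrivial facts $d(\si)=d(\si|_M)$ and $\ph_{\si|_M}=\ph_\si|_M$ that you assume without comment. For (2) the paper simply invokes \cite{Hi} for minimality of $\hrho\circ\ph_{\hrho}$ rather than passing through the continuous core.

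The gap is in (4): the equality $N_\om=M_\om$ you rely on is \emph{false}. Proper outerness of $\si_T^\vph$ only excludes \emph{exact} intertwiners between $\id_M$ and $\si_{nT}^\vph$, not asymptotic ones, and on the injective type III$_1$ factor $\si_T^\vph$ is approximately inner, so asymptotic intertwiners exist in abundance. Concretely, the dual action $\th_t$ is not centrally trivial on $N$ for $t\neq0$: if $\th_t=\Ad u\circ\si_s^{\hvph}$ then $u$ normalizes $M$, hence $u=vU^n$ with $v\in U(M)$, and restricting to $M$ forces $nT+s=0$ and $v\in\C$ (via $T(M)=\{0\}$), whence evaluating on $U$ gives $t=0$. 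Thus $\th$ acts nontrivially on $N_\om$, so $N_\om$ contains elements not fixed by $\th_\om$ and therefore not representable in $M$; Lemma~\ref{lem:modelpiece} even constructs such central sequences explicitly. Your argument therefore yields only the easy direction $\hrho\in\Cnd(N)\Rightarrow\rho\in\Cnd(M)$ (via $M_\om\subs N_\om$). For the converse the paper uses a completely different idea: the structural result from \cite{M-T-endo-pre} that on an injective factor an irreducible centrally trivial endomorphism is, up to inners, a modular automorphism, which is then transported between $M$ and $N$ via the defining formula for $\hrho$.
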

\begin{proof}
(1) If $\hat{\rho}$ is irreducible, then 
$I_0=\mathbb{C}$, and 
$(\rho,\rho)=\mathbb{C}$ follows from the previous lemma.
Conversely if $\rho$ is irreducible,
then $\rho\bar{\rho}$ contains
no nontrivial modular automorphisms
because the $T$-set $T(M)$ is trivial.
This means
$(\rho,\rho)=\mathbb{C}$,
and $(\si_{nT}^\vph \rho,\rho)=0$ for $n\ne 0$.
Hence $I_0=\mathbb{C}$, and $I_n=0$ for $n\neq 0$.
Since $\hat{\rho}(N)'\cap N$
is densely spanned by $\{I_n\}_{n\in\Z}$,
$\hat{\rho}$ is irreducible.

We prove the latter statement in (1).
Take $x\in \rho(M)'\cap N$
and let $x=\sum_{n\in\Z}x_n^* U^n$ be the formal decomposition.
Then for each $n\in\Z$,
$x_n\in (\rho,\si_{nT}^\vph \rho)$.
From the above argument, $x_0\in\C$ and $x_n=0$ for $n\neq0$.
Hence $\rho(M)'\cap N=\C$.

(2)
By \cite[Lemma 3.5]{M-T-endo-pre},
the map $\ph_\hrho$ is well-defined.
By \cite[Theorem 2.8]{Hi}, $\hat{\rho}\phi_{\hrho}$ is the minimal
conditional expectation, and it
follows that $\phi_{\hrho}$ is standard.

(3)
Let $\ps$ be a periodic weight constructed as in
the proof of Lemma \ref{lem: rho-hrho-app}.
By Lemma \ref{lem: rho-hrho-app}, we see that
$\hrho\in \End_0^\th(N)\cap \ker(\mo)$.
So, the given map is well-defined.
We show that the map is a bijection.
Clearly it is injective, and it suffices to show the surjectivity.
Let $\si\in \End_0^\th(N)\cap \ker(\mo)$.
Since $\mo(\si)=\id$, we have
$d(\si)^{iT}[D\ps\circ\ph_\si:D\ps]_T=1$.
This is equivalent to
\begin{equation}\label{eq: siU}
\si(U)=d(\si)^{iT}[D\vph\circ\ph_\si|_M:D\vph]_T U.
\end{equation}
Set $\rho=\si|_M$.
The action $\th$ of $\T$ on $\si(N)$ is dominant,
and $d(\sigma)=d(\rho)$ follows from \cite[Theorem 2.8 (2)]{Hi}.
In the proof of \cite[Theorem 2.8 (2)]{Hi},
it is also shown that $\si\circ\ph_\si|_M$ is the minimal expectation
from $M$ onto $\rho(M)$.
Hence $\ph_\rho=\ph_\si|_M$.
Then the equality (\ref{eq: siU}) yields $\si=\wdh{\si|_M}$.

(4)
Let $\rho\in\Cnd(M)$.
We may and do assume that $\rho$ is irreducible.
Then by \cite[Theorem 4.12]{M-T-endo-pre},
there exists $t\in\R$ such that $[\rho]=[\si_t^\vph]$.
Then $[\hrho]=[\wdh{\si_t^\vph}]=[\si_t^\hvph]$, and $\hrho\in\Cnd(N)$.

Conversely we assume that $\hrho\in\Cnd(N)$.
Thanks to (1),
we may and do assume that $\hrho$ is irreducible.
By \cite[Theorem 4.12]{M-T-endo-pre},
there exist $t\in \R$ and $u\in U(N)$
such that $\hrho=\Ad u\circ\si_t^\hvph$.
Considering the formal decomposition of $u$,
we see that $(\si_{nT+t}^\vph,\rho)\neq 0$ for some $n\in\Z$.
Since $\rho$ is irreducible by (1),
this means $[\rho]=[\si_{nT+t}^\vph]$, and $\rho\in\Cnd(M)$.
\end{proof}

Let $K$ be a finite dimensional Hilbert space.
Following the procedure introduced in \S\ref{sec:appendix},
we define the canonical extension $\bbe\in\Mor(N,N\oti B(K))$
for $\be\in \Mor_0(M,M\oti B(K))$ by
\begin{align*}
&\bbe(x)=\be(x)\quad\mbox{for all }x\in M;
\\
&
\bbe(U)=d(\be)^{iT}[D\vph\circ\Ph^\be:D\vph\oti \Tr_K]_T (U\oti1).
\end{align*}

By $\Mor_0^\th(N,N\oti B(K))$,
we denote the set of homomorphisms in $\Mor_0(N, N\oti B(K))$
commuting with $\th$.
The following is a direct consequence of the previous theorem.
The fourth statement follows from the third one and
Theorem \ref{thm: app-cent-prop}.

\begin{lem}\label{lem:canonical}
Let $K$ be a finite dimensional Hilbert space.
Then one has the following:
\begin{enumerate}
\item
Let $\beta\in \Mor_0(M, M\otimes B(K))$.
Then $\bbe$ is irreducible if and only if $\be$ is irreducible.
In this case, the inclusion $\be(M)\subs N\oti B(K)$ is irreducible;

\item
Let $\beta\in \Mor_0(M, M\otimes B(K))$.
Then $d(\bbe)=d(\be)$ and the standard left inverse $\Ph^\bbe$ is
given by the following equality:
for $x\in M\oti B(K)$ and $n\in\Z$,
\[
\Ph^\bbe(x (U^n\oti1))
=
d(\be)^{-inT}\Ph^\be(x[D\vph\oti\Tr_K:D\vph\circ\Ph^\be]_{nT})
U^n;
\]

\item
The extension $\ovl{\cdot}$ is a bijection from $\Mor_0(M,M\oti B(K))$
onto $\Mor_0^\th(N,N\oti B(K))\cap \ker(\mo)$;

\item
Let $\beta\in \Mor_0(M, M\otimes B(K))$.
If $d(\be)=\dim(K)$,
then $\bbe\in\oInt(N,N\oti B(K))$;

\item
Let $\beta\in \Mor_0(M, M\otimes B(K))$.
Then $\be\in \Cnt(M,M\oti B(K))$
if and only if
$\bbe\in\Cnt(N,N\oti B(K))$.
\end{enumerate}
\end{lem}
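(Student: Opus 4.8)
The plan is to deduce this morphism statement from its endomorphism counterpart, Theorem~\ref{thm: extension-app-cent}(4), through the $\Mor$--$\End$ dictionary of Section~\ref{sec:appendix}. First I would fix a Hilbert space $\mH\subs M$ with support $1$ and $\dim\mH=\dim K$, which exists since $M$ is properly infinite \cite{Ro}, together with an orthonormal basis $\{\eta_i\}$, and let $\Ps\col M\oti B(K)\ra M$, $\Ps(x\oti e_{ij})=\eta_i x\eta_j^*$, be the resulting isomorphism. Because $\mH\subs M\subs N$ keeps support $1$ and is fixed by $\th$, the same basis produces an isomorphism $\widetilde{\Ps}\col N\oti B(K)\ra N$ extending $\Ps$ and satisfying $\widetilde{\Ps}\circ(\th_t\oti\id)=\th_t\circ\widetilde{\Ps}$. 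Put $\rho:=\Ps\circ\be\in\End_0(M)$.

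The first point is that this conversion preserves central triviality. For $x\in M_\om$ the element $x$ commutes with every fixed operator of $M$ inside $M^\om$, so $\Ps^\om(x\oti1)=\sum_i\eta_i x\eta_i^*=\sum_i x\eta_i\eta_i^*=x$; since $\Ps^\om$ is injective, $\rho^\om(x)=\Ps^\om(\be^\om(x))$ equals $x$ if and only if $\be^\om(x)=x\oti1$. Hence $\be\in\Cnt(M,M\oti B(K))$ if and only if $\rho\in\Cnd(M)$, and the identical computation carried out in $N$ shows $\bbe\in\Cnt(N,N\oti B(K))$ if and only if $\widetilde{\Ps}\circ\bbe\in\Cnd(N)$.

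The second point is the identification $\widetilde{\Ps}\circ\bbe=\hrho$, which I would establish without any Connes-cocycle computation. By Lemma~\ref{lem:canonical}(3) we have $\bbe\in\Mor_0^\th(N,N\oti B(K))\cap\ker(\mo)$; since $\widetilde{\Ps}$ is an isomorphism intertwining the torus actions and preserving the module, the endomorphism $\widetilde{\Ps}\circ\bbe$ lies in $\End_0^\th(N)\cap\ker(\mo)$ and restricts on $M$ to $\Ps\circ\be=\rho$. By Theorem~\ref{thm: extension-app-cent}(3) every element of $\End_0^\th(N)\cap\ker(\mo)$ is the canonical extension of its restriction to $M$, so $\widetilde{\Ps}\circ\bbe=\hrho$.

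Combining the three reductions with Theorem~\ref{thm: extension-app-cent}(4) gives
\[
\be\in\Cnt(M,M\oti B(K))
\iff\rho\in\Cnd(M)
\iff\hrho\in\Cnd(N)
\iff\bbe\in\Cnt(N,N\oti B(K)),
\]
which is the desired equivalence. The step I expect to be the main obstacle is the identification $\widetilde{\Ps}\circ\bbe=\hrho$: everything hinges on checking that the Hilbert-space conversion is compatible with the torus action $\th$ and carries trivial module to trivial module, so that the uniqueness clause of Theorem~\ref{thm: extension-app-cent}(3) applies. This bookkeeping is exactly what the dictionary of Section~\ref{sec:appendix} is meant to supply; were one instead to verify $\widetilde{\Ps}\circ\bbe=\hrho$ directly, one would have to match $d(\be)=d(\rho)$ together with the cocycle $[D\vph\circ\Ph^\be:D\vph\oti\Tr_K]_T$ against $[D\vph\circ\ph_\rho:D\vph]_T$, which is the genuinely computational part that the appeal to Theorem~\ref{thm: extension-app-cent}(3) circumvents.
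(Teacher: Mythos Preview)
Your proposal addresses only part (5); parts (1)--(4) also need to be derived from Theorem~\ref{thm: extension-app-cent} (and, for (4), from Theorem~\ref{thm: app-cent-prop}), though by the same dictionary you set up.

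For (5) itself, your approach is correct and is exactly what the paper means by ``direct consequence of the previous theorem'': one passes from $\Mor_0$ to $\End_0$ via the map $\si_*$ of \S\ref{sec:appendix} (your $\Ps$), invokes Theorem~\ref{thm: extension-app-cent}(4), and passes back via Lemma~\ref{lem: bijection}(3). One point deserves more care, however. Your route to the identification $\widetilde{\Ps}\circ\bbe=\hrho$ appeals to the uniqueness clause of part (3), which in turn requires $\mo(\widetilde{\Ps}\circ\bbe)=\id$; you assert this because $\widetilde{\Ps}$ ``preserves the module'', but that phrase conceals precisely the compatibility of $\si_*$ with canonical extension that you are trying to establish. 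The cleaner path is the one the paper has already built in: the sentence preceding the lemma defines $\bbe$ ``following the procedure introduced in \S\ref{sec:appendix}'', and in that section the morphism extension is \emph{by definition} $\si^{-1}$ composed with the endomorphism extension of $\si\circ\be$, after which Lemma~\ref{lem: connes-cocycle} verifies the explicit cocycle formula and shows independence of $\si$. Specialising that computation to $t=T$ (with $\la^\vph(T)=U$) gives $\widetilde{\Ps}\circ\bbe=\hrho$ directly, without passing through (3). Either route closes the argument, but you should recognise that this identification is the substantive step rather than a formality.
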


\subsection{Reduction to the classification of 
actions on $\meR_\la$}\label{sec:1-lambda}

Let $\alpha$ a centrally free cocycle action of $\bhG$
on $M\cong\meR_\infty$.
Then
$\alpha$ is automatically approximately inner
from Corollary \ref{cor: app-cent-infinite}.
For each $\pi\in\IG$, we consider the canonical extension
$\bal_\pi\in \Mor_0(N,N\oti B(K))$ as before.
Then
$\bal$ is a cocycle action on $N$ with the same 2-cocycle.

\begin{prop}\label{prop:ext}
Let $\alpha$ be a centrally free cocycle action of
$\bhG$ on $M$.
Then
$\bal$ is an approximately inner and
centrally free cocycle action of $\bhG$ on $N$.
\end{prop}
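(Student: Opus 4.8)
The plan is to treat approximate innerness and central freeness separately, using Lemma \ref{lem:canonical} as the main engine; this is the type III$_1$ analogue of the proof of Lemma \ref{lem: be-th-app-cent}, now streamlined through the packaged facts on canonical extensions. I will argue componentwise in $\pi\in\IG$, taking as given (as stated just before the proposition) that $\bal$ is already a cocycle action of $\bhG$ on $N$ with the same $2$-cocycle as $\al$, so that only the two asserted properties remain to be checked.

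For approximate innerness, fix $\pi\in\IG$. Since $M$ is a factor, the standard left inverse $\Ph_\pi^\al$ makes $\al_\pi\circ\Ph_\pi^\al$ the minimal conditional expectation onto $\al_\pi(M)$, of index $d(\pi)^2$; hence the statistical dimension satisfies $d(\al_\pi)=d(\pi)=\dim H_\pi$. Applying Lemma \ref{lem:canonical}(4) with $\be=\al_\pi$ and $K=H_\pi$ then yields $\bal_\pi\in\oInt(N,N\oti B(H_\pi))$. As this holds for every $\pi$, the cocycle action $\bal$ is approximately inner. I emphasize that this step uses only factoriality and the dimension identity, not central freeness.

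For central freeness, I first observe that $\al$ is free: if $\al_\pi$ failed to be properly outer for some $\pi\neq\btr$, then, exactly as in the freeness step of the proof of Lemma \ref{lem: be-th-app-cent}, $\al_\pi$ would be implemented by a unitary and hence centrally trivial, contradicting its assumed proper central non-triviality. Consequently each $\al_\pi$ is irreducible by \cite[Lemma 2.8]{M-T-CMP}, and Lemma \ref{lem:canonical}(1) transports irreducibility to $\bal_\pi$. On the other hand, proper central non-triviality of $\al_\pi$ gives $\al_\pi\notin\Cnt(M,M\oti B(H_\pi))$ for $\pi\neq\btr$, so Lemma \ref{lem:canonical}(5) forces $\bal_\pi\notin\Cnt(N,N\oti B(H_\pi))$, i.e.\ $\bal_\pi$ is centrally non-trivial. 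Since $\bal_\pi$ is irreducible, \cite[Lemma 8.3]{M-T-CMP} upgrades central non-triviality to proper central non-triviality, and therefore $\bal$ is centrally free.

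The delicate point is the passage from central freeness of $\al$ to the irreducibility of each $\al_\pi$ (equivalently, that a centrally free cocycle action on the factor $M$ is free), because Lemma \ref{lem:canonical} only transfers irreducibility and membership in $\Cnt$, and offers no direct transfer of proper central non-triviality. Making the implication ``not properly outer $\Rightarrow$ unitarily implemented $\Rightarrow$ centrally trivial'' precise for a single component $\al_\pi$ is the main technical content here, mirroring the freeness argument in Lemma \ref{lem: be-th-app-cent}; once it is in place, the remaining transfers through Lemma \ref{lem:canonical}(1), (5) and the upgrade via \cite[Lemma 8.3]{M-T-CMP} are formal.
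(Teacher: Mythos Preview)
Your overall architecture matches the paper's proof exactly: approximate innerness from Lemma \ref{lem:canonical}(4) via $d(\al_\pi)=d(\pi)=\dim H_\pi$, and central freeness by transferring irreducibility (Lemma \ref{lem:canonical}(1)) and central non-triviality (Lemma \ref{lem:canonical}(5)) to $\bal_\pi$, then upgrading with \cite[Lemma 8.3]{M-T-CMP}.

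The gap is in your freeness argument for $\al$, which is circular. You invoke the step ``not properly outer $\Rightarrow$ implemented by a unitary'' by analogy with the freeness step in Lemma \ref{lem: be-th-app-cent}; but there that implication is applied to $\tbe_\pi$ and uses the \emph{already known} irreducibility of $\be_\pi$ (via \cite[Proposition 3.4]{Iz-can2} and \cite[Lemma 2.8]{M-T-CMP}). For your $\al_\pi$ no irreducibility is yet available---it is precisely what you are trying to derive---and without it a not-properly-outer $\al_\pi$ may merely contain an inner summand rather than be globally inner. So the chain ``not properly outer $\Rightarrow$ unitarily implemented $\Rightarrow$ centrally trivial'' is unjustified at the first arrow.

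The correct route is both simpler and non-circular. Suppose $0\neq a\in M\oti B(H_\pi)$ satisfies $\al_\pi(x)a=a(x\oti1)$ for all $x\in M$. Applying this to representatives gives $\al_\pi^\om(y)a=a(y\oti1)$ for every $y\in M_\om$. Since the matrix entries of $a$ lie in $M$ and $M_\om\subset M'\cap M^\om$, one has $a(y\oti1)=(y\oti1)a$, hence $\al_\pi^\om(y)a=(y\oti1)a$ for all $y\in M_\om$. Thus the very same $a$ witnesses the failure of proper central non-triviality, contradicting central freeness. This shows directly that centrally free implies free; the paper's terse ``Since $\al_\pi$ is properly outer'' is relying on exactly this elementary observation. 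With freeness in hand, your remaining transfers through Lemma \ref{lem:canonical}(1),(5) and \cite[Lemma 8.3]{M-T-CMP} are correct as written.
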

\begin{proof}
For each $\pi\in\IG\setm\{\btr\}$,
$\bal_\pi$ is approximately inner
and
centrally nontrivial by Lemma \ref{lem:canonical} (4) and (5).
Since $\al_\pi$ is properly outer,
$\al_\pi$ is irreducible \cite[Lemma 2.8]{M-T-CMP}.
Hence so is $\bal_\pi$ by Lemma \ref{lem:canonical} (1).
Then by \cite[Lemma 8.3]{M-T-CMP},
$\bal_\pi$ is properly centrally non-trivial.
Thus the cocycle action $\bal$ is centrally free.
\end{proof}

Our main theorem of this section is the following:

\begin{thm}\label{thm:class1}
Let $\alpha$ be a centrally free action of $\bhG$ on $M$.
Then the $\bhG\times\T$-action $\bal\theta$ on $N$
is cocycle conjugate to
$\theta\otimes \alpha^{(0)}$,
where $\al^{(0)}$ is a free action of $\bhG$ on $\meR_0$.
\end{thm}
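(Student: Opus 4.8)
The plan is to prove Theorem \ref{thm:class1} by a model action splitting argument, mirroring the type III$_\la$ case of \S\ref{sec:lam} and Connes--Haagerup's treatment of $\meR_\infty$ \cite{Co-III1,Ha-III1}; the construction itself will be carried out in \S\ref{sec:lam-stab} and \S\ref{sec:1-model}. First I would record that $\bal\theta$ is genuinely a $\bhG\times\T$-action: $\bal$ commutes with $\theta$ since $\bal\in\Mor_0^\theta(N,N\oti B(K))$ by Lemma \ref{lem:canonical}, and by Proposition \ref{prop:ext} the $\bhG$-part $\bal$ is an approximately inner and centrally free cocycle action on the type III$_\la$ factor $N$. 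The crucial extra ingredient is the \emph{approximate innerness of the modular automorphisms} of $M\cong\meR_\infty$, made quantitative in Corollary \ref{cor: modular-app}: it furnishes a sequence $\{w_n\}_n\subs U(M)$ with $\si_T^\vph=\lim_n\Ad w_n$ and $[D\vph\circ\Ph_\pi^\al:D\vph\oti\tr_\pi]_T=\lim_n\al_\pi(w_n)(w_n^*\oti1)$ for all $\pi\in\IG$. Setting $w:=(w_n)_n\in M^\om$ and recalling that $U\in N$ implements $\si_T^\vph$ with $\theta_t(U)=e^{-\sqrt{-1}t}U$, the central unitary $v:=w^*U$ lies in $(N^\theta)'\cap N^\om$ and satisfies $\theta_t(v)=e^{-\sqrt{-1}t}v$. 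Since $w\in M$, this $v$ is compatible with $\bal$ through the second limit above, and it is exactly the germ of the model torus action.

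Next I would set up the central sequence machinery for the $\bhG$-part. Applying Theorem \ref{thm: app-cocycle} to $\bal$ (with trivial perturbing group) yields $U\in U(N^\om\oti\lhG)$ implementing $\bal$ on $N$ and producing the commuting cocycle actions $\ga^1:=\Ad U^*\circ\bal^\om$ and $\ga^2:=\Ad U^*(\,\cdot\,\oti1)$, with $\ga:=(\ga^1\oti\id)\circ\ga^2$ leaving $N_\om$ and $(N_\om)^\ga$ invariant; moreover $\theta^\om$ commutes with both $\ga^1$ and $\ga^2$. Using the Rohlin theorem for $\ga^1$ (Corollary \ref{cor: Rohlin}, via Theorem \ref{thm: jRoh-G2-fix}) together with the spectral data of the central unitary $v$, I would produce inside the type II$_1$ algebra $(N_\om)^\ga$ finite-dimensional matrix models carrying the model $\bhG\times\T$-action: each is fixed by $\bal$ (hence by $\ga$), while $\theta$ acts on them through the eigenvalue decomposition of $v$, i.e. as a model torus action. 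Converting these approximate data into genuine matrix units with prescribed $\theta$-eigenvalues is precisely what the \emph{stability of $\theta$ on $(N_\om)^\ga$}, the content of \S\ref{sec:lam-stab}, is designed to supply.

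With the local models in hand, the splitting of \S\ref{sec:1-model} proceeds by an inductive Connes-type construction: one builds mutually commuting finite-dimensional subfactors $M_{n_k}\subs N$ realizing the model actions, together with perturbing unitaries controlling $\bal$ and $\theta$, so that $\sum_k\|\psi\circ E_{M_{n_k}'\cap N}-\psi\|<\infty$ for a total family of states $\psi$. Connes' Lemma \ref{lem:con-split} then yields a tensor splitting $N\cong\meR_0\oti(E'\cap N)$ under which $\bal\theta$ is carried to $\sigma\oti(\bal\theta)'$, where $\sigma$ is the model $\bhG\times\T$-action on $\meR_0$ whose $\bhG$-part is a free action and whose $\T$-part is a model torus action $\theta_0$. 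After correcting the resulting $2$-cocycle, which is evaluated in $E'\cap N$ and hence trivialized on that properly infinite algebra by Lemma \ref{lem: prop-inf-coboundary}, this gives the model splitting $\bal\theta\sim\sigma\oti\bal\theta$. Finally, I would cancel as at the end of \S\ref{sec:lam}: combining $\bal\sim\id_N\oti\al^{(0)}$ from the already-proven type III$_\la$ case of Theorem \ref{thm:main}, the absorption $\theta\sim\theta\oti\theta_0$ of the dual torus action, and $\sigma\oti\sigma\approx\sigma$, together with Lemma \ref{lem: id-otimes-al}, collapses $\bal\theta\sim\sigma\oti\bal\theta$ to $\bal\theta\sim\theta\oti\al^{(0)}$.

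The step I expect to be the main obstacle is the model splitting for the \emph{continuous} torus action. Unlike the discrete $\Z$ of the III$_\la$ case, where Corollary \ref{cor: gamma-th Rohlin} directly produces $\theta$-shifted Rohlin towers, here $\T$ is a compact connected group, so one cannot feed $\theta$ into the relative Rohlin theorem as a discrete factor. Instead the torus structure must be generated internally from the central unitary $v$, and the whole argument hinges on establishing stability of $\theta$ on the central sequence algebra $(N_\om)^\ga$ and on the compatibility of $v$ with $\bal$. This is exactly the place where \textbf{approximate innerness of the modular automorphisms is indispensable}: without Corollary \ref{cor: modular-app} there is no central unitary implementing the rotation, and the model $\T$-action cannot be built.
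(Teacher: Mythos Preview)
Your overall architecture---model splitting plus cancellation---matches the paper, and you have correctly located the key ingredients (Proposition \ref{prop:ext}, Corollary \ref{cor: modular-app}, the central-sequence machinery). But the proposal has two genuine gaps.

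\textbf{The model action $\sigma$ is inconsistent, and the $\meR_\la$ splitting is missing.} You say the matrix models are \emph{fixed by $\bal$}, which forces the $\bhG$-part of $\sigma$ to be trivial; yet two sentences later you claim its $\bhG$-part is free. In the paper the model splitting on $N$ produces only a torus action $\be$ on $\meR_0$ (Theorem \ref{thm:modelsplit}), not a $\bhG\times\T$-action. There is a second, independent splitting $\bal\theta\sim\id_{\meR_\la}\oti\bal\theta$ (Corollary \ref{cor:lam-split}), proved at the level of $M\cong\meR_\infty$ via the property $L'_a$ (Theorem \ref{thm:lam-split}); this is what lets one invoke Connes--Haagerup's $\theta_{-t}\oti\theta_t\sim\id_{\meR_\la}\oti\be_t$. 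Your single splitting cannot play both roles.

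\textbf{The cancellation step is circular.} You want to feed in $\bal\sim\id_N\oti\al^{(0)}$, but that is only a cocycle conjugacy of $\bhG$-actions. To use it for $\bal\theta$ you would need the conjugating isomorphism and cocycle to intertwine $\theta$---which is precisely Theorem \ref{thm:class1}. The three ingredients you list (that $\bhG$-conjugacy, $\theta\sim\theta\oti\theta_0$, and $\sigma\oti\sigma\approx\sigma$) do not collapse $\sigma\oti\bal\theta$ to $\theta\oti\al^{(0)}$; try to write the chain and you will find you are always left with $\bal\theta$ on the right. The paper's resolution is a \emph{doubling trick}: one shows directly (Lemma \ref{lem:conj}, via the preceding lemma) that the $\bhG\times\T$-action $\theta_{-t}\oti\bal\theta_t$ on $N\oti N$ is cocycle conjugate to $\id_N\oti\wdh{\ga^{(0)}}\oti\al^{(0)}$, by reducing to the already-proven III$_\la$ case on $(N\oti N)\rti_{\theta\oti\theta^{-1}}\T$. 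Then the chain
\[
\bal\theta\ \sim\ \id_{\meR_\la}\oti\be\oti\bal\theta\ \sim\ \theta\oti\theta^{-1}\oti\bal\theta\ \sim\ \theta\oti\be\oti\id_{\meR_\la}\oti\al^{(0)}\ \sim\ \theta\oti\al^{(0)}
\]
goes through. Your proposal is missing this $\theta^{-1}$-cancellation mechanism entirely.
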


\noindent
$\bullet$
\textit{Proof of Theorem \ref{thm:main} for $\meR_\infty$.}

Since the natural extension of $\bal$ to $N\rti_\th\T$
is cocycle conjugate to $\al$ by Takesaki duality,
we see that Theorem \ref{thm:class1} implies
Theorem \ref{thm:main}
considering the partial crossed product by $\th$ as before.
\hfill$\Box$
\\

The rest of this section is devoted to show Theorem \ref{thm:class1}.
The essential part of our proof is the model
action splitting result Proposition \ref{prop:split}.
The following lemma shows that
the canonical extension well behaves to cocycle perturbations.

\begin{lem}\label{lem: can-cocycle-stable}
For $i=1,2$,
let $M^i$ be a type III$_1$ factor,
$\vph^i\in W(M^i)$
and $(\al^i,u^i)$ be a cocycle action of $\bhG$ on $M^i$.
We set $N^i:=M^i\rti_{\si_T^{\vph^i}}\Z$ and
the dual action $\th^i:=\wdh{\si_T^{\vph^i}}$.
If
$(\al^1,u^1)$ is cocycle conjugate to $(\al^2,u^2)$,
then
there exists an isomorphism $\Ps\col N^1\ra N^2$
and a unitary $v\in M^2\oti \lhG$
such that
\begin{itemize}
\item
$\Ps\circ\th_t^1=\th_t^2\circ\Ps$ for all $t\in\T$;

\item
$(\Ps\oti\id)\circ\bal^1\circ\Ps^{-1}
=
\Ad v \circ\bal^2$;

\item
$(\Ps\oti\id\oti\id)(u^1)=(v\oti1)\al^2(v)u^2(\id\oti\De)(v^*)$.
\end{itemize}
In particular,
the $\bhG\times\T$-cocycle action
$\bal^1\th^1$ is cocycle conjugate to $\bal^2\th^2$.
\end{lem}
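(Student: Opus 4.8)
The plan is to factor the desired isomorphism $\Ps$ through three elementary operations relating the two extensions: a cocycle perturbation, a change of reference weight, and a weight-preserving conjugacy. Throughout, write $\overline{\,\cdot\,}^{\chi}$ for the canonical extension computed relative to a weight $\chi$. First I would unpack the hypothesis: by definition of cocycle conjugacy there are an isomorphism $\th_0\col M^1\ra M^2$ and $w\in U(M^1\oti\lhG)$ with $\Ad w\circ\al^1=(\th_0^{-1}\oti\id)\circ\al^2\circ\th_0$ and $w\al^1(w)u^1(\id\oti\De)(w^*)=(\th_0^{-1}\oti\id\oti\id)(u^2)$. Putting $\al^{1\prime}:=\Ad w\circ\al^1$, these say exactly that $\th_0$ is a genuine conjugacy (no cocycle) from $(\al^{1\prime},u^{1\prime})$ onto $(\al^2,u^2)$, where $u^{1\prime}$ is the perturbed $2$-cocycle. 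Set $\psi:=\vph^2\circ\th_0\in W(M^1)$ and $a:=[D\psi:D\vph^1]_T$, so that $\si_T^{\psi}=\Ad a\circ\si_T^{\vph^1}$.

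Next I would establish three intertwiners. \emph{(iii) Cocycle perturbation.} I claim $\overline{\al^{1\prime}}^{\vph^1}=\Ad w\circ\bal^1$ on $N^1$, with $w$ viewed in $M^1\oti\lhG\subs N^1\oti\lhG$. On $M^1$ both sides are $\al^{1\prime}$; on the generator $U^1$ this reduces, using the standardness fact $\Ph_\pi^{\al^{1\prime}}=\Ph_\pi^{\al^1}\circ\Ad w_\pi^*$ and the Connes rule $[D(\chi\circ\Ad w_\pi^*):D\chi]_T=w_\pi\,\si_T^{\chi}(w_\pi^*)$ with $\chi=\vph^1\circ\Ph_\pi^{\al^1}$, to the identity $[D\vph^1\circ\Ph_\pi^{\al^{1\prime}}:D\vph^1\oti\Tr_\pi]_T=w_\pi\,[D\vph^1\circ\Ph_\pi^{\al^1}:D\vph^1\oti\Tr_\pi]_T\,\si_T^{\vph^1}(w_\pi^*)$, which is precisely what one gets by conjugating the defining formula of $\bal^1(U^1)$ by $w_\pi$. \emph{(i) Change of weight.} I would use the canonical isomorphism $\Theta_1\col N^1\ra P:=M^1\rti_{\si_T^{\psi}}\Z$ fixing $M^1$ and sending $U^1\mapsto a^*U^{\psi}$; since $a\in M^1$ it intertwines the dual torus actions. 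Crucially, $\Theta_1$ intertwines the two extensions of $\al^{1\prime}$ \emph{exactly}: comparing $\overline{\al^{1\prime}}^{\vph^1}$ with $\overline{\al^{1\prime}}^{\psi}$ on $U^1$ produces the factor $\al^{1\prime}_\pi(a)\,[D\vph^1\circ\Ph_\pi:D\vph^1\oti\Tr_\pi]_T\,(a^*\oti1)\,[D\psi\circ\Ph_\pi:D\psi\oti\Tr_\pi]_T^*$, which collapses to $1$ once one invokes the pull-back identity $\al^{1\prime}_\pi([D\psi:D\vph^1]_t)=[D\psi\circ\Ph_\pi^{\al^{1\prime}}:D\vph^1\circ\Ph_\pi^{\al^{1\prime}}]_t$ together with the chain rule. \emph{(ii) Weight-preserving conjugacy.} As $\vph^2\circ\th_0=\psi$, the map $\th_0$ carries $\si_T^{\psi}$ to $\si_T^{\vph^2}$ and extends to $\Theta_0\col P\ra N^2$ with $\Theta_0(U^{\psi})=U^2$, intertwining dual actions; being a genuine conjugacy it transports left inverses, weights and Connes cocycles, so by Lemma \ref{lem:canonical} it intertwines $\overline{\al^{1\prime}}^{\psi}$ with $\bal^2$.

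Composing, I set $\Ps:=\Theta_0\circ\Theta_1\col N^1\ra N^2$, which intertwines $\th^1$ and $\th^2$ and restricts to $\th_0$ on $M^1$. Chaining (iii), (i), (ii) yields $(\Ps\oti\id)\circ\Ad w\circ\bal^1=\bal^2\circ\Ps$, whence $(\Ps\oti\id)\circ\bal^1\circ\Ps^{-1}=\Ad v\circ\bal^2$ with $v:=(\Ps\oti\id)(w^*)=(\th_0\oti\id)(w^*)\in M^2\oti\lhG$, giving the first two bullets. The third bullet involves only the $M$-level data and is literally the relation $w\al^1(w)u^1(\id\oti\De)(w^*)=(\th_0^{-1}\oti\id\oti\id)(u^2)$ transported by $\th_0$ and rewritten through $v$, so it is automatic. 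For the final assertion, $v$ lies in $M^2\oti\lhG$ and the dual action $\th^2$ fixes $M^2$ pointwise, so $v$ is $\th^2$-invariant; hence $(\Ps,v)$, with trivial $\T$-cocycle, realises the cocycle conjugacy of the combined $\bhG\times\T$-cocycle actions $\bal^1\th^1$ and $\bal^2\th^2$.

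The main obstacle I anticipate is bookkeeping the reference-weight mismatch $\vph^2\circ\th_0\neq\vph^1$: the whole argument hinges on $\Theta_1$ intertwining the two canonical extensions with no residual cocycle, which rests on the pull-back identity for Connes cocycles under the standard left inverse and on the chain rule collapsing the candidate cocycle to $1$. Verifying these identities at the generator $U^1$, and confirming that the standard left inverse perturbs as $\Ph_\pi^{\Ad w\circ\al}=\Ph_\pi^{\al}\circ\Ad w_\pi^*$, is the computational heart; once they are in place the three operations compose formally and $v$ is forced to live in $M^2\oti\lhG$.
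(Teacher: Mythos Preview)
Your proposal is correct and takes essentially the same approach as the paper. The paper pushes the weight forward, setting $\psi^2:=\vph^1\circ\Ps_0^{-1}$ on $M^2$ and identifying $M^2\rti_{\si_T^{\psi^2}}\Z$ with $N^2$ inside the core $\wdt{M^2}$, then verifies the intertwining relation by a single chain of Connes-cocycle identities evaluated at $U^{\psi^2}$; your three-step decomposition (cocycle perturbation, change of weight, weight-preserving conjugacy) is the same argument run from the $M^1$ side and unpacked more modularly, and the key identities you isolate (the perturbation formula $\Ph_\pi^{\Ad w\circ\al}=\Ph_\pi^{\al}\circ\Ad w_\pi^*$, the pull-back $\al_\pi([D\psi:D\vph]_t)=[D\psi\circ\Ph_\pi:D\vph\circ\Ph_\pi]_t$, and the chain-rule collapse) are exactly what is used implicitly in the paper's computation.
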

\begin{proof}
Since $(\al^1,u^1)\sim(\al^2,u^2)$,
there exists an isomorphism $\Ps_0\col M^1\ra M^2$
and $v\in M^2\oti\lhG$ such that
\begin{itemize}
\item
$(\Ps_0\oti\id)\circ\al^1\circ\Ps_0^{-1}
=
\Ad v \circ\al^2$;

\item
$(\Ps_0\oti\id\oti\id)(u^1)=(v\oti1)\al^2(v)u^2(\id\oti\De)(v^*)$.
\end{itemize}
We set $\ps^2:=\vph^1\circ\Ps_0^{-1}\in W(M^2)$.
Then there exists an isomorphism
$\Ps\col N^1\ra M^2\rti_{\si_T^{\ps^2}}\Z$
such that
$\Ps(xU^{\vph^1})=\Ps_0(x)U^{\ps^2}$,
where $U^{\vph^1}$ and $U^{\ps^2}$ are the implementing unitaries
for $\si_T^{\vph^1}$ and $\si_T^{\ps^2}$, respectively.
Then $\Ps$ intertwines the dual actions.
Regard $M^2\rti_{\si_T^{\ps^2}}\Z=N^2$ in the core $\wdt{M}^2$.
It suffices to show the second equality holds on
the implementing unitary $U^{\ps^2}$.
This is checked as follows:
for $\pi\in\IG$, we have
\begin{align*}
&\hspace{16pt}(\Ps\oti\id)\circ\bal_\pi^1\circ\Ps^{-1}(U^{\ps^2})
\\
&=
(\Ps\oti\id)(\bal_\pi^1(U^{\vph^1}))
\\
&=
(\Ps\oti\id)
([D\vph^1\circ\Ph_\pi^{\al^1}:D\vph^1\oti\tr_\pi]_T
(U^{\vph^1}\oti1))
\\
&=
[D\vph^1\circ\Ph_\pi^{\al^1}\circ(\Ps_0^{-1}\oti\id)
:D\vph^1\circ\Ps_0^{-1}\oti\tr_\pi]_T
(U^{\ps^2}\oti1)
\\
&=
[D\ps^2\circ\Ps_0\circ\Ph_\pi^{\al^1}\circ(\Ps_0^{-1}\oti\id)
:D\ps^2\oti\tr_\pi]_T
(U^{\ps^2}\oti1)
\\
&=
[D\ps^2\circ\Ph_\pi^{(\Ps_0\oti\id)\circ\al^1\circ\Ps_0^{-1}}
:D\ps^2\oti\tr_\pi]_T
(U^{\ps^2}\oti1)
\\
&=
[D\ps^2\circ\Ph_\pi^{\Ad v\circ\al^2}
:D\ps^2\oti\tr_\pi]_T
(U^{\ps^2}\oti1)
\\
&=
[D\ps^2\circ\Ph_\pi^{\al^2}\circ\Ad v_\pi^*
:D\ps^2\oti\tr_\pi]_T
(U^{\ps^2}\oti1)
\\
&=
v_\pi\si_T^{\ps\circ\Ph_\pi^{\al^2}}(v_\pi^*)
[D\ps^2\circ\Ph_\pi^{\al^2}
:D\ps^2\oti\tr_\pi]_T
(U^{\ps^2}\oti1)
\\
&=
v_\pi
[D\ps^2\circ\Ph_\pi^{\al^2}:D\ps^2\oti\tr_\pi]_T
\si_T^{\ps^2\oti\tr_\pi}(v_\pi^*)
(U^{\ps^2}\oti1)
\\
&=
v_\pi
[D\ps^2\circ\Ph_\pi^{\al^2}:D\ps^2\oti\tr_\pi]_T
(U^{\ps^2}\oti1)
v_\pi^*
\\
&=
\Ad v_\pi\circ\bal^2(U^{\ps^2}).
\end{align*}
\end{proof}

The following lemma is an equivariant version of
\cite[Lemma I.2]{Co-III1}.
Recall that $\al^{(0)}$ is a free action of $\bhG$
on $\meR_0$.

\begin{lem}
One has the following:

\begin{enumerate}
\item
Let $\de$ be an action of $\bhG$ on $\meR_\la$
and $\ga\in\Aut(\meR_\la)$ such that
\begin{itemize}
\item
$\de$ commutes with $\ga$;
 
\item
$\meR_\la\rtimes_\gamma \Z \cong \meR_\la$;

\item
The natural extension $\ovl{\de}$ of $\de$
to $\meR_\la\rtimes_\gamma \Z$ is
approximately inner and centrally free;

\item
The $\bhG\times \T$-action $\ovl{\de} \hat{\gamma}$
is centrally free on $\meR_\la\rti_\ga\Z$.
\end{itemize}
Then $\bhG\times \Z$-action $\de\gamma$ on $\meR_\la$
is cocycle conjugate to
$\id_{\meR_\la}\otimes \gamma^{(0)}\otimes \alpha^{(0)}$, 
where $\gamma^{(0)}$ is an aperiodic automorphism on $\meR_0$.

\item
Let $\de$ be an action of $\bhG$ on $\meR_\la$,
and $\beta$ an action of $\T$ on $\meR_\la$
such that
\begin{itemize}

\item $\de$ is approximately inner and centrally free;

\item
$\de$ commutes with $\be$;

\item
The $\bhG\times\T$-action $\de\beta$
is centrally free on $\meR_\la$;

\item
$\meR_\la\rtimes_\beta \T\cong \meR_\la$.
\end{itemize}
Then
the $\bhG\times\T$-action $\de\beta$ is cocycle conjugate to
$\id_{\meR_\la}\otimes \widehat{\gamma^{(0)}} \otimes \alpha^{(0)}$.
\end{enumerate}
\end{lem}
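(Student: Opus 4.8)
The plan is to regard the two statements as Takesaki duals of one another, to prove the $\Z$-statement (1) directly, and to obtain the $\T$-statement (2) from it by passing to a crossed product. The conceptual point for (1) is that, writing $\Z$ for the dual group of $\T$, the pair $(\de,\ga)$ is an action of the \emph{amenable discrete Kac algebra} $\bhG\times\Z$ on $\meR_\la$, whose compact companion $\bG\times\T$ still has amenable dual; the content of (1) is then that this action is cocycle conjugate to the model action $\id_{\meR_\la}\oti\ga^{(0)}\oti\al^{(0)}$. First I would dispose of the $\ga$-part: the hypothesis $\meR_\la\rti_\ga\Z\cong\meR_\la$ together with central freeness of $\ovl{\de}\hat\ga$ forces $\mo(\ga)$ to be trivial, so $\ga\in\oInt(\meR_\la)$ by the identification $\oInt=\ker(\mo)$ for injective factors \cite{Co-III1}, while aperiodicity makes $\ga$ centrally free.

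The key preliminary step is to establish that the joint action $\de\ga$ is approximately inner and centrally free \emph{on $\meR_\la$ itself}. Central freeness I would transport from the assumed central freeness of $\ovl{\de}\hat\ga$ on $N:=\meR_\la\rti_\ga\Z$, using the canonical identification $N_\om\cong(\meR_\la)_\om^{\ga_\om}$ and the compatibility $(\ovl{\de}\hat\ga)^\om|_{(\meR_\la)_\om}=(\de\ga)^\om|_{(\meR_\la)_\om}$, exactly as in the central-freeness argument of Lemma \ref{lem: be-th-app-cent}. Approximate innerness of the $\ga$-component is already recorded above; for the $\de$-component I would derive it from the approximate innerness of $\ovl{\de}$ on $N$ by restriction along $\meR_\la\subs N$ together with the modular/trace characterization of Corollary \ref{cor: app-cent-infinite} (equivalently \cite[Theorem 3.15]{M-T-endo-pre}).

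Granting this, Theorem \ref{thm:main} applied to the compact Kac algebra $\bG\times\T$ and the injective factor $\meR_\la$ yields $\de\ga\sim\id_{\meR_\la}\oti\om$, where $\om$ is a free $\bhG\times\Z$-action on $\meR_0$. It then remains to identify $\om$ with the product $\ga^{(0)}\oti\al^{(0)}$: its $\bhG$-part is a free action of $\bhG$, hence cocycle conjugate to $\al^{(0)}$, and its $\Z$-part is an aperiodic automorphism of $\meR_0$, hence conjugate to $\ga^{(0)}$ by Connes' uniqueness of aperiodic automorphisms \cite{Con-auto}; the uniqueness of free actions of $\bhG\times\Z$ on $\meR_0$ lets one realize $\om$ on $\meR_0\oti\meR_0$ as $\al^{(0)}\oti\ga^{(0)}$ simultaneously, proving (1). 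Statement (2) then follows by duality: given $(\de,\be)$ as in (2), I would form $\meR_\la\rti_\be\T$, whose dual $\Z$-action together with the extension of $\de$ satisfies the hypotheses of (1) (Takesaki duality gives $(\meR_\la\rti_\be\T)\rti\Z\cong\meR_\la\oti B(L^2(\T))\cong\meR_\la$), apply (1), and transport the resulting cocycle conjugacy back through the partial crossed product as in Lemma \ref{lem: G1-G2-al-be} and Lemma \ref{lem: can-cocycle-stable}, turning $\id_{\meR_\la}\oti\ga^{(0)}\oti\al^{(0)}$ into $\id_{\meR_\la}\oti\widehat{\ga^{(0)}}\oti\al^{(0)}$.

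The step I expect to be the main obstacle is precisely the transfer of approximate innerness and central freeness between $\meR_\la$ and its crossed product $N$: the hypotheses are phrased on $N$ (for $\ovl{\de}\hat\ga$), whereas Theorem \ref{thm:main} must be applied to $\de\ga$ on $\meR_\la$, and bridging the two requires the canonical-extension dictionary of \S\ref{sec:appendix} and tight control of the central sequence algebras. Should approximate innerness of $\de$ on $\meR_\la$ itself fail to be directly available, the fallback is to imitate the model-action-splitting of Lemma \ref{lem: alth-id}: use the relative Rohlin machinery of Corollary \ref{cor: gamma-th Rohlin} to absorb the model automorphism, $\de\ga\sim\ga^{(0)}\oti\de\ga$, and then exploit the approximate innerness of $\ga\oti\ga^{-1}$ (a trivial-module automorphism, hence in $\oInt(\meR_\la)$) to split off the residual $\ga$-dependence and return to the approximately inner situation treated above — exactly the role played by $\th\oti\th^{-1}$ at the end of the proof of Theorem \ref{thm: cocycle-al-th}.
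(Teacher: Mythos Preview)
Your overall strategy matches the paper's: for (1), verify that the $\bhG\times\Z$-action $\de\ga$ on $\meR_\la$ is approximately inner and centrally free, then invoke Theorem~\ref{thm:main} (already established for $\meR_\la$); for (2), pass to the crossed product by $\T$, apply (1) to $(\de,\hat\be)$, and undo via Lemma~\ref{lem: G1-G2-al-be}. Your treatment of approximate innerness and of part~(2) is essentially the paper's.

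The genuine gap is your argument for central freeness of $\de\ga$ on $\meR_\la$. The ``compatibility'' you invoke,
\[
(\ovl{\de}\hat\ga)^\om|_{(\meR_\la)_\om}=(\de\ga)^\om|_{(\meR_\la)_\om},
\]
does not type-check: the left side is an action of $\bhG\times\T$, the right of $\bhG\times\Z$. Under the identification $N_\om\cong(\meR_\la)_\om^{\ga_\om}$ the dual action $\hat\ga_t$ becomes trivial (it fixes $\meR_\la$ pointwise), so central freeness of $\ovl{\de}\hat\ga$ only tells you that $\de_\pi^\om$ is non-trivial on $(\meR_\la)_\om^{\ga_\om}$ for $\pi\neq\btr$. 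It says nothing about $\de_\pi\ga^n$ for $n\neq0$, which is precisely what you need. This is also not the direction of Lemma~\ref{lem: be-th-app-cent}: there one lifts central freeness \emph{from} the smaller algebra \emph{to} its crossed product, not the reverse.

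The paper's Step~3 is substantive and not bypassable by the central-sequence identification. One first perturbs so that a generalized trace $\ps$ is $\de\ga$-invariant (via Lemma~\ref{lem: gen-tr-cocycle}), then analyzes the relative commutant $Q_\pi=\de_\pi(R)'\cap (R\rti_\ga\Z)\oti B(H_\pi)$: freeness of $\ga$ forces $Q_\pi$ to be finite-dimensional, $\Ad W$ acts ergodically, and the atoms lie in $R\oti B(H_\pi)$ and are cyclically permuted by $\ga$. If $\de_\pi\ga^n$ fails proper central non-triviality, one of its irreducible summands $\be_1$ is centrally trivial, hence by Corollary~\ref{cor: app-cent-infinite} equal to $\Ad u\circ\si^\ps_{t_0}$; the cyclic $\ga$-action then forces all summands to be equivalent, so $m=1$ and $\de_\pi\ga^n=\Ad u\circ\si^\ps_{t_0}$ itself. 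From $\ps$-invariance one computes $\ga(u)=e^{is_0}u$ and deduces $\ovl{\de}_\pi\circ\Ad W^n=\Ad u\circ\si^{\hat\ps}_{t_0}\circ\hat\ga_{-s_0}$, so $\ovl{\de}_\pi\hat\ga_{s_0}$ is centrally trivial on $N$---contradicting the hypothesis unless $\pi=\btr$ and $s_0=0$, whence $\ga^n\in\Cnd(\meR_\la)$ and $n=0$. Your proposal contains no substitute for this link between a putative centrally trivial $\de_\pi\ga^n$ on $\meR_\la$ and a centrally trivial $\ovl{\de}_\pi\hat\ga_{s_0}$ on $N$; that link is where the fourth hypothesis of (1) actually enters.
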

\begin{proof}
(1)
Set $R:=\meR_\la$
which admits the $\bhG\times\Z$-action $\de\ga$.
Let $W\in R\rti_\ga\Z$ be the unitary implementing $\ga$.

\noindent{\bf Step 1.}
We show that $\ga$ is approximately inner and centrally free.

This follows from \cite[Lemma I.2]{Co-III1}.
Also see \cite[Lemma XVIII.4.18]{Tak-book}.

\noindent{\bf Step 2.}
We show that the $\bhG\times\Z$-action $\de\ga$
is approximately inner.

It is known that $R$ and $R\rtimes_\gamma \Z$ have the common flow of
weights \cite{Kw-Tak, Se-flow}.
Since $\ovl{\de}$ is approximately inner on $R\rtimes_\gamma \Z$,
$\mo(\ovl{\de})=\mo(\de)=\id$.
Hence $\de$ is approximately inner on $R$
by Theorem \ref{thm: app-cent-prop},
and so is $\de\gamma$.

\noindent{\bf Step 3.}
We show that the $\bhG\times\Z$-action $\de\ga$
is centrally free.

Fix a generalized trace $\ps$ on $R$.
Note that our assumption of (1) is satisfied for
any perturbed actions of $\de\ga$.
By Lemma \ref{lem: gen-tr-cocycle},
we may and do assume that $\ps$ is invariant by
$\de\gamma$.

For each $\pi\in\IG$,
we set
$Q_\pi:=\de_\pi(R)'\cap (R\rti_\ga\Z\oti B(H_\pi))$.
We can show that $Q_\pi$ is finite dimensional
in a similar way to the proof
of Theorem \ref{thm: extension-app-cent} (1),
where the freeness of $\ga$ is crucial.
Also we can show that $\Ad (W\oti1)$ ergodically acts on $Q_\pi$,
and the torus action $\hga$ preserves $Q_\pi$.
Therefore, there exist atoms $\{p_i\}_{i=1}^m\subs Q_\pi$
such that $p_i\in R\oti B(H_\pi)$, $\ga(p_i)=p_{i+1}$
for $1\leq i\leq m-1$ and
\begin{equation}\label{eqn: Q-p}
Q_\pi=\de_\pi(R)'\cap (R\oti B(H_\pi))=\C p_1+\cdots+\C p_m.
\end{equation}
Take an isometry $V_1\in N\oti B(H_\pi)$ such that $V_1V_1^*=p_1$.
Set $V_i:=(\ga^{i-1}\oti\id)(V_1)$ for $1\leq i\leq m$.
Then we have $V_i V_i^*=p_i$.

Now assume that
$\de_\pi\gamma^n$ is not properly centrally non-trivial
on $R$ for some $\pi \in \IG$ and $n\in \Z$.
Set $\be_i:=V_i^* \de_\pi(\ga^n(\cdot))V_i$ for each $i$.
Then $\be_i\in \Mor_0(R,R\oti B(H_\pi))$ is irreducible
and $\de_\pi\ga^n=\sum_{i=1}^m V_i \be_i(\cdot)V_i^*$.
Then $\be_i$ is not properly centrally non-trivial
for some $i$.
We may and do assume $i=1$.
Since $\be_1$ is irreducible,
$\be_1$ is centrally trivial \cite[Lemma 8.3]{M-T-CMP}.
Then by Corollary \ref{cor: app-cent-infinite},
we see that $\be_1=\Ad u\circ \si_{t_0}^\ps$
for some $u\in U(R)$ and $t_0\in\R$.

So we have $\de_\pi (\gamma^n(x))V_1 u=V_1 u \sigma^\ps_{t_0}(x)$
for $x\in R$.
Applying $\ga^{i-1}$ to the both sides,
we have $\de_\pi\big{(}\ga^n(\ga^{i-1}(x))\big{)}V_i \ga^{i-1}(u)
=V_i\ga^{i-1}(u) \si_{t_0}^\ps(\ga^{i-1}(x))$
for $x\in R$, where we have used
the fact that $\ga$ commutes with $\si^\ps$.
By definition of $\be_i$,
we obtain
$\be_i(\ga^{i-1}(x))\ga^{i-1}(u)
=\ga^{i-1}(u)\si_{t_0}^\ps(\ga^{i-1}(x))$,
that is, $\be_i=\Ad \ga^{i-1}(u)\circ\si_{t_0}^\ps$.
Hence $\{\be_i\}_{i=1}^m$ define the equivalent sectors.
By (\ref{eqn: Q-p}), this is possible when $m=1$,
that is, $\de_\pi\ga^n$ is irreducible.
Hence we may assume that $\de_\pi\ga^n=\Ad u\circ \si_{t_0}^\ps$.

Since $\ps$ is invariant under
$\de\gamma$, $u\in R_\ps$, and $\gamma(u)=e^{\sqrt{-1}s_0}u$
for some $s_0\in\R$.
We can check that
$\ovl{\de}_\pi\circ\Ad W^n
=\Ad u\circ\si_{t_0}^{\hat{\ps}}\circ\hat{\ga}_{-s_0}$
holds on $R\rti_\ga\Z$ by direct computation.
So $\ovl{\de}_\pi\hat{\ga}_{s_0}$ is centrally trivial,
and the assumption (1) yields $\pi=\btr$ and $s_0=0$,
and $\ga^n=\Ad u\circ \si_{t_0}^\ps$.
Then we get $n=0$ from central freeness of $\ga$.

\noindent{\bf Step 4.}
We use the classification result for actions on $\meR_\la$.

The $\bhG\times\Z$-action $\de\gamma$ on $\meR_\la$
is an approximately inner and centrally free.
So $\de\gamma$ is
cocycle conjugate to
$\id_N\otimes
\gamma^{(0)}\otimes \alpha^{(0)}$
by Theorem \ref{thm:main} for $\meR_\la$.

(2)
Let $N=\meR_\la\rti_\be \T$ and $\ga=\bbe$.
Extend the action $\de$ to $N$, which is also denoted by $\de$.
Using the Takesaki duality \cite{Tak-dual},
we see that all the assumptions of (1) are fulfilled.
Then we get $\de\bbe\sim \id_{\meR_\la}\oti\ga^{(0)}\oti\al^{(0)}$.
Comparing the crossed products by $\bbe$ and $\ga^{(0)}$,
we obtain $\de\be\sim \id_{\meR_\la}\oti\wdh{\ga^{(0)}}\oti\al^{(0)}$.
\end{proof}

\begin{lem}\label{lem:conj}
Let $M\cong\meR_\infty$, $N=M\rti_{\si_T^\vph}\Z$ as before,
and $\alpha$ a
centrally free action of $\bhG$ on $M$.
Then the $\bhG\times\T$-action
$\theta_{-t}\oti \bal\theta_t$ on $N\oti N$
is cocycle conjugate to
$\id_N\otimes \widehat{\gamma^{(0)}}_t\otimes \alpha^{(0)}$.
\end{lem}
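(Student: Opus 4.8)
The plan is to exhibit the given action as the product of a $\bhG$-part and a $\T$-part on $N\oti N\cong\meR_\la$ and then to apply part (2) of the preceding lemma. Writing an element of $\bhG\times\T$ as $(\pi,t)$, the action $\th_{-t}\oti\bal_\pi\th_t$ has $\bhG$-part $\de:=\id_N\oti\bal$ and $\T$-part $\be_t:=\th_{-t}\oti\th_t$, and $\de\be$ is precisely $\th_{-t}\oti\bal\th_t$. Since $\bal$ commutes with $\th$ (indeed $\bal\in\Mor_0^\th(N,N\oti B(K))$), the actions $\de$ and $\be$ commute. As $N\cong\meR_\la$ and the tensor square of an injective type III$_\la$ factor is again injective of type III$_\la$, we have $N\oti N\cong\meR_\la$, so the setting matches the hypotheses of the preceding lemma.

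First I would check the cheap hypotheses. By Proposition~\ref{prop:ext}, $\bal$ is approximately inner and centrally free on $N$. Approximate innerness of $\de=\id_N\oti\bal$ is immediate: if $\Ad U^\nu$ approximates $\bal_\pi$ on $N$, then $\Ad(1\oti U^\nu)$ approximates $\de_\pi$. Central freeness of $\de$ follows because tensoring with the identity preserves proper central non-triviality, the obstruction for $\id_N\oti\bal_\pi$ being visible already on the central sequences $1\oti N_\om\subs(N\oti N)_\om$.

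Next I would identify the $\T$-crossed product. The unitary $V:=U\oti1$ satisfies $\be_t(V)=\th_{-t}(U)\oti1=e^{\sqrt{-1}t}V$, so $\be$ possesses a charge-one eigenunitary. Hence $\be$ is the dual action of the single automorphism $\ga:=\Ad V$ restricted to the fixed-point algebra $F:=(N\oti N)^\be$: one verifies that $V$ normalizes $F$ and that $F$ together with $V$ generates $N\oti N$, using $V^*(U\oti U)=1\oti U$. A degree computation gives
\[
F=\ovl{\spa}\{(x\oti y)(U\oti U)^k\mid x,y\in M,\ k\in\Z\}=(M\oti M)\rti_{\si_T^{\vph\oti\vph}}\Z,
\]
which is an injective type III$_\la$ factor, so $F\cong\meR_\la$. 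Takesaki duality \cite{Tak-dual} then yields $(N\oti N)\rti_\be\T\cong F\oti B(\ell^2(\Z))\cong\meR_\la$, which is the crossed-product hypothesis.

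The main obstacle is the last hypothesis, central freeness of the $\bhG\times\T$-action $\de\be$, that is, proper central non-triviality of $\th_{-t}\oti\bal_\pi\th_t$ for all $(\pi,t)\neq(\btr,0)$. I would argue case by case, detecting the obstruction on one tensor leg. When $t=0$ and $\pi\neq\btr$, it is seen on $1\oti N_\om$ from the proper central non-triviality of $\bal_\pi$ (Proposition~\ref{prop:ext}). When $t\neq0$, it is seen on $N_\om\oti1$ from the proper central non-triviality of the dual flow $\th_t$ on $N$. This last fact is the delicate point: it reflects the structure of $N$ as a type III$_\la$ factor, namely that the dual flow attached to the discrete decomposition is centrally free, and is precisely the ingredient ensuring that $\bal\th$ is centrally free on $N$ as already required for Theorem~\ref{thm:class1}. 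Granting it, part (2) of the preceding lemma applies and gives $\de\be\sim\id_{\meR_\la}\oti\widehat{\gamma^{(0)}}\oti\al^{(0)}$; transporting along $\meR_\la\cong N$ produces the claimed cocycle conjugacy $\th_{-t}\oti\bal\th_t\sim\id_N\oti\widehat{\gamma^{(0)}}_t\oti\al^{(0)}$.
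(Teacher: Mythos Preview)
Your approach is essentially the same as the paper's: both apply part~(2) of the preceding lemma to $\de=\id_N\oti\bal$ and $\be_t=\th_{-t}\oti\th_t$ on $N\oti N\cong\meR_\la$. The paper cites \cite[Lemma~1(b)]{Co-III1} for the identification $(N\oti N)\rti_{\th_t\oti\th_{-t}}\T\cong(M\oti M)\rti_{\si_T^{\vph\oti\vph}}\Z$, which is exactly your eigenunitary/fixed-point computation; and where you give a case analysis for central freeness of $\de\be$, the paper simply declares it ``obvious.'' Your slicing argument for the $t=0$ case is correct, and your flagged ``delicate point'' that $\th_t\notin\Cnt(N)$ for $t\neq0$ is indeed the content behind the paper's one-line claim---it holds because $\mo(\th_t)$ is the rotation by $t$ on the flow of weights $L^\infty(\T)$ of $N$, hence nontrivial, while $\Cnt(\meR_\la)\subs\ker(\mo)$.
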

\begin{proof}
We can identify $(N\otimes N)\rtimes_{\theta_t\otimes \theta_{-t}} \T$ 
with $(M\otimes M)
\rtimes_{\sigma^\vph_T\otimes \sigma^\varphi_T }\Z$
\cite[Lemma 1 (b)]{Co-III1}.
Hence
$(N\otimes N)\rtimes_{\theta_t\otimes \theta_{-t}}\T$
is a factor of type III$_\lambda$.
By Proposition \ref{prop:ext}, $\bal$ is approximately inner
and centrally free, hence so is $\id\oti \bal$.
It is obvious that
$\theta_{-t}\oti \bal\theta_t$ is a centrally free action.
Then the previous lemma can be applied.
\end{proof}

\begin{prop}\label{prop:split}
Let $M$, $N$, $\alpha$, $\theta$ be as above.
Let $\beta_t$ be a
product type action of $\T$
on $\meR_0\cong\bigotimes_{i=1}^\infty M_2(\C)$ 
given by
$\beta_t=\bigotimes_{i=1}^\infty \Ad 
\left(\begin{array}{cc}
1&0 \\
0&e^{\sqrt{-1}t} \\
\end{array}\right)$
for $t\in\R$.
Then $\bal\theta$ is
cocycle conjugate to
$\id_{\meR_\lambda}\otimes \beta\otimes \bal\theta$.
\end{prop}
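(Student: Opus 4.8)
The plan is to adapt the model action splitting of Lemma \ref{lem: alth-id} from the type III$_\la$ setting to the present one, with the single trace-scaling automorphism there replaced by the dual torus action $\th$ and the product type automorphism $\si$ replaced by the product type torus action $\be$. Since $\bal$ is approximately inner and centrally free by Proposition \ref{prop:ext}, I would first apply Theorem \ref{thm: app-cocycle} (with $\Ga=\{e\}$) to obtain $U\in U(N^\om\oti\lhG)$ which is simultaneously a $\bhG$-representation and an $\bal^\om$-cocycle and for which $(\Ad U_\pi^\nu)_\nu$ approximates $\bal_\pi$. As in \S\ref{sec:appr} and \S\ref{sec:lam} I would then form $\ga^1:=\Ad U^*\circ\bal^\om$, $\ga^2:=\Ad U^*(\cdot\oti1)$, and the $\bhG\times\bhG^\opp$-action $\ga:=(\ga^1\oti\id)\circ\ga^2$, which trivialises the central $\bhG$-part. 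Because $\th$ commutes with $\bal$, after arranging $U$ to be $\th^\om$-equivariant the action $\ga$ commutes with $\th^\om$, and the problem is reduced to producing the model torus action inside the relative central sequence algebra $N_\om^\ga$, which is of type II$_1$ since $N\cong\meR_\la$.

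The heart of the argument is then to construct, for the product type generator, a system of $2\times2$ matrix units $\{f_{ij}\}_{i,j=0}^{1}\subset N_\om^\ga$ with $\th_t^\om(f_{ij})=e^{\sqrt{-1}(i-j)t}f_{ij}$ for all $t\in\T$; this is the torus counterpart of Lemma \ref{lem: smu}. I would proceed in two steps mirroring Corollary \ref{cor: fixed-typeII} and Lemma \ref{lem: th-stab}: first that the $\th$- and $\ga$-fixed algebra $N_\om^{\ga\th^\om}$ is of type II$_1$, so it contains $\th^\om$-invariant $2\times2$ matrix units $\{e_{ij}\}$; and second that the torus action $\th^\om$ is stable on $N_\om^\ga$, i.e.\ that the $\T$-cocycle $u_t:=e_{00}+e^{\sqrt{-1}t}e_{11}$ is a coboundary $u_t=w\,\th_t^\om(w^*)$ for some $w\in U(N_\om^\ga)$, whence $f_{ij}:=w^*e_{ij}w$ gives the desired eigenvectors. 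The main obstacle lies exactly here: unlike the discrete group $\Z$ handled in \S\ref{sec:lam}, the group $\T$ is compact, so the Rohlin machinery of \S\ref{sec:Roh} and Corollary \ref{cor: gamma-th Rohlin} do not apply directly, and even the $\th^\om$-equivariance of $U$ used above is not immediate from Theorem \ref{thm: app-cocycle}. I expect to overcome this by exploiting the approximate innerness of the modular automorphisms $\si_t^\vph$ of $\meR_\infty$: through Corollary \ref{cor: modular-app} this supplies central unitaries implementing $\si_T^\vph$ with controlled Connes cocycles, and transported through the discrete decomposition $N=M\rti_{\si_T^\vph}\Z$ (along which $\th$ is dual to $\si_T^\vph$) these yield precisely the type II$_1$ and stability statements above.

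With the central matrix units in hand, the rest is a routine globalisation following Step 1 of Lemma \ref{lem: alth-id}. I would inductively build mutually commuting $2\times2$ subfactors $\{M_2^{(k)}\}_{k\ge1}\subset N$ from representing sequences of the $f_{ij}$, together with perturbing unitaries $\ovl u^m\in U(N\oti\lhG)$ and $\T$-cocycles $\ovl w^m$, so that after perturbation $\bal$ fixes $E:=\bigvee_k M_2^{(k)}$ while $\th$ acts on $E$ as $\be$, arranging $\sum_k\|\ps\circ E_{(M_2^{(k)})'\cap N}-\ps\|<\infty$ for all $\ps$ in a total subset of $N_*$. Lemma \ref{lem:con-split} then gives $E\cong\meR_0$ and a splitting $N\cong E\oti(E'\cap N)$ under which the perturbed action has the form $\be\oti(\bal\th)'$ for a $\bhG\times\T$-cocycle action on the type III factor $E'\cap N$; Lemma \ref{lem: prop-inf-coboundary} straightens the residual $2$-cocycle to a genuine action, yielding $\bal\th\sim\be\oti\bal\th$. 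Finally, since $\be$ is of product type we have $\be\approx\be\oti\be$, and combining this with injectivity of $N$ (so that $N\cong\meR_\la\oti N$ and the trivial $\bhG\times\T$-action on the extra $\meR_\la$ factor is absorbed by a stabilisation as in Lemma \ref{lem: id-otimes-al}) gives $\bal\th\sim\id_{\meR_\la}\oti\be\oti\bal\th$, as asserted. Apart from the passage from $\Z$ to the compact group $\T$ in the central sequence analysis, every step runs parallel to \S\ref{sec:lam}.
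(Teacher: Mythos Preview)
Your proposal correctly identifies Corollary \ref{cor: modular-app} as the key input, but both your handling of the torus and your final $\id_{\meR_\la}$-absorption contain gaps, and the paper's route is quite different. The paper proves Proposition \ref{prop:split} as the conjunction of two separate statements, $\bal\th\sim\id_{\meR_\la}\oti\bal\th$ (Corollary \ref{cor:lam-split}) and $\bal\th\sim\be\oti\bal\th$ (Theorem \ref{thm:modelsplit}), neither of which passes through the ultraproduct action $\ga$ on $N_\om$. For the first, your claim that it follows from injectivity of $N$ ``as in Lemma \ref{lem: id-otimes-al}'' is wrong: that lemma only absorbs type I tensor factors, and the bare isomorphism $N\cong\meR_\la\oti N$ does not intertwine the actions. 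The paper instead works on $M=\meR_\infty$, introduces an equivariant property $L'_a$ (Definition \ref{def:L'}), and establishes it by showing that the inclusions $\al_{\pi'}(M)\subset M\oti B(H_{\pi'})$ are relatively $\la$-stable (Lemma \ref{lem:L'}, using Corollary \ref{cor: modular-app} to see that $\si_T^{\vph\circ\Ph_{\pi'}^\al}$ is approximately inner on the subfactor); this gives $\al\sim\id_{\meR_\la}\oti\al$ on $M$ (Theorem \ref{thm:lam-split}), and Corollary \ref{cor:lam-split} follows formally via Lemma \ref{lem: can-cocycle-stable}.

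For the $\be$-splitting, the obstacles you flag are real and you do not resolve them: Theorem \ref{thm: app-cocycle} is stated only for discrete $\Ga$, so you cannot arrange $\th^\om$-equivariance of $U$, and for the compact group $\T$ the map $t\mapsto\th_t^\om$ on $N_\om$ need not be continuous, so ``stability of $\th^\om$ on $N_\om^\ga$'' is not well posed. The paper avoids ultraproducts here altogether. Using Theorem \ref{thm:lam-split} once more to reduce to $\al\sim\id_{\meR_0}\oti\al$, Lemma \ref{lem:modelpiece} works directly in $N=(\meR_0\oti M)\rti_{\si_T^{\tr\oti\vph}}\Z$: with implementing unitary $U$, a centralizing sequence $v_n\in\meR_0$ fixed by $\al$, and $w_n\in M$ supplied by Corollary \ref{cor: modular-app} so that $\Ad w_n\to\si_T^\vph$ and $\al_\pi(w_n)(w_n^*\oti1)\to[D\vph\circ\Ph_\pi^\al:D\vph\oti\tr_\pi]_T$, the elements $u_n:=U^*w_nv_n^*\in N$ are partial isometries with $u_n^2=0$, $u_nu_n^*+u_n^*u_n=1$, $\th_t(u_n)=e^{\sqrt{-1}t}u_n$ \emph{exactly}, and $\bal(u_n)-u_n\oti1\to0$ in the strong$^*$ topology. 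The inductive globalisation (Theorem \ref{thm:modelsplit}) then runs as in Theorem \ref{thm:lam-split}, with the perturbing unitaries produced by Lemma \ref{lem:perturb} lying in $M\oti\lhG$ (since $u_nu_n^*,u_n^*u_n\in M$) and hence commuting with $\th$. Thus the $L'_a$ machinery of \S\ref{sec:lam-stab} is not an afterthought but the foundation of both halves of the argument, and is precisely what your plan is missing.
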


The proof of Proposition \ref{prop:split} will be presented
in the sequel subsections.
Here we prove Theorem \ref{thm:class1}
assuming Proposition \ref{prop:split}.

\vspace{10pt}
\noindent
$\bullet$
\textit{Proof of Theorem \ref{thm:class1}.}

Note that $\beta$ is a minimal action of $\T$,
hence is dual, and conjugate to $\widehat{\gamma^{(0)}}$. 
Since $\theta_{-t}\otimes \theta_t$ (resp. $\theta_t$) is
cocycle
conjugate to $\id_{\meR_\lambda}\otimes \beta_t$ (resp.  
$\theta_t\otimes \beta_t\otimes \id_{\meR_\lambda}$) by the theory of 
Connes \cite[Lemma 5]{Co-III1} and Haagerup \cite{Ha-III1}, 
we have
\begin{align*}
\bal\theta_t&\sim
\id_{\meR_\lambda}\otimes \beta_t \otimes \bal\theta_t 
\mbox{\hspace{10pt}(by Proposition \ref{prop:split})}\\
&\sim\theta_t\otimes \theta_{-t}\otimes \bal\theta_t \\
&\sim\theta_t\otimes \beta_t\otimes \id_{\meR_\lambda}\otimes
 \alpha^{(0)} \mbox{\hspace{10pt}(by Lemma \ref{lem:conj})}
\\
&\sim\theta_t\otimes \alpha^{(0)}.
\end{align*}
Hence $\bal\theta_t$ is cocycle conjugate to $\theta_t\otimes
\alpha^{(0)}$.
Taking the crossed product by $\th$,
we see that $\al$ is cocycle conjugate to $\id_{\meR_\infty}\oti\al^{(0)}$.
\hfill$\Box$
\\

Therefore the proof of Theorem \ref{thm:main} has been reduced to
proving Proposition \ref{prop:split}.
We will
show that
$\bal\theta\sim
\id_{\meR_\lambda}\otimes \bal\theta$ in Corollary \ref{cor:lam-split},
and
$\bal\theta\sim \beta\otimes \bal\theta$
in Theorem \ref{thm:modelsplit},
and complete the proof of Proposition \ref{prop:split}.

\subsection{$\lambda$-stability}\label{sec:lam-stab}
As an analogue of the property $L'_a$ in \cite{Ar-lambda},
we introduce the following notion.

\begin{defn}\label{def:L'}
Let $\bhG$ be a discrete Kac algebra, $P$ a factor, and
$\alpha$ a cocycle action of $\bhG$ on $P$.
For $0<\lambda < 1$, 
set $a=\frac{\lambda}{1+\lambda}$.
We say that $(P,\alpha)$
satisfies the \emph{property} $L'_a$ if we have the following:

For any
$\varepsilon>0$,
any finite sets $\mF\Subset  \IG$ and
$\Psi_\pi \Subset (P\otimes B(H_\pi))_{*}$ for $\pi\in\mF$,
there exists a partial isometry 
$u\in P$ such that
for $\psi \in \Psi_\pi$, $\pi\in \mF$,
\begin{align*}
&uu^*+u^*u=1,
\quad u^2=0;\\
&\|(u\otimes 1)\cdot\psi-\lambda
\psi\cdot(u\otimes 1)\|<\varepsilon;\\
&\|(u\otimes 1-\alpha_\pi(u))\cdot \psi\|<\varepsilon;\\
&\|\psi\cdot(u\otimes 1-\alpha_\pi(u))\|<\varepsilon.
\end{align*}
\end{defn}

Note that the property $L'_a$ is
stable under perturbations of a cocycle action.

\begin{lem}\label{lem:L'}
Let $\alpha$ be a
centrally free cocycle action
of $\bhG$ on $\meR_\infty$.
Then $(\meR_\infty,\alpha)$ has the property
$L'_a$, $a=\frac{\lambda}{1+\lambda}$,
for any $0<\lambda<1$.
\end{lem}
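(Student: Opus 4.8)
Since property $L_a'$ is stable under cocycle perturbation, and since $\al$, being centrally free on $\meR_\infty$, is approximately inner by Corollary \ref{cor: app-cent-infinite}, I would first perturb $\al$ to a genuine action by Lemma \ref{lem: prop-inf-coboundary} and \cite[Theorem 6.2]{M-T-CMP}; thus I may assume that $\al$ is an action. Fix a faithful normal state $\vph$ on $M$ and put $\vph^\om=\vph\circ\ta^\om$. The constant $a=\la/(1+\la)$ is the $\vph^\om$-weight of the lower corner: the element $u$ I seek is the off-diagonal generator $e_{12}$ of a $2\times2$ matrix unit whose diagonal projections carry $\vph^\om$-weights $(1+\la)^{-1}$ and $\la(1+\la)^{-1}$, so that $u$ becomes a modular eigenoperator of weight $\la$.

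The plan is to produce these matrix units in the ultraproduct and then read off the approximate inequalities from a representing sequence. Concretely, given $\vep$, $\mF$ and the $\Psi_\pi$, let $B\subs M$ be a finitely generated von Neumann subalgebra encoding the supports of the functionals in the $\Psi_\pi$; it suffices to find, in $B'\cap M^\om$, a partial isometry $u$ with $u^2=0$, $uu^*+u^*u=1$, with central (i.e.\ $M_\om$-valued) diagonal $uu^*,u^*u$, with $\si^{\vph^\om}_t(u)=\la^{-it}u$ (equivalently $u\vph^\om=\la\vph^\om u$), and with $\al^\om_\pi(u)=u\oti1$ for all $\pi\in\mF$. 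A representing sequence $(u^\nu)_\nu$ of such a $u$ then satisfies the three families of estimates in Definition \ref{def:L'} simultaneously: the relations $u^2=0$ and $uu^*+u^*u=1$ are exact, the centrality of the diagonal together with the weight-$\la$ scaling gives $\|(u^\nu\oti1)\psi-\la\psi(u^\nu\oti1)\|\to0$, and $\al^\om_\pi(u)=u\oti1$ gives $\|(u^\nu\oti1-\al_\pi(u^\nu))\psi\|\to0$ and $\|\psi(u^\nu\oti1-\al_\pi(u^\nu))\|\to0$ along $\om$.

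The construction of $u$ separates into a modular part and an equivariant part. For the modular part I use that $M\cong\meR_\infty$ is the injective type III${}_1$ factor: its $T$-set is trivial and $S(M)=[0,\infty)$, while its asymptotic centralizer $M_\om$ is a type II${}_1$ factor, so the modular flow $\si^{\vph^\om}$ possesses, in $B'\cap M^\om$, a $2\times2$ system whose diagonal lies in $M_\om$ and whose off-diagonal is a weight-$\la$ eigenoperator; this gives a partial isometry meeting every requirement except $\al$-invariance. To impose the latter I invoke Theorem \ref{thm: app-cocycle} with $\Ga=\{e\}$, which furnishes $U\in U(M^\om\oti\lhG)$ implementing $\al$ with $U$ a representation and $U^*$ an $\al^\om$-cocycle, and pass to the strongly free actions $\ga^1=\Ad U^*\circ\al^\om$ and $\ga^2=\Ad U^*(\,\cdot\,\oti1)$ of $\bhG$ and $\bhG^\opp$ on $M_\om$, which enjoy the Rohlin property and the cocycle splitting of Corollaries \ref{cor: Rohlin} and \ref{cor: wU-cocycle}.

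The main obstacle is to realize the modular scaling and the $\al^\om$-invariance at once: since $\vph$ is not $\al$-invariant, $\si^{\vph^\om}$ need not preserve the fixed-point algebra $(M^\om)^{\al^\om}$, so the two constructions do not obviously commute. The required compatibility is exactly Corollary \ref{cor: modular-app}, which realizes $\si^\vph_T$ as a limit $\lim_n\Ad w_n$ for which $\al_\pi(w_n)(w_n^*\oti1)$ converges to the Connes cocycle $[D\vph\circ\Ph_\pi^\al:D\vph\oti\tr_\pi]_T$. I would use this to correct the weight-$\la$ eigenoperator by an $\al^\om$-cocycle valued in $\{U\}'\cap M^\om$, and then apply Corollary \ref{cor: wU-cocycle} to absorb the correction into a unitary of $M_\om$, landing on a $u$ that is simultaneously $\al^\om$-fixed and weight-$\la$ scaling with central diagonal. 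Verifying that the correction can be chosen to respect both the matrix-unit relations and the eigenoperator equation---that is, the bookkeeping of the Connes cocycles $[D\vph\circ\Ph_\pi^\al:D\vph\oti\tr_\pi]_T$ under the standard left inverse $\Ph_\pi^\al$---is the delicate computational core of the argument.
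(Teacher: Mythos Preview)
Your route is quite different from the paper's, and the step you flag as ``the delicate computational core'' is not bookkeeping but the entire content of the lemma---and it is not clear your tools reach it. The paper does not attempt to build the eigenoperator in $M^\om$ at all. Instead it reformulates the problem in subfactor language: for a finite $\mF$ it sets $\Pi:=\bigoplus_{\pi\in\mF}\pi'$ with $\pi'=d(\pi)\btr\oplus\pi$, so that $\al_{\pi'}(x)=\diag(x\oti1_\pi,\al_\pi(x))$ and property $L'_a$ for $(M,\al)$ becomes \emph{relative $\la$-stability} of the inclusion $\al_{\Pi}(M)\subs M\oti B(H_\Pi)$ of injective type III$_1$ factors. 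This is then proved by passing to the discrete core $N=M\rti_{\si^\vph_T}\Z$: there the canonical extension $\bal$ is approximately inner and centrally free (Proposition~\ref{prop:ext}), hence by the already established type III$_\la$ case of Theorem~\ref{thm:main} one has $\bal\sim\id_{\meR_\la}\oti\bal$, which makes the $N$-level inclusion relatively $\la$-stable. Corollary~\ref{cor: modular-app} is used only to show that the crossed-product automorphism $\si^{\vph\circ\Ph_{\pi'}^\al}_T$ is approximately inner on the subfactor, so that Connes' transfer argument \cite[Corollary II.3]{Co-III1} pulls relative $\la$-stability back from $N$ to $M$.

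The gap in your argument is structural. The weight-$\la$ eigenoperator $u_0$ you begin with lies outside $M_\om$; only its diagonal $u_0u_0^*,\,u_0^*u_0$ is centralizing. All of the Rohlin and cocycle-splitting machinery you invoke (Corollaries~\ref{cor: Rohlin}, \ref{cor: fixed-typeII}, \ref{cor: wU-cocycle}) lives in $M_\om$ and acts on unitaries or $\al^\om$-cocycles there; it says nothing about how to move a modular eigenoperator in $M^\om\setminus M_\om$ into the $\al^\om$-fixed part. Concretely, to conjugate $u_0$ by some $z\in M_\om$ into an $\al^\om$-fixed element you would need an $\al^\om$-cocycle $c=(z^*\oti1)\al^\om(z)\in M_\om\oti\lhG$ with $c^*(u_0\oti1)c=\al^\om(u_0)$; you neither produce such a $c$ nor explain why the equation is solvable. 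Corollary~\ref{cor: modular-app} controls $[D\vph\circ\Ph_\pi^\al:D\vph\oti\tr_\pi]_T$, i.e.\ how $\al$ interacts with a \emph{fixed} modular automorphism $\si^\vph_T$, not how $\al^\om$ acts on an \emph{arbitrary} eigenoperator $u_0$ coming from Connes--Haagerup. In the paper's proof this obstruction never arises because the type III$_\la$ classification hands you, at the $N$-level, an entire $\bal$-fixed copy of $\meR_\la$ in which the eigenoperator sits for free; your plan forgoes that input without replacing it.
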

\begin{proof}
Since $M:=\meR_\infty$ is properly infinite,
we may and do assume that $\al$ is an action.
Take $\pi \in \IG$, and set $\pi':=d(\pi) \mathbf{1}\oplus \pi$,
a direct sum representation of $\bG$.
Consider an inclusion $\alpha_{\pi'}(M)\subset M\otimes B(H_{\pi'})$.
We can identify $M\otimes B(H_{\pi'})$ with $M_2(M\otimes B(H_\pi))$
and
\[
\alpha_{\pi'}(M)=\left\{
\left(
\begin{array}{cc}
x\otimes 1_\pi&0_\pi \\
0_\pi&\alpha_\pi(x) \\
\end{array}\right)
\mid x\in M\right\}.
\]
Then $\alpha_{\pi'}(M)\subset M\otimes B(H_{\pi'})$
is an inclusion of injective factors of type III$_1$
with the minimal index $4d(\pi)^2$.
The minimal expectation $E^\pi$ is given by
\[
E^\pi\left(
\left(\begin{array}{cc}
a&b \\
c&d \\
\end{array}\right)
\right)
=\frac{1}{2}\alpha_{\pi'}
\left((\id\otimes \tr_\pi)
(a)+\Phi_\pi(d)
\right).
\]

For a fixed $0<\la<1$,
we construct the type III$_\la$ factor $N:=M\rti_{\si_T^\vph}\Z\subs \tM$,
$T=-2\pi/\log\la$ as before.
The implementing unitary is denoted by $U^\vph=\la^{\vph}(T)$.
Set
$\gamma:=\sigma^{\varphi\circ\Ph_{\pi'}^\al}_T$,
where $\Ph_{\pi'}^\al=\alpha^{-1}_{\pi'}\circ E^\pi$.
Then $\gamma$ globally preserves the inclusion
$\alpha_{\pi'}(M)\subset M\otimes B(H_{\pi'})$.
\\

\noindent\textbf{Claim 1.}
We show that the inclusion
$\alpha_{\pi'}(M)\rti_\ga\Z\subset (M\otimes B(H_{\pi'}))\rti_\ga\Z$
is isomorphic to $\bal_{\pi'}(N)\subs N\oti B(H_{\pi'})$.

We identify $(M\otimes B(H_{\pi'}))\rti_\ga\Z$ with
$(M\otimes B(H_{\pi'}))\rti_{\si_T^{\vph\oti\tr_{\pi'}}}\Z$
in the core algebra $Q$ of $M\otimes B(H_{\pi'})$.
Then
\begin{align*}
\alpha_{\pi'}(M)\rti_\ga\Z
&=
\alpha_{\pi'}(M)\vee\{\la^{\vph\circ\Ph_{\pi'}^\al}(T)\}''
\\
&=
\alpha_{\pi'}(M)\vee
\{[D\vph\circ\Ph_{\pi'}^\al:D\vph\oti\tr_{\pi'}]_T
\la^{\vph\oti\tr_{\pi'}}(T)\}''.
\end{align*}
The canonical isomorphism $\Psi\col Q\ra \tM\oti B(H_{\pi'})$
satisfies $\Ps|_{M\oti B(H_{\pi'})}=\id$ and
$\Ps(\la^{\vph\oti\tr_{\pi'}}(T))=\la^\vph(T)\oti1$.
Hence
\[\Ps(\la^{\vph\circ\Ph_{\pi'}^\al}(T))
=[D\vph\circ\Ph_{\pi'}^\al:D\vph\oti\tr_{\pi'}]_T
(\la^{\vph}(T)\oti1)=\bal_{\pi'}(\la^\vph(T)).
\]
Then we have
$\Ps(\alpha_{\pi'}(M)\rti_\ga\Z)=\bal_{\pi'}(N)$
and $\Ps((M\otimes B(H_{\pi'}))\rti_\ga\Z)=N\oti B(H_{\pi'})$.
\\

\noindent\textbf{Claim 2.}
We show that the inclusion $\bal_{\pi'}(N)\subs N\oti B(H_{\pi'})$
is relatively $\la$-stable.

Since $\bal$ is approximately inner and centrally free on
$N$ by Proposition \ref{prop:ext},
$\bal$ is cocycle conjugate to
$\id_{\meR_\lambda}\otimes \bal$
by Theorem \ref{thm:main} for type III$_\la$ case.
Hence the inclusion $\bal_{\pi'}(N)\subset N\otimes B(H_{\pi'})$
is relatively $\lambda$-stable
in the sense that
$\bal_{\pi'}(N)\subset N\otimes B(H_{\pi'})
\cong \meR_\lambda\oti \bal_{\pi'}(N)
\subset (\meR_\lambda\oti N)\otimes B(H_{\pi'})$.
\\

\noindent\textbf{Claim 3.}
We show that $\ga$ is an approximately inner automorphism
on the subfactor $\al_{\pi'}(M)\subs M\oti B(H_{\pi'})$.

By Corollary \ref{cor: modular-app}, 
we can choose $\{w_n\}_n\subset U(M)$
such that $\displaystyle\sigma_T^\varphi=\lim_{n\rightarrow \infty}\Ad w_n$ 
and 
$\displaystyle
[D\varphi\circ\Phi_\pi:D\varphi\otimes \tr_\pi ]_T
=\lim_{n\to\infty}
\alpha_\pi(w_n)(w_n^*\otimes 1)$ for all $\pi\in \IG$.
Since 
$2\varphi\circ \alpha^{-1}_{\pi'}\circ E^\pi $ is nothing but 
a balanced functional
$\varphi\otimes \tr_\pi\oplus\varphi\circ \Phi_\pi$, 
\[
\ga=
\Ad \left(
\begin{array}{cc}
1_\pi&0_\pi \\
0_\pi& [D\varphi\circ \Phi_\pi: D\varphi\otimes \tr_\pi]_T
\end{array}
\right)\circ (\sigma^\varphi_T\otimes \id_\pi).
\]
Thus
$\displaystyle\gamma=\lim_{n\rightarrow\infty }
\Ad\alpha_{\pi'}(w_n)$,
and $\gamma$ is approximately inner in a subfactor sense.
\\

By the previous three claims,
we can show that
the inclusion
$\alpha_{\pi'}(M)\subset M\otimes B(H_{\pi'})$
is relatively $\lambda$-stable.
Indeed, the proof is similar to that of 
\cite[Corollary II.3]{Co-III1}.
(Also see \cite[Theorem 3.6]{M-III1}.)
Hence for any $\varepsilon>0$ and
any $\{\psi_i\}_{i=1}^n\subs (M\otimes B(H_{\pi'}))_*$,
there exists $u\in M$ such that 
$u^2=0$, $uu^*+u^*u=1$ and
\[
\|\alpha_{\pi'}(u)\cdot \psi_i-
\lambda \psi_i\cdot\alpha_{\pi'}(u)\|<\varepsilon,
\quad \mbox{for all }1\leq i\leq n.
\]
For $\psi\in (M\otimes B(H_\pi))_*$, define $\psi_{ij}\in 
(M\otimes B(H_{\pi'}))_*$ by 
$\psi_{ij}(a)=\psi(a_{ij})$ via identification of
$M\otimes B(H_{\pi'})$ with $M_2(M\otimes B(H_\pi))$ and
$\alpha_{\pi'}(x)=\mathrm{diag}(x\otimes 1_\pi,\alpha_\pi(x))$
for $x\in M$.
Assume we have chosen $u$ so that
\[
\|\alpha_{\pi'}(u)\cdot \psi_{ij}-
\lambda \psi_{ij}\cdot\alpha_{\pi'}(u)\|<\varepsilon
\quad\mbox{for all }i,j=1,2.
\]
Then we obtain the following four inequalities.
\begin{align*}
&\|(u\otimes 1_\pi)\cdot \psi-
\lambda \psi\cdot (u\otimes 1)\|<\varepsilon,
\quad
\|(u\otimes 1)\cdot \psi-
\lambda \psi\cdot \alpha_{\pi}(u)\|<\varepsilon,
\\
&\|\alpha_{\pi}(u)\cdot \psi-
\lambda \psi\cdot (u\otimes 1)\|<\varepsilon,
\quad
\|\alpha_{\pi}(u)\cdot \psi-
\lambda \psi\cdot \alpha_{\pi}(u)\|<\varepsilon.
\end{align*}
It is easy to deduce that $u$ satisfies the condition in Definition 
\ref{def:L'} for $\psi$.
So far, we have considered a single element $\pi\in \IG$.
For a finite subset 
$\mF\Subset\IG $, define $\Pi:=\bigoplus_{\pi\in \mF}\pi' $, 
and consider the similarly defined inclusion
$\alpha_\Pi(M)\subset M\otimes B(H_\Pi)$.
Then the same argument is applicable.
\end{proof}

\begin{lem}\label{lem:perturb}
Let $P$ be a properly infinite factor,
$H$ a finite dimensional Hilbert space,
$\alpha \in \Mor_0(P,P\otimes B(H))$
and $\Phi\Subs(P\otimes B(H))_*$
a finite set of faithful states.
Let $0<\varepsilon<1$ and $0<\lambda\leq 1$.
Assume that
there exists $u\in P$ such that $uu^*+u^*u=1$, $u^2=0$
and for all $\varphi\in \Phi$,
\begin{align*}
&\|(u\otimes 1)\cdot\varphi-\lambda \varphi \cdot
(u\otimes 1)\|\leq \lambda\varepsilon,
\quad
\|\varphi\cdot (u\otimes 1) -\lambda^{-1} (u\otimes 1)\cdot \varphi \|
\leq 
\lambda\varepsilon;
\\
&\|(u\otimes 1-\alpha(u))\cdot\varphi\|\leq \lambda\varepsilon,
\quad
\|\varphi\cdot (u\otimes 1-\alpha(u))\|\leq \lambda\varepsilon.
\end{align*}
Then there exists a unitary $v\in P\otimes B(H)$ such that 
$\Ad v\circ \alpha =\id$ on the type I$_2$ subfactor $\{u\}''$ and
$\|v -1\|_{\varphi}^\#< 12 \sqrt[4]{\varepsilon}$
for all $\varphi\in \Phi$. 
\end{lem}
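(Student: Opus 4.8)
The plan is to conjugate $\alpha(u)$ to $u\otimes1$ by an explicit unitary built from the two copies of $M_2(\C)$ generated by $u\otimes1$ and $\alpha(u)$, and then to verify the quantitative bound one state $\varphi$ at a time. Write $a=u\otimes1$ and $b=\alpha(u)$; since $uu^*+u^*u=1$ and $u^2=0$, both $a$ and $b$ are partial isometries with $aa^*+a^*a=1=bb^*+b^*b$ and $a^2=b^2=0$, so $\{a\}''$ and $\{b\}''$ are type I$_2$ subfactors with matrix units $e_{11}=aa^*$, $e_{22}=a^*a$, $e_{12}=a$ and $f_{11}=bb^*$, $f_{22}=b^*b$, $f_{12}=b$, respectively. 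Since $\Ad v\circ\alpha$ and $\alpha$ are $*$-homomorphisms and $\{u\}''$ is generated by $u$, it suffices to produce a unitary $v\in P\otimes B(H)$ with $v\,\alpha(u)\,v^*=u\otimes1$, i.e. $vbv^*=a$; note that such a $v$ will be built from $u$ and $\alpha(u)$ alone and will not depend on $\varphi$.

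First I would translate the hypotheses into $\#$-norm estimates. The two inequalities involving $u\otimes1-\alpha(u)$ give, by a reverse Cauchy--Schwarz argument (using $\|a-b\|\le2$), that $\varphi((a-b)^*(a-b))$ and $\varphi((a-b)(a-b)^*)$ are $\le 2\lambda\varepsilon$, whence $\|a-b\|_\varphi^\#\le\sqrt{2\lambda\varepsilon}$. Writing $p=aa^*$, $q=a^*a$, $p'=bb^*$, $q'=b^*b$ and using $p-p'=a(a-b)^*+(a-b)b^*$ together with the analogous identity for $q-q'$, the $\lambda$-commutation hypotheses—which allow me to pass $\varphi$ across $a$ at the cost of a factor $\lambda$—control the right-handed $\varphi$-norms, so that $\|p-p'\|_\varphi^\#$ and $\|q-q'\|_\varphi^\#$ are of order $\sqrt{\varepsilon}$.

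Next, since $p'$ and $p$ are projections that are $O(\sqrt{\varepsilon})$-close in the $\varphi$-metric and are equivalent together with their complements ($P\otimes B(H)$ being properly infinite), I would invoke the standard lemma producing a unitary $v_1$ with $v_1p'v_1^*=p$ (hence $v_1q'v_1^*=q$) and $\|v_1-1\|_\varphi^\#=O(\varepsilon^{1/4})$; the square-root loss in this step is precisely what converts $\sqrt{\varepsilon}$ into the fourth root of the final bound. Put $b_1:=v_1bv_1^*$, so that $b_1b_1^*=p$ and $b_1^*b_1=q$ hold exactly while $\|b_1-a\|_\varphi^\#=O(\varepsilon^{1/4})$. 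Then $w:=a^*b_1$ is exactly a unitary of the corner $qPq$ (one checks $w^*w=ww^*=q$), and $v_2:=aa^*+a^*b_1=p+w$ is a genuine unitary with $v_2b_1v_2^*=a$ and $\|v_2-1\|_\varphi^\#=\|a^*(b_1-a)\|_\varphi^\#=O(\varepsilon^{1/4})$, again using the $\lambda$-commutation for the right-handed part. Setting $v:=v_2v_1$ yields $vbv^*=a$, i.e. $\Ad v\circ\alpha=\id$ on $\{u\}''$, and a triangle inequality $\|v-1\|_\varphi^\#\le\|(v_2-1)v_1\|_\varphi^\#+\|v_1-1\|_\varphi^\#$ combines the two pieces.

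The genuinely delicate part is the constant and exponent bookkeeping, not the construction. Because there is no trace, every right-handed $\varphi$-norm (those of the form $\varphi(x\,\cdot\,x^*)$) must be reduced to a left-handed one by commuting $\varphi$ through $a$ via the $\lambda$-commutation inequalities, each such move costing a factor of $\lambda$ and a reverse Cauchy--Schwarz; keeping these losses under control and checking that they accumulate to the stated bound $12\,\sqrt[4]{\varepsilon}$ uniformly over the finite set $\Phi$ is the main obstacle. I would also need to pin down the exact form of the close-projections lemma with $\#$-metric control, since that is the one external ingredient and is the source of the exponent $1/4$.
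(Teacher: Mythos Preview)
Your construction is correct and in fact collapses to the paper's: if you build $v_1=w+w'$ from partial isometries $w\colon p'\to p$ and $w'\colon q'\to q$, a short computation using $bw^*=0$ and $w'^*w=0$ gives $v_2v_1=w+a^*wb$, which is exactly the paper's unitary $v=(uu^*\otimes1)w\alpha(uu^*)+(u^*\otimes1)w\alpha(u)$. So the two approaches are the same construction, organized differently; the paper goes in one step and needs only $w$, not the complementary $w'$.

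Two corrections on the estimates. First, you misplace the square-root loss. The Connes--Ocneanu partial-isometry lemma the paper cites is \emph{linear}: $\|w-p\|_\varphi^\#\le 7\|p-p'\|_\varphi=O(\sqrt{\varepsilon})$, so any reasonable $v_1$ built from it would satisfy $\|v_1-1\|_\varphi^\#=O(\sqrt{\varepsilon})$, not $O(\varepsilon^{1/4})$. The fourth root appears only at the very end, from the conversion $\|v-1\|_\varphi^2\le\|v-1\|\cdot\|(v-1)\varphi\|$ after one has shown $\|(v-1)\varphi\|,\ \|\varphi(v-1)\|=O(\sqrt{\varepsilon})$. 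Second, your two-step organization creates avoidable difficulties: controlling $\|b_1-a\|_\varphi^\#$ with $b_1=v_1bv_1^*$ means estimating a conjugation in the $\#$-norm, and $\|(v_2-1)v_1\|_\varphi^\#$ involves the perturbed state $\varphi\circ\Ad v_1^*$; neither behaves well without a trace. The paper sidesteps this by doing all the intermediate bookkeeping in the functional norms $\|x\cdot\varphi\|$ and $\|\varphi\cdot x\|$ (where products split cleanly) and converting to $\|\cdot\|_\varphi^\#$ only once, at the end. If you rewrite your estimates in those norms you will recover the paper's argument.
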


\begin{proof}
In the following, we frequently use the inequalities
$\|x\|_\varphi^2\leq \|x\|\|x\cdot \varphi\|$,
$\|x\cdot\varphi\|\leq \sqrt{\|\varphi\|}\|x\|_\varphi$. 
First we show $uu^*\otimes 1$ and $\alpha(uu^*)$ are close as follows:
\begin{align*}
&\hspace{16pt}\|(uu^*\otimes 1-\alpha(uu^*))\cdot \varphi\|\\
&\leq
\|(uu^*\otimes 1)\cdot\varphi-
\lambda^{-1} \alpha(u)\cdot\varphi\cdot(u\otimes 1))\|\\
&\quad+ 
\|\lambda^{-1} \alpha(u)\cdot\varphi\cdot(u^*\otimes 1)
-\alpha(uu^*)\cdot \varphi\|\\
&=
 \|(uu^*\otimes 1)\varphi-\lambda^{-1}(u\otimes 1)\cdot\varphi \cdot
(u^*\otimes 1) 
\|  \\ &\quad + \|
\lambda^{-1}(u\otimes 1)\cdot\varphi\cdot( u^*\otimes 1) -
\lambda^{-1} \alpha(u)\cdot\varphi\cdot(u^*\otimes 1))\|
\\
&\quad +
\|\lambda^{-1} \varphi\cdot (u^*\otimes 1)
-(u^*\otimes 1)\cdot \varphi
\| + \|
(u^*\otimes 1)\varphi
-\alpha(u^*)\cdot \varphi\|\leq4\varepsilon.
\end{align*}
Since $\|x\|_\varphi^2\leq \|x\varphi\|\|x\|$, 
we have $\|uu^*\otimes1- \alpha(uu^*)\|_{\varphi}^2\leq 
8\varepsilon $. In the same way, we have
$\|u^*u\otimes1- \alpha(u^*u)\|_{\varphi}^2\leq 
8\varepsilon$. Hence we have 
$\|uu^*\otimes1- \alpha(uu^*)\|_{\varphi}^\#\leq 
2\sqrt{2\varepsilon}$ and
$\|u^*u\otimes1- \alpha(u^*u)\|_{\varphi}^\#\leq 
2\sqrt{2\varepsilon}$.

By \cite[Lemma 1.1.4]{Con-auto} and \cite[Lemma 8.1.1]{Ocn-act}, 
there exists a partial isometry 
$w \in P\otimes B(H)$
with $ww^*=uu^*\otimes 1$, $w^*w=\alpha(uu^*)$,
$\|w-uu^*\otimes 1\|_{\varphi}^\#\leq 
7\|uu^*\otimes 1-\alpha(uu^*)\|_{\varphi}$ for $\varphi\in \Phi$.
Hence we have
$\|w-uu^*\otimes 1\|_{\varphi}^\#
\leq 14\sqrt{2\varepsilon}$.

Set $v:=(uu^*\otimes 1)w\alpha(uu^*)+(u^*\oti1)w\alpha(u)$.
It is standard to see $\Ad v\circ \alpha(x)=x\otimes 1$ for $x\in\{u\}''$.
We estimate $\|(v-1)\cdot \varphi\|$ and
$\|\varphi\cdot(v-1)\|$.
Since $\|x\|_\varphi \leq \sqrt{2}\|x\|_\varphi^\#$, and 
$\|x\varphi\|\leq \sqrt{\|\varphi\|}\|x\|_\varphi$,
we have
\[\|(w-uu^*\otimes 1)\cdot\varphi\|\leq 
\|w-uu^*\otimes 1\|_{\varphi}\leq \sqrt{2}
\|w-uu^*\otimes 1\|_{\varphi}^\#\leq 28\sqrt{\varepsilon}.
\]

Since $\|[uu^*\otimes 1,\varphi]\|\leq 2\varepsilon$, we get
\begin{align*}
&\hspace{14pt}
\left\|\left((uu^*\otimes 1)w
\alpha (uu^*)-uu^*\otimes 1\right)\cdot\varphi
\right\|\\
&\leq
\left\|\left(w\alpha (uu^*)-uu^*\otimes 1\right)
\cdot\varphi
\right\| \\ 
&\leq
\left\|\left(w\alpha (uu^*)-w(uu^*\otimes 1)\right)
\cdot\varphi
\right\| 
+
\left\|\left(w(uu^*\otimes 1)-uu^*\otimes 1\right)\cdot
\varphi
\right\| \\ 
&\leq 4\varepsilon+ 
\|(w-uu^*\otimes 1)[\varphi, uu^*\otimes 1]
\|+
\|(w-uu^*\otimes 1)\cdot\varphi\cdot(uu^*\otimes 1)\|
 \\
&\leq 4\varepsilon+ 4\varepsilon +
28\sqrt{\varepsilon}
\leq 36\sqrt{\varepsilon}
\end{align*}
and
\begin{align*}
&\hspace{14pt}\left\|\left((u^*\otimes 1)w
\alpha (u)-u^*u\otimes 1\right)\cdot\varphi
\right\|\\ 
&\leq
\left\|\left(w\alpha (u)-u\otimes 1\right)
\cdot\varphi
\right\| \\ 
&\leq
\left\|\left(w\alpha (u)-w(u\otimes 1)\right)
\cdot\varphi
\right\| 
+
\left\|\left(w(u\otimes 1)-u\otimes 1\right)
\cdot\varphi
\right\| \\ 
&\leq \varepsilon+ 
\|(w-uu^*\otimes 1)(u\varphi-\lambda\varphi u)
\|+
\|(w-uu^*\otimes 1)\cdot\lambda \varphi\cdot(u\otimes 1)\|
 \\
&\leq \varepsilon+ 2\varepsilon +
28\sqrt{\varepsilon}
\leq 31\sqrt{\varepsilon}.
\end{align*}

Hence $\|(v-1)\cdot \varphi\|\leq 36\sqrt{\varepsilon}+31\sqrt{\varepsilon}=
67\sqrt{\varepsilon}$, and $\|v-1\|_\varphi^2\leq 134\sqrt{\varepsilon}$ holds.

Next we estimate $\|\varphi\cdot(v-1)\|$ as follows:
\begin{align*}
&\hspace{14pt}\left\|\varphi\cdot\left((uu^*\otimes 1)w
\alpha (uu^*)-uu^*\otimes 1\right)
\right\|
\\ 
&\leq
 \left\|\varphi\cdot(uu^*\otimes 1)\left(w
\alpha (uu^*)-uu^*\otimes 1\right)
\right\| \\
&\leq
 \left\|\left[\varphi,uu^*\otimes 1\right]
\left(w
\alpha (uu^*)-uu^*\otimes 1\right)
\right\| \\
&\quad+ \left\|(uu^*\otimes 1)\cdot\varphi\cdot\left(w
\alpha (uu^*)-uu^*\otimes 1\right)
\right\| 
\\
&\leq
4\varepsilon
+ \left\|(uu^*\otimes 1)\cdot\varphi\cdot\left(w
\alpha (uu^*)-uu^*\otimes 1\right)
\right\| 
\\
&\leq
4\varepsilon
+ \left\|\varphi\cdot\left(w-\alpha(uu^*)\right)\alpha(uu^*)
\right\| +
\|\varphi\cdot\left((uu^*\otimes 1)\alpha (uu^*)-
uu^*\otimes 1\right)\|
\\
&\leq
4\varepsilon
+ 28\sqrt{\varepsilon}+
\|\varphi\cdot(uu^*\otimes 1)\left(\alpha (uu^*)-uu^*\otimes 1\right)
\|
\\
&\leq 32\sqrt{\varepsilon}+4\varepsilon+
\|\varphi\cdot\left(\alpha (uu^*)-uu^*\otimes 1\right)
\| \\
&\leq 32\sqrt{\varepsilon}+4\varepsilon+4\varepsilon
\leq40\sqrt{\varepsilon}
\end{align*}
and
\begin{align*}
&\hspace{14pt}
\left\|\varphi\cdot\left((u^*\otimes 1)w
\alpha (u)-u^*u\otimes 1\right)
\right\|
\\ 
&\leq
 \left\|(\varphi \cdot(u^*\otimes 1)-\lambda (u^*\otimes 1)\cdot\varphi)
\cdot\left(w
	   \alpha (u)-u\otimes 1\right)
\right\| \\
&\quad+
 \left\|\lambda u^*\varphi\cdot\left(w
	   \alpha (u)-u\otimes 1\right)
\right\|\\
&\leq
2\varepsilon
+
 \left\|\varphi\cdot\left(w
	   \alpha (u)-u\otimes 1\right)
\right\|\\
&\leq
2\varepsilon
+
 \left\|\varphi\cdot(w-uu^*\otimes 1)\alpha(u)\|+\|
	 \varphi\cdot\left((uu^*\otimes 1)  \alpha (u)-u\otimes 1\right)
\right\|\\
&\leq
2\varepsilon
+
28\sqrt{\varepsilon}
+ \|
 \varphi\cdot(uu^*\otimes 1)\cdot\left(  \alpha (u)-u\otimes 1\right)
\| \\
&\leq
30\sqrt{\varepsilon}+4\varepsilon
+ \|
 \varphi\cdot\left(  \alpha (u)-u\otimes 1\right)
\|
\leq 35\sqrt{\varepsilon}.
\end{align*}
Hence $\|\varphi\cdot (v-1)\|\leq 75\sqrt{\varepsilon}$, and 
$\|v^*-1\|_\varphi^2\leq 150\sqrt{\varepsilon}$ holds.
This implies that $\|v-1\|_\varphi^{\#2}=\frac{1}{2}(\|v-1\|_\varphi^2+
\|v^*-1\|_\varphi^2)\leq 142\sqrt{\varepsilon}$,
and
$\|v-1\|_\varphi^\#\leq 12\sqrt[4]{\varepsilon}$.
\end{proof}

\begin{thm}\label{thm:lam-split}
Let $\alpha$ be a
centrally free action of $\bhG$ on $\meR_\infty$.
Then $\alpha$ is cocycle
conjugate to $\id_{\meR_\lambda}\otimes \alpha$ for all $0< \lambda <1$.
\end{thm}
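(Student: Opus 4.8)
The plan is to construct, inside $M\cong\meR_\infty$, a copy of the Powers factor $\meR_\lambda$ that is fixed by $\alpha$ up to an inner perturbation and that splits off as a tensor factor, just as Connes builds the $\lambda$-stable subfactor in the classification of injective type III$_1$ factors \cite{Co-III1}. Since $\alpha$ is already an action, no cocycle reduction is needed. Fix $\varphi$, a summable sequence $\{\varepsilon_m\}$ of positive reals, an increasing total family $\Psi_m\Subset M_*$, and an exhausting sequence $\mF_m\Subset\IG$. I would then build inductively mutually commuting type $\mathrm{I}_2$ subfactors $N_m=\{u_m\}''\subset M$ together with perturbing unitaries so that the partially perturbed action fixes $N_1,\dots,N_m$. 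At the $m$-th stage one works inside the relative commutant $E_{m-1}'\cap M$, where $E_{m-1}=\bigvee_{k<m}N_k$ is finite dimensional; this relative commutant is again an injective factor of type III$_1$, its central sequence algebra coincides with $M_\omega$, and the perturbed action restricted to it is still centrally free. Thus Lemma \ref{lem:L'} (property $L'_a$) furnishes a $\lambda$-skew generator $u_m$ that is almost fixed, and Lemma \ref{lem:perturb} converts this into a unitary $v^m$ with $\|v^m-1\|_\varphi^\#<\varepsilon_m$ perturbing the action so as to fix $\{u_m\}''$ exactly.

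Summability of $\{\varepsilon_m\}$ guarantees that the products $\bar v^m=v^m\cdots v^1$ converge in the strong$*$ topology to an $\alpha$-cocycle $\bar v^\infty$, and $\Ad\bar v^\infty\circ\alpha$ fixes every $N_m$ pointwise. The $\lambda$-skew condition built into the $u_m$ forces each $N_m$ to carry the state $\tfrac{1}{1+\lambda}\diag(1,\lambda)$ and these states to factorize asymptotically; hence, by Connes' splitting lemma for Powers factors \cite{Co-III1} (the type III$_\lambda$ analogue of Lemma \ref{lem:con-split}), the algebra $E=\bigvee_m N_m$ is isomorphic to $\meR_\lambda$ and yields a tensor product decomposition $M=E\otimes(E'\cap M)$. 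Since $E'\cap M$ is an injective factor which must be of type III$_1$, it is isomorphic to $\meR_\infty$ by Haagerup's uniqueness theorem. Writing $\alpha'$ for the restriction of $\Ad\bar v^\infty\circ\alpha$ to $E'\cap M$, we obtain $\Ad\bar v^\infty\circ\alpha=\id_E\otimes\alpha'$, that is $\alpha\sim\id_{\meR_\lambda}\otimes\alpha'$; moreover $\alpha'$ is centrally free because $\id_E\otimes\alpha'$ is.

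It then remains to remove the discrepancy between $\alpha'$ and $\alpha$, which I would do by the self-absorption of $\meR_\lambda$. Since $\meR_\lambda\otimes\meR_\lambda\cong\meR_\lambda$, one has $\id_{\meR_\lambda}\otimes\id_{\meR_\lambda}\approx\id_{\meR_\lambda}$, and tensoring the relation $\alpha\sim\id_{\meR_\lambda}\otimes\alpha'$ by $\id_{\meR_\lambda}$ gives
\[
\id_{\meR_\lambda}\otimes\alpha
\sim \id_{\meR_\lambda}\otimes\id_{\meR_\lambda}\otimes\alpha'
\approx \id_{\meR_\lambda}\otimes\alpha'
\sim \alpha,
\]
which is exactly the asserted cocycle conjugacy $\alpha\sim\id_{\meR_\lambda}\otimes\alpha$. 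Note that this wrap-up is genuinely needed, because the $\lambda$-skew blocks cannot lie in the centralizer and so the one-shot construction does not return the same $\alpha$ on the complementary factor.

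The main obstacle is the $m$-th inductive step: one must produce the perturbation $v^m$ simultaneously for all $\pi\in\mF_m$ and with norm controlled by $\varepsilon_m$, while keeping $u_m$ in the relative commutant $E_{m-1}'\cap M$ so that the blocks commute and the product-state structure required for the Powers-factor splitting is preserved. This is precisely where the quantitative estimates of Lemma \ref{lem:L'} and Lemma \ref{lem:perturb} are combined, bundling the finitely many $\pi$ into $\Pi=\bigoplus_{\pi\in\mF_m}\pi'$ as in the proof of Lemma \ref{lem:L'}. Verifying that the limit perturbation implements a genuine tensor splitting compatible with the action, rather than merely an approximate one, is the delicate point, and it rests on the summability of $\{\varepsilon_m\}$ together with the III$_\lambda$ analogue of Lemma \ref{lem:con-split}.
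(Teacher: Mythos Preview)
Your strategy is exactly the paper's, and the final self-absorption trick $\alpha\sim\id_{\meR_\lambda}\otimes\alpha'\approx\id_{\meR_\lambda}\otimes\id_{\meR_\lambda}\otimes\alpha'\sim\id_{\meR_\lambda}\otimes\alpha$ is also what the paper does. Two points, however, are genuine gaps rather than mere abbreviations.

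First, the limit $\bar v^\infty$ is \emph{not} an $\alpha$-cocycle, and nothing in the construction makes it one. At stage $m$ the unitary $v^m$ is produced by Lemma~\ref{lem:perturb} only to satisfy $\Ad v^m\circ\alpha^{m-1}(u_m)=u_m\otimes1$; no cocycle identity is imposed. Consequently $\Ad\bar v^\infty\circ\alpha$ is only a \emph{cocycle} action, with 2-cocycle $c=(\bar v^\infty\otimes1)(\alpha\otimes\id)(\bar v^\infty)(\id\otimes\Delta)((\bar v^\infty)^*)$. What saves the argument is that $c$ is evaluated in $E'\cap M$ (since the perturbed map fixes $E$), and $E'\cap M$ is properly infinite; hence $c$ is a coboundary by Lemma~\ref{lem: prop-inf-coboundary}. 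Only after this further perturbation do you get a genuine action $\id_E\otimes\beta$ cocycle conjugate to $\alpha$. The paper does exactly this step; without it your sentence ``we obtain $\Ad\bar v^\infty\circ\alpha=\id_E\otimes\alpha'$'' is not justified.

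Second, the strong$^*$ convergence of $\bar v^m$ does not follow from summability of $\|v^m-1\|^\#_{\psi_1\otimes\tr_\pi}$ alone. One has $\|\bar v_\pi^{m+1}-\bar v_\pi^m\|_{\psi_1\otimes\tr_\pi}=\|v_\pi^{m+1}-1\|_{(\psi_1\otimes\tr_\pi)\circ\Ad(\bar v_\pi^m)^*}$, so you must also arrange $\|v_\pi^{m+1}-1\|^\#_{(\psi_1\otimes\tr_\pi)\circ\Ad(\bar v_\pi^m)^*}<\varepsilon_{m+1}$ at each step. This requires feeding the twisted state $(\psi_1\otimes\tr_\pi)\circ\Ad(\bar v_\pi^m)^*$ into Lemma~\ref{lem:L'} and Lemma~\ref{lem:perturb} alongside $\psi_1\otimes\tr_\pi$. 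The paper records both estimates explicitly for this reason; your inductive hypothesis should too.
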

\begin{proof}
Set $M:=\meR_\infty$,
$\varepsilon_n:=16^{-n}$.
Let $\{\mF_n\}_{n=1}^\infty$ be an increasing sequence of
finite sets of $\IG$ with $\bigcup_{n=1}^\infty\mF_n=\IG$. 
Let $\{\psi_n\}_{n=1}^\infty\subset (M_*)_+$
be a countable dense subset such that
$\psi_1$ is a faithful state.
For each $k\in\N$, we will
construct a mutually commuting sequence of $2\times 2$-matrix units 
$\{e_{ij}(k)\}_{i,j=1}^2$, and unitaries $v^k, 
\bar{v}^k\in M\otimes \lhG $
with the following five conditions:
\begin{align*}
&\bar{v}^n=v^n v^{n-1}\cdots v^1;
\\
&\Ad \bar{v}_\pi^n\circ \alpha_\pi(e_{ij}(k))=e_{ij}(k)\otimes 1_\pi, 
\quad i,j=1,2,\ 1\leq k\leq n,\ \pi\in\mF_n;
\\
&\|v_\pi^n -1\|_{\psi_1\otimes \tr_\pi}^\#<12\sqrt[4]{\varepsilon_n}, 
\quad\pi\in \mF_n;
\\
&\|v_\pi^n -1\|_{(\psi_1\otimes \tr_\pi)\circ 
\Ad \bar{v}^{n-1*}_\pi}^\#<12\sqrt[4]{\varepsilon_n}, 
\quad\pi\in \mF_n;
\\
&\|\psi_k\cdot e_{ij}(n)-\lambda^{i-j}e_{ij}(n)\cdot\psi_k\|< 
2\varepsilon_n,
\quad1\leq k\leq n.
\end{align*}

Since $(M,\alpha)$ has the property $L'_a$ 
for any $0<a< 1/2$ by Lemma \ref{lem:L'}, we can choose 
$u\in M$ such that
\begin{align*}
&uu^*+u^*u=1,\quad
u^2=0;
\\
&\|(u\otimes 1-\alpha_\pi(u))\cdot (\psi_1\otimes \tr_\pi)\|<\lambda
\varepsilon_1,
\quad\pi \in \mF_1;
\\
&\|(\psi_1\otimes \tr_\pi)\cdot
(u\otimes 1-\alpha_\pi(u))\|<\lambda
\varepsilon_1,
\quad\pi \in \mF_1;
\\
&\|u\cdot\psi_1-\lambda \psi_1\cdot u\|<\lambda^2\varepsilon_1.
\end{align*}
Then by Lemma
\ref{lem:perturb}, there exists a unitary $v_\pi^1$ such that
$\|v_\pi^1 -1\|^\#_{\psi_1\otimes \tr_\pi}< 12\sqrt[4]{\varepsilon_1}$,
$\pi\in \mF_1$, and
$\Ad v_\pi^1\circ\alpha_\pi (u)=u\otimes 1$.
We define $v_\rho^1$, $\rho\not\in \mF_1$, 
in a similar way to the proof of Lemma \ref{lem:perturb}.
Set $\{e_{11}(1), e_{12}(1),
e_{21}(1), e_{22}(1)\}:=\{uu^*, u, u^*,u^*u\}$.
Note that 
$\|[e_{ii}(1),\psi_1]\|<2\varepsilon_1$,
so the first step is complete.

Suppose we have done up to the $n$-th step.
Set $E_n:=\bigvee_{k=1}^n(\{e_{ij}(k)\}_{i,j=1}^2)''$,
$\alpha^{n+1}:=\Ad \bar{v}^n \circ\alpha$, 
and $M_{n+1}:=E_n'\cap M$. 
Then $\alpha^{n+1}$
is a centrally free cocycle 
action on $M_{n+1}\cong\meR_\infty$.
Hence $(M_{n+1}, \alpha^{n+1})$ 
has the property $L'_a$ by Lemma \ref{lem:L'}.
Let $\{w_\el\}_{\el=1}^{4^n}$ be a basis for $E_n^*$
with $\|w_\el\|\leq1$, and 
decompose $\psi_k=\sum_{\el=1}^{4^n} w_\el\otimes \psi_{k \el}$.
Take $u\in M_{n+1}$ satisfying
$uu^*+u^*u=1$, $u^2=0$ and the following conditions:
for any $\pi\in\mF_{n+1}$,
\begin{align*}
&\|(u\otimes 1-\alpha_\pi^{n+1}(u))
\cdot (\psi_1\otimes \tr_\pi)\|<\lambda
\varepsilon_{n+1};
\\
&\|(\psi_1\otimes \tr_\pi)
\cdot(u\otimes 1-\alpha_\pi^{n+1}(u))\|<\lambda
\varepsilon_{n+1};
\\
&\|(u\otimes 1-\alpha_\pi^{n+1}(u))
\cdot
\big{(}(\psi_1\otimes \tr_\pi)\circ \Ad 
\bar{v}_\pi^{n*}\big{)}\|<\lambda
\varepsilon_{n+1};
\\
&\|
\big{(}(\psi_1\otimes \tr_\pi)\circ \Ad \bar{v}_\pi^{n*}\big{)}
\cdot(u\otimes 1-\alpha_\pi^{n+1}(u))
\|<\lambda
\varepsilon_{n+1};
\\
&\|(u\otimes 1)\cdot
\big{(}
(\psi_{1}\otimes \tr_\pi) \circ \Ad \bar{v}_\pi^{n*}
\big{)}
-
\lambda
\big{(}
(\psi_{1}\otimes \tr_\pi)\circ \Ad \bar{v}_\pi^{n*}
\big{)}\cdot (u\oti1)\|
<
\lambda^2 \varepsilon_{n+1};
\\
&\|u\cdot\psi_{k\el}-\lambda \psi_{k\el}\cdot u\|<
4^{-n}\lambda^2 \varepsilon_{n+1},
\quad1\leq k\leq n+1,\ 1\leq \el\leq 4^n.
\end{align*}
Here we have regarded
$\psi_1$ and $(\psi_1\otimes \tr_\pi)\circ\Ad\bar{v}^{n*}_\pi$
as states on $M_{n+1}$ and $M_{n+1}\otimes B(H_\pi)$, respectively.
The last inequality yields 
$\|u\cdot \psi_k-\lambda 
\psi_k\cdot u\|\leq \lambda^2 \varepsilon_{n+1}$,
and in particular,
$\|(u\otimes 1)\cdot (\psi_1\otimes \tr)
-\lambda 
(\psi_1\otimes \tr)\cdot (u\otimes 1)\|\leq \lambda^2 \varepsilon_{n+1}$.
By Lemma \ref{lem:perturb},
there exists a unitary $v^{n+1}_\pi\in M_{n+1}\otimes B(H_\pi)$
for $\pi\in\mF_{n+1}$
such that
\begin{align*}
&\Ad v^{n+1}_{\pi}\circ\alpha_\pi^n(u)=u\otimes 1,
\quad\pi\in\mF_{n+1};
\\
&\|v_\pi^{n+1}-1\|_{\psi_1\otimes \tr_\pi}^\#<12\sqrt[4]{\varepsilon_{n+1}},
\quad\pi\in \mF_{n+1};
\\
&\|v_\pi^{n+1}-1\|_{(\psi_1\otimes \tr_\pi)\circ\Ad \bar{v}_\pi^{n*}}^\#
<12\sqrt[4]{\varepsilon_{n+1}},
\quad\pi\in \mF_{n+1}.
\end{align*}
Set $\{e_{11}(n+1), e_{12}(n+1), e_{21}(n+1),e_{22}(n+1)\}
:=\{uu^*,u,u^*,u^*u\}$.
Then $\|\psi_k\cdot e_{ij}(n+1)-\lambda^{i-j}
e_{ij}(n+1)\cdot\psi_k\|<2\varepsilon_{n+1}$ holds for $1\leq k\leq n+1$.
Define $v^{n+1}$ by extending $v_\pi^{n+1}$, $\pi\in\mF_{n+1}$,
as before. 
Thus we have finished the $(n+1)$-st step, and this completes our induction.

Define $E_\infty:=\bigvee_{k=1}^\infty \{e_{ij}(k)\}_{i,j=1,2}''$.
Since
$\sum_{k=1}^\infty\|\psi_n\cdot e_{ij}(k)-\lambda^{i-j}e_{ij}(k)\cdot\psi_n
\|<\infty $ for all $n\in\N$,
$E_\infty$ is an injective factor of type III$_\lambda$,
and we have the
factorization $M=E_\infty\vee E_\infty'\cap M\cong E_\infty\otimes 
E_\infty'\cap M$ by \cite[Theorem 1.3]{Ar-lambda}. (Also see 
\cite[Lemma XVIII.4.5]{Tak-book}.)
Next we show the convergence of $\{\bar{v}_\pi^n\}_{n=1}^\infty$.
If $\pi\in \mF_n$, we have
\begin{align*}
\|\bar{v}^{n+1}_\pi-\bar{v}_\pi^n\|_{\psi_1\otimes \tr_\pi}
&=\|(v_\pi^{n+1}-1)\bar{v}_\pi^n\|_{\psi\otimes \tr_\pi}
=\|v_\pi^{n+1}-1\|_{(\psi\otimes \tr_\pi)\circ \Ad \bar{v}_\pi^{n*}}\\
&<12\sqrt{2}\sqrt[4]{\varepsilon_{n+1}}
\end{align*}
and
\begin{align*}
\|\left(\bar{v}^{n+1}_\pi-\bar{v}_\pi^n\right)^{*}\|_{\psi_1\otimes \tr_\pi}
&=\|(v_\pi^{n+1}-1)^*\|_{\psi\otimes \tr_\pi}\\
&<12\sqrt{2}\sqrt[4]{\varepsilon_{n+1}}.
\end{align*}
Hence for each $\pi\in\IG$,
$\{\bar{v}_\pi^n\}_{n=1}^\infty$ is a Cauchy sequence
in the strong* topology,
and set $\displaystyle\bar{v}_\pi:=\lim_{n\to\infty} \bar{v}_\pi^n$.
Set $\bar{v}=(\bar{v}_\pi)_\pi\in M\oti \lhG$.
By the choice of $v_\pi^n$,
$\al':=\Ad \bar{v}\circ\alpha$ acts trivially on
$E_\infty$.
Hence $\al'$ is a cocycle action on
$E_\infty'\cap M$ with a 2-cocycle
$u=\bar{v}^{(12)}(\alpha\otimes \id)(\bar{v})(\id\otimes \Delta)(\bar{v}^*)$.
Since
$E_\infty'\cap M$ is of type III,
$u$ is a coboundary by 
Lemma \ref{lem: prop-inf-coboundary}.
Hence $\alpha$ is cocycle conjugate to 
$\id_{E_\infty}\otimes \beta$
for some action $\beta$ of $\bhG$ on $E'_\infty\cap M$.
Since $E_\infty\otimes E_\infty\cong E_\infty
\cong \meR_\lambda$,
$\al\sim \id_{E_\infty}\oti\be
\approx \id_{E_\infty}\oti\id_{E_\infty}\oti\be
\sim\id_{\meR_\lambda}\otimes \alpha$.
\end{proof}

\begin{cor}\label{cor:lam-split}
Let $M\cong \meR_\infty$, $N=M\rti_{\si_T^\vph}\Z$,
$T=-2\pi/\log\la$
and $\theta$ be as before.
Let $\al$ be a centrally free action of $\bhG$ on $\meR_\infty$.
Then the $\bhG\times\T$-action
$\bal\theta$ is cocycle conjugate to
$\id_{\meR_\lambda}\otimes \bal\theta$.
\end{cor}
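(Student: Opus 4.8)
The plan is to obtain this as a formal consequence of the type III$_\la$ splitting result Theorem \ref{thm:lam-split} together with the stability of the canonical extension under cocycle conjugacy recorded in Lemma \ref{lem: can-cocycle-stable}. First I would apply Theorem \ref{thm:lam-split} to the centrally free action $\al$ of $\bhG$ on $M\cong\meR_\infty$, obtaining a cocycle conjugacy $\al\sim\id_{\meR_\la}\oti\al$ of actions of $\bhG$ on $\meR_\infty$. The remaining task is to transport this cocycle conjugacy to the canonical extensions, and for that I would feed it into Lemma \ref{lem: can-cocycle-stable}, which converts a cocycle conjugacy of cocycle actions on type III$_1$ factors into a cocycle conjugacy of the associated $\bhG\times\T$-actions on the type III$_\la$ cores.

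To set up the data of Lemma \ref{lem: can-cocycle-stable}, I would take $(M^1,\vph^1)=(M,\vph)$ and $\al^1=\al$, so that $N^1=N$, $\th^1=\theta$ and the extension of $\al^1$ is exactly $\bal$ as in the statement. On the other side I would take $M^2:=\meR_\la\oti M$ with $\al^2:=\id_{\meR_\la}\oti\al$, which is type III$_1$ since $\meR_\la\oti\meR_\infty\cong\meR_\infty$. The crucial point is the choice of weight: letting $\psi$ be a generalized trace on $\meR_\la$, so that $\si_T^\psi=\id$ for our $T=-2\pi/\log\la$ by the definition recalled in \S\ref{sec:ext}, and setting $\vph^2:=\psi\oti\vph\in W(M^2)$, we get $\si_T^{\vph^2}=\id_{\meR_\la}\oti\si_T^\vph$. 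Consequently the crossed product factorizes as $N^2=M^2\rti_{\si_T^{\vph^2}}\Z=\meR_\la\oti N$ and the dual action becomes $\th^2=\id_{\meR_\la}\oti\theta$.

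The one computation that requires care is the identification of the canonical extension $\ovl{\al^2}$ with $\id_{\meR_\la}\oti\bal$, and I expect this weight bookkeeping to be the main (though routine) obstacle. Writing $\Ph_\pi^{\al^2}=\id_{\meR_\la}\oti\Ph_\pi^\al$ and $d(\al_\pi^2)=d(\al_\pi)=d(\pi)$, the defining formula for the canonical extension on the implementing unitary $1\oti U$ reads $\ovl{\al^2}_\pi(1\oti U)=d(\pi)^{iT}[D\vph^2\circ\Ph_\pi^{\al^2}:D\vph^2\oti\Tr_\pi]_T(1\oti U\oti1)$. Since $\vph^2\circ\Ph_\pi^{\al^2}=\psi\oti(\vph\circ\Ph_\pi^\al)$, the multiplicativity of Connes cocycles for tensor products gives $[D\vph^2\circ\Ph_\pi^{\al^2}:D\vph^2\oti\Tr_\pi]_T=[D\psi:D\psi]_T\oti[D\vph\circ\Ph_\pi^\al:D\vph\oti\Tr_\pi]_T$, and $\si_T^\psi=\id$ forces $[D\psi:D\psi]_T=1$. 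Hence $\ovl{\al^2}_\pi(1\oti U)=1\oti\bal_\pi(U)$, while $\ovl{\al^2}(x)=x=(\id_{\meR_\la}\oti\bal)(x)$ for $x\in M^2$, so that $\ovl{\al^2}=\id_{\meR_\la}\oti\bal$ and therefore $\ovl{\al^2}\th^2=\id_{\meR_\la}\oti\bal\theta$.

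With these identifications in place, Lemma \ref{lem: can-cocycle-stable} applied to the cocycle conjugacy $(\al^1,1)\sim(\al^2,1)$ from the first paragraph yields that $\ovl{\al^1}\th^1=\bal\theta$ is cocycle conjugate to $\ovl{\al^2}\th^2=\id_{\meR_\la}\oti\bal\theta$, which is the assertion. The only genuinely delicate ingredient is the verification in the third paragraph that tensoring by the $\si_T$-trivial factor $(\meR_\la,\psi)$ commutes with forming the canonical extension; everything else is a direct citation of the two results already available.
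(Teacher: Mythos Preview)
Your proposal is correct and follows exactly the paper's approach: apply Theorem \ref{thm:lam-split} to get $\al\sim\id_{\meR_\la}\oti\al$, then invoke Lemma \ref{lem: can-cocycle-stable} with a weight on $\meR_\la$ whose modular automorphism has period $T$ so that the crossed product and dual action split as $\meR_\la\oti N$ and $\id_{\meR_\la}\oti\theta$. The paper uses a periodic state $\vph_\la$ where you use a generalized trace $\psi$, but the only property needed is $\si_T^\psi=\id$, and your detailed verification that $\ovl{\id_{\meR_\la}\oti\al}=\id_{\meR_\la}\oti\bal$ simply makes explicit what the paper leaves to the reader.
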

\begin{proof}
This is immediate from
Lemma \ref{lem: can-cocycle-stable} and
Theorem \ref{thm:lam-split}
when we consider the state of the form $\vph_\la\oti \vph$
on $\meR_\la\oti M$, where
$\varphi_\lambda$ is a periodic
state on $\meR_\lambda$.
\end{proof}

\subsection{Model action splitting}\label{sec:1-model}

\begin{lem}\label{lem:modelpiece}
Let $\al$ be a centrally free cocycle action of $\bhG$ on $M\cong\meR_\infty$.
Then there exists a centralizing sequence
of partial isometries $\{u_n\}_n\subset N$
with $u_nu_n^*+u_n^*u_n=1$, $u_n^2=0$, $\theta_t(u_n)=e^{\sqrt{-1}t}u_n$
for all $t\in\R$
and $\displaystyle\lim_{n\to\infty}\bal(u_n)-u_n\otimes 1=0$
in the $\sigma$-strong* topology.
\end{lem}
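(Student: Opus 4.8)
The plan is to manufacture the partial isometries by transporting the partial isometries furnished by the property $L'_a$ of $(\meR_\infty,\al)$ across the crossed product $N=M\rti_{\si_T^\vph}\Z$, the whole point being that the modular $\la$-eigenvalue produced by $L'_a$ is matched exactly to the type III$_\la$ structure of $N$ via $\la=e^{-2\pi/T}$. First I would reduce to the case that $\al$ is an action: since $M\cong\meR_\infty$ is properly infinite we may perturb the cocycle action to an action by Lemma \ref{lem: prop-inf-coboundary}, and $\al$ is automatically approximately inner by Corollary \ref{cor: app-cent-infinite}. I would then work against the faithful state $\hvph=\vph\circ E_\th$ and recall from the proof of Lemma \ref{lem: rho-hrho-app} the periodic (generalized trace) weight $\psi$ on $N$, whose centralizer $N_\psi$ is a type II$_\infty$ factor containing both $U$ and $M_\vph$, and that $\si_t^{\hvph}(U)=U$.

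The requirement $\th_t(u_n)=e^{\sqrt{-1}t}u_n$ forces $u_n$ to have $U$-degree $-1$, i.e. $u_n=a_nU^*$ with $a_n\in M$; moreover, centralizing with respect to the state $\hvph$ forces $\si_t^{\hvph}(a_nU^*)=\si_t^\vph(a_n)U^*$ to have trivial weight, hence $a_n$ to lie asymptotically in the centralizer $M_\vph$. So I would search for $a_n$ that is an approximate $2\times2$ system inside $M_\vph$ and is approximately $\al$-fixed. By Lemma \ref{lem:L'}, $(\meR_\infty,\al)$ has property $L'_a$ with $a=\la/(1+\la)$, producing for each finite $\mF\Subs\IG$ and $\varepsilon>0$ a partial isometry $u\in M$ with $u^2=0$, $uu^*+u^*u=1$, $u\vph\approx\la\vph u$ and $\al_\pi(u)\approx u\oti 1$; such $u$ is an approximate $\si^\vph$-eigenvector of eigenvalue $\la$. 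Combining the $L'_a$-data so as to cancel the modular weight (an eigenvector of weight $\la^{-it}$ against a partner of weight $\la^{+it}$) yields an approximate $2\times2$ system $a_n$ of trivial $\si^\vph$-weight, still approximately $\al$-fixed; then $u_n:=a_nU^*$ satisfies $\th_t(u_n)=e^{\sqrt{-1}t}u_n$ exactly, and since $\si_{\pm T}^\vph(a_n)\approx a_n$ the relations $u_n^2\approx0$ and $u_nu_n^*+u_n^*u_n\approx1$ hold approximately.

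Two verifications then remain. The asymptotic $\bal$-fixedness I would read off the formula $\bal_\pi(U)=d(\pi)^{iT}[D\vph\circ\Ph_\pi^\al:D\vph\oti\tr_\pi]_T(U\oti 1)$ together with $\al_\pi(a_n)\approx a_n\oti 1$; the Connes-cocycle factor is controlled by Corollary \ref{cor: modular-app}, which gives $\si_T^\vph=\lim_k\Ad w_k$ and $[D\vph\circ\Ph_\pi^\al:D\vph\oti\tr_\pi]_T=\lim_k\al_\pi(w_k)(w_k^*\oti 1)$, so that it is absorbed into the estimate. The hard part will be the centralizing property, i.e. showing $(u_n)$ lies in $N_\om$: this is the core of the Connes--Haagerup mechanism, where one must check that the trivial $\si^{\hvph}$-weight of $a_nU^*$, combined with asymptotic commutation in the II$_\infty$ factor $N_\psi$ (using that $a_n$ becomes central there and that $\la=e^{-2\pi/T}$ matches the modular structure), forces $\|[u_n,\Phi]\|\to0$ for every $\Phi\in N_*$.

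Finally I would upgrade the approximate relations to exact ones. Because $u_n$ lies in the $\si$-weakly closed $\th$-eigenspace $MU^*$ and satisfies $u_n^2\approx0$, $u_nu_n^*+u_n^*u_n\approx1$, a polar-decomposition and spectral correction carried out inside $M_\vph U^*$ (hence $\th$-equivariantly) replaces $u_n$ by a nearby genuine partial isometry with $u_n^2=0$ and $u_nu_n^*+u_n^*u_n=1$ and still $\th_t(u_n)=e^{\sqrt{-1}t}u_n$; since the $\si$-strong$^*$ errors are small, both the $\bal$-fixedness and the centralizing property are preserved. Running this over a sequence $\varepsilon_n\downarrow0$, finite sets $\mF_n\uparrow\IG$ and an increasing exhaustion of $N_*$, and then applying the standard diagonal/index-selection argument, produces the required centralizing sequence $\{u_n\}_n$.
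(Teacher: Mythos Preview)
Your proposal has a genuine gap in both of the two ``remaining verifications.'' Write $u_n=a_nU^*$ with $a_n\in M$ approximately $\al$-fixed. Then
\[
\bal_\pi(a_nU^*)=\al_\pi(a_n)(U^*\oti1)[D\vph\circ\Ph_\pi^\al:D\vph\oti\tr_\pi]_T^{\,*},
\]
so $\bal_\pi(u_n)-u_n\oti1\approx (a_n\oti1)(U^*\oti1)\big((\si_{-T}^\vph\oti\id)([\cdots]_T^*)-1\big)$, and the Connes cocycle $[\cdots]_T$ is a \emph{fixed} nontrivial unitary that does not tend to $1$; Corollary~\ref{cor: modular-app} does not ``absorb'' it unless the approximating unitaries $w_n$ actually appear as a factor in $u_n$. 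Exactly the same obstruction kills the centralizing claim: having $a_n$ of asymptotically trivial $\si^\vph$-weight only yields $u_n\in N_{\hvph}$ asymptotically, which is necessary but far from sufficient for $(u_n)\in N_\om$; multiplying a centralizing sequence by the fixed, non-central unitary $U^*$ destroys the centralizing property. Your ``cancel the modular weight by pairing two $L'_a$-eigenvectors'' step is also unclear---products like $u_1u_2^*$ lose the $2\times2$ structure---but the decisive point is that without the $w_n$ \emph{inside} $u_n$ neither conclusion holds.

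The paper's proof supplies precisely this missing ingredient. It first invokes the $\la$-stability result (Theorem~\ref{thm:lam-split}, itself built on~$L'_a$) to reduce to $\al=\id_{\meR_0}\oti\al$, so that one may pick a centralizing sequence $v_n\in\meR_0\oti\C1$ with $v_n^2=0$, $v_nv_n^*+v_n^*v_n=1$ and $\al(v_n)=v_n\oti1$ \emph{exactly}. Then with $w_n\in\C1\oti M$ from Corollary~\ref{cor: modular-app} one sets $u_n:=U^*w_nv_n^*$. The factor $w_n$ does double duty: it makes $(U^*w_n)$ centralizing (the Connes--Haagerup mechanism: $\Ad w_n\to\si_T^\vph=\Ad U|_M$), and it cancels the Connes cocycle in $\bal_\pi(U^*)$ via $[\cdots]_T^*\al_\pi(w_n)-(w_n\oti1)\to0$. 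No polar-decomposition correction is needed because the matrix relations are inherited exactly from $v_n$. The moral: Corollary~\ref{cor: modular-app} is not an estimate to be applied after the fact, but the source of the building block $w_n$ that must sit inside $u_n$.
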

\begin{proof}
Since $M$ is properly infinite,
$\alpha$ is cocycle conjugate to an action $\al'$.
Then $\al'\sim\id_{\meR_\lambda}\otimes \alpha'$
by Theorem \ref{thm:lam-split} and
$\meR_\lambda \cong \meR_0\otimes \meR_\lambda$, 
$\alpha'$ is cocycle conjugate to
$\id_{\meR_0}\otimes \alpha'$ via an isomorphism
$\meR_0\otimes M\cong M$.
By Lemma \ref{lem: can-cocycle-stable},
it suffices to show the statements for
$\id_{\meR_0}\otimes \alpha$
and $N=(\meR_0\oti M)\rti_{\si_T^{\tr\otimes \varphi}}\Z$
assuming that $\al$ is an action.
We denote by $U$ the implementing unitary.

Let
$\{v_n\}_{n=1}^\infty
\subset  \meR_0\otimes \C1\subset \meR_0\otimes M$
be a centralizing sequence of partial
isometries with
$v_nv_n^*+v_n^*v_n=1$, $v_n^2=0$,
and $(\id_{\meR_0}\otimes \alpha)(v_n)=v_n\otimes 1$.
Let
$\{w_n\}_{n=1}^\infty\subset\C1\otimes  M$ as 
in Corollary \ref{cor: modular-app}.
Set $u_n:=U^*w_n v_n^*$ for each $n\in\N$.
Since $[w_n,v_n]=0$ and
$Uv_nU^*=\sigma^{\tr\oti\varphi}_T(v_n)=v_n$, we have $u_nu_n^*=v_nv_n^*$,
$u_n^*u_n=v_n^*v_n\in M$,
$u_nu_n^*+u_n^*u_n=1$ and
$u^2_n=0$.
Since $(U^* w_n)_n$ is centralizing,
$\{u_n\}_{n=1}^\infty$ is a centralizing sequence in $N$,
and $\theta_t(u_n)=e^{\sqrt{-1}t}u_n$ for all $t\in\T$.
Take a faithful normal state $\psi$ on $N\otimes B(H_\pi)$.
Then we have
\begin{align*}
&\hspace{14pt}
\left\|\psi \cdot \left(
(\id\otimes \bal_\pi)(u_n)-u_n\otimes 1\right)
\right\| \\
&=
\left\|\psi\cdot
(U^*\oti1)
\left(
[D\varphi\circ \Phi_\pi:D\phi\otimes \tr_\pi]_T^*
(\id\otimes\alpha_\pi)(w_n)(v_n^*\otimes 1)
-(w_n v_n^*\otimes 1)
\right)
\right\|
\\
&=
\left\|\psi\cdot
(U^*\oti1)
\left(
[D\varphi\circ \Phi_\pi:D\phi\otimes \tr_\pi]_T^*
-(w_n \otimes 1)(\id\otimes\alpha_\pi)(w_n^*)
\right)
\right\|
\\
&\to0
\end{align*}
as $n\to\infty$.
In a similar way, we get 
$\displaystyle\lim_{n\to\infty}
\left\|\left(
(\id\otimes \bal_\pi)(u_n)-u_n\otimes 1\right)\cdot \psi
\right\|=0$.
These implies that $\bal_\pi(u_n)-u_n\otimes 1$
converges to 0 $\sigma$-strongly*.
\end{proof}

\begin{thm}\label{thm:modelsplit}
Let $M$, $N$, $\theta$ be as before.
Let $\al$ be a centrally free action of $\bhG$ on $M$.
Let $\beta$ be the
infinite tensor product type action of $\T$ on $\meR_0$
given in Proposition \ref{prop:split}.
Then the $\bhG\times\T$-action
$\bal\th$ is cocycle conjugate to
$\beta\otimes \bal\th$.
\end{thm}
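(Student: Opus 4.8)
The plan is to run the same inductive model-splitting scheme as in the proof of Theorem~\ref{thm:lam-split}, but now feeding it the $\theta$-graded partial isometries supplied by Lemma~\ref{lem:modelpiece} so that the subfactor we peel off carries precisely the product type action $\beta$. Concretely, I would construct inductively mutually commuting systems of $2\times2$ matrix units $\{e_{ij}(k)\}_{i,j=1}^{2}\subset N$ together with perturbation unitaries $v^{k}\in (E_{k-1}'\cap N)\oti\lhG$ (with $E_{k-1}:=\bigvee_{j<k}\{e_{ij}(j)\}''$), where at each step the $e_{ij}(k)$ arise from a partial isometry $u=e_{12}(k)$ produced by Lemma~\ref{lem:modelpiece} applied to the centrally free cocycle action $\bal^{k}:=\Ad\bar v^{k-1}\circ\bal$ on $E_{k-1}'\cap N$. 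The defining features of $u$, namely $uu^*+u^*u=1$, $u^2=0$, $\theta_t(u)=e^{\sqrt{-1}t}u$ and $\bal^k(u)\approx u\oti1$, give matrix units satisfying \emph{exactly} $\theta_t(e_{ij}(k))=e^{\sqrt{-1}(i-j)t}e_{ij}(k)$ and approximately $\bal^k(e_{ij}(k))\approx e_{ij}(k)\oti1$; I would then upgrade the latter to an exact intertwining by choosing $v^k$ via Lemma~\ref{lem:perturb}, with $\|v^k_\pi-1\|$ small enough on a dense sequence of states that $\bar v^{k}:=v^kv^{k-1}\cdots v^1$ converges strong$^*$ and the matrix units form a centralizing sequence.

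Passing to the limit, I would set $E_\infty:=\bigvee_{k}\{e_{ij}(k)\}''$. The centralizing estimates force $E_\infty\cong\meR_0$ and yield a tensor splitting $N=E_\infty\vee(E_\infty'\cap N)\cong E_\infty\oti(E_\infty'\cap N)$ by Connes' splitting lemma (Lemma~\ref{lem:con-split}), while the exact grading $\theta_t(e_{ij}(k))=e^{\sqrt{-1}(i-j)t}e_{ij}(k)$ identifies $\theta|_{E_\infty}$ with the product type action $\beta$ and hence gives $\theta=\beta\oti\theta'$ under this splitting, where $\theta':=\theta|_{E_\infty'\cap N}$. The limit cocycle $\bar v:=\lim_k\bar v^k$ is a $\bal$-cocycle for which $\bal':=\Ad\bar v\circ\bal$ fixes $E_\infty$ pointwise; crucially $\bar v$ is $\theta$-invariant (the projections $e_{ii}(k)$ and $\bal(e_{ii}(k))$ are $\theta$-fixed and the perturbation unitaries of Lemma~\ref{lem:perturb} can be chosen $\theta$-equivariant), so $(\bal',\theta)$ is a genuine cocycle perturbation of the $\bhG\times\T$-action $\bal\theta$. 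Restricting to $E_\infty'\cap N$, which is a type III (hence properly infinite) factor, the $2$-cocycle of $\bal'|_{E_\infty'\cap N}$ is a coboundary by Lemma~\ref{lem: prop-inf-coboundary}, so after a further perturbation $\bal\theta$ is cocycle conjugate to $\beta\oti\gamma$ for some $\bhG\times\T$-action $\gamma$ on $E_\infty'\cap N$ (here $\beta$ is understood with trivial $\bhG$-part).

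Finally, I would invoke the self-absorption $\beta\oti\beta\approx\beta$ of the infinite product type action to conclude
\[
\beta\oti\bal\theta\sim\beta\oti\beta\oti\gamma\sim\beta\oti\gamma\sim\bal\theta,
\]
which is the assertion. I expect the main obstacle to be the $\T$-equivariance of the perturbation, i.e.\ arranging that $\bar v$ is $\theta$-invariant so that the entire splitting takes place within the $\bhG\times\T$-category and $\theta$ restricts \emph{exactly} to $\beta$ on $E_\infty$; this is precisely the role of the exact eigenvalue relation $\theta_t(u_n)=e^{\sqrt{-1}t}u_n$ in Lemma~\ref{lem:modelpiece}, and verifying that the unitaries from Lemma~\ref{lem:perturb} inherit this equivariance is the delicate bookkeeping step. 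The remaining ingredients (central-sequence control of the $e_{ij}(k)$ and summability of $\|v^k_\pi-1\|$ for convergence of $\bar v$) are routine adaptations of the corresponding parts of Theorem~\ref{thm:lam-split}.
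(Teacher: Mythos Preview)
Your outline matches the paper's approach, and the $\theta$-equivariance point you flag as the main obstacle is in fact the easy part: since $\theta_t(u)=e^{\sqrt{-1}t}u$ forces $uu^*,u^*u\in N^\theta=M$, both $uu^*\oti1_\pi$ and $\bal_\pi(uu^*)=\al_\pi(uu^*)$ lie in $M\oti B(H_\pi)$, so the partial isometry $w$ in Lemma~\ref{lem:perturb} (and hence $v^k_\pi$) can be taken in $M\oti B(H_\pi)$, which is automatically $\theta$-fixed. The paper dispatches this in one line.

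The step you have underestimated is the inductive application of Lemma~\ref{lem:modelpiece}. That lemma is not stated for an arbitrary cocycle action on an arbitrary factor; it requires the specific setup of a centrally free cocycle action on $M\cong\meR_\infty$ and produces partial isometries in the discrete core $N=M\rti_{\si_T^\vph}\Z$. To iterate, one must verify at each stage that $N_k:=E_{k-1}'\cap N$ is again a discrete core, i.e.\ that $M_k:=N_k^\theta$ is an injective type III$_1$ factor with $\theta|_{N_k}$ the dual action of $\si_T^{\vph'}$, and---crucially---that $\bal^k|_{N_k}$ is the canonical extension of a centrally free cocycle action $\al^k:=\bal^k|_{M_k}$. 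The paper does this explicitly for $k=2$: it builds an isomorphism $\rho\colon N\to N_2$ from isometries $s_1\in M$, $s_2=e_{21}(1)w$ (of $\theta$-degree $-1$), checks that $\rho$ intertwines $\theta$, observes that the $2$-cocycle of $\bal^2$ is $\theta$-fixed and hence in $M_2$, and then invokes Lemma~\ref{lem:canonical} twice---part~(3) to recognise $\bal^2|_{N_2}$ as a canonical extension (since it commutes with $\theta$ and has trivial module), and part~(5) to transfer central freeness from $\bal^2$ back down to $\al^2$. This re-initialisation is the substantive new content relative to Theorem~\ref{thm:lam-split}; without it you cannot feed $\bal^k$ back into Lemma~\ref{lem:modelpiece}. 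The remaining steps (Connes splitting, $\theta|_{E_\infty}=\beta$, and the self-absorption $\beta\oti\beta\approx\beta$) are as you describe.
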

\begin{proof}
The proof is similar to that of Theorem \ref{thm:lam-split}.
Set $\varepsilon_n:=16^{-n}$.
Let $\{\psi_n\}_{n=1}^\infty\subset (M_*)_+$
be a countable dense subset such that $\psi_1$ is a faithful state.
For each $k\in\N$,
we will construct a mutually commuting sequence of $2\times 2$-matrix units
$\{e_{ij}(k)\}_{i,j=1}^2\subs N$,
and unitaries
$v^k, \bar{v}^k\in M\otimes B(H_\pi)$
with the following:
\begin{align*}
&\bar{v}^k=v^k v^{k-1}\cdots v^1;
\\
&\Ad \bar{v}_\pi^n\circ\bal_\pi(e_{ij}(k))=e_{ij}(k)\oti1
\quad\mbox{for all } i,j=1,2,\ 1\leq k\leq n,\ \pi\in\mF_n;
\\
&\|v_\pi^n -1\|_{\psi_1\otimes \tr_\pi}
<12\sqrt[4]{\varepsilon_n}
\quad\mbox{for all }\pi\in \mF_n;
\\
&\|v_\pi^n -1\|_{(\psi_1\otimes \tr_\pi)\circ \Ad \bar{v}^{n-1*}}
<12\sqrt[4]{\varepsilon_n}
\quad\mbox{for all }\pi\in \mF_n;
\\
&\theta_t(e_{ij}(k))
=\Ad
\left(
\begin{array}{cc}
1& 0\\
0&e^{\sqrt{-1}t}\\
\end{array}\right)(e_{ij}(k))
\quad\mbox{for all } t\in\R,\ i,j=1,2,\ k\in\N;
\\
&\|\psi_k\cdot e_{ij}(n)-e_{ij}(n)\cdot\psi_k\|
< 
2\varepsilon_n\quad\mbox{for all } i,j=1,2,\ 1\leq k\leq n.
\end{align*}

By Lemma \ref{lem:modelpiece},
there exists a partial isometry $u\in N$
such that $u^2=0$, $uu^*+u^*u=1$, $\theta_t(u)=e^{\sqrt{-1}t}u$,
$uu^*, u^*u\in M$ and
\begin{align*}
&\|(u\otimes 1-\bal_\pi(u))\cdot (\psi_1\otimes \tr_\pi)\|
<
\varepsilon_1\quad\mbox{for all } \pi \in \mF_1;
\\
&\|(\psi_1\otimes \tr_\pi)\cdot(u\otimes 1-\bal_\pi(u))\|<
\varepsilon_1
\quad\mbox{for all } \pi \in \mF_1;
\\
&\|u\cdot\psi_1-\psi_1\cdot u\|<\varepsilon_1.
\end{align*}

Since $uu^*\otimes 1_\pi,\bal_\pi(uu^*)=\al_\pi(uu^*)\in M\otimes B(H_\pi)$,
we can take $w$ from $M$ in the proof of Lemma \ref{lem:perturb}.
Then $v_\pi^1$ constructed in Lemma \ref{lem:perturb}
is in $M\otimes B(H_\pi)$,
and we have
\begin{align*}
&\|v_\pi^1 -1\|^\#_{\psi_1}< 12\sqrt[4]{\varepsilon_1}
\quad\mbox{for all }
\pi\in \mF_1;
\\
&\Ad v_\pi^1\circ\bal_\pi (u)=u\otimes 1
\quad\mbox{for all }\pi\in \mF_1.
\end{align*}
Set $\{e_{11}(1), e_{12}(1),
e_{21}(1), e_{22}(1)\}:=\{u^*u,u^*,u,uu^*\}$.
Define $v^1\in M\oti\lhG$ by extending $v_\pi^1$, $\pi\in\mF_1$, 
as before. 
Note that
$\|[e_{ij}(1),\psi_1]\|<2\varepsilon_1$ for $i,j=1,2$.
So the first step is complete.

Set $\bal^2:=\Ad v^1\circ\bal$, and $N_2:=\{u\}'\cap N$.
Take $w\in M$ an isometry with $ww^*=e_{11}(1)$.
Set $s_1:=e_{11}(1)w$ and $s_2:=e_{21}(1)w$.
Then $s_is^*_j=e_{ij}(1)$,
$\theta_t(s_1)=s_1$ and $\theta_t(s_2)=e^{-it}s_2$ hold.
Let $\rho(x):=\sum_{i=1}^2 s_ixs_i^*$.
Then $\rho$ is an isomorphism
between $N$ and $N_2$ which intertwines $\theta$.
Then $M_2:=N_2^\theta=\rho(M)$ is the injective factor of type III$_1$,
and
$\theta$ is the dual action for $\sigma^{\varphi'}_T$
where $\vph':=\vph\circ\rho^{-1}\in (M_2)_*$.
Since $\theta$ commutes with $\bal^2$
because of $v_\pi^1\in M\oti B(H_\pi)$,
$\bal^2$ preserves $M_2$.
Note that $v^1\bal(v^1)(\id\oti\De)((v^1)^*)$,
a 2-cocycle of $\bal^2$ is in $N_2$
and fixed by $\theta$, and it is indeed in $M_2$.
This means that $\bal|_{M_2}$ is a cocycle action.
Obviously we have
$Z(\widetilde{N})=Z(\widetilde{N_2})$.
Hence $\bal^2$ has trivial Connes-Takesaki module,
and $\bal^2$ is approximately inner.
By Lemma \ref{lem:canonical},
$\bal^2$ is the canonical extension of $\al^2:=\bal^2|_{M_2}$.
Since $\bal$ is centrally free,
$\bal^2$ is centrally free,
and $\al^2$ is centrally free on $M_2$
by Lemma \ref{lem:canonical}.

Then we can apply Lemma \ref{lem:modelpiece} to $M_2$,
$\alpha^2$, and $\theta$.
The rest of the proof is same as that of Theorem \ref{thm:lam-split}.
\end{proof}

\section{Appendix}\label{sec:appendix}
We discuss relations between the canonical extension of
endomorphisms and homomorphisms.
In this section, we do not assume the amenability of $\bhG$.

\subsection{Canonical extension of homomorphisms}
Let $M$ be a properly infinite factor
and
$H$ a finite dimensional Hilbert space with $\mathrm{dim}\, H=n$.
Let $\tM$ be the canonical core of $M$
\cite[Definition 2.5]{FT}.
We denote by $\Tr_H$ and $\tr_H$ the non-normalized and the normalized
traces on $B(H)$, respectively.
Then we can introduce an isomorphism between
the inclusions $M\subs \tM$ and $M\otimes B(H)\subs \tM\otimes B(H)$
as follows.
Fix isometries $\{v_i\}_{i=1}^n\subs M$ with orthogonal ranges
and $\sum_{i=1}^n v_i v_i^*=1$.
Define $\sigma \in \Mor(\tM\otimes B(H),\tM)$ by
\[
\sigma(x)=\sum_{i,j=1}^n v_ix_{ij}v_j^*.
\]
It is easy to see that $\sigma$
is an isomorphism with
$\sigma^{-1}(x)=\sum_{i,j=1}^n v_i^*xv_j\otimes e_{ij}$.
The map $\si$ derives the following bijection:
\[
\si_*\col \Mor_0(M,M\oti B(K))\ra \End_0(M),
\quad \al\mapsto \si\circ\al.
\]
We can check that $d(\al)=d(\si\circ\al)$
and the standard left inverse of $\rho:=\si\circ\al$
is given by $\ph_\rho=\Ph_\al\circ\si^{-1}$.
Hence
$\Phi^\alpha(x)=\phi_\rho\circ\sigma(x)
=\sum_{i,j=1}^n\ph_\rho(v_ix_{ij}v_j^*)$
holds.

Recall the topology on $\End_0(M)$ introduced in
\cite[Definition 2.1]{M-T-endo-pre}.
We also introduce a topology on $\Mor_0(M,M\oti B(H))$
similarly.

\begin{lem}\label{lem: homeo}
The map $\si_*$ is a homeomorphism.
\end{lem}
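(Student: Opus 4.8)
The plan is to exploit that $\si$ is a \emph{fixed} normal $*$-isomorphism, independent of the variable homomorphism, and that both $\si_*$ and its inverse are realized as composition with $\si$, respectively $\si^{-1}$. Recall from \cite[Definition 2.1]{M-T-endo-pre} that a net converges in $\End_0(M)$ (and, by the analogous definition, in $\Mor_0(M,M\oti B(H))$) precisely when the maps converge pointwise in the $\si$-strong$*$ topology, their standard left inverses converge pointwise in the $\si$-strong$*$ topology, and the statistical dimensions converge. The two structural identities to be used are already recorded above: for $\al\in\Mor_0(M,M\oti B(H))$ and $\rho:=\si\circ\al$ one has $d(\al)=d(\rho)$ and $\ph_\rho=\Ph^\al\circ\si^{-1}$, equivalently $\Ph^\al=\ph_\rho\circ\si$.

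First I would prove continuity of $\si_*$. Let $\al_\nu\to\al$ in $\Mor_0(M,M\oti B(H))$. Since $\si$ is normal, hence $\si$-strong$*$ continuous on bounded sets, $\si(\al_\nu(x))\to\si(\al(x))$ for every $x\in M$, so $\si_*(\al_\nu)=\si\circ\al_\nu$ converges pointwise to $\si\circ\al$. For the left inverses, the identity $\ph_{\si\circ\al_\nu}=\Ph^{\al_\nu}\circ\si^{-1}$ gives, for each fixed $y\in M$, $\ph_{\si\circ\al_\nu}(y)=\Ph^{\al_\nu}(\si^{-1}(y))\to\Ph^{\al}(\si^{-1}(y))=\ph_{\si\circ\al}(y)$, using that $\si^{-1}(y)$ is a fixed element of $M\oti B(H)$ and that $\Ph^{\al_\nu}\to\Ph^\al$ pointwise. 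Finally $d(\si\circ\al_\nu)=d(\al_\nu)\to d(\al)=d(\si\circ\al)$. Hence $\si_*(\al_\nu)\to\si_*(\al)$, so $\si_*$ is continuous.

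The continuity of $\si_*^{-1}$ is entirely symmetric, once one notes that $\si$ restricts to an isomorphism $M\oti B(H)\to M$ (immediate from the formula for $\si^{-1}$), so that $\si_*^{-1}$ sends $\rho\in\End_0(M)$ to $\si^{-1}\circ\rho\in\Mor_0(M,M\oti B(H))$. The map part is handled by normality of $\si^{-1}$, and the left-inverse part by the reversed identity $\Ph^{\si^{-1}\circ\rho}=\ph_\rho\circ\si$, together with $d(\si^{-1}\circ\rho)=d(\rho)$. Thus $\si_*^{-1}$ is also continuous, and $\si_*$ is a homeomorphism.

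The only genuine point to pin down is that the topology of \cite[Definition 2.1]{M-T-endo-pre} is captured by these three pieces of pointwise data (the map, its standard left inverse, and the dimension); once this is granted the argument is routine, since every ingredient of the topology transforms by composing with one of the fixed normal isomorphisms $\si^{\pm1}$, an operation that manifestly preserves pointwise $\si$-strong$*$ convergence. The remaining work is purely bookkeeping: keeping track of the restrictions $\si\colon M\oti B(H)\to M$ and $\si^{-1}\colon M\to M\oti B(H)$ so that the composites are honest elements of $\End_0(M)$ and $\Mor_0(M,M\oti B(H))$, which follows at once from the explicit formula for $\si^{-1}$.
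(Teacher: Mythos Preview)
Your argument hinges entirely on the claim that the topology of \cite[Definition~2.1]{M-T-endo-pre} is given by pointwise $\sigma$-strong$^*$ convergence of the maps, pointwise $\sigma$-strong$^*$ convergence of their standard left inverses, and convergence of the dimensions. That is not what the definition says. The topology is defined by declaring $\rho^\nu\to\rho$ precisely when $\|\vph\circ\ph_{\rho^\nu}-\vph\circ\ph_\rho\|\to 0$ in $M_*$ for every $\vph\in M_*$, and the analogous condition for $\Mor_0(M,M\oti B(H))$. You yourself flag this as ``the only genuine point to pin down'', but it is not bookkeeping: norm convergence of $\vph\circ\ph_{\rho^\nu}$ is a \emph{uniform} condition over the unit ball of $M$, and is neither obviously implied by nor obviously implies your three pointwise conditions. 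No argument for any such equivalence is offered, so the proof as written does not establish continuity for the actual topology in question.

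Once the correct definition is in hand the lemma is a one-line computation, which is what the paper does. From $\ph_{\si_*(\al)}=\Ph^\al\circ\si^{-1}$ one gets
\[
\vph\circ\ph_{\si_*(\al^\nu)}-\vph\circ\ph_{\si_*(\al)}
=(\vph\circ\Ph^{\al^\nu}-\vph\circ\Ph^{\al})\circ\si^{-1},
\]
and since $\si^{-1}$ is a fixed unital $*$-isomorphism, precomposition by it is an isometry between the preduals. Hence the norm convergence transfers directly, with no need to track the homomorphisms, their left inverses pointwise, or the dimensions separately.
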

\begin{proof}
Take any $\vph\in M_*$.
Assume that $\al^\nu\to\al$
in $\Mor_0(M,M\oti B(H))$ as $\nu\to\infty$,
that is,
we have the norm convergence
$\vph\circ \Ph^{\al^\nu}\to\vph\circ\Ph^{\al}$
in $(M\oti B(H))_*$.
Write $\rho^\nu=\si_*(\al^\nu)$ and $\rho=\si_*(\al)$.
Using $\ph_{\rho^\nu}=\Ph^{\al^\nu}\circ \si^{-1}$
and $\ph_\rho=\Ph^{\al}\circ\si^{-1}$,
we have the norm convergence
$\vph\circ\ph_{\rho^\nu}\to\vph\circ\ph_\rho$,
that is, $\rho^\nu\to\rho$ as $\nu\to\infty$.
Hence $\si_*$ is continuous.
Similarly we can prove that $\si_*^{-1}$ is continuous.
\end{proof}

\begin{lem}\label{lem: connes-cocycle}
Let $\varphi$ be a faithful normal state on $M$.
Then one has
\[
\left[D\vph\circ\Phi^\alpha: D\varphi\otimes \Tr\right]_t=
\sum_{i,j=1}^n
v_i^*[D\varphi\circ \phi_{\si_*(\al)}: D\varphi]_t
\sigma^\varphi_t(v_j)\otimes e_{ij}
\quad\mbox{for all}\ t\in\R.
\]
\end{lem}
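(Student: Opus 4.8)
The plan is to reduce the identity to the behaviour of relative modular cocycles under the isomorphism $\si$ and then to a single explicit cocycle computation. Write $\rho:=\si_*(\al)=\si\circ\al$, so that $\Ph^\al=\ph_\rho\circ\si$ and hence $\vph\circ\Ph^\al=(\vph\circ\ph_\rho)\circ\si$. Introduce the weight $\chi:=(\vph\oti\Tr)\circ\si^{-1}$ on $M$; since $\si^{-1}(y)=\sum_{i,j}v_i^*yv_j\oti e_{ij}$ and $v_i^*v_j=\de_{ij}$, one checks directly that $\chi\circ\si=\vph\oti\Tr$. Thus both weights appearing on the left-hand side are pullbacks along $\si$, namely $\vph\circ\Ph^\al=(\vph\circ\ph_\rho)\circ\si$ and $\vph\oti\Tr=\chi\circ\si$. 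Applying the transformation rule for Connes cocycles under the isomorphism $\si$ together with the chain rule gives
\[
[D\vph\circ\Ph^\al:D\vph\oti\Tr]_t=\si^{-1}\big([D\vph\circ\ph_\rho:D\chi]_t\big)=\si^{-1}\big([D\vph\circ\ph_\rho:D\vph]_t\,[D\vph:D\chi]_t\big).
\]
Since $\si^{-1}(Z)=\sum_{i,j}v_i^*Zv_j\oti e_{ij}$, the statement will follow once I identify $[D\vph:D\chi]_t$ and verify that right multiplication by it converts $v_j$ into $\si^\vph_t(v_j)$.

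The key computation is the claim that
\[
[D\vph:D\chi]_t=a_t,\qquad a_t:=\sum_{q}\si^\vph_t(v_q)\,v_q^*,
\]
where $a_t$ is unitary because $v_i^*v_j=\de_{ij}$ and $\sum_q v_qv_q^*=1$. First I would compute the modular automorphism of $\chi$. As $\Tr$ is tracial, $\si^{\vph\oti\Tr}_t=\si^\vph_t\oti\id$, and transporting along $\si^{-1}$ yields $\si^\chi_t=\si\circ(\si^\vph_t\oti\id)\circ\si^{-1}$; a direct evaluation gives $\si^\chi_t(y)=\sum_{p,q}v_p\,\si^\vph_t(v_p^*yv_q)\,v_q^*=a_t^*\,\si^\vph_t(y)\,a_t$ for all $y\in M$. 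Hence the unitary $c_t:=a_t^*$ satisfies $\Ad c_t\circ\si^\vph_t=\si^\chi_t$, and a short computation using $v_p^*v_q=\de_{pq}$ shows that $c_t$ is a $\si^\vph$-cocycle, $c_{s+t}=c_s\,\si^\vph_s(c_t)$. By Connes' theorem these two properties determine $c_t=[D\chi:D\vph]_t$ up to a scalar one-parameter group $\mu^{it}$; the value $\mu=1$ is then fixed by the balanced-weight definition of the relative cocycle (equivalently, by the inner-perturbation formula $[D(\vph\circ\Ad u):D\vph]_t=u^*\si^\vph_t(u)$, which already settles the case $n=1$, where $v_1$ is unitary and $\chi=\vph\circ\Ad v_1^*$ gives exactly $c_t=v_1\si^\vph_t(v_1^*)$). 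Taking adjoints yields $[D\vph:D\chi]_t=c_t^*=a_t$.

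Finally I would assemble the pieces. Using $a_tv_j=\sum_q\si^\vph_t(v_q)\,v_q^*v_j=\si^\vph_t(v_j)$, I obtain, with $Z=[D\vph\circ\ph_\rho:D\vph]_t\,a_t$,
\[
v_i^*Zv_j=v_i^*\,[D\vph\circ\ph_\rho:D\vph]_t\,\si^\vph_t(v_j),
\]
so that $\si^{-1}(Z)=\sum_{i,j}v_i^*\,[D\vph\circ\ph_{\si_*(\al)}:D\vph]_t\,\si^\vph_t(v_j)\oti e_{ij}$, which is precisely the asserted right-hand side. The main obstacle is the normalization of the cocycle $[D\vph:D\chi]_t$: the cocycle identity and the intertwining relation $\Ad c_t\circ\si^\vph_t=\si^\chi_t$ pin it down only up to a scalar factor $\mu^{it}$, and ruling out $\mu\ne1$ requires genuine information about the values of $\vph$ and $\chi$ rather than merely their modular automorphism groups. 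For $n=1$ this is the classical inner-perturbation formula; for general $n$, where $\chi=(\vph\oti\Tr)\circ\si^{-1}$ is not a single inner perturbation, I would secure $\mu=1$ by running the modular-automorphism computation inside the balanced weight on $M_2(M)$, which is normalization-correct by construction.
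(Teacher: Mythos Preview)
Your approach is correct and genuinely different from the paper's. The paper sets $u_t:=\sum_{i,j}v_i^*[D\vph\circ\ph_\rho:D\vph]_t\,\si^\vph_t(v_j)\oti e_{ij}$ and verifies directly that $u_t$ is a $\si^{\vph\oti\Tr}$-cocycle satisfying the relative modular (KMS) condition for the pair $(\vph\circ\Ph^\al,\vph\oti\Tr)$ on $M\oti B(H)$; this is done by a bare-hands computation with the analytic continuation $F(z)$. You instead factor through the isomorphism $\si$, reducing via functoriality and the chain rule to the single cocycle $[D\vph:D\chi]_t$ for $\chi=(\vph\oti\Tr)\circ\si^{-1}$ on $M$, and then compute that cocycle explicitly as $a_t=\sum_q\si^\vph_t(v_q)v_q^*$.

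What each approach buys: your reduction is more structural and isolates the only nontrivial ingredient, namely $[D\vph:D\chi]_t$; the paper's route avoids introducing the auxiliary weight $\chi$ but pays for this with a longer verification in $M\oti B(H)$. The one point you leave underspecified is the normalization $\mu=1$. The $n=1$ case does not settle general $n$, and ``running the computation inside the balanced weight'' is really just the KMS verification again. In fact that verification is short here: with $c_t=a_t^*=\sum_p v_p\si^\vph_t(v_p^*)$ one has $\chi(c_t\si^\vph_t(x)y)=\sum_i\vph(\si^\vph_t(v_i^*x)yv_i)$, and the KMS condition for $\vph$ applied to each summand $(v_i^*x,\,yv_i)$ gives an analytic $F$ with boundary value $\sum_i\vph(yv_i\si^\vph_t(v_i^*x))=\vph(yc_t\si^\vph_t(x))$. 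This pins down $c_t=[D\chi:D\vph]_t$ with no ambiguity. Once you include this two-line check, your argument is complete and arguably cleaner than the paper's, since the KMS step lives in $M$ rather than in $M\oti B(H)$.
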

\begin{proof}
Set $\rho:=\si_*(\al)$ and a unitary
$u_t:=\sum_{i,j=1}^n v_i^*
[D\varphi\circ \phi_\rho:D\varphi]_t
\sigma^\varphi_t(v_j)\otimes e_{ij}$.
Then $u_t$ is
a $\sigma^{\varphi\otimes\mathrm{Tr}}$-cocycle.
We verify that $u_t$ satisfies the relative modular condition.
Let $\mathbf{D}:=\{z\in \mathbf{C}\mid 0<\mathrm{Im}(z)<1\}$,
and 
\[
A(\mathbf{D}):=\{f(z)\mid f(z)
\mbox{ is analytic on }\mathbf{D},
\mbox{bounded, continuous on }
\ovl{\mathbf{D}}\}.
\]
Take $x,y\in M\otimes B(H)$.
By the relative modular condition for
$[D\varphi\circ \Phi^\alpha:D\varphi]_t$
and
$\sum_{k=1}^n v_kx_{k\el}$ and $\sum_{k=1}^n y_{\el j}v_j^*$,
we can choose $F_{\el}(z)\in \mathcal{A}(\mathbf{D})$ such
that
\[F_\el(t)=\sum_{j,k=1}^n
\varphi\circ\phi_\rho\left([D\varphi\circ\phi_\rho:D\varphi]_t
\sigma_t^\varphi
(v_kx_{k\el})y_{\el j}v_j^*
\right)\quad\mbox{for all }t\in \R
\]
and 
\[
F_\el(t+\sqrt{-1})=\sum_{j,k=1}^n
\varphi\left(y_{\el j}v_j^*[D\varphi\circ\phi_\rho:D\varphi]_t
\sigma_t^\varphi
(v_kx_{k\el})
\right)\quad\mbox{for all }t\in \R.
\]
Set $F(z):=\sum_{\el=1}^n F_\el(z)\in \mathcal{A}(\mathbf{D})$.
Then we have
\begin{align*}
\varphi\circ \Phi^\alpha(u_t\sigma_t^{\varphi\otimes \mathrm{Tr}}(x)y)  &=
\sum_{i,j=1}^n
\varphi\circ \phi_\rho\left(v_i(u_t\sigma_t^{\varphi\otimes 
\mathrm{Tr}}(x)y)_{ij}v_j^*\right) \\
&=
\sum_{i,j,k,\el=1}^n
\varphi\circ \phi_\rho
\left(v_iu_{t,ik}\sigma_t^{\varphi}(x_{k\el})y_{\el j}v_j^*\right) \\
&=
\sum_{i,j,k,\el=1}^n
\varphi\circ \phi_\rho
\left(v_i
v_i^*[D\varphi\circ \phi_\rho:D\varphi]_t
\sigma^\varphi_t(v_k)
\sigma_t^{\varphi}(x_{k\el})y_{\el j}v_j^*\right) \\
&=
\sum_{j,k,\el=1}^n
\varphi\circ \phi_\rho\left(
[D\varphi\circ \phi_\rho:D\varphi]_t
\sigma^\varphi_t(v_k
x_{k\el})y_{\el j}v_j^*\right)  \\
&= F(t),
\end{align*}
and
\begin{align*}
(\varphi\otimes \mathrm{Tr})(yu_t\sigma_t^{\varphi\otimes \mathrm{Tr}}(x))
&=
\sum_{\el=1}^n\varphi
\left((yu_t\sigma_t^{\varphi\otimes \mathrm{Tr}}(x))_{\el\el}\right)
= 
\sum_{j,k,\el=1}^n
\varphi\left(y_{\el j}u_{t,jk}\sigma_t^{\varphi}(x_{k\el})\right)
\\
&= 
\sum_{j,k,\el=1}^n\varphi\left(y_{\el j}
v_j^*[D\varphi\circ \phi_\rho:D\varphi]_t
\sigma_t^\varphi(v_k)
\sigma_t^{\varphi}(x_{k\el})\right)\\
&=F(t+\sqrt{-1}).
\end{align*}
This shows that $u_t$ satisfies the relative modular condition. 
\end{proof}

Let $\sim\col \End(M)_0\ra \End(\tM)$ be the canonical extension
\cite[Theorem 2.4]{Iz-can2}.
We define the map
$\sim\col \Mor_0(M,M\oti B(K))\ra \Mor(\tM,\tM\oti B(K))$
by
\[
\tal=\si^{-1}\circ \wdt{\si\circ \al}
\quad\mbox{for all}\ \al\in \Mor_0(M,M\oti B(K)).
\]
In fact, $\tal$ does not depend on $\si$ as follows.

\begin{thm}
One has the following:
\begin{enumerate}
\item  $\tal(x)=\al(x)$ for all $x\in M$;

\item $\tal(\lambda^\vph(t))
=
d(\al)^{it}
[D\varphi\circ \Phi^\alpha: D\varphi\otimes \Tr_K]_t
(\lambda^\vph(t)\otimes 1)$
for all $t\in\R$.
\end{enumerate}
\end{thm}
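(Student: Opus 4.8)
The plan is to unwind the definition $\tal=\si^{-1}\circ\wdt{\si\circ\al}$ and to reduce the whole statement to Izumi's canonical extension of the single endomorphism $\rho:=\si_*(\al)=\si\circ\al$, whose two defining formulas are available from \cite[Theorem 2.4]{Iz-can2}. Throughout I would use the two facts recorded just before Lemma \ref{lem: homeo}, namely $d(\rho)=d(\al)$ and $\ph_\rho=\Ph^\al\circ\si^{-1}$, together with the explicit inverse $\si^{-1}(y)=\sum_{i,j}v_i^*yv_j\oti e_{ij}$.

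For statement (1) the argument is immediate. For $x\in M$ one has $\trho(x)=\rho(x)=\si(\al(x))$ by the defining property of the canonical extension of $\rho$, hence $\tal(x)=\si^{-1}(\trho(x))=\si^{-1}(\si(\al(x)))=\al(x)$.

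For statement (2) I would start from Izumi's formula $\trho(\la^\vph(t))=d(\rho)^{it}[D\vph\circ\ph_\rho:D\vph]_t\,\la^\vph(t)$ and apply $\si^{-1}$. Writing $c_t:=[D\vph\circ\ph_\rho:D\vph]_t\in M$, the key computation is $\si^{-1}(c_t\la^\vph(t))=\sum_{i,j}v_i^*c_t\la^\vph(t)v_j\oti e_{ij}$; pushing $\la^\vph(t)$ past $v_j$ via the crossed-product relation $\la^\vph(t)v_j=\si_t^\vph(v_j)\la^\vph(t)$ lets me factor $\la^\vph(t)\oti1$ out on the right and leaves the matrix $\sum_{i,j}v_i^*c_t\si_t^\vph(v_j)\oti e_{ij}$. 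Since $\rho=\si_*(\al)$, this matrix is precisely the right-hand side of Lemma \ref{lem: connes-cocycle}, so it equals $[D\vph\circ\Ph^\al:D\vph\oti\Tr_K]_t$. Reinserting the scalar $d(\rho)^{it}=d(\al)^{it}$ then gives $\tal(\la^\vph(t))=d(\al)^{it}[D\vph\circ\Ph^\al:D\vph\oti\Tr_K]_t\,(\la^\vph(t)\oti1)$, as asserted. Finally, neither right-hand side in (1) nor (2) involves the isometries $\{v_i\}$, and $\tM$ is generated by $M$ together with $\{\la^\vph(t)\}_t$; hence the two formulas determine $\tal$ uniquely and show that $\tal$ is independent of the chosen $\si$.

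The only genuine content lies in the compatibility already packaged in Lemma \ref{lem: connes-cocycle}; with that in hand the rest is bookkeeping. The point demanding care is the correct handling of the tensor legs and of the commutation of $\la^\vph(t)$ with the $v_j$, so that the Connes cocycle emerges in exactly the matrix form $\sum_{i,j}v_i^*c_t\si_t^\vph(v_j)\oti e_{ij}$ provided by the lemma, rather than a transposed or adjoint variant.
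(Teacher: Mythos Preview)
Your proposal is correct and follows essentially the same route as the paper: set $\rho=\si_*(\al)$, use Izumi's defining formulas for $\trho$, apply $\si^{-1}$, commute $\la^\vph(t)$ past the $v_j$, and invoke Lemma \ref{lem: connes-cocycle} to identify the resulting matrix with the desired Connes cocycle. The paper's proof is line-for-line the same computation, and your remark on independence from $\si$ matches the paper's preamble to the theorem.
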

\begin{proof}
Set $\rho:=\sigma_*(\alpha)$.
Then by definition, we have
\begin{align*}
&\trho(x)
=
\rho(x)\quad\mbox{for all}\ x\in M,
\\
&\trho(\la^\vph(t))
=
d(\rho)^{it}[D\vph\circ\ph_\rho:D\vph]_t \la^\vph(t)
\quad\mbox{for all}\ t\in\R.
\end{align*}
Since $\si^{-1}\circ\rho=\al$,
(1) follows.
On (2), we have
\begin{align*}
\tal(\la^\vph(t))
&=
\si^{-1}(\trho(\la^\vph(t)))
=
\sum_{k,\el=1}^n
v_k^* \trho(\la^\vph(t)) v_\el\oti e_{k\el}
\\
&=
\sum_{k,\el=1}^n
d(\rho)^{it} v_k^* 
[D\vph\circ\ph_\rho:D\vph]_t\la^\vph(t)
v_\el\oti e_{k\el}
\\
&=
\sum_{k,\el=1}^n
d(\rho)^{it}
(v_k^* [D\vph\circ\ph_\rho:D\vph]_t
\si_t^\vph(v_\el)\oti e_{k\el})(\la^\vph(t)\oti1)
\\
&=
d(\al)^{it}[D\vph\circ\Ph^\al:D\vph\oti\Tr_K]_t
(\la^\vph(t)\oti1).
\quad(\mbox{by Lemma \ref{lem: connes-cocycle}})
\end{align*}
\end{proof}

We say that $\al\in \Mor_0(M,M\oti B(K))$ is \emph{inner}
if there exists a unitary $U\in M\oti B(K)$ such that
$\al=U(\cdot\oti1)U^*$.
Denote by
$\Int(M,M\oti B(K))$,
$\oInt(M,M\oti B(K))$ and $\Cnt(M,M\oti B(K))$
the set of the inner homomorphisms,
the approximately inner homomorphisms and
the centrally trivial homomorphisms in $\Mor_0(M,M\oti B(K))$,
respectively.
(See Definition \ref{defn: app-cent}.)
Then we have the following bijective correspondence.
See \cite{M-T-endo-pre} for the notations used here.

\begin{lem}\label{lem: bijection}
The bijection $\si_*\col \Mor_0(M,M\oti B(K))\ra \End_0(M)$
yields the following bijective maps:
\begin{enumerate}
\item
$\si_*\col \Int(M, M\oti B(K))\ra \Int_{\dim(K)}(M)$;

\item
$\si_*\col \oInt(M, M\oti B(K))\ra \oInt_{\dim(K)}(M)$;

\item
$\si_*\col \Cnt(M,M\oti B(K))\ra \Cnd(M)$.
\end{enumerate}
\end{lem}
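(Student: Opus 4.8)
The plan is to exploit the structure already set up in the excerpt: $\si$ restricts to an isomorphism of $M\oti B(K)$ onto $M$, and the endomorphism $\rho_0:=\si(\,\cdot\oti1)\in\End_0(M)$, given by $\rho_0(x)=\sum_{i=1}^{n}v_ixv_i^*$ with $n=\dim(K)$, is the canonical inner endomorphism of dimension $n$ attached to the Hilbert space $\spa\{v_i\}_{i=1}^n\subs M$. Since $\si_*(\al)=\si\circ\al$, $\si^{-1}\circ\rho_0=(\,\cdot\oti1)$, and $d(\al)=d(\si\circ\al)$, each of the three classes can be tracked by recording how $\si$ and $\rho_0$ interact with unitaries, with the topology of \cite{M-T-endo-pre}, and with central sequences. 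In particular every inner homomorphism has statistical dimension $\dim(K)$, which is what pins down the subscript in $\Int_{\dim(K)}(M)$.

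For (1) I would argue as follows. If $\al=\Ad U\circ(\,\cdot\oti1)$ with $U\in U(M\oti B(K))$, then $\si_*(\al)=\Ad\si(U)\circ\rho_0$ with $\si(U)\in U(M)$; and $\Ad w\circ\rho_0(x)=\sum_i(wv_i)x(wv_i)^*$ is implemented by the orthonormal isometries $wv_i$ with $\sum_i(wv_i)(wv_i)^*=1$, hence lies in $\Int_{\dim(K)}(M)$. Conversely, given $\rho\in\Int_{\dim(K)}(M)$ implemented by isometries $\{\xi_i\}_{i=1}^{n}$ with $\sum_i\xi_i\xi_i^*=1$, the element $w:=\sum_i\xi_iv_i^*$ is a unitary of $M$ satisfying $\rho=\Ad w\circ\rho_0$, whence $\si_*^{-1}(\rho)=\si^{-1}\circ\rho=\Ad\si^{-1}(w)\circ(\,\cdot\oti1)$ is inner. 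This establishes the bijection in (1), the whole content being the equivalence $\rho=\Ad w\circ\rho_0\Leftrightarrow\si^{-1}\circ\rho=\Ad\si^{-1}(w)\circ(\,\cdot\oti1)$.

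For (2) I would reduce to (1) via the homeomorphism property of $\si_*$ (Lemma \ref{lem: homeo}). The essential verification is that Definition \ref{defn: app-cent}(2) says precisely that $\al$ lies in the closure of $\Int(M,M\oti B(K))$ in the topology recalled in the proof of Lemma \ref{lem: homeo}. Indeed, for an inner homomorphism $\be^\nu=\Ad U^\nu\circ(\,\cdot\oti1)$ the standard left inverse is $\Ph^{\be^\nu}=(\id\oti\tr_K)\circ\Ad(U^\nu)^*$, so $\vph\circ\Ph^{\be^\nu}=(\vph\oti\tr_K)\circ\Ad(U^\nu)^*$, and the convergence required in Definition \ref{defn: app-cent}(2) is exactly $\be^\nu\to\al$. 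Since $\oInt(M,M\oti B(K))$ and $\oInt_{\dim(K)}(M)$ are the closures of the inner parts and $\si_*$ is a homeomorphism carrying $\Int(M,M\oti B(K))$ onto $\Int_{\dim(K)}(M)$ by (1), it carries closure onto closure, giving (2).

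For (3) I would compute on the ultrapower using $\rho^\om=\si^\om\circ\al^\om$, where $\rho=\si_*(\al)$. First I would note that both $\rho_0$ and $\si$ act trivially on central sequences: for $x=(x^\nu)_\nu\in M_\om$ the $x^\nu$ asymptotically commute with each $v_i$, so $\rho_0^\om(x)=x$ and $(\si^{-1})^\om(x)=x\oti1$. Then if $\al^\om(x)=x\oti1$ for all $x\in M_\om$, one gets $\rho^\om(x)=\si^\om(x\oti1)=\rho_0^\om(x)=x$, so $\rho\in\Cnd(M)$; conversely $\rho^\om(x)=x=\si^\om(x\oti1)$ together with $\rho^\om(x)=\si^\om(\al^\om(x))$ and the injectivity of $\si^\om$ yields $\al^\om(x)=x\oti1$, i.e.\ $\al\in\Cnt(M,M\oti B(K))$. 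Throughout, I expect the main obstacle to be not any analytic difficulty but the bookkeeping of the conventions of \cite{M-T-endo-pre}: matching the dimension subscript in $\Int_{\dim(K)}$ and $\oInt_{\dim(K)}$ to full-dimension inner homomorphisms, and checking that the normalization $\tr_K$ in Definition \ref{defn: app-cent} is the one making the left-inverse identity $\Ph^{\be}=(\id\oti\tr_K)\circ\Ad U^*$ hold so that the topological identifications in (2) are exact.
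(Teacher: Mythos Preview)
Your proposal is correct and follows essentially the same approach as the paper: for (1) you exhibit the implementing Hilbert space $\{\si(U)v_k\}_k$ (the paper writes $w_k=\sum_iv_iU_{ik}$) and invert via $w=\sum_i\xi_iv_i^*$ (the paper sets $U_{ik}=v_i^*w_k$); for (2) the paper likewise says only ``follows from (1) and Lemma \ref{lem: homeo}'', and your added check that $\Ph^{\be^\nu}=(\id\oti\tr_K)\circ\Ad(U^\nu)^*$ is the right justification; for (3) the paper computes $\rho(x^\nu)-\si(x^\nu\oti1)\to0$ and $\si(x^\nu\oti1)=\sum_iv_ix^\nu v_i^*\to x^\nu$ at the level of representing sequences, which is exactly your ultrapower argument written sequentially.
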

\begin{proof}
(1)
Assume that $\al=\Ad U(\cdot\oti1)$ for some unitary $U\in M\oti B(K)$.
Set $\rho:=\si_*(\al)$ and a Hilbert space $\meH\subs M$
which is spanned by $w_k:=\sum_{i=1}^n v_iU_{ik}$, $k=1,\dots,n$.
Then for $x\in M$, we have
\begin{align*}
\rho(x)
&=
\si(\al(x))
=
\si(U(x\oti1)U^*)
=
\sum_{i,j,k=1}^n\si(U_{ik}x U_{jk}^*\oti e_{ij})
\\
&=
\sum_{i,j,k=1}^n v_i U_{ik}x U_{jk}^* v_j^*
=
\sum_{k=1}^n w_k x w_k^*
=
\rho_{\meH}(x).
\end{align*} 
Hence $\rho=\rho_\meH\in \Int_{\dim(K)}(M)$.
Conversely if we have $\rho=\rho_\meH$ with $\dim\meH=n$,
then setting $U_{ik}:=v_i^* w_k$
for some orthonormal basis $\{w_k\}_{k=1}^n\subs\meH$,
we have $\si^{-1}\circ\rho=\Ad U(\cdot\oti1)$.

(2)
This follows from (1) and Lemma \ref{lem: homeo}.

(3)
Assume that $\al\in\Cnt(M,M\oti B(K))$.
Set $\rho:=\si_*(\al)$.
Take an $\om$-centralizing sequence $(x^\nu)_\nu$ in $M$.
Then $\al(x^\nu)-x^\nu\to0$ strongly* as $\nu\to\om$.
Hence $\rho(x^\nu)-\si(x^\nu\oti1)\to0$.
Since $\si(x^\nu\oti1)=\sum_{i,j=1}^n v_i x^\nu v_i^*$,
we see that $\rho(x^\nu)-x^\nu\to0$,
that is, $\rho\in\Cnd(M)$.
The converse can be proved similarly.
\end{proof}

We define the following set:
\[
\Mor_{{\rm CT}}(M, M\oti B(K))
=
\{\al\in \Mor_0(M,M\oti B(K))\mid \si_*(\al)\in \End(M)_{{\rm CT}}\}.
\]
The following lemma shows that
this set does not depend on $\si$.

\begin{lem}
Let $\al\in\Mor_0(M, M\oti B(K))$.
Then the following are equivalent:
\begin{enumerate}
\item
$\al\in\Mor_{{\rm CT}}(M, M\oti B(K))$;

\item
There exists $\ga\in \Aut_\th(Z(\tM))$
such that $\tal(z)=\ga(z)\oti1$
for all $z\in Z(\tM)$.
\end{enumerate}
\end{lem}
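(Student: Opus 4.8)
The plan is to exploit the defining relation $\tal=\si^{-1}\circ\trho$, where $\rho:=\si_*(\al)=\si\circ\al$, so that the asserted equivalence reduces to transporting, through the isomorphism $\si$, the condition on $\trho$ between the endomorphism picture on $\tM$ and the homomorphism picture on $\tM\oti B(K)$. By the very definition of $\Mor_{\rm CT}$, the statement $\al\in\Mor_{\rm CT}(M,M\oti B(K))$ means exactly $\rho\in\End(M)_{\rm CT}$, i.e. $\trho$ leaves $Z(\tM)$ globally invariant with $\trho|_{Z(\tM)}=\ga$ for some $\ga\in\Aut_\th(Z(\tM))$. Thus the whole content is the bookkeeping that converts this into the formula $\tal(z)=\ga(z)\oti1$.

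First I would record the explicit formulas $\si(x)=\sum_{i,j}v_ix_{ij}v_j^*$ and $\si^{-1}(x)=\sum_{i,j}v_i^*xv_j\oti e_{ij}$, together with the relations $v_i^*v_j=\de_{ij}$ and $\sum_i v_iv_i^*=1$. Beyond these, the only fact needed is that each $z\in Z(\tM)$ commutes with every $v_i\in M\subs\tM$. For $(1)\Rightarrow(2)$, assuming $\rho\in\End(M)_{\rm CT}$, I set $\ga:=\trho|_{Z(\tM)}\in\Aut_\th(Z(\tM))$ and compute, for $z\in Z(\tM)$,
\[
\tal(z)=\si^{-1}(\trho(z))=\si^{-1}(\ga(z))=\sum_{i,j}v_i^*\,\ga(z)\,v_j\oti e_{ij}=\sum_{i,j}\de_{ij}\,\ga(z)\oti e_{ij}=\ga(z)\oti1,
\]
using that $\ga(z)\in Z(\tM)$ commutes with the $v_j$. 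Conversely, for $(2)\Rightarrow(1)$, from $\tal(z)=\ga(z)\oti1$ I would apply $\si$ to obtain
\[
\trho(z)=\si(\tal(z))=\si(\ga(z)\oti1)=\sum_i v_i\,\ga(z)\,v_i^*=\ga(z)\sum_i v_iv_i^*=\ga(z)\in Z(\tM),
\]
so $\trho$ carries $Z(\tM)$ into itself with $\th$-equivariant restriction $\ga$, which is precisely $\rho\in\End(M)_{\rm CT}$, hence $\al\in\Mor_{\rm CT}(M,M\oti B(K))$.

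There is essentially no hard analytic step; the argument is the two one-line computations above. The point that genuinely deserves comment—and which I would state explicitly—is the independence of everything from the auxiliary isometries $\{v_i\}$ used to build $\si$. Since $\si(z\oti1)=\sum_i v_iv_i^*\,z=z$ for $z\in Z(\tM)$, the isomorphism $\si$ restricts to the canonical identification of $Z(\tM\oti B(K))=Z(\tM)\oti1$ with $Z(\tM)$; this is exactly what makes $\tal|_{Z(\tM)}$, and hence both $\Mor_{\rm CT}$ and the automorphism $\ga$ appearing in $(2)$, insensitive to the choice hidden in $\si$.

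Finally, I would note that $\th$-equivariance of $\ga$ comes for free in both directions: in $(1)\Rightarrow(2)$ it is part of $\trho|_{Z(\tM)}\in\Aut_\th(Z(\tM))$ guaranteed by $\rho\in\End(M)_{\rm CT}$, while in $(2)\Rightarrow(1)$ it is built into the hypothesis on $\ga$. In particular the same $\ga$ serves for both implications, so the equivalence is completely symmetric and no separate surjectivity or commutation argument is required beyond invoking the definition of $\End(M)_{\rm CT}$ and the formula $\tal=\si^{-1}\circ\trho$.
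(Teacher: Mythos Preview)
Your proof is correct and follows essentially the same approach as the paper: both use the defining relation $\tal=\si^{-1}\circ\trho$ together with the observation that $\si^{-1}(z)=z\oti1$ (equivalently $\si(z\oti1)=z$) for $z\in Z(\tM)$, reducing each implication to a one-line computation. Your additional remarks on the independence from the choice of $\{v_i\}$ and on $\th$-equivariance are useful clarifications that the paper leaves implicit.
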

\begin{proof}
Assume that $\al\in\Mor_{{\rm CT}}(M, M\oti B(K))$.
Set $\rho:=\si_*(\al)$.
Then $\rho$ has Connes-Takesaki module $\mo(\rho)$.
By definition, $\si^{-1}(z)=z\oti1$ for $z\in Z(\tM)$.
For $z\in Z(\tM)$, we have
\[
\tal(z)=\si^{-1}(\trho(z))=\si^{-1}(\mo(\rho)(z))=\mo(\rho)(z)\oti1.
\]

Conversely, assume that such $\ga$ exists.
Then we have
\[
\trho(z)=\si(\tal(z))=\si(\ga(z)\oti1)=\ga(z).
\]
Hence $\rho$ has the Connes-Takesaki module $\ga$,
that is, $\al\in \Mor_{\rm CT}(M, M\oti B(K))$.
\end{proof}

In this situation, we say that $\al$ has the \emph{Connes-Takesaki module}
$\mo(\al):=\ga$.

\begin{thm}\label{thm: app-cent-prop}
Let $M$ be a properly infinite injective factor.
Then one has the following:
\begin{enumerate}
\item 
$\al\in\oInt(M,M\oti B(K))$

\noindent$\Leftrightarrow$
$\al\in \Mor_{{\rm CT}}(M,M\oti B(K))$
with
$\mo(\al)=\th_{\log(\dim(K)/d(\al))}$;

\item
$\al\in\Cnt(M,M\oti B(K))$

\noindent$\Leftrightarrow$
There exists a unitary
$U\in M_{d(\al),\dim(K)}(\tM)$ such that
$\tal=U(\cdot\oti1)U^*$.

\end{enumerate}
\end{thm}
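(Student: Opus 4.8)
The plan is to transport both equivalences to the setting of endomorphisms through the bijection $\si_*\col \Mor_0(M,M\oti B(K))\ra \End_0(M)$, $\al\mapsto\rho:=\si\circ\al$, and then to invoke the corresponding characterizations for endomorphisms of a properly infinite injective factor from \cite{M-T-endo-pre}. Throughout I would use that $d(\al)=d(\rho)$, that the restrictions of $\si_*$ to $\Int$, $\oInt$ and $\Cnt$ are bijections onto $\Int_{\dim(K)}(M)$, $\oInt_{\dim(K)}(M)$ and $\Cnd(M)$ by Lemma \ref{lem: bijection}, and that $\al\in\Mor_{{\rm CT}}(M,M\oti B(K))$ if and only if $\rho\in\End(M)_{{\rm CT}}$, in which case $\mo(\al)=\mo(\rho)$ by the lemma defining $\Mor_{{\rm CT}}$. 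The one piece of bookkeeping common to both parts is the relation between the canonical extensions: by construction $\tal=\si^{-1}\circ\trho$, so any statement about $\trho$ on $\tM$ reads off as a statement about $\tal$ on $\tM$ through $\si$.

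For part (1), set $\rho:=\si_*(\al)$, so that $\al\in\oInt(M,M\oti B(K))$ is equivalent to $\rho\in\oInt_{\dim(K)}(M)$. I would then quote the endomorphism characterization \cite[Theorem 3.15]{M-T-endo-pre}: for a properly infinite injective factor, $\rho\in\oInt_{n}(M)$ holds precisely when $\rho\in\End(M)_{{\rm CT}}$ with $\mo(\rho)=\th_{\log(n/d(\rho))}$. Substituting $n=\dim(K)$ and $d(\rho)=d(\al)$, and translating the condition $\rho\in\End(M)_{{\rm CT}}$ together with its module back through $\si_*$, yields exactly $\al\in\Mor_{{\rm CT}}(M,M\oti B(K))$ with $\mo(\al)=\th_{\log(\dim(K)/d(\al))}$. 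Both implications come out at once since $\si_*$ is a bijection preserving all the relevant data.

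For part (2), again put $\rho:=\si_*(\al)$; by Lemma \ref{lem: bijection}(3) we have $\al\in\Cnt(M,M\oti B(K))$ if and only if $\rho\in\Cnd(M)$. The endomorphism counterpart from \cite{M-T-endo-pre} asserts that $\rho\in\Cnd(M)$ precisely when the canonical extension $\trho$ is inner on the core $\tM$, i.e.\ $\trho=\rho_{\widetilde{\meH}}$ for a Hilbert space $\widetilde{\meH}\subs\tM$ of dimension $d(\rho)$ and support $1$. It then remains to convert this inner-ness of $\trho$ into the asserted implementation of $\tal$. Using $\tal=\si^{-1}\circ\trho$ together with the Hilbert-space/matrix dictionary from the proof of Lemma \ref{lem: bijection}(1)---now applied over $\tM$ instead of $M$---the implementing Hilbert space $\widetilde{\meH}$ is repackaged into a unitary $U\in M_{d(\al),\dim(K)}(\tM)$ with $\tal=U(\cdot\oti1)U^*$, and conversely. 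I expect this final repackaging to be the main obstacle: the index $d(\al)$ is in general non-integral, so that $M_{d(\al),\dim(K)}(\tM)$ and the word ``unitary'' must be interpreted through the theory of a Hilbert space in a von Neumann algebra \cite{Ro}; one must check that the dimension of $\widetilde{\meH}$ matches $d(\al)$ and that the intertwining relations survive passage through $\si$ over the core, which carries a nontrivial center when $M$ is not of type III$_1$.
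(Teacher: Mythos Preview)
Your approach is essentially the paper's: transport everything through $\si_*$ and quote the endomorphism characterizations \cite[Theorem 3.15, 4.12]{M-T-endo-pre}. The one point where you diverge is your closing worry about non-integral $d(\al)$ in part (2). This concern evaporates: if $\al\in\Cnt(M,M\oti B(K))$, then $\rho=\si_*(\al)\in\Cnd(M)$, and by \cite[Theorem 3.3 (5)]{Iz-can2} the statistical dimension $d(\rho)=d(\al)$ is automatically an integer. The paper's proof consists of exactly the two citations plus this remark, so $M_{d(\al),\dim(K)}(\tM)$ is an honest rectangular matrix algebra and the unitary $U$ is literal; no appeal to \cite{Ro} or to Hilbert spaces of non-integral dimension is needed.
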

\begin{proof}
This follows from \cite[Theorem 3.15, 4.12]{M-T-CMP}.
Note that if $\al\in \Cnt(M,M\oti B(K))$, then $d(\al)$ is integer
\cite[Theorem 3.3 (5)]{Iz-can2}.
\end{proof}

We obtain the following corollary.

\begin{cor}\label{cor: app-cent-infinite}
The following statements hold:
\begin{enumerate}
\item
If $M=\meR_{0,1}$, then

\vspace{5pt}
\begin{itemize}
\item
$\al\in \oInt(M,M\oti B(K))$
\\
$\Leftrightarrow$
$\ta\circ\Ph^\al=\ta\oti\tr_K$,
where $\ta$ is a trace on $M$;

\vspace{5pt}
\item
$\al\in\Cnt(M,M\oti B(K))$
\\
$\Leftrightarrow$
there exist $n\in\N$ and a unitary $U\in M\oti M_{\dim(K),n}(\C)$
such that
\[
\al(x)=U(x\oti1)U^*
\quad\mbox{for all}\ x\in M.
\]
\end{itemize}

\item
If $M=\meR_\la$ with $0<\la<1$,
then

\vspace{5pt}
\begin{itemize}
\item
$\al\in \oInt(M,M\oti B(K))$
\\
$\Leftrightarrow$
$[D\vph\circ\Ph^\al:D\vph\oti\tr_K]_T=1$,
where $\vph$ is a generalized trace on $M$
and $T=-2\pi/\log\la$;

\vspace{5pt}
\item
$\al\in\Cnt(M,M\oti B(K))$
\\
$\Leftrightarrow$
there exist $n\in\N$, a unitary $U\in M\oti M_{\dim(K),n}(\C)$
and $\{s_i\}_{i=1}^{n}\subs \R$
such that
\[
\al(x)=U\diag(\si_{s_1}^\vph(x),\dots,\si_{s_{n}}^\vph(x))U^*
\quad\mbox{for all}\ x\in M.
\]
\end{itemize}

\item
If $M=\meR_\infty$,
then

\vspace{5pt}
\begin{itemize}
\item
$\oInt(M,M\oti B(K))=\Mor_0(M,M\oti B(K))$;
\vspace{5pt}
\item
$\al\in\Cnt(M,M\oti B(K))$
\\
$\Leftrightarrow$
there exist $n\in\N$, a unitary $U\in M\oti M_{\dim(K),n}(\C)$
and $\{s_i\}_{i=1}^{n}\subs \R$
such that
\[
\al(x)=U\diag(\si_{s_1}^\vph(x),\dots,\si_{s_n}^\vph(x))U^*
\quad\mbox{for all}\ x\in M.
\]
\end{itemize}
\end{enumerate}

\end{cor}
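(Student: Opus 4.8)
The plan is to deduce every equivalence in Corollary~\ref{cor: app-cent-infinite} from the general characterizations of Theorem~\ref{thm: app-cent-prop} by specializing the abstract module/core data to each of the three factors. The common computational engine is the explicit formula for the canonical extension,
\[
\tal(\la^\vph(t)) = d(\al)^{it}[D\vph\circ\Ph^\al:D\vph\oti\Tr_K]_t(\la^\vph(t)\oti1),
\]
which lets me read off the action of $\tal$ on $Z(\tM)$ once I know how $Z(\tM)$ sits inside $\tM$ and how the dual flow $\th$ acts on it. Since $\Tr_K=(\dim K)\tr_K$, the Connes cocycle rescales to $[D\vph\circ\Ph^\al:D\vph\oti\Tr_K]_t=(\dim K)^{-it}[D\vph\circ\Ph^\al:D\vph\oti\tr_K]_t$, so
\[
\tal(\la^\vph(t)) = (d(\al)/\dim K)^{it}[D\vph\circ\Ph^\al:D\vph\oti\tr_K]_t(\la^\vph(t)\oti1),
\]
and the scalar $(d(\al)/\dim K)^{it}=e^{-it\log(\dim K/d(\al))}$ is exactly what will account for the module exponent $\log(\dim K/d(\al))$.

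For the $\oInt$ assertions I would first treat $\meR_{0,1}$ with $\vph=\ta$ a trace: then $\si^\ta=\id$, so $\tM\cong M\oti L^\infty(\R)$ with $Z(\tM)=L^\infty(\R)$ generated by $\{\la^\ta(t)\}_t$ and $\th$ acting by translation. Theorem~\ref{thm: app-cent-prop}(1) says $\al\in\oInt$ iff $\al\in\Mor_{{\rm CT}}$ with $\mo(\al)=\th_{\log(\dim K/d(\al))}$; evaluating the displayed formula on the central generators $\la^\ta(t)$ shows this holds iff $[D\ta\circ\Ph^\al:D\ta\oti\tr_K]_t=1$ for all $t$, that is, $\ta\circ\Ph^\al=\ta\oti\tr_K$. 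For $\meR_\la$ I take $\vph$ a generalized trace, so the flow of weights is periodic of period $T=-2\pi/\log\la$ and $Z(\tM)\cong L^\infty(\R/T\Z)$; here periodicity means only the value at $t=T$ survives, the module being detected through the periodic central generator realized from $\la^\vph(T)$ (compare Lemma~\ref{lem: hrho-relcom}), and matching $(d(\al)/\dim K)^{iT}[D\vph\circ\Ph^\al:D\vph\oti\tr_K]_T$ against $\th_{\log(\dim K/d(\al))}$ forces $[D\vph\circ\Ph^\al:D\vph\oti\tr_K]_T=1$. For $\meR_\infty$ the core has $Z(\tM)=\C$, so every $\al\in\Mor_0$ lies in $\Mor_{{\rm CT}}$ with the necessarily trivial module coinciding with $\th_s$ for every $s$; the condition of Theorem~\ref{thm: app-cent-prop}(1) is therefore vacuous and $\oInt(M,M\oti B(K))=\Mor_0(M,M\oti B(K))$.

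For the $\Cnt$ assertions I would invoke Theorem~\ref{thm: app-cent-prop}(2): $\al\in\Cnt$ iff $\tal=U(\cdot\oti1)U^*$ for a unitary $U\in M_{d(\al),\dim K}(\tM)$, together with the integrality $d(\al)=n\in\N$ noted in its proof. The task is to descend this core-innerness to the stated concrete form on $M$. For $\meR_{0,1}$ the triviality $\si^\ta=\id$ means the canonical extension carries no modular twist, so $U$ can be arranged in $M\oti M_{\dim K,n}(\C)$ and $\al(x)=U(x\oti1)U^*$. For $\meR_\la$ and $\meR_\infty$ the modular automorphism is genuinely nontrivial; decomposing $U$ along the spectral subspaces of $\si^\vph$ (equivalently, expanding its entries in $M'\cap\tM$ as in Lemma~\ref{lem: hrho-relcom} and Lemma~\ref{lem:canonical}) produces exponents $s_1,\dots,s_n\in\R$ and a unitary $U\in M\oti M_{\dim K,n}(\C)$ with $\al(x)=U\diag(\si_{s_1}^\vph(x),\dots,\si_{s_n}^\vph(x))U^*$. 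In each case the converse is a direct verification, using the bijection $\si_*$ of Lemma~\ref{lem: bijection} and the fact that the $\si_{s_i}^\vph$ are centrally trivial, so that such a map lands in $\Cnt$.

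The step I expect to be the main obstacle is the $\meR_\la$ bookkeeping in both halves. In the $\oInt$ half one must verify that testing the module on the single periodic generator coming from $\la^\vph(T)$ really captures the full equality $\mo(\al)=\th_{\log(\dim K/d(\al))}$, and that the scalar $(d(\al)/\dim K)^{iT}$ matches rotation by $\log(\dim K/d(\al))$ on $L^\infty(\R/T\Z)$ under the normalization $T=-2\pi/\log\la$; a sign or period slip here would corrupt the statement. In the $\Cnt$ half the delicate point is organizing the descent of the rectangular unitary $U\in M_{n,\dim K}(\tM)$ to $M$ with the correct modular exponents $s_i$, which is precisely where the relative-commutant analysis in the spirit of Lemma~\ref{lem: hrho-relcom} and the integrality $d(\al)\in\N$ are indispensable.
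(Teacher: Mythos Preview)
Your approach is correct and is essentially what the paper has in mind: the paper states this corollary immediately after Theorem~\ref{thm: app-cent-prop} with no proof, treating it as a direct specialization (ultimately relying on the endomorphism results of \cite{M-T-endo-pre} transported through the bijection $\si_*$ of Lemma~\ref{lem: bijection}). Your explicit unpacking of the module condition via the canonical-extension formula on $\la^\vph(t)$, and your identification of the central generator in each case ($L^\infty(\R)$ for $\meR_{0,1}$, the single unitary $\la^\vph(T)$ for $\meR_\la$, trivial centre for $\meR_\infty$), is exactly the computation that makes the corollary immediate; the $\Cnt$ descent you outline is likewise the content of \cite[Theorem~4.12]{M-T-endo-pre} read through $\si_*$.
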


\subsection{Canonical extension of cocycle actions}

We discuss canonical extension of cocycle actions.
Let $(\al,u)$ be a cocycle action
of $\bhG$ on a factor $M$.
For $\pi\in\IG$, we define the left inverse $\Ph_\pi^\al$
for $\al_\pi$ by
\[
\Ph_\pi^\al(x)=(1\oti T_{\opi,\pi}^*)u_{\opi,\pi}^*
(\al_\opi\oti\id)(x)u_{\opi,\pi}(1\oti T_{\opi,\pi})
\quad\mbox{for all}\ x\in M\oti B(H_\pi),
\]
where $T_{\opi,\pi}$ is an isometry intertwining
$\btr$ and $\opi\oti\pi$ \cite[p.491]{M-T-CMP}.
Then $\al_\pi\circ\Ph_\pi^\al$ is a faithful normal conditional
expectation from $M\oti B(H_\pi)$ onto $\al_\pi(M)$.
Set $d(\pi):=\dim(H_\pi)$.
Recall the diagonal operator $a\in M\oti\lhG$ of $u$
\cite[Definition 5.5]{M-T-CMP}:
\[
(a\oti1)(1\oti\De(e_\btr))=u(1\oti\De(e_\btr)).
\]

\begin{lem}
One has $\Ind(\al_\pi\circ \Ph_\pi^\al)=d(\pi)^2$
for all $\pi\in\IG$.
\end{lem}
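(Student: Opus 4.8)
The plan is to recognise $E:=\al_\pi\circ\Ph_\pi^\al$ as the minimal conditional expectation of $M\oti B(H_\pi)$ onto $\al_\pi(M)$ and to compute its index through the statistical dimension of $\al_\pi$. Since $M$ is a factor and $\Ph_\pi^\al$ is the standard left inverse, Proposition \ref{prop: standard} gives that $E$ is minimal; by the index--dimension correspondence (cf.\ \cite{Hi}), $\Ind(E)=d(\al_\pi)^2$, where $d(\al_\pi)$ is the dimension of $\al_\pi\in\Mor_0(M,M\oti B(H_\pi))$. Thus the whole assertion reduces to the identity $d(\al_\pi)=d(\pi)$.

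To evaluate $d(\al_\pi)$ I would set up the conjugate equations explicitly. Put $R_\pi:=u_{\opi,\pi}(1\oti T_{\opi,\pi})$ and $\bar R_\pi:=u_{\pi,\opi}(1\oti T_{\pi,\opi})$. Using the cocycle identity $(\al\oti\id)\circ\al=\Ad u\circ(\id\oti\De)\circ\al$ together with the intertwining property of $T_{\opi,\pi}\in\Hom(\btr,\opi\oti\pi)$ and $\al_\btr=\id$, one checks that $R_\pi$ is an isometry lying in $\Hom(\id_M,\al_\opi\al_\pi)$ with $\Ph_\pi^\al(x)=R_\pi^*(\al_\opi\oti\id)(x)R_\pi$, and symmetrically that $\bar R_\pi\in\Hom(\id_M,\al_\pi\al_\opi)$ is an isometry. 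Hence $(\al_\opi,R_\pi,\bar R_\pi)$ realises a conjugate of $\al_\pi$, and $\Ph_\pi^\al$ is exactly the left inverse attached to the standard solution normalised to be unital. In this normalisation the dimension is recovered from the balancing scalar, namely $d(\al_\pi)^{-1}$ is the constant $c$ with $(\bar R_\pi^*\oti1)(1\oti R_\pi)=c\cdot1$.

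Finally I would compute this scalar. The $2$-cocycle relation (2) for $u$ and the intertwiner relations for $T$ cause the factors $u_{\opi,\pi},u_{\pi,\opi}$ and the $\al$-data to cancel, so that $(\bar R_\pi^*\oti1)(1\oti R_\pi)$ collapses to the purely representation-theoretic pairing $(T_{\pi,\opi}^*\oti1_\pi)(1_\pi\oti T_{\opi,\pi})$. Because $\bhG$ is a Kac algebra, the antipode is involutive and $\tr_\pi$ is a trace, so the category $\Rep(\bG)$ carries integer dimensions and the above pairing equals $d(\pi)^{-1}1_\pi$; therefore $d(\al_\pi)=d(\pi)$ and $\Ind(E)=d(\pi)^2$. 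The main obstacle is precisely this last cancellation: one must track $u_{\opi,\pi}$ and $u_{\pi,\opi}$ faithfully through both conjugate equations and verify that the normalisation built into the definition of $\Ph_\pi^\al$ is the standard one, so that the balancing constant is genuinely $d(\pi)^{-1}$ and not merely proportional to it. As a consistency check, for $u=1$ and $\al_\pi=\id_M\oti1$ one has $\Ph_\pi^\al=\id_M\oti\tr_\pi$ and $E=\id_M\oti\tr_\pi$, whose index on $M\oti\C1\subs M\oti B(H_\pi)$ is visibly $(\dim H_\pi)^2=d(\pi)^2$.
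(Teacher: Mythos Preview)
Your approach via conjugate equations is a natural categorical alternative, but as written it is circular. You invoke Proposition~\ref{prop: standard} to conclude that $E=\al_\pi\circ\Ph_\pi^\al$ is the minimal expectation, yet in the paper this lemma \emph{precedes} Proposition~\ref{prop: standard}, and the proof of that proposition relies on the present lemma: part~(1) needs $\Ind(E_\pi^\be)=d(\pi)^2$ for the standard $\be$ in order to identify the minimal index, and part~(3) explicitly uses the quasi-basis $\{d(\pi)^{1/2}(1\oti e_{\pi_{ij}})a_\pi^*\}$ produced here to compute $E_\pi^{-1}$. Moreover, the Appendix drops the standing amenability assumption on $\bhG$ and the cocycle action is not assumed free, so parts~(2) and~(3) of Proposition~\ref{prop: standard} are not even available in this generality.

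There is also a structural mismatch: the lemma asks for $\Ind(E_\pi)$ for this \emph{particular} expectation, not for the minimal index. Your argument computes the statistical dimension $d(\al_\pi)$ and then needs minimality of $E_\pi$ to identify $\Ind(E_\pi)$ with $d(\al_\pi)^2$---which is precisely the content of ``standard'' and of Proposition~\ref{prop: standard}. Your route can be repaired without circularity: once you verify that $R_\pi=u_{\opi,\pi}(1\oti T_{\opi,\pi})$ and $\bar R_\pi=u_{\pi,\opi}(1\oti T_{\pi,\opi})$ satisfy the zigzag identities with scalar $d(\pi)^{-1}$, you can extract a quasi-basis for $E_\pi$ directly from $\bar R_\pi$ and read off the index, never touching minimality. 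The paper bypasses all of this by writing down the quasi-basis $\{d(\pi)^{1/2}(1\oti e_{\pi_{ij}})a_\pi^*\}_{i,j}$ explicitly (where $a$ is the diagonal of $u$), checking the quasi-basis identity by a short computation using \cite[Lemma~5.6]{M-T-CMP}, and summing to get $\Ind(E_\pi)=d(\pi)^2(\id\oti\tr_\pi)(a_\pi^*a_\pi)=d(\pi)^2$.
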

\begin{proof}
Set $E_\pi:=\al_\pi\circ\Ph_\pi^\al$ and $d(\pi)=\dim H_\pi$.
Let $\{e_{\pi_{ij}}\}_{i,j=1}^{d(\pi)}$ be a system of
matrix units of $B(H_\pi)$.
We show that
$\{d(\pi)^{1/2}(1\oti e_{\pi_{ij}})a_\pi^*\}_{i,j=1}^{d(\pi)}$
is a quasi basis for $E_\pi$ \cite[Definition 1.2.2]{Wa}.
For any $y\in M$ and $1\leq k,\el\leq d(\pi)$, we have
\begin{align*}
&\Ph_\pi^\al(a_\pi(1\oti e_{\pi_{ji}})(y\oti e_{\pi_{k\el}}))
\\
&=
\de_{ik}
(1\oti T_{\opi,\pi}^*)
u_{\opi,\pi}^*
\al_\opi(a_\pi)
(\al_\opi(y)\oti e_{\pi_{j\el}})
u_{\opi,\pi}
(1\oti T_{\opi,\pi})
\\
&=
\de_{ik}
(1\oti T_{\opi,\pi}^*)
(a_{\opi}^*\oti1)
\al_\opi(a_\pi)
(\al_\opi(y)\oti e_{\pi_{j\el}})
(a_\opi\oti1)
(1\oti T_{\opi,\pi})
\\
&=
\de_{ik}
(1\oti T_{\opi,\pi}^*)
(\al_\opi(y)a_\opi\oti e_{\pi_{j\el}})
(1\oti T_{\opi,\pi})
\quad(\mbox{by \cite[Lemma 5.6(1)]{M-T-CMP}})
\\
&=
d(\pi)^{-1}\de_{ik}
\left(\al_\opi(y)a_\opi\right)_{\opi_{j\el}}.
\end{align*}
Using this equality, we have
\begin{align*}
&\hspace{14pt}\sum_{i,j=1}^{d(\pi)}
d(\pi)(1\oti e_{\pi_{ij}})a_\pi^*
E_\pi(a_\pi(1\oti e_{\pi_{ji}})(y\oti e_{k\el}))
\\
&=
\sum_{i,j=1}^{d(\pi)}
\de_{ik}
(1\oti e_{\pi_{ij}})a_\pi^*
\al_\pi
(
\left(
\al_\opi(y)a_\opi
\right)_{\opi_{j\el}}
)
=
\sum_{j=1}^{d(\pi)}
(1\oti e_{\pi_{kj}})a_\pi^*
\al_\pi
\big{(}
\left(
\al_\opi(y)a_\opi
\right)_{\opi_{j\el}}
\big{)}
\\
&=
\sum_{j=1}^{d(\pi)}
(1\oti \vep_{\pi_k})
(1\oti \vep_{\pi_j}^*\oti \vep_{\opi_j}^*)
(a_\pi^*\oti1)
\al_\pi
\big{(}
\al_\opi(y)a_\opi
\big{)}
(1\oti1\oti\vep_{\pi_\el})
\\
&=
\sum_{j=1}^{d(\pi)}
(1\oti \vep_{\pi_k})
(1\oti \vep_{\pi_j}^*\oti \vep_{\opi_j}^*)
u_{\pi,\opi}^*
\al_\pi
(
\al_\opi(y)a_\opi
)
(1\oti1\oti\vep_{\pi_\el})
\\
&=
\sum_{j=1}^{d(\pi)}
(1\oti \vep_{\pi_k})
(1\oti \vep_{\pi_j}^*\oti \vep_{\opi_j}^*)
(\id\oti\De)(\al(y))
u_{\pi,\opi}^*
\al_\pi(a_\opi)
(1\oti1\oti\vep_{\pi_\el})
\\
&=
\sum_{j=1}^{d(\pi)}
(y\oti \vep_{\pi_k})
(1\oti \vep_{\pi_j}^*\oti \vep_{\opi_j}^*)
u_{\pi,\opi}^*
\al_\pi(a_\opi)
(1\oti1\oti\vep_{\pi_\el})
\\
&=
\sum_{j=1}^{d(\pi)}
(y\oti \vep_{\pi_k})
(1\oti \vep_{\pi_j}^*\oti \vep_{\opi_j}^*)
(a_\pi^*\oti1)
\al_\pi(a_\opi)
(1\oti1\oti\vep_{\pi_\el})
\\
&=
\sum_{j=1}^{d(\pi)}
(y\oti \vep_{\pi_k})
(1\oti \vep_{\pi_j}^*\oti \vep_{\opi_j}^*)
(1\oti1\oti\vep_{\pi_\el})
=
y\oti e_{\pi_{k\el}}.
\end{align*}
Hence
$\{d(\pi)^{1/2}(1\oti e_{\pi_{ij}})a_\pi^*\}_{i,j=1}^{d(\pi)}$
is a quasi basis for $E_\pi$,
and we have
\begin{align*}
\Ind(E_\pi)
&=\sum_{i,j=1}^{d(\pi)}
d(\pi)^{1/2}(1\oti e_{\pi_{ij}})a_\pi^*
\cdot
d(\pi)^{1/2}a_\pi(1\oti e_{\pi_{ij}}^*)
\\
&=
d(\pi)^2(\id\oti\tr_\pi)(a_\pi^*a_\pi)
=d(\pi)^2.\quad(\mbox{by \cite[Lemma 5.6(2)]{M-T-CMP}})
\end{align*}
\end{proof}

\begin{defn}
We say that a cocycle action $\al\in\Mor(M,M\oti\lhG)$
is \emph{standard}
when the left inverse $\Ph_\pi^\al$ is standard
for each $\pi\in\IG$.
\end{defn}

\begin{prop}\label{prop: standard}
Let $\al\in\Mor(M,M\oti\lhG)$ be a cocycle action.
Then the following hold:
\begin{enumerate}
\item
If $\al$ is cocycle conjugate
to a standard cocycle action $\be\in\Mor(M^2,M^2\oti\lhG)$,
then $\al$ is standard.

\item
If $\al$ is free, then $\al$ is standard;

\item
If $\bhG$ is amenable, then $\al$ is standard.

\end{enumerate}
\end{prop}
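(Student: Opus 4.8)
The plan is to read ``standard'' as the statement that, for each $\pi\in\IG$, the conditional expectation $\al_\pi\circ\Ph_\pi^\al\col M\oti B(H_\pi)\ra\al_\pi(M)$ is the minimal conditional expectation. By the preceding lemma this expectation always has Jones index $\Ind(\al_\pi\circ\Ph_\pi^\al)=d(\pi)^2$, whereas the minimal index equals $d(\al_\pi)^2\le d(\pi)^2$; thus standardness is equivalent to $d(\al_\pi)=d(\pi)$, and the only real content lies in the case where $\al_\pi$ is reducible. For $\pi=\btr$ one has $\al_\btr=\id_M$ and $\Ph_\btr^\al=\id_M$, so there is nothing to prove, and throughout I would restrict to $\pi\neq\btr$.

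For (2) I would first argue that freeness forces each $\al_\pi$ to be irreducible. Proper outerness of $\al_\pi$ for $\pi\neq\btr$, together with the intertwiner and Frobenius calculus for the family $\{\al_\pi\}$ (exactly as in the action case, cf.\ \cite[Lemma 2.8]{M-T-CMP}), gives $\al_\pi(M)'\cap(M\oti B(H_\pi))=\C$. An irreducible finite-index inclusion admits a \emph{unique} conditional expectation, which therefore coincides with the minimal one; hence $\al_\pi\circ\Ph_\pi^\al$ is automatically minimal and $\al$ is standard. This is the easiest of the three parts and needs no computation beyond the irreducibility.

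For (1), let $\th\col M\ra N$ and $w\in U(M\oti\lhG)$ implement the cocycle conjugacy $\Ad w\circ\al=(\th^{-1}\oti\id)\circ\be\circ\th$. Componentwise this reads $w_\pi\al_\pi(\cdot)w_\pi^*=(\th^{-1}\oti\id)\circ\be_\pi\circ\th$, so the isomorphism $\Psi:=(\th\oti\id)\circ\Ad w_\pi\col M\oti B(H_\pi)\ra N\oti B(H_\pi)$ carries the inclusion $\al_\pi(M)\subs M\oti B(H_\pi)$ onto $\be_\pi(N)\subs N\oti B(H_\pi)$. Since an isomorphism of inclusions sends the minimal expectation to the minimal expectation, it suffices to check that $\Psi$ intertwines the two canonical left inverses, i.e.\ $\Psi\circ(\al_\pi\circ\Ph_\pi^\al)\circ\Psi^{-1}=\be_\pi\circ\Ph_\pi^\be$. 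This is a direct but bookkeeping-heavy computation: one inserts the defining formula for $\Ph_\pi$, uses that $w_{\opi,\pi}$ relates the two $2$-cocycles through the cocycle-conjugacy identity $w\al(w)u(\id\oti\De)(w^*)=(\th^{-1}\oti\id\oti\id)(v)$, and uses the naturality of the intertwiners $T_{\opi,\pi}$. Once the left inverses are matched, minimality of $\be_\pi\circ\Ph_\pi^\be$ transfers to $\al_\pi\circ\Ph_\pi^\al$.

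For (3) I would reduce to a genuine action and invoke (1). By Lemma \ref{lem: id-otimes-al} the pair $(\al,u)$ is cocycle conjugate to $(\id_{B(\el_2)}\oti\al,1\oti u)$ on the properly infinite algebra $B(\el_2)\oti M$, where by Lemma \ref{lem: prop-inf-coboundary} the $2$-cocycle is a coboundary; hence $\id_{B(\el_2)}\oti\al$ is cocycle conjugate to a genuine action $\al'$. Granting that a genuine action is standard, part (1) shows $\id_{B(\el_2)}\oti\al$ is standard, and since $\Ph_\pi^{\id\oti\al}=\id_{B(\el_2)}\oti\Ph_\pi^\al$ and amplification by $B(\el_2)$ does not affect minimality of an expectation, it follows that $\al$ itself is standard. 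The heart of the matter, and the step I expect to be the main obstacle, is the claim that a genuine action is standard even when $\al'_\pi$ is reducible: equivalently that the canonical left inverse, built from the standard solutions $T_{\opi,\pi}$ to the conjugate equations in $\Rep(\bG)$, realizes the minimal index $d(\pi)^2$, i.e.\ $d(\al'_\pi)=d(\pi)$. This is automatic on the irreducible components but must be controlled across the whole, possibly infinite, decomposition of $\al'_\pi$, and it is precisely here that the amenability of $\bhG$ is used, through an averaging/F{\o}lner argument over $\IG$ that forces the canonical expectation to be the minimal one.
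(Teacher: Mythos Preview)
Your treatment of (2) matches the paper's exactly, and your reformulation of standardness as $d(\al_\pi)=d(\pi)$ is correct and useful.

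For (1), your plan to explicitly intertwine the two left inverses through the isomorphism $\Psi$ would work, but the paper bypasses this cocycle bookkeeping entirely. It simply observes that cocycle conjugacy yields an isomorphism between the inclusions $\al_\pi(M)\subs M\oti B(H_\pi)$ and $\be_\pi(M^2)\subs M^2\oti B(H_\pi)$, so their \emph{minimal} indices coincide. Since $\be$ is standard this common minimal index is $d(\pi)^2$, which by the preceding lemma equals $\Ind(\al_\pi\circ\Ph_\pi^\al)$; an expectation whose index attains the minimum is the minimal one, so $\al$ is standard. No identity relating the $2$-cocycles and the intertwiners $T_{\opi,\pi}$ is ever needed.

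For (3), your reduction (amplify to a properly infinite algebra, perturb the $2$-cocycle away, invoke (1)) is exactly the paper's, and you correctly isolate the residual claim: a genuine action $\be$ is standard when $\bhG$ is amenable. But ``an averaging/F{\o}lner argument over $\IG$'' is not yet a proof, and this is where the proposal has a gap. The paper's argument is short and concrete. Set $E_\pi:=\be_\pi\circ\Ph_\pi^\be$ and $Q_\pi:=\be_\pi(M)'\cap(M\oti B(H_\pi))$. The quasi-basis from the preceding lemma (with $a_\pi=1$ since $\be$ is an action) gives $E_\pi^{-1}(x)=d(\pi)^2\,(\id\oti\tr_\pi)(x)\oti1$ for $x\in Q_\pi$. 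Hiai's criterion \cite[Theorem~1(2)]{Hi0} then says $E_\pi$ is minimal iff $E_\pi^{-1}=d(\pi)^2E_\pi$ on $Q_\pi$, i.e.\ iff $(\id\oti\tr_\pi)(x)=\Ph_\pi^\be(x)$ for every $x\in Q_\pi$; both sides are automatically scalar because $M$ is a factor. Amenability enters exactly once, and not via F{\o}lner sets in $\IG$: take an invariant mean $m$ on $\lhG$ and any state $\chi$ on $M^\be$, and set $\ps(y):=m\bigl((\chi\oti\id)(\be(y))\bigr)$. This $\ps$ is a $\be$-invariant state on $M$, and invariance applied inside the defining formula for $\Ph_\pi^\be$ gives $\ps(\Ph_\pi^\be(x))=(\ps\oti\tr_\pi)(x)$, whence $(\id\oti\tr_\pi)(x)=\Ph_\pi^\be(x)$.
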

\begin{proof}
(1)
Let $\pi\in\IG$.
Since the inclusion $\be_\pi(M^2)\subs M^2\oti B(H_\pi)$
is isomorphic to $\al_\pi(M)\subs M\oti B(H_\pi)$,
$[M\oti B(H_\pi):\al_\pi(M)]_0=[\be_\pi(M^2):M^2\oti B(H_\pi)]_0=d(\pi)^2$.
Hence $\al$ is standard.

(2)
For any $\pi\in\IG$,
the expectation $\al_\pi\circ\Ph_\pi^\al$
is minimal because of the irreducibility of
the inclusion $\al_\pi(M)\subs M\oti B(H_\pi)$
\cite[Lemma 2.8]{M-T-CMP}.
Hence $\al$ is standard.

(3)
Since
$[B(\el_2)\oti M\oti B(H_\pi):B(\el_2)\oti \al_\pi(M)]_0
=[M\oti B(H_\pi):\al_\pi(M)]_0$,
we may and do assume that $M$ is properly infinite
by considering $\id\oti \al$.
Then $\al$ is cocycle conjugate to an action
$\be$ on $M$ by Lemma \ref{lem: prop-inf-coboundary}.
By (1), it suffices to show that $\be$ is standard.
We check $E_\pi^{-1}=d(\pi)^2 E_\pi$
on $Q_\pi:=\be_\pi(M)'\cap (M\oti B(H_\pi))$
to use \cite[Theorem 1(2)]{Hi0}.
Take $x\in Q_\pi$.
Then by \cite[p.62 Remark]{Wa}, we have
\[
E_\pi^{-1}(x)
=
\sum_{i,j=1}^{d(\pi)}
d(\pi)^{1/2}(1\oti e_{\pi_{ij}})
x
d(\pi)^{1/2}(1\oti e_{\pi_{ij}}^*)
=
d(\pi)^2((\id\oti\tr_\pi)(x)\oti1).
\]
So,
$E_\pi$ is minimal if and only if the following holds:
\begin{equation}\label{eq: tr-Ph}
(\id\oti\tr_\pi)(x)=\Ph_\pi^\be(x)\in\C.
\end{equation}
If we can find a $\be$-invariant state
$\ps\in M^*$,
the proof is finished.
Indeed, applying $\ps$ to $\Ph_\pi^\be$,
we have
\begin{align*}
\ps(\Ph_\pi^\be(x))
&=
T_{\opi,\pi}^*(\ps\oti\id\oti\id)(\be_\opi(x))T_{\opi,\pi}
=
T_{\opi,\pi}^*(1_\opi\oti (\ps\oti\id)(x))T_{\opi,\pi}
\\
&=
(\ps\oti\tr_\pi)(x)
\end{align*}
Hence (\ref{eq: tr-Ph}) holds.
Such a state $\ps$ is constructed by
using an invariant mean $m\in\lhG^*$.
Take a state $\vph$ on $M^\al$ and set $\ps:=m((\vph\oti\id)(\be(x)))$.
Then we have $(\ps\oti\id)(\be(x))=\ps(x)1$ for all $x\in M$,
that is, $\ps$ is invariant under $\be$.
\end{proof}

\begin{prob}
Is any cocycle action of $\bhG$ on a factor standard?
\end{prob}

Let $\al\in \Mor(M,M\oti\lhG)$ be
a standard cocycle action with a 2-cocycle $u$.
Now for $\pi\in\IG$,
we consider the canonical extension
$\tal_\pi\in\Mor(\tM,\tM\oti B(H_\pi))$.
Collecting $(\tal_\pi)_\pi$,
we obtain a map $\tal\in \Mor(\tM,\tM\oti\lhG)$,
which is called the canonical extension of the action $\al$.
We have the following equalities:
\begin{align*}
&\tal_\pi(x)=\al_\pi(x)
\quad\mbox{for all}\ x\in M;
\\
&\tal_\pi(\la^\vph(t))
=[D\vph\circ\Ph_\pi^\al:D\vph\oti\tr_\pi]_t
(\la^\vph(t)\oti1)
\quad\mbox{for all}\ t\in\R, \vph\in W(M).
\end{align*}

The following two results even for actions of general Kac algebras
are obtained in \cite{Y-can},
where operator valued weight theory is fully used,
but we can directly prove them for the discrete $\bhG$.
We present their proofs for readers' convenience.

Take $\vph\in W(M)$.
For $t\in\R$, we define $w_t=(w_{t,\pi})_\pi\in U(M\oti \lhG)$
by
\[
w_{t,\pi}=[D\vph\circ\Ph_\pi^\al:D\vph\oti\tr_\pi]_t^*.
\]

\begin{lem}\label{lem: Connes-cocycle-alpha}
The unitary $w_t$ satisfies the following:
\[
(w_t\oti1)\al(w_t)u(\id\oti\De)(w_t^*)
=(\si_t^\vph\oti\id)(u).
\]
\end{lem}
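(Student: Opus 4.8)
The identity to prove is a cocycle equation relating $w_t=(w_{t,\pi})_\pi$ with $w_{t,\pi}=[D\vph\circ\Ph_\pi^\al:D\vph\oti\tr_\pi]_t^*$, the $2$-cocycle $u$, and the coproduct $\De$. My first instinct is that this is essentially a statement about Connes cocycles and should follow by a ``cocycle chain rule'' argument once the pieces are set up correctly. The plan is to evaluate both sides on an irreducible $\pi$-component and then re-express everything in terms of Connes cocycles $[D\cdot:D\cdot]_t$ attached to the various left inverses $\Ph_\pi^\al$, $\Ph_\rho^\al$, and their compositions, finally using the uniqueness/chain-rule properties of these cocycles to collapse the expression. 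First I would fix $\pi,\rho\in\IG$ and write out the $(\pi,\rho)$-component of the left-hand side explicitly: $w_{t,\pi}\oti1$ times $\al_\pi(w_{t,\cdot})$ evaluated along $\De$, times $u_{\pi,\rho}$, times $(\id\oti\De)(w_t^*)$ at $(\pi,\rho)$. The key algebraic input will be the $2$-cocycle relation (axiom (2) in the definition of cocycle action) which links $u_{\pi,\rho}$ to $\al(u)$ and $(\id\oti\De)(u)$, together with the defining formula for $\Ph_\pi^\al$.

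The heart of the matter is a relation between left inverses under composition. The crucial computation is that $\Ph_\pi^\al\circ(\Ph_\rho^\al\oti\id)$, when applied after $\al$, decomposes according to the fusion $\pi\oti\rho=\bigoplus\si$ into a sum over $\Ph_\si^\al$ weighted by the intertwiners $S\in\ONB(\si,\pi\oti\rho)$ and the factor $d(\si)/(d(\pi)d(\rho))$ — this is exactly the identity that appears in the verification inside the proof of Lemma \ref{lem: gen-tr-cocycle}. I would therefore compute the Connes cocycle $[D\vph\circ\Ph_\rho^\al\circ(\Ph_\pi^\al\oti\id):D\vph\oti\tr_\pi\oti\tr_\rho]_t$ in two ways: once by the chain rule passing through the intermediate weight $D\vph\circ\Ph_\rho^\al\oti\tr_\pi$, which produces $\al_\pi(w_{t,\rho})$-type terms and $w_{t,\pi}$-type terms together with a $u$-conjugation, and once by directly using the standard left inverse of the composite, which produces the $\si$-components $w_{t,\si}$ recombined through the $\De$-structure. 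Matching the two expressions is what yields the stated equation, with the $\si_t^\vph$ appearing because $[D\vph\circ\Ph:D\vph\oti\tr]_t$ transforms $u$ by the modular automorphism $\si_t^\vph$ acting on the base.

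The main obstacle I anticipate is bookkeeping rather than conceptual: correctly tracking where $\al$ versus $\si_t^\vph$ acts, and in particular isolating the appearance of $(\si_t^\vph\oti\id)(u)$ on the right-hand side. The reason $\si_t^\vph$ survives on $u$ is that $u$ lies in $(M^\al)'\cap M\oti\lhG\oti\lhG$ (as noted in the subsection on cocycle actions), and conjugating a Connes cocycle past $u$ costs exactly a modular twist of $u$ by $\si_t^\vph$ in the first leg; all the $\al$-twists coming from $\al(w_t)$ cancel against the $2$-cocycle relation for $u$. I expect the cleanest route is to avoid the fusion sum entirely if possible by instead working with the defining relation $\tal_\pi(\la^\vph(t))=w_{t,\pi}^*(\la^\vph(t)\oti1)$ of the canonical extension: applying the already-proved fact that $\tal$ is a cocycle action (with the \emph{same} $2$-cocycle $u$) to the $2$-cocycle identity $(\tal\oti\id)\circ\tal=\Ad u\circ(\id\oti\De)\circ\tal$ evaluated on $\la^\vph(t)$, and using $\si_t^\vph=\Ad\la^\vph(t)$ on $M$, should deliver the claimed equation after canceling the $\la^\vph(t)$ factors. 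This reduces the problem to a formal manipulation of the extension's cocycle axiom, which I consider the most economical strategy and the one least prone to index errors.
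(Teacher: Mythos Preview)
Your first approach --- computing the Connes cocycle $[D\vph\circ\Ph_\rho^\al\circ(\Ph_\pi^\al\oti\id):D\vph\oti\tr_\pi\oti\tr_\rho]_t$ in two ways via the chain rule and the fusion decomposition $\pi\oti\rho=\bigoplus\si$ --- is essentially what the paper does. The paper's proof expands $(w_{t,\pi}\oti1)\al_\pi(w_{t,\rho})$ using the chain rule, conjugates by $u_{\pi,\rho}$, and then decomposes along the intertwiners $S\in\ONB(\si,\pi\oti\rho)$ with weights $d(\si)/(d(\pi)d(\rho))$, exactly as you outlined.

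However, the route you actually prefer is circular. You propose to ``apply the already-proved fact that $\tal$ is a cocycle action (with the same $2$-cocycle $u$)'' and evaluate the identity $(\tal\oti\id)\circ\tal=\Ad u\circ(\id\oti\De)\circ\tal$ on $\la^\vph(t)$. But in this paper that fact is \emph{not} already proved at this point: it is precisely Theorem~\ref{thm: canonical-ext-action}, and the paper's proof of that theorem consists of evaluating both sides on $\la^\vph(t)$ and invoking ``the previous lemma'', namely the present Lemma~\ref{lem: Connes-cocycle-alpha}, to close the argument. So the logical dependency runs in the opposite direction from what you assume, and your shortcut would beg the question. You must carry out the direct chain-rule/fusion computation; the cancellation you hoped would spare you the bookkeeping is the content of the lemma, not a consequence of something established earlier.
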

\begin{proof}
By the chain rule of Connes' cocycles,
we may and do assume that $\vph$ is a state.
Let $\pi,\rho\in\IG$.
Using the isomorphism $\al_\pi^{-1}\col \al_\pi(M)\ra M$,
we have
\[
\al_\pi(w_{t,\rho})
=[D\vph\circ\Ph_\rho^\al\circ(\al_\pi^{-1}\oti\id)
: D\vph\circ \al_\pi^{-1}\oti \tr_\rho]_t^*.
\]
Since $E_\pi:=\al_\pi\circ\Ph_\pi^\al\col M\oti B(H_\pi)\ra \al_\pi(M)$
is a conditional expectation,
we have
\begin{align*}
\al_\pi(w_{t,\rho})
&=
[D\vph\circ\Ph_\rho^\al\circ(\al_\pi^{-1}\oti\id)\circ(E_\pi\oti\id)
: D\vph\circ \al_\pi^{-1}\circ E_\pi\oti \tr_\rho]_t^*
\\
&=
[D\vph\circ\Ph_\rho^\al\circ(\Ph_\pi^\al\oti\id)
: D\vph\circ \Ph_\pi^\al\oti \tr_\rho]_t^*
\\
&=
[D\vph\circ \Ph_\pi^\al\oti \tr_\rho
:D\vph\circ\Ph_\rho^\al\circ(\Ph_\pi^\al\oti\id)]_t.
\end{align*}
Then we have
\begin{align*}
&\hspace{16pt}(w_{t,\pi}\oti1_\rho)\al_\pi(w_{t,\rho})
\notag\\
&=
([D\vph\oti\tr_\pi:D\vph\circ\Ph_\pi^\al]_t\oti1_\rho)
[D\vph\circ \Ph_\pi^\al\oti \tr_\rho
:D\vph\circ\Ph_\rho^\al\circ(\Ph_\pi^\al\oti\id)]_t
\notag\\
&=
[D\vph\oti\tr_\pi\oti\tr_\rho:D\vph\circ\Ph_\pi^\al\oti\tr_\rho]_t
[D\vph\circ \Ph_\pi^\al\oti \tr_\rho
:D\vph\circ\Ph_\rho^\al\circ(\Ph_\pi^\al\oti\id)]_t
\notag\\
&=
[D\vph\oti\tr_\pi\oti\tr_\rho
:D\vph\circ\Ph_\rho^\al\circ(\Ph_\pi^\al\oti\id)]_t.
\end{align*}
Applying
$(\si_t^\vph\oti\id\oti\id)(u_{\pi,\rho}^*)$
and $u_{\pi,\rho}$ to the both sides,
we have
\begin{align}
&\hspace{16pt}
(\si_t^\vph\oti\id\oti\id)(u_{\pi,\rho}^*)
(w_{t,\pi}\oti1_\rho)\al_\pi(w_{t,\rho})
u_{\pi,\rho}
\notag\\
&=
(\si_t^\vph\oti\id\oti\id)(u_{\pi,\rho}^*)
u_{\pi,\rho}
\notag\\
&\quad\cdot
[D\vph\oti\tr_\pi\oti\tr_\rho\circ\Ad u_{\pi,\rho}
:D\vph\circ\Ph_\rho^\al\circ(\Ph_\pi^\al\oti\id)
\circ\Ad u_{\pi,\rho}]_t
\notag\\
&=
[D\vph\oti\tr_\pi\oti\tr_\rho
:D\vph\circ\Ph_\rho^\al\circ(\Ph_\pi^\al\oti\id)
\circ\Ad u_{\pi,\rho}]_t.
\label{eq: pi-rho-Phi}
\end{align}

Recall the following formula \cite[Lemma 2.5]{M-T-CMP}:
for $X\in M\oti B(H_\pi)\oti B(H_\rho)$,
\[
\Ph_\rho^\al\circ(\Ph_\pi^\al\oti\id)(u_{\pi,\rho}Xu_{\pi,\rho}^*)
=
\sum_{\si\prec \pi\cdot\rho}\sum_{S\in \ONB(\si,\pi\cdot\rho)}
\frac{d(\si)}{d(\pi)d(\rho)}
\Ph_\si^\al((1\oti S^*)X(1\oti S)).
\]
Hence for $S\in \ONB(\si,\pi\cdot\rho)$,
we have
\begin{align*}
\Ph_\rho^\al\big{(}(\Ph_\pi^\al\oti\id)
(u_{\pi,\rho}(1\oti SS^*)Xu_{\pi,\rho}^*)\big{)}
&=
\frac{d(\si)}{d(\pi)d(\rho)}
\Ph_\si^\al((1\oti S^*)X(1\oti S))
\\
&=
\Ph_\rho^\al\big{(}(\Ph_\pi^\al\oti\id)
(u_{\pi,\rho}X(1\oti SS^*)u_{\pi,\rho}^*)\big{)}.
\end{align*}
In particular, $1\oti SS^*$ is in the centralizer
of $\vph\circ \Ph_\rho^\al\circ(\Ph_\pi^\al\oti\id)\circ\Ad u_{\pi,\rho}$.
Trivially, it is also in the centralizer of $\vph\oti\tr_\pi\oti\tr_\rho$.
Hence we see that
the both sides of (\ref{eq: pi-rho-Phi}) commutes with
$1\oti SS^*$,
and we have
\begin{align}
(\ref{eq: pi-rho-Phi})
&=
\sum_{\si\prec\pi\cdot\rho}
\sum_{S\in \ONB(\si,\pi\cdot\rho)}
[D\vph\oti\tr_\pi\oti\tr_\rho
:D\vph\circ\Ph_\rho^\al\circ(\Ph_\pi^\al\oti\id)]_t
(1\oti SS^*)
\notag\\
&=
\sum_{\si\prec\pi\cdot\rho}
\sum_{S\in \ONB(\si,\pi\cdot\rho)}
[D(\vph\oti\tr_\pi\oti\tr_\rho)_{1\oti SS^*}
\notag
\\
&\hspace{100pt}
:D\big{(}\vph\circ\Ph_\rho^\al
\circ(\Ph_\pi^\al\oti\id)
\circ\Ad u_{\pi,\rho}\big{)}_{1\oti SS^*}]_t,
\label{eq: si-pi-rho-S}
\end{align}
where the last cocycles are evaluated in
$\big{(}M\oti B(H_\pi)\oti B(H_\rho)\big{)}_{1\oti SS^*}$.

Let $\Th_S\col B(H_\si)\ra \big{(}B(H_\pi)\oti B(H_\rho)\big{)}_{SS^*}$
be the isomorphism defined by $\Th_S(x)=SxS^*$ for $x\in B(H_\si)$.
Using
\begin{align*}
&
(\tr_\pi\oti \tr_\rho)_{SS^*}=
\frac{d(\si)}{d(\pi)d(\rho)}\tr_\si\circ\Th_S^{-1}
\\
&
\big{(}\vph\circ\Ph_\rho^\al\circ(\Ph_\pi^\al\oti\id)
\circ\Ad u_{\pi,\rho}\big{)}_{1\oti SS^*}
=
\frac{d(\si)}{d(\pi)d(\rho)} \vph\circ \Ph_\si^{\al} \circ(\id\oti\Th_S^{-1}),
\end{align*}
we have
\begin{align*}
(\ref{eq: si-pi-rho-S})
&=
\sum_{\si\prec\pi\cdot\rho}
\sum_{S\in \ONB(\si,\pi\cdot\rho)}
[D\vph\oti\tr_\si\circ\Th_S^{-1}
:D\vph\circ\Ph_\si^\al \circ (\id\oti \Th_S^{-1})]_t
\\
&=
\sum_{\si\prec\pi\cdot\rho}
\sum_{S\in \ONB(\si,\pi\cdot\rho)}
(\id\oti \Th_S)
\big{(}
[D\vph\oti\tr_\si
:D\vph\circ\Ph_\si^\al]_t
\big{)}
\\
&=
\sum_{\si\prec\pi\cdot\rho}
\sum_{S\in \ONB(\si,\pi\cdot\rho)}
(\id\oti\De)(w_{t})(1\oti SS^*)
=(\id\oti{}_\pi \De_\rho)(w_t).
\end{align*}
Thus we get
\[
(\si_t^\vph\oti\id\oti\id)(u_{\pi,\rho}^*)
(w_{t,\pi}\oti1_\rho)\al_\pi(w_{t,\rho})
u_{\pi,\rho}
=(\id\oti{}_\pi \De_\rho)(w_t).
\]
\end{proof}

\begin{thm}\label{thm: canonical-ext-action}
Let $(\al,u)$ be a standard cocycle action of $\bhG$ on a
factor $M$.
Then the canonical extension $(\tal,u)$ is a cocycle action
on $\tM$.
\end{thm}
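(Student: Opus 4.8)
The plan is to verify the three defining relations of a cocycle action for the pair $(\tal,u)$ on $\tM$, exploiting that the $2$-cocycle $u$ is unchanged and still lies in $M\oti\lhG\oti\lhG\subs\tM\oti\lhG\oti\lhG$. First I would observe that since $u$ is evaluated in $M$ and $\tal|_M=\al$, one has $\tal(u)=(\tal\oti\id\oti\id)(u)=(\al\oti\id\oti\id)(u)=\al(u)$. Consequently the second relation $(u\oti1)(\id\oti\De\oti\id)(u)=\tal(u)(\id\oti\id\oti\De)(u)$ and the normalization $u_{\cdot,\btr}=u_{\btr,\cdot}=1$ are literally the corresponding relations for $(\al,u)$, hence hold automatically. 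Thus the entire content of the theorem is the first relation
\[
(\tal\oti\id)\circ\tal=\Ad u\circ(\id\oti\De)\circ\tal.
\]

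Next I would note that $\tM$ is generated as a von Neumann algebra by $M$ together with the unitaries $\{\la^\vph(t)\}_{t\in\R}$, so it suffices to check the displayed relation on these two families. On $M$ the equality reduces, via $\tal|_M=\al$ and $u\in M\oti\lhG\oti\lhG$, verbatim to the first cocycle relation $(\al\oti\id)\circ\al=\Ad u\circ(\id\oti\De)\circ\al$ for $(\al,u)$, which holds by hypothesis. So the real work is to check the relation on $\la^\vph(t)$.

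For this I would use the explicit extension formula, which with the notation $w_t=(w_{t,\pi})_\pi$, $w_{t,\pi}=[D\vph\circ\Ph_\pi^\al:D\vph\oti\tr_\pi]_t^*$, reads $\tal(\la^\vph(t))=w_t^*(\la^\vph(t)\oti1)$. Expanding both sides and writing $w_{t,12}=w_t\oti1$, the left-hand side becomes $\al(w_t^*)\,w_{t,12}^*\,(\la^\vph(t)\oti1\oti1)$, while on the right I would first compute $(\id\oti\De)(\tal(\la^\vph(t)))=(\id\oti\De)(w_t^*)(\la^\vph(t)\oti1\oti1)$ and then push $\la^\vph(t)$ through $u^*$ using $\Ad\la^\vph(t)|_M=\si_t^\vph$, i.e. $(\la^\vph(t)\oti1\oti1)u^*=(\si_t^\vph\oti\id\oti\id)(u^*)(\la^\vph(t)\oti1\oti1)$. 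After cancelling the common factor $\la^\vph(t)\oti1\oti1$, the required identity becomes
\[
\al(w_t^*)\,w_{t,12}^*=u\,(\id\oti\De)(w_t^*)\,(\si_t^\vph\oti\id\oti\id)(u^*),
\]
which is precisely a rearrangement of the cocycle identity of Lemma \ref{lem: Connes-cocycle-alpha}, namely $w_{t,12}\,\al(w_t)\,u\,(\id\oti\De)(w_t^*)=(\si_t^\vph\oti\id\oti\id)(u)$.

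I expect the main obstacle to be exactly this computation on $\la^\vph(t)$: keeping the leg-numbering bookkeeping straight across $\tal\oti\id$ and $\id\oti\De$, correctly commuting $\la^\vph(t)$ past the $M$-valued unitary $u$ via the modular automorphism, and matching the resulting expression to Lemma \ref{lem: Connes-cocycle-alpha}. The remaining points are routine: that $\tal=(\tal_\pi)_\pi$ assembles into a well-defined normal homomorphism $\tM\ra\tM\oti\lhG$, and that standardness (which yields $d(\al_\pi)=d(\pi)$) is what makes the stated extension formula hold with the normalized trace $\tr_\pi$ and no extra scalar, thereby matching the framework in which Lemma \ref{lem: Connes-cocycle-alpha} is proved.
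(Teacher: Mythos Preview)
Your proposal is correct and follows essentially the same approach as the paper: reduce to checking the first cocycle relation on the generators $M$ and $\la^\vph(t)$, use $\tal(\la^\vph(t))=w_t^*(\la^\vph(t)\oti1)$, commute $\la^\vph(t)$ past $u^*$ via $\si_t^\vph$, and identify the resulting equality with Lemma~\ref{lem: Connes-cocycle-alpha}. You are slightly more explicit than the paper in noting that the second and third cocycle-action conditions are automatic (since $u\in M\oti\lhG\oti\lhG$ and $\tal|_M=\al$) and in tracking how standardness removes the scalar $d(\al_\pi)^{it}$, but the argument is otherwise the same.
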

\begin{proof}
We will check
$(\tal\oti\id)\circ\tal=\Ad u\circ(\id\oti\De)\circ\tal$.
We have $\tal=\al$ on $M$, and that is trivial.
For $t\in\R$, $\al(\la^\vph(t))=w_{t}^*(\la^\vph(t)\oti1)$.
The previous lemma yields
\begin{align*}
(\tal\oti\id)(\tal(\la^\vph(t)))
&=
(\tal\oti\id)(w_{t}^*(\la^\vph(t)\oti1))
\\
&=
(\al\oti\id)(w_{t}^*)(w_{t}^*\oti1)
(\la^\vph(t)\oti1\oti1)
\\
&=
u
(\id\oti\De)(w_{t}^*)
(\si_t^\vph\oti\id\oti\id)(u^*)
(\la^\vph(t)\oti1\oti1)
\\
&=
u(\id\oti\De)(\tal(\la^\vph(t)))u^*.
\end{align*}
\end{proof}

\begin{lem}\label{lem: trace-inv}
Let $(\al,u)$ be a standard cocycle action of $\bhG$ on $M$.
The canonical trace $\ta$ on $\tM$ is invariant under $\tal$,
that is, $\ta\circ\Ph_\pi^\tal=\ta\oti\tr_\pi$
for all $\pi\in\IG$.
\end{lem}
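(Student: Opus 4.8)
The plan is to follow the computation in the proof of Lemma~\ref{lem: be-th-app-cent} almost verbatim, with the discrete core replaced by the continuous core $\tM=M\rti_{\si^\vph}\R$. Fix $\pi\in\IG$ and $\vph\in W(M)$, and write $\hvph$ for the dual weight on $\tM$. Since two faithful normal semifinite weights with $[D\psi:D\chi]_t=1$ for all $t$ coincide, it suffices to prove
\[
[D\ta\circ\Ph_\pi^\tal:D\ta\oti\tr_\pi]_t=1\qquad\text{for all }t\in\R .
\]
Throughout I would use the defining relation of the canonical trace on the core, namely $[D\hvph:D\ta]_t=\la^\vph(t)$, so that $[D\ta:D\hvph]_t=\la^\vph(t)^*$.

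First I would expand by the chain rule of Connes cocycles, inserting $\hvph\circ\Ph_\pi^\tal$ and $\hvph\oti\tr_\pi$:
\begin{align*}
[D\ta\circ\Ph_\pi^\tal:D\ta\oti\tr_\pi]_t
&=[D\ta\circ\Ph_\pi^\tal:D\hvph\circ\Ph_\pi^\tal]_t\\
&\quad\cdot[D\hvph\circ\Ph_\pi^\tal:D\hvph\oti\tr_\pi]_t
\cdot[D\hvph\oti\tr_\pi:D\ta\oti\tr_\pi]_t .
\end{align*}
Since $\tal_\pi\circ\Ph_\pi^\tal$ is a conditional expectation onto $\tal_\pi(\tM)$ and $\tal_\pi$ is an isomorphism onto its range, the first factor equals $\tal_\pi([D\ta:D\hvph]_t)=\tal_\pi(\la^\vph(t)^*)$, while the third factor is plainly $[D\hvph:D\ta]_t\oti1=\la^\vph(t)\oti1$.

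The middle factor is where the real work lies, and it is the step I expect to be the main obstacle. The claim is
\[
[D\hvph\circ\Ph_\pi^\tal:D\hvph\oti\tr_\pi]_t=[D\vph\circ\Ph_\pi^\al:D\vph\oti\tr_\pi]_t .
\]
To prove it I would let $T_\th$ denote the operator valued weight from $\tM$ onto $M$ attached to the trace-scaling dual action $\th$, so that $\hvph=\vph\circ T_\th$. Exactly as was used in Lemma~\ref{lem: be-th-app-cent}, $\tal$ (hence $\Ph_\pi^\tal$) commutes with $\th$, which yields the intertwining $T_\th\circ\Ph_\pi^\tal=\Ph_\pi^\al\circ(T_\th\oti\id)$ and therefore
\[
\hvph\circ\Ph_\pi^\tal=(\vph\circ\Ph_\pi^\al)\circ(T_\th\oti\id),
\qquad
\hvph\oti\tr_\pi=(\vph\oti\tr_\pi)\circ(T_\th\oti\id).
\]
Thus both weights are the dual weights of $\vph\circ\Ph_\pi^\al$ and $\vph\oti\tr_\pi$ on $M\oti B(H_\pi)$ for the operator valued weight $T_\th\oti\id$, and since operator valued weights preserve Connes cocycles, the desired equality follows. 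Carefully verifying the equivariance $\Ph_\pi^\tal\circ(\th_s\oti\id)=\th_s\circ\Ph_\pi^\tal$ from the construction of $\tal$, together with the intertwining with $T_\th$, is the delicate bookkeeping here; everything else is formal.

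Finally I would assemble the three factors. Using the defining formula $\tal_\pi(\la^\vph(t))=[D\vph\circ\Ph_\pi^\al:D\vph\oti\tr_\pi]_t(\la^\vph(t)\oti1)$, and hence $\tal_\pi(\la^\vph(t)^*)=(\la^\vph(t)^*\oti1)[D\vph\circ\Ph_\pi^\al:D\vph\oti\tr_\pi]_t^*$, the product collapses:
\[
(\la^\vph(t)^*\oti1)[D\vph\circ\Ph_\pi^\al:D\vph\oti\tr_\pi]_t^*
[D\vph\circ\Ph_\pi^\al:D\vph\oti\tr_\pi]_t(\la^\vph(t)\oti1)=1 .
\]
As this holds for every $t\in\R$, the Connes cocycle is trivial, whence $\ta\circ\Ph_\pi^\tal=\ta\oti\tr_\pi$ for all $\pi\in\IG$, which is the assertion.
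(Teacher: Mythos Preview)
Your proof is correct and is essentially identical to the paper's own argument: the paper also applies the chain rule with intermediate weight $\hvph$, identifies the outer factors as $\tal_\pi(h^{-it})$ and $h^{it}\oti1$ (where $h^{it}=\la^\vph(t)$, so these are exactly your $\tal_\pi(\la^\vph(t)^*)$ and $\la^\vph(t)\oti1$), and reduces the middle factor to $[D\vph\circ\Ph_\pi^\al:D\vph\oti\tr_\pi]_t$ via $\hvph=\vph\circ T_\th$ and the $\th$-equivariance of $\Ph_\pi^{\tal}$, precisely as you do.
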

\begin{proof}
Let $\vph\in W(M)$.
Take a positive operator $h$ affiliated in $\tM_{\hvph}$
such that $h^{it}=\la^\vph(t)$.
Then the canonical trace is given by $\ta:=\hvph_{h^{-1}}$,
which does not depend on the choice of the weight $\vph$.
Let $T_\th\col\tM\ra M$ be the averaging operator valued weight for $\th$.
Then $\hvph=\vph\circ T_\th$.
Since $\th$ commutes with $\tal$,
we have
\[
[D\hvph\circ\Ph_\pi^\tal:D\hvph\oti\tr_\pi]_t
=[D\vph\circ\Ph_\pi^\al\circ(T_\th\oti\id):D\vph\circ T_\th\oti\tr_\pi]_t
=[D\vph\circ\Ph_\pi^\al:D\vph\oti\tr_\pi]_t.
\]
This implies
\begin{align*}
&\hspace{16pt}
[D\ta\circ\Ph_\pi^\tal:D\ta\oti\tr_\pi]_t
\\
&=
[D\ta\circ\Ph_\pi^\tal:D\hvph\circ\Ph_\pi^\hal]_t
[D\hvph\circ\Ph_\pi^\tal:D\hvph\oti\tr_\pi]_t
[D\hvph\oti\tr_\pi:D\ta\oti\tr_\pi]_t
\\
&=
\tal_\pi(h^{-it})[D\vph\circ\Ph_\pi^\al:D\vph\oti\tr_\pi]_t
(h^{it}\oti1)
=1.
\end{align*}
\end{proof}

Since $\tal$ commutes with $\th$,
$\tal$ extends to an action on $\tM\rti_\th\R$.
We call it the \emph{second canonical extension}
and denote that by $\wdt{\tal}$.

\begin{cor}\label{cor: second-conjugate}
Let $\al\in\Mor(M,M\oti\lhG)$ be a standard action.
The second canonical extension $\wdt{\tal}$
is cocycle conjugate to $\id_{B(\el_2)}\oti\al$.
\end{cor}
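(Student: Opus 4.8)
The plan is to deduce the corollary from Takesaki duality applied to the continuous decomposition. First I would fix a weight $\vph\in W(M)$ and realize the core as $\tM=M\rti_{\si^\vph}\R$ with trace-scaling flow $\th=\widehat{\si^\vph}$, so that the second canonical extension $\wdt{\tal}$ lives on $\tM\rti_\th\R$. By Takesaki duality \cite{Tak-dual} there is an isomorphism $\Ps\col\tM\rti_\th\R\ra M\oti B(L^2(\R))$. Since $B(L^2(\R))\cong B(\el_2)$ and the tensor-factor flip, composed with this identification, is a conjugacy carrying $\al\oti\id_{B(L^2(\R))}$ to $\id_{B(\el_2)}\oti\al$, the whole statement reduces to showing that $\Ps$ implements a cocycle conjugacy between $\wdt{\tal}$ and $\al\oti\id_{B(L^2(\R))}$.

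The next step is to pin down $\wdt{\tal}$ on generators and isolate the only data distinguishing it from an amplification of $\al$. By construction $\wdt{\tal}$ agrees with $\al$ on $M$, fixes the implementing unitaries $\la^\th(s)$, and on $\la^\vph(t)$ it is governed by $w_t=(w_{t,\pi})_\pi$ with $w_{t,\pi}=[D\vph\circ\Ph_\pi^\al:D\vph\oti\tr_\pi]_t^*$, namely $\tal_\pi(\la^\vph(t))=w_{t,\pi}^*(\la^\vph(t)\oti1)$. Here the hypothesis that $\al$ is an honest action (so $u=1$) is decisive: specializing Lemma \ref{lem: Connes-cocycle-alpha} to $u=1$ gives $(w_t\oti1)\al(w_t)=(\id\oti\De)(w_t)$, so each $w_t$ is an $\al$-cocycle, while the chain rule for Connes cocycles yields the modular relation $w_{s+t,\pi}=\si_s^{\vph\oti\tr_\pi}(w_{t,\pi})w_{s,\pi}$ in the parameter $t$. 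These are exactly the two identities needed for the family $\{w_t\}$ to assemble, through the crossed product, into a single unitary that is a cocycle for the amplified action.

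I would then transport the generators through $\Ps$. On $M$ the map $\wdt{\tal}$ becomes $\al\oti\id_{B(L^2(\R))}$, and the entire discrepancy is concentrated on the copy of $B(L^2(\R))$ generated by the images of $\la^\vph(t)$ and $\la^\th(s)$, where the twists $w_t$ enter. Because $w_t$ is an $\al$-cocycle for every $t$ and obeys the modular relation above, these twists are implemented inside $M\oti B(L^2(\R))$ by the dual unitaries and integrate to a unitary $V\in U((M\oti B(L^2(\R)))\oti\lhG)$ with $\Ps\circ\wdt{\tal}\circ(\Ps^{-1}\oti\id)=\Ad V\circ(\al\oti\id_{B(L^2(\R))})$. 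The $(\al\oti\id)$-cocycle identity for $V$ reduces, after unwinding $\Ps$, precisely to the two identities recorded in the previous paragraph, and trace invariance of $\tal$ (Lemma \ref{lem: trace-inv}) guarantees that $\Ps$ is compatible with the flows so that this reduction is legitimate. This yields $\wdt{\tal}\sim\al\oti\id_{B(L^2(\R))}$ and hence the corollary.

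The main obstacle is the explicit bookkeeping in the Takesaki-duality identification: one must choose a concrete model of $\Ps$, compute the images of $\la^\vph(t)$ and $\la^\th(s)$ (the modular twist is intrinsic and cannot be removed from $M$ and the generators simultaneously), and check that the resulting candidate $V$ genuinely satisfies the $(\al\oti\id)$-cocycle equation and effects the intertwining on every generator. All the analytic content of that verification is already isolated in Lemma \ref{lem: Connes-cocycle-alpha} (with $u=1$) and the chain rule, so the remaining work is organizational rather than conceptual. Alternatively, one can route the argument through the endomorphism picture of Section \ref{sec:appendix} via the bijection $\si_*$ and invoke the corresponding compatibility of Izumi's canonical extension with duality \cite{Iz-can2}, which packages the same computation.
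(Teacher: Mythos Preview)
Your proposal is correct and follows essentially the same route as the paper: fix $\vph$, use Takesaki duality for $\tM\rti_\th\R\cong B(L^2(\R))\oti M$, and identify the discrepancy between $\wdt{\tal}$ and $\id_{B(L^2(\R))}\oti\al$ as the cocycle built from $t\mapsto w_{-t}$, whose $\al$-cocycle property is exactly Lemma \ref{lem: Connes-cocycle-alpha} with $u=1$. The paper's proof is terse (it simply declares $w(t)=w_{-t}$ to be the requisite cocycle and asserts the intertwining), whereas you spell out the two verifications---the pointwise $\al$-cocycle identity and the modular relation needed for the intertwining on $\la^\vph(t)$---but the argument is the same.
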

\begin{proof}
Let $\vph$ be a faithful normal semifinite weight on $M$.
We regard $\tM=M\rti_{\si^\vph}\R$.
Define $w(\cdot)\in U(L^\infty(\R)\oti M\oti\lhG)$
by $w(t)=w_{-t}$ for $t\in\R$.
Then $w$ is an $\id_{B(L^2(\R))}\oti\al$-cocycle.
By Takesaki duality, there exists a canonical isomorphism
$\tM\rti_\th\R\cong B(L^2(\R))\oti M$
intertwining the actions $\wdt{\tal}$ and $\Ad w\circ(\id\oti\al)$.
\end{proof}

\ifx\undefined\bysame
\newcommand{\bysame}{\leavevmode\hbox to3em{\hrulefill}\,}
\fi


\begin{thebibliography}{99}

\bibitem{Ar-lambda}
Araki, H., \textsl{Asymptotic ratio set and property ${L}_\lambda'$},
Publ. Res. Inst. Math. Sci. {\bf 6} (1970), 443--460.

\bibitem{Con-auto}
Connes, A., \textsl{Outer conjugacy classes of automorphisms of factors},
Ann. Sci. Eco. Norm. Sup. {\bf 8} (1975), 383--420.

\bibitem{Co-inj}
Connes, A.,
\textsl{Classification of injective factors.
Cases II$_1$, II$\sb{\infty}$, III$\sb{\lambda}$, $\lambda \not=1$},
Ann. of Math. (2) \textbf{104} (1976), 73--115.

\bibitem{Co-III1}
Connes, A., \textsl{Type {III}$_1$ factors, property ${L}'_\lambda$
and closure of inner automorphisms}, J. Operator Theory {\bf 14} (1985),
189--211.

\bibitem{CT}Connes, A. and Takesaki, M.,
{\it The flow of weights on factors of type III},
T\^{o}hoku Math. J. (2) \textbf{29} (1977), 473--575.

\bibitem{ES}Enock, M. and Schwartz, J.-M.,
{\it Produit crois\'{e} d'une alg\`{e}bre
de von Neumann par une alg\`{e}bre de Kac, II},
Publ. Res. Inst. Math. Sci. \textbf{16} (1980), 189--232.

\bibitem{FT}Falcone, T. and Takesaki, M.,
{\it The non-commutative flow of weights on a von Neumann algebra},
J. Funct. Anal. \textbf{182} (2001), 170--206.

\bibitem{Ha-st}Haagerup, U.,
{\it The standard form of von Neumann algebras},
Math. Scand. \textbf{37} (1975), 271--283.

\bibitem{Ha-III1}
Haagerup, U., \textsl{Connes' bicentralizer problem and uniqueness
of the injective factor of type {III}$_1$}, Acta Math. {\bf 158} (1987),
95--148.

\bibitem{Hi0}
Hiai, F.,
\textsl{Minimizing indices of conditional expectations onto a subfactor},
Publ. Res. Inst. Math. Sci. \textbf{24} (1988), 673--678.

\bibitem{Hi}
Hiai, F.,
\textsl{Minimum index for subfactors and entropy},
J. Operator Theory \textbf{24} (1990), 301--336.

\bibitem{Iz-can2}
Izumi, M., \textsl{Canonical extension of endomorphisms of type {III} factors},
Amer. J. Math. {\bf 125} (2003), 1--56.

\bibitem{JT}
Jones, V. F.~R. and Takesaki, M.,
\textsl{Actions of compact abelian groups
on semifinite injective factors}, Acta Math. {\bf 153} (1984), 213--258.

\bibitem{KST}Kawahigashi, Y., Sutherland C.~E. and Takesaki, M.,
{\it The structure of the automorphism group of an injective factor
and the cocycle conjugacy of discrete abelian group actions}, 
Acta Math. \textbf{169} (1992), 105--130.

\bibitem{Kw-Tak}
Kawahigashi, Y. and Takesaki, M., \textsl{Compact abelian group actions
on injective factors}, J. Funct. Anal. {\bf 105} (1992), 112--128.

\bibitem{Kri-erg}
Krieger, W., \textsl{On ergodic flows and the isomorphism of factors},
Math. Ann. {\bf 223} (1976), 19--70.

\bibitem{M-III1}
Masuda, T., \textsl{An analogue of {C}onnes-{H}aagerup approach to
classification of subfactors of type {III}$_1$},
J. Math. Soc. Japan {\bf 57} (2005), 959--1003.

\bibitem{M-T-CMP}
Masuda, T. and Tomatsu, R.,
\textsl{Classification of minimal actions of
a compact {K}ac algebra with amenable dual},
Comm. Math. Phys. {\bf 274} (2007), 487--551,

\bibitem{M-T-endo-pre}
Masuda, T. and Tomatsu, R.,
\textsl{Approximate innerness and central triviality for endomorphisms},
arXiv.math.OA/0802.0344, (2008).

\bibitem{Ocn-act}
Ocneanu, A., \textsl{Actions of discrete amenable groups
on von {N}eumann algebras},
Lecture Notes in Mathematics vol. 1138, Springer, Berlin, (1985).

\bibitem{Ro} Roberts, J.~E.,
{\it Crossed product of von Neumann algebras by group dual},
Symposia Mathematica \textbf{XX} (1976), 335--363.

\bibitem{Se-flow}
Sekine, Y.,
\textsl{Flow of weights of the crossed products of type {III}
factors by discrete groups},
Publ. Res. Inst. Math. Sci. {\bf 26} (1990), 655--666.

\bibitem{Su-Tak-RIMS}
Sutherland, C.~E. and Takesaki, M.,
\textsl{Actions of discrete amenable groups and groupoids
on von {N}eumann algebras}, Publ. Res. Inst. Math. Sci.
{\bf 21} (1985), 1087--1120.

\bibitem{Su-Tak-act}
Sutherland, C.~E. and Takesaki, M.,
\textsl{Actions of discrete amenable groups on injective factors
of type III$_\lambda$, $\lambda \ne 1$},
Pacific. J. Math. {\bf 137} (1989), 405--444.

\bibitem{ST2}Sutherland, C.~E. and Takesaki, M.,
\textit{Right inverse of the module of
approximately finite-dimensional factors of type III
and approximately finite ergodic principal measured groupoids},
Operator algebras and their applications, II (Waterloo, ON, 1994/1995),
149--159, Fields Inst. Commun., 20, Amer. Math. Soc., Providence, RI, 1998.

\bibitem{Tak-dual}
Takesaki, M., \textsl{Duality for crossed products
and the structure of von {N}eumann algebras of type {III}},
Acta Math. {\bf 131} (1973), 249--310.

\bibitem{Tak-book}
Takesaki, M., \textsl{Theory of {O}perator {A}lgebras},
Springer-{V}erlag, {B}erlin-{H}eidelberg-{N}ew {Y}ork,
I (2002), II, III (2003).

\bibitem{Wa}
Watatani, Y.,
{\it Index for $C\sp *$-subalgebras},
Mem. Amer. Math. Soc. \textbf{83} (1990), no. 424.

\bibitem{Y-can}Yamanouchi, T.,
\textit{Canonical extension of actions of locally compact quantum groups},
J. Funct. Anal. \textbf{201} (2003), 522--560.

\end{thebibliography}
\end{document}